\RequirePackage[english]{babel}
\documentclass[a4paper,twoside,11pt]{amsart}[2009/07/02] 
\usepackage[latin1]{inputenc}
\usepackage{amsmath}
\usepackage{amssymb}
\usepackage{amsfonts}
\usepackage{fixltx2e}[2005/12/01]

\usepackage{enumerate}

\renewcommand{\theenumii}{{\upshape{(\alph{enumii})}}}

\newcommand{\myitemii}[1]{\renewcommand{\theenumii}{{\upshape{(#1)}}}\item}

\usepackage[matrix,arrow,cmtip,frame,graph]{xy} 
\SelectTips{cm}{}
\newdir{(}{{}*!/-5pt/@^{(}}
\newdir{(x}{{}*!/-5pt/@_{(}}
\newdir{+}{{}*!/-9pt/{}}
\newdir{>+}{@{>}*!/-9pt/{}}
\entrymodifiers={+!!<0pt,\fontdimen22\textfont2>}

\usepackage[nosubsections]{mytheorems2}

\newcommand{\pref}[1]{\eqref{#1}}

\DeclareMathOperator{\Hom}{Hom}

\newcommand{\Z}{\mathbb{Z}}

\newcommand{\inj}{\hookrightarrow}
\newcommand{\iso}{\cong} 
\newcommand{\map}[3]{#1\,:\, #2\rightarrow #3}
\DeclareMathOperator{\image}{im}

\newcommand{\etale}{\'{e}tale}
\newcommand{\Etale}{\'{E}tale}

\newcommand{\Sch}{\mathbf{Sch}} 

\newcommand{\Spec}{\mathrm{Spec}}
\newcommand{\Min}{\mathrm{Min}}
\newcommand{\sep}{\mathrm{sep}}
\newcommand{\cons}{\mathrm{cons}} 
\newcommand{\genpts}{\mathrm{gen}} 
\newcommand{\red}{\mathrm{red}}
\newcommand{\shensel}[1]{{^{\mathrm{sh}}{#1}}}

\newcommand{\A}[1]{\mathbb{A}^{#1}}    

\newcommand\ip{\mathfrak{p}} 
\newcommand\im{\mathfrak{m}} 
\newcommand\sO{\mathcal{O}} 
\newcommand\sF{\mathcal{F}} 
\newcommand\sA{\mathcal{A}} 
\newcommand\sB{\mathcal{B}} 
\newcommand\id[1]{\mathrm{id}_{#1}}

\newcommand{\normcmd}[3]{%
#2%
\ifx\\#1\\%
^{\,\mathrm{#3}}%
\else%
^{#1/\mathrm{#3}}%
\fi%
}
\newcommand{\wn}[2]{\normcmd{#1}{#2}{wn}} 
\newcommand{\rwn}[2]{{^{\,*}_{#1}#2}} 

\newcommand{\catE}{\mathbf{E}}
\newcommand{\catF}{\mathbf{F}}
\newcommand{\sN}{\mathcal{N}}
\newcommand{\qc}{\mathrm{qc}}
\newcommand{\finite}{\mathrm{fin}}
\newcommand{\metale}{\text{\'et}} 
\newcommand{\interior}{\mathrm{int}}
\newcommand{\TIC}{\mathrm{TIC}}

\newcommand{\cU}{\mathcal{U}}
\newcommand{\cT}{\mathcal{T}} 
\newcommand{\equalizer}[2]{\xymatrix@1@M=0mm@C=7mm{#1%
\ar@<.5ex>@{+->+}[r] \ar@<-.5ex>@{+->+}[r] & #2}}

\newcommand\loccit{\textit{loc.\ cit.}}

\begin{document}

\title[Submersions and effective descent of \'etale morphisms]
{Submersions and effective descent\\of \'etale morphisms}
\author{David Rydh}
\address{Department of Mathematics, University of California, Berkeley,
970 Evans Hall, Berkeley, CA 94720-3840 USA}
\email{dary@math.berkeley.edu}
\date{2010-03-03}
\subjclass[2010]{Primary 14A15; Secondary 13B21, 13B22, 13B40, 14F20, 14F43}
\keywords{submersive, subtrusive, universally open, descent, \'etale,
blow-up, h-topology, algebraic spaces}

\begin{abstract}
Using the flatification by blow-up result of Raynaud and Gruson, we obtain new
results for submersive and subtrusive morphisms. We show that universally
subtrusive morphisms, and in particular universally open morphisms, are
morphisms of \emph{effective} descent for the fibered category of \etale{}
morphisms. Our results extend and supplement previous treatments on submersive
morphisms by Grothendieck, Picavet and Voevodsky. Applications include the
universality of geometric quotients and the elimination of noetherian
hypotheses in many instances.
\end{abstract}

\maketitle


\setcounter{secnumdepth}{0}
\begin{section}{Introduction}
Submersive morphisms, that is, morphisms inducing the quotient topology on the
target, appear naturally in many situations such as when studying quotients,
homology, descent and the fundamental group of schemes. Somewhat unexpected,
they are also closely related to the integral closure of ideals. Questions
related to submersive morphisms of \emph{schemes} can often be resolved by
topological methods using the description of schemes as locally ringed spaces.
Corresponding questions for \emph{algebraic spaces} are significantly harder as
an algebraic space is not fully described as a ringed space. The main result of
this paper is an effective descent result which bridges this gap between
schemes and algebraic spaces.

The first proper treatment of submersive morphisms seems to be due to
Grothendieck~\cite[Exp.~IX]{sga1} with applications to the fundamental group of
a scheme. He shows that submersive morphisms are morphisms of descent for the
fibered category of \etale{} morphism. He then proves \emph{effectiveness} for
the fibered category of quasi-compact and separated \etale{} morphisms in some
special cases, e.g., for finite morphisms and universally open morphisms of
finite type between noetherian schemes. Our main result consists of several
very general effectiveness results extending those of Grothendieck
significantly. For example, we show that any universal submersion of
noetherian schemes is a morphism of effective descent for quasi-compact
\etale{} morphisms. As an application, these effectiveness results imply that
strongly geometric quotients are categorical in the category of algebraic
spaces~\cite{rydh_finite_quotients}.

Later on Picavet singled out a subclass of submersive morphisms
in~\cite{picavet_submersion}. He termed these morphisms \emph{subtrusive} and
undertook a careful study of their main properties. The class of subtrusive
morphisms is natural in many respects. For example, over a locally noetherian
scheme, every submersive morphism is subtrusive. Picavet has also given an
example showing that a finitely presented universally submersive morphism is
not necessarily subtrusive. In particular, not every finitely presented
universally \emph{submersive} morphism is a limit of finitely presented
submersive morphisms of noetherian schemes. We will show that every finitely
presented universally \emph{subtrusive} morphism is a limit of finitely
presented submersive morphisms of noetherian schemes. This is a key result
missing in~\cite{picavet_submersion} allowing us to eliminate noetherian
hypotheses in questions about universal subtrusions of finite presentation. It
also shows that the class of subtrusive morphisms is indeed an important and
very natural extension of submersive morphisms of noetherian schemes.

A general observation is that in the noetherian setting it is often useful to 
describe submersive morphisms using the subtrusive property.
For example, there is a valuative criterion for submersions of noetherian
schemes~\cite[Prop.~3.7]{kollar_quotients} which rather describes the essence
of the subtrusiveness.

\begin{subsection}{Structure theorem}
An important tool in this article is the structure theorems for universally
subtrusive morphisms given in~\S\ref{S:structure-theorems}: Let $\map{f}{X}{Y}$
be a universally subtrusive morphism of finite presentation. Then there is a
morphism $\map{g}{X'}{X}$ and a factorization of $f\circ g$
$$\xymatrix{X'\ar[r]^{f_1} & Y'\ar[r]^{f_2} & Y}$$
where $f_1$ is fppf and $f_2$ is proper, surjective and of finite presentation,
cf.\ Theorem~\pref{T:ST-aff}. This is shown using the flatification
result of Raynaud and Gruson~\cite{raynaud-gruson}.

We also show that if $f$ is in addition quasi-finite, then there is a similar
factorization as above such that $f_1$ is an open covering and $f_2$ is finite,
surjective and of finite presentation, cf.\ Theorem~\pref{T:STf-noeth/aff}.
Combining these results, we show that every universally subtrusive morphism of
finite presentation $\map{f}{X}{Y}$ has a refinement $X'\to Y$ which factors
into an open covering $f_1$ followed by a surjective and proper morphism of
finite presentation $f_2$.

This structure theorem is a generalization to the non-noetherian case of a
result of Voevodsky~\cite[Thm.~3.1.9]{voevodsky_homology}. The proof is
somewhat technical and the reader without any interest in non-noetherian
questions may prefer to read the proof given by Voevodsky which has a more
geometric flavor. Nevertheless, our extension is crucial for the elimination
of noetherian hypotheses referred to above.

As a first application, we show in Section~\ref{S:loc-closed} that universally
subtrusive morphisms of finite presentation are morphisms of effective descent
for locally closed subsets. This result is not true for universally
\emph{submersive} morphisms despite its topological nature.
\end{subsection}

\begin{subsection}{Effective descent of \'etale morphisms}
In Section~\ref{S:descent} we use the structure theorems
of~\S\ref{S:structure-theorems} and the proper base change theorem in \etale{}
cohomology to prove that
\begin{itemize}
\item Quasi-compact universally \emph{subtrusive} morphisms are morphisms of
effective descent for finitely presented \etale{}
morphisms, cf.\ Theorem~\pref{T:descent-for-etqc}.
\item Universally open and surjective morphisms are morphisms of effective
descent for \etale{} morphisms, cf.\ Theorem~\pref{T:descent-for-et}.
\end{itemize}
In particular, universal submersions between noetherian schemes are morphisms
of effective descent for quasi-compact \etale{} morphisms.
\end{subsection}

\begin{subsection}{Applications}
The effective descent results of~\S\ref{S:descent} have several applications.
One is the study of the algebraic fundamental group using morphisms of
effective descent for finite \etale{} covers, cf.~\cite[Exp.~IX, \S
5]{sga1}. Another application, also the origin of this paper, is in the theory
of quotients of schemes by groups. The effective descent results show that
strongly geometric quotients are \emph{categorical} in the category of
algebraic spaces~\cite{rydh_finite_quotients}. This result is obvious in the
category of schemes but requires the results of~\S\ref{S:descent} for the
extension to algebraic spaces. The third application in mind is similar to the
second. Using the effective descent results we can extend some basic results on
the $h$- and $qfh$-topologies defined by Voevodsky~\cite{voevodsky_homology} to
the category of algebraic spaces. This is done
in~\S\S\ref{S:descent-of-morphisms}--\ref{S:h-top}. The $h$-topology has been
used in singular homology~\cite{voevodsky-suslin_sing-hom}, motivic homology
theories~\cite{voevodsky_motives} and when studying families of
cycles~\cite{voevodsky_cycles}. The $h$-topology is also related to the
integral closure of ideals~\cite{brenner_Groth-top}.
\end{subsection}

\begin{subsection}{Elimination of noetherian hypotheses}
Let $S$ be an inverse limit of affine schemes $S_\lambda$. The situation in
mind is as follows. Every ring $A$ is the filtered direct limit of its
subrings $A_\lambda$ which are of finite type over $\Z$. The scheme
$S=\Spec(A)$ is the inverse limit of the excellent noetherian schemes
$S_\lambda=\Spec(A_\lambda)$.

Let $X\to S$ be a finitely presented morphism. Then $X\to S$ descends to a
finitely presented morphism $X_\lambda\to S_\lambda$ for sufficiently large
$\lambda$~\cite[Thm.~8.8.2]{egaIV}. By this, we mean that $X\to S$ is
the base change of $X_\lambda\to S_\lambda$ along $S\to S_\lambda$.
If $X\to S$ is proper (resp.\ flat, \etale{}, smooth, etc.) then so is
$X_\lambda\to S_\lambda$ for sufficiently large $\lambda$,
cf.~\cite[Thm.~8.10.5, Thm.~11.2.6, Prop.~17.7.8]{egaIV}. Note that the
corresponding result for universally open is missing in~\cite{egaIV}. As we
have mentioned earlier, the analogous result for universally submersive is
false.

In Theorem~\pref{T:subtrusive-limit-of-subtrusive} we show that if $X\to S$ is
\emph{universally subtrusive} then so is $X_\lambda\to S_\lambda$ for
sufficiently large $\lambda$. We also show the corresponding result for $X\to
S$ \emph{universally open}. An easy application of this result is the
elimination of noetherian hypotheses in~\cite[\S\S14--15]{egaIV}. In
particular, every universally open morphism locally of finite presentation has
a locally quasi-finite quasi-section, cf.~\cite[Prop.~14.5.10]{egaIV}.
\end{subsection}

\begin{subsection}{Appendices}
Some auxiliary results are collected in two appendices. In the first appendix
we recall the henselian properties of a scheme which is proper over a complete
or henselian local ring. These properties follow from the Stein factorization
and Grothendieck's existence theorem and constitute a part of the proper base
change theorem in \etale{} cohomology. With algebraic spaces we can express
these henselian properties in an appealing form which is used when proving the
effective descent results in Section~\ref{S:descent}.

In the second appendix, we briefly recall the weak subintegral closure of rings
and weakly normal extensions. We also introduce the \emph{absolute weak
normalization} which we have not found elsewhere. When $X$ is an integral
scheme, the absolute weak normalization is the weak subintegral closure in the
perfect closure of the function field of $X$. The absolute weak normalization
is used to describe the sheafification of a representable functor in the
$h$-topology.
\end{subsection}

\begin{subsection}{Terminology and assumptions}
A morphism of schemes or algebraic spaces is called a \emph{nil-immersion} if
it is a surjective immersion. Equivalently, it is a closed immersion given by
an ideal sheaf which is a nil-ideal, i.e., every section of the ideal sheaf is
locally nilpotent.

Given a covering $\map{f}{X}{Y}$ we say that $\map{f'}{X'}{Y}$ is a
\emph{refinement} of $f$ if
$f'$ is covering and factors through $f$.
For general terminology and properties of algebraic
spaces, see Knutson~\cite{knutson_alg_spaces}. As in~\cite{knutson_alg_spaces}
we assume that all algebraic spaces are quasi-separated.
\end{subsection}

\begin{subsection}{Acknowledgment}
The author would like to thank the referee for a very careful reading and
for several suggestions which improved the paper.
\end{subsection}

\end{section}
\setcounter{secnumdepth}{3}


\begin{section}{Topologies}

In addition to the Zariski topology, we will have use of two additional
topologies which we recall in this section. The first is the
\emph{constructible} topology, cf.~\cite[\S7.2]{egaI_NE}, which also is known
as the \emph{patch} topology. The second topology is the $S$-topology where $S$
stands for specialization. We then define submersive morphisms and give
examples of morphisms which are submersive in the constructible topology.

The closed (resp.\ open) subsets of the constructible topology are the
pro-constructible (resp.\ ind-constructible) subsets. A subset is
pro-constructible (resp.\ ind-constructible) if it locally is an intersection
(resp.\ union) of constructible sets. An important characterization of
pro-constructible subsets is given by the following proposition.

\begin{proposition}[{\cite[Prop.~7.2.1]{egaI_NE}}]\label{P:char-of-pro-constr}
Let $X$ be a quasi-compact and quasi-separated scheme. A subset $E\subseteq X$
is pro-constructible if and only if there is an affine scheme $X'$ and
a morphism $\map{f}{X'}{X}$ such that $E=f(X')$.
\end{proposition}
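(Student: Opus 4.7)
The plan is to prove the two implications separately; both rest on the principle that a cofiltered inverse limit of nonempty quasi-compact schemes with affine transition morphisms is nonempty.

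\emph{Sufficiency.} Given $\map{f}{X'}{X}$ with $X'$ affine and $X$ quasi-compact and quasi-separated, I would write $X'$ as a cofiltered limit $X' = \lim_\lambda X'_\lambda$ of affine $X$-schemes of finite presentation; this is possible because $X$ is qcqs, via the standard approximation of affine morphisms by those of finite presentation. By Chevalley's theorem each image $f_\lambda(X'_\lambda)$ is constructible in $X$, and one always has $f(X') \subseteq \bigcap_\lambda f_\lambda(X'_\lambda)$. For the reverse inclusion, the limit principle applied to the fibers $(X'_\lambda)_x$ shows that any $x$ in the intersection lies in $f(X')$. Hence $f(X') = \bigcap_\lambda f_\lambda(X'_\lambda)$ is an intersection of constructible sets, so is pro-constructible.

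\emph{Necessity.} First I would reduce to the case $X = \Spec A$ by choosing a finite affine open cover $X = U_1 \cup \cdots \cup U_n$: the traces $E \cap U_i$ are pro-constructible in the affine $U_i$ (since $U_i$ is constructible in $X$), and if each is realized as the image of some affine $V_i \to U_i$, then $\bigsqcup_i V_i$ is again affine and its image in $X$ is $E$. For $X$ affine, write $E = \bigcap_{\lambda} C_\lambda$ with the family $\{C_\lambda\}$ of constructible subsets closed under finite intersections. Each $C_\lambda$ is a finite union of locally closed constructible subsets of the form $V(I) \cap D(g_1) \cap \cdots \cap D(g_m)$, which are themselves affine; hence there is an affine $Y_\lambda \to X$ with image $C_\lambda$. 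Now set
\[
X' \;=\; \lim_{S} \prod_{\lambda \in S} Y_\lambda,
\]
the cofiltered limit over finite subsets $S$ of the index set, where the product denotes a fiber product over $X$. Each finite fiber product of affines over an affine base is affine, so $X'$ is affine. Its image in $X$ is contained in $\bigcap_\lambda C_\lambda = E$, and for each $x \in E$ the fiber $X'_x$ is a cofiltered limit of finite fiber products of nonempty $\kappa(x)$-schemes, hence nonempty (a finite tensor product of nonzero $\kappa(x)$-algebras is nonzero, and then the limit principle applies). Thus $E \subseteq f(X')$, giving equality.

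The main obstacle I expect is the nonemptiness of the limit of fibers, needed in both directions. This rests on the nontrivial fact that the underlying topological space of a cofiltered limit of qcqs schemes with affine transition maps is the limit of the underlying spaces, so that limits of nonempty qcqs schemes stay nonempty; once that is in hand, the remainder is a combinatorial manipulation of constructible subsets together with the standard approximation machinery for affine morphisms.
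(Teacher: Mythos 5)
The paper does not actually prove this proposition: it is quoted directly from EGA~I, Prop.~7.2.1, so the only comparison available is with the standard EGA argument, and your proof is essentially that argument. Both directions are sound: Chevalley's theorem plus nonemptiness of a cofiltered limit of nonempty quasi-compact fibers for sufficiency, and the realization of each constructible set as the image of an affine scheme followed by an infinite fiber product for necessity. The nonemptiness statements you isolate at the end are exactly the right pivot points, and in the necessity direction they reduce to the triviality that a filtered colimit of nonzero rings is nonzero.

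One imprecision is worth fixing in the sufficiency direction. You justify writing $X'=\varprojlim_\lambda X'_\lambda$ with $X'_\lambda$ affine and of finite presentation over $X$ by appealing to approximation of \emph{affine morphisms}, but $\map{f}{X'}{X}$ need not be an affine morphism when $X$ is only quasi-separated: for instance, one of the two open immersions $\A{2}\to X$ into the plane with doubled origin has affine source but is not an affine morphism (the preimage of the other chart is $\A{2}\setminus\{0\}$). Since a cofiltered limit of schemes affine over $X$ along affine transition maps is again affine over $X$, the approximation you invoke cannot produce such a presentation in general. The fix costs nothing: either use relative approximation, which yields $X'_\lambda$ of finite presentation over $X$ (not necessarily affine over $X$) with affine transition morphisms --- all that Chevalley and the limit principle require --- or first reduce to $X$ affine exactly as you do for necessity, observing that $f$ is quasi-compact because $X'$ is quasi-compact and $X$ is quasi-separated, so each $f^{-1}(U_i)$ is a finite union of principal affine opens of $X'$ and the affine-to-affine case applies.
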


If $X$ is a scheme, then we denote by $|X|$ its underlying topological space
with the Zariski topology and $|X|^{\cons}$ its underlying topological space
with the constructible topology. If $\map{f}{X}{Y}$ is a morphism of schemes
then we let $f^{\cons}$ be the underlying map in the constructible topology.

\begin{proposition}[{\cite[Prop.~7.2.12]{egaI_NE}}]
\label{P:constructible-topology}
Let $X$ be a scheme.
\begin{enumerate}
\item If $\map{f}{X}{Y}$ is a morphism of schemes, then $f^{\cons}$ is
continuous.
\item If $\map{f}{X}{Y}$ is \emph{quasi-compact}, then $f^{\cons}$ is
closed.
\item If $\map{f}{X}{Y}$ is \emph{locally of finite presentation}, then
$f^{\cons}$ is open.
\item If $Z\inj X$ is closed, then ${|X|^{\cons}}_{|Z}=|Z|^{\cons}$.
\label{PI:CT-closed}
\item If $U\subseteq X$ is open, then ${|X|^{\cons}}_{|U}=|U|^{\cons}$.
\label{PI:CT-open}
\item If $W$ is a locally closed subscheme of $X$, then
${|X|^{\cons}}_{|W}=|W|^{\cons}$.
\label{PI:CT-loc-closed}
\end{enumerate}
\end{proposition}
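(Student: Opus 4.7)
The plan is to use Proposition~\pref{P:char-of-pro-constr} as the main engine throughout, exploiting the characterization of pro-constructible subsets of a qcqs scheme as images of morphisms from affine schemes, together with the fact that pro-constructibility and ind-constructibility are local properties on the base. For part (i), to check continuity of $f^{\cons}$ it suffices to verify that $f^{-1}$ preserves pro-constructibility. Localizing on $X$ and $Y$, I may assume both are affine; then any pro-constructible $E\subseteq Y$ is the image $g(Y')$ of some affine $\map{g}{Y'}{Y}$ by Proposition~\pref{P:char-of-pro-constr}, and $f^{-1}(E)$ is the image of the affine scheme $X\times_Y Y'$ under the projection to $X$, hence pro-constructible.

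For part (ii), I would localize on $Y$ to assume $Y$ affine; quasi-compactness of $f$ then implies $X$ is quasi-compact, so $X$ is covered by finitely many affine opens $X_1,\dots,X_n$. Given a pro-constructible $E\subseteq X$, each trace $E\cap X_i$ is pro-constructible in $X_i$ (invoking part (v), which I prove first), hence by Proposition~\pref{P:char-of-pro-constr} writable as the image of an affine $E'_i\to X_i$. Composing with $f$, the image $f(E\cap X_i)$ is pro-constructible in $Y$, and $f(E)=\bigcup_i f(E\cap X_i)$ is a finite union of pro-constructibles, hence pro-constructible. For part (iii), the crucial input is Chevalley--Grothendieck for morphisms locally of finite presentation (EGA~IV, Thm.~1.8.4): the image of a constructible subset is locally constructible. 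Writing an arbitrary ind-constructible set as a union of constructibles, its image becomes a union of locally constructible sets, and since locally constructible sets are ind-constructible, the image is ind-constructible.

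For parts (iv)--(vi), I would compare directly the subspace topology induced on $Z$ (resp.\ $U$, $W$) from $|X|^{\cons}$ with the intrinsic constructible topology. The key observation is that the constructible sets of a closed or open subscheme are exactly the traces of constructible sets of the ambient scheme: for a closed immersion $Z\inj X$ the retrocompact opens of $Z$ are traces of retrocompact opens of $X$; for an open immersion $U\subseteq X$ any retrocompact open of $U$ is also retrocompact in $X$. Passing to finite Boolean combinations and then to arbitrary intersections/unions yields the equalities for pro- and ind-constructibles. Part (vi) then follows by factoring $W\inj X$ as an open immersion into a closed subscheme and combining (iv) and (v).

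The main obstacle I foresee is the logical interdependence between (ii) and (v): the proof of (ii) in the non-quasi-separated case needs to cut $X$ by an affine cover, which implicitly uses (v). I would therefore establish (iv) and (v) from the definitions first, being careful that neither argument invokes (ii), before deducing (ii). A secondary subtlety in (iii) is the assertion that locally constructible sets are ind-constructible, which must be unwound directly from the definition of ind-constructibility as a local property.
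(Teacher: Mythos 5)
Your proposal is correct, but it takes a genuinely different route from the paper on both halves of the statement. For (i)--(iii) the paper simply cites EGA~I (new ed.), Prop.~7.2.12, whereas you reprove them from Proposition~\pref{P:char-of-pro-constr} and Chevalley's theorem; these arguments are sound and are essentially the standard proofs. The more interesting divergence is in (iv)--(vi): the paper deduces (iv) from (ii) and (v) from (iii), using only that a closed immersion is quasi-compact and an open immersion is locally of finite presentation, so that $|Z|^{\cons}\to|X|^{\cons}$ (resp.\ $|U|^{\cons}\to|X|^{\cons}$) is an injective, continuous and closed (resp.\ open) map, hence a homeomorphism onto its image with the subspace topology. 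You instead prove (iv) and (v) directly by matching constructible sets, and then feed (v) into your proof of (ii); this inverts the paper's logical order but is consistent, and it has the advantage of making the localization step in (ii) explicit, at the cost of some bookkeeping the paper avoids. Two small repairs are needed in your version. First, your claim that a retrocompact open of an open subscheme $U$ is retrocompact in $X$ is false in general (take $U$ itself non-retrocompact in a non-quasi-separated $X$); what you actually need is the local statement, which is rescued by working on affine opens contained in $U$, since pro- and ind-constructibility are local notions and Zariski-opens are open in the constructible topology. Second, in (iii) the image of a constructible set under a morphism that is merely locally of finite presentation is in general only ind-constructible rather than locally constructible; the affine reduction you sketch handles this, but the citation of Chevalley should be adjusted accordingly.
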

\begin{proof}
(i)--(iii) are~\cite[Prop.~7.2.12~(iii)--(v)]{egaI_NE}. Statements (iv) and (v)
are consequences of (ii) and (iii) respectively, as closed immersions are
quasi-compact and open immersions are locally of finite presentation. Finally
(vi) follows immediately from (iv) and (v).
\end{proof}


The Zariski topology induces a partial ordering on the underlying set of
points~\cite[2.1.1]{egaI_NE}. We let $x\leq x'$ if $x\in \overline{\{x'\}}$,
i.e., if $x$ is a specialization of $x'$, or equivalently if
$\overline{\{x\}}\subseteq\overline{\{x'\}}$. The $S$-topology is the topology
associated to this ordering. A subset is thus closed (resp.\ open) if and only
if it is stable under specialization (resp.\ generization). We denote by $S(E)$
the closure of $E$ in the $S$-topology. By $\overline{E}$ we will always mean
the closure of $E$ in the Zariski topology. A morphism of schemes
$\map{f}{X}{Y}$ is \emph{generizing} (resp.\ \emph{specializing})
if it is open (resp.\ closed) in the $S$-topology~\cite[\S3.9]{egaI_NE}.
An open (resp.\ closed) morphism of schemes is generizing (resp.\
specializing)~\cite[Prop.~3.9.3]{egaI_NE}.

\begin{remark}
For an affine scheme $\Spec(A)$ the partial ordering described above
corresponds to \emph{reverse} inclusion of prime ideals and a maximal point
corresponds to a minimal ideal. In commutative algebra, it is common to take
the ordering on the spectrum corresponding to inclusion of prime ideals, but
this is less natural from a geometric viewpoint.
\end{remark}

\begin{proposition}[{\cite[Thm.~7.3.1]{egaI_NE}}]
\label{P:pro-constr-and-closure}
Let $X$ be a scheme. If $E\subseteq X$ is an ind-constructible subset then
$x\in \interior(E)$ if and only if $\Spec(\sO_{X,x})\subseteq E$. Equivalently,
we have that the interior of $E$ in the Zariski topology coincides with the
interior of $E$ in the $S$-topology. If $F\subseteq X$ is a pro-constructible
subset then $\overline{F}=S(F)$.
\end{proposition}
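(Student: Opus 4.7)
First I would observe that the three claims really reduce to one. Since the $S$-open sets are precisely the generization-stable subsets, the $S$-topology interior of any subset $E\subseteq X$ is always $\{x:\Spec(\sO_{X,x})\subseteq E\}$, and it always contains the Zariski interior. Hence the first assertion and the ``equivalently'' clause are the same statement. Moreover, the first and third assertions are exchanged under complementation: setting $F=X\setminus E$, one has $x\in\interior(E)\iff x\notin\overline{F}$ (since $x$ lies in the interior of $E$ exactly when some Zariski-open neighborhood is disjoint from $F$), and $\Spec(\sO_{X,x})\subseteq E\iff x\notin S(F)$ (since $S(F)$ consists of those points admitting a generization in $F$). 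It therefore suffices to prove $\overline{F}=S(F)$ for every pro-constructible $F\subseteq X$.

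The inclusion $S(F)\subseteq\overline{F}$ is automatic, as any Zariski-closed set is stable under specialization. For the reverse, fix $x\in\overline{F}$ and pass to an affine open neighborhood of $x$ using Proposition~\pref{P:constructible-topology}, reducing to the case $X=\Spec(A)$. By Proposition~\pref{P:char-of-pro-constr} I may then write $F=f(X')$, where $X'=\Spec(B)$ is affine and $\map{f}{X'}{X}$ corresponds to a ring homomorphism $\varphi\colon A\to B$. Let $\ip\subset A$ be the prime defining $x$.

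The hypothesis $x\in\overline{F}$ translates, via the basic opens $D(a)$ with $a\in A\setminus\ip$, to the statement that for every such $a$ there exists $\mathfrak{q}\in\Spec B$ with $\varphi(a)\notin\mathfrak{q}$; equivalently, $\varphi(a)$ is not nilpotent in $B$, hence in particular nonzero. The image $\varphi(A\setminus\ip)\subseteq B$ is therefore a multiplicative subset that does not contain $0$, so the localization $B\otimes_A A_\ip$ is nonzero. Any prime $\mathfrak{q}$ of this localization pulls back to a prime of $B$ with $\varphi^{-1}(\mathfrak{q})\subseteq\ip$; the point $x'=f(\mathfrak{q})\in F$ is then a generization of $x$, giving $x\in S(F)$. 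The only non-routine step is the algebraic reinterpretation of the topological hypothesis $x\in\overline{F}$ in terms of the non-vanishing of $\varphi$-images on $A\setminus\ip$; once that is in hand, the extraction of $x'$ is a standard localization argument.
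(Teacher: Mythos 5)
Your proof is correct, and all three assertions do reduce as you say to the closure statement for pro-constructible sets; the key steps (realizing $F$ as the image of an affine scheme via Proposition~\pref{P:char-of-pro-constr}, translating $x\in\overline{F}$ into non-nilpotence of $\varphi(a)$ for $a\in A\setminus\ip$, and extracting a generization from a prime of the nonzero localization $B\otimes_A A_\ip$) all check out. The paper gives no proof of this proposition, citing it directly from EGA~I, Thm.~7.3.1; your argument is essentially the standard one found there, so there is nothing to compare beyond noting that your write-up is a complete and self-contained substitute for the citation.
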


\begin{corollary}\label{C:constr+S<=>Zariski}
Let $X$ be a scheme. A subset $E\subseteq X$ is open (resp.\ closed) in the
Zariski topology if and only if $E$ is open (resp.\ closed) in both the
constructible topology and the $S$-topology.
\end{corollary}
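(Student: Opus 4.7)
The plan is to prove the two equivalences separately, handling the closed case via the last statement of Proposition~\ref{P:pro-constr-and-closure} and the open case via the first.

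For the forward direction (both cases), if $E$ is Zariski open then it is in particular ind-constructible, hence open in the constructible topology, and it is stable under generization by~\cite[Prop.~3.9.3]{egaI_NE} (any Zariski-open set is clearly stable under generization), hence open in the $S$-topology. The closed case is symmetric.

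For the reverse direction in the closed case, suppose $E$ is closed in both the constructible and the $S$-topology. Then $E$ is pro-constructible and stable under specialization, so $S(E)=E$. By Proposition~\ref{P:pro-constr-and-closure}, $\overline{E}=S(E)$, so $\overline{E}=E$ and $E$ is Zariski closed. For the open case, suppose $E$ is open in both topologies. Then $E$ is ind-constructible and stable under generization. For any $x\in E$, every point of $\Spec(\sO_{X,x})$ is a generization of $x$ and therefore lies in $E$, so $\Spec(\sO_{X,x})\subseteq E$. By Proposition~\ref{P:pro-constr-and-closure}, this gives $x\in\interior(E)$; since this holds for every $x\in E$, we conclude $E=\interior(E)$ is Zariski open.

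There is no real obstacle: the corollary is essentially a restatement of Proposition~\ref{P:pro-constr-and-closure}, packaged using the observation that stability under generization/specialization is exactly openness/closedness in the $S$-topology. One could equivalently deduce the open case from the closed case (or vice versa) by passing to complements, since a subset is open in the $S$-topology iff its complement is closed, and the same holds in the Zariski and constructible topologies.
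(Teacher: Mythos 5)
Your proof is correct and follows essentially the same route as the paper: the forward direction amounts to observing that both the constructible and $S$-topologies refine the Zariski topology, and the reverse direction is exactly an unpacking of Proposition~\ref{P:pro-constr-and-closure} (the interior characterization for the open case, $\overline{F}=S(F)$ for the closed case). The paper's proof is just a terser version of what you wrote.
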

\begin{proof}
As a closed (resp.\ open) immersion is quasi-compact (resp.\ locally of finite
presentation),
it follows that
the constructible topology is finer than the Zariski topology. That the
$S$-topology is finer than the Zariski topology is obvious. This shows the
``only if'' part. The ``if'' part follows from
Proposition~\pref{P:pro-constr-and-closure}.
\end{proof}

A map of topological spaces $\map{f}{X}{Y}$ is \emph{submersive} or a
submersion if $f$ is surjective and $Y$ has the quotient topology, i.e.,
$E\subseteq Y$ is open (resp.\ closed) if and only if $f^{-1}(E)$ is open
(resp.\ closed). We say that a morphism of schemes $\map{f}{X}{Y}$ is
submersive if the underlying morphism of topological spaces is submersive. We
say that $f$ is \emph{universally submersive} if $\map{f'}{X\times_Y Y'}{Y'}$
is submersive for every morphism of schemes $Y'\to Y$.

The composition of two submersive morphisms is submersive and if the
composition $g\circ f$ of two morphisms is a submersive morphism then so is
$g$. It follows immediately from Corollary~\pref{C:constr+S<=>Zariski} that if
$f$ is submersive in both the constructible and the $S$-topology, then $f$ is
submersive in the Zariski topology.


\begin{proposition}\label{P:submersions-in-cons}
Let $\map{f}{X}{Y}$ be a surjective morphism of schemes. Then $f^{\cons}$ is
submersive in the following cases:
\begin{enumerate}
\item $f$ is quasi-compact.
\item $f$ is locally of finite presentation.
\item $f$ is open.
\end{enumerate}
\end{proposition}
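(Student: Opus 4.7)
My plan is to reduce each case to the standard fact that a continuous surjection of topological spaces which is either closed or open is a quotient map. Since the constructible topology is symmetric under complementation, it suffices to check, for a given subset $E\subseteq Y$, the implication ``$f^{-1}(E)$ open in $|X|^{\cons}$ implies $E$ open in $|Y|^{\cons}$''; the dual closed statement then follows by passing to complements.

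For case~(i), if $f$ is quasi-compact then by Proposition~\pref{P:constructible-topology}(i)--(ii) the map $f^{\cons}$ is continuous and closed. Given $E\subseteq Y$ with $f^{-1}(E)$ closed in $|X|^{\cons}$, the image $f^{\cons}\bigl(f^{-1}(E)\bigr)$ is closed in $|Y|^{\cons}$ and equals $E$ by surjectivity of $f$. Case~(ii) is parallel: if $f$ is locally of finite presentation, Proposition~\pref{P:constructible-topology}(i),~(iii) makes $f^{\cons}$ continuous and open, and the same argument works with ``open'' in place of ``closed''.

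The main obstacle is case~(iii): an open Zariski morphism need not be quasi-compact, so Proposition~\pref{P:constructible-topology}(ii) does not apply to $f$ itself. I would get around this by localizing on both source and target. By Proposition~\pref{P:constructible-topology}(v), openness in $|Y|^{\cons}$ is a Zariski-local property on $Y$, so we may assume $Y$ is affine. Fix an affine open cover $\{V\}$ of $X$; each $f|_V\colon V\to Y$ is then a morphism of affine schemes, hence quasi-compact, and Proposition~\pref{P:constructible-topology}(ii) asserts that $(f|_V)^{\cons}$ is closed. Suppose $f^{-1}(E)$ is open in $|X|^{\cons}$. Applying part~(v) of the same proposition to the open immersion $V\hookrightarrow X$ shows that $V\cap f^{-1}(Y\setminus E)$ is closed in $|V|^{\cons}$, so its image $f(V)\setminus E$ is closed in $|Y|^{\cons}$. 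Since $f$ is Zariski open, $f(V)$ is Zariski, hence constructible, open in $Y$, and
$$
E\cap f(V)\;=\;f(V)\setminus\bigl(f(V)\setminus E\bigr)
$$
is open in $|Y|^{\cons}$. Surjectivity of $f$ gives $\bigcup_V f(V)=Y$, so $E=\bigcup_V\bigl(E\cap f(V)\bigr)$ is a union of constructible opens, and is therefore open in $|Y|^{\cons}$, as required.
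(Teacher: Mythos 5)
Your proof is correct. Cases (i) and (ii) coincide with the paper's: $f^{\cons}$ is a continuous surjection that is closed, resp.\ open, by Proposition~\pref{P:constructible-topology}, hence a quotient map. In case (iii) both arguments reduce to $Y$ affine and exploit that the restriction of $f$ to an affine open of $X$ is quasi-compact, but the execution differs: the paper uses quasi-compactness of the affine $Y$ to pick finitely many affine opens of $X$ whose union $U$ is a quasi-compact open with $f|_U$ still surjective, and then simply applies case (i) to $f|_U$ (together with Proposition~\pref{P:constructible-topology}~\ref{PI:CT-open} to identify $|U|^{\cons}$ with the subspace topology); you instead keep the entire affine cover $\{V\}$ and glue, which requires the extra observation that each $f(V)$ is Zariski-open and hence open in the constructible topology, so that $E\cap f(V)=f(V)\setminus\bigl(f(V)\setminus E\bigr)$ is constructibly open and $E$ is an arbitrary union of such sets. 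Both routes are sound; the paper's is shorter, while yours trades the finite-subcover extraction for the gluing step. One small remark: your appeal to the constructible topology being ``symmetric under complementation'' is unnecessary (and not literally accurate, since pro-constructible and ind-constructible are distinct notions); passing between the open and the closed formulations of the quotient property works in any topological space simply by taking complements.
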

\begin{proof}
If $f$ is quasi-compact (resp.\ locally of finite presentation) then
$f^{\cons}$ is closed (resp.\ open) by
Proposition~\pref{P:constructible-topology} and it follows that $f^{\cons}$ is
submersive.

Assume that $f$ is open. Taking an open covering, we can assume that $Y$ is
affine.
As $f$ is open there is then a quasi-compact open subset $U\subseteq X$ such
that $f|_U$ is surjective. As $f|_U$ is quasi-compact it follows by part (i)
that $f^{\cons}|_U$ is submersive. In particular, we have that $f^{\cons}$ is
submersive.
\end{proof}

\begin{proposition}\label{P:descent-of-top-props}
Let $\map{f}{X}{Y}$ and $\map{g}{Y'}{Y}$ be morphisms of schemes and
let $\map{f'}{X'}{Y'}$ be the pull-back of $f$ along $g$.
\begin{enumerate}
\item Assume that $g$ is submersive. If $f'$ is open
(resp.\ closed, resp.\ submersive) then so is $f$.
\item Assume that $g$ is universally submersive. Then $f$ has one of the
properties: universally open, universally closed, universally submersive,
separated; if and only if $f'$ has the same property.
\item Assume that $g^{\cons}$ is submersive. Then $f$ is quasi-compact if and
only if $f'$ is quasi-compact.
\end{enumerate}
\end{proposition}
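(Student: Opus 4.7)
\medskip

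\noindent\textbf{Proof plan.} Throughout the argument I fix notation for the cartesian square: write $X' = X\times_Y Y'$ and let $g':X'\to X$ denote the second projection, so that $f\circ g' = g\circ f'$. The key combinatorial identities I will use repeatedly are
\[
f'\bigl(g'^{-1}(U)\bigr) = g^{-1}\bigl(f(U)\bigr) \quad\text{and}\quad f'^{-1}\bigl(g^{-1}(E)\bigr) = g'^{-1}\bigl(f^{-1}(E)\bigr),
\]
valid for any $U\subseteq X$ and $E\subseteq Y$.

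For (i), suppose $f'$ is open and $U\subseteq X$ is open. Then $g'^{-1}(U)$ is open, so $f'(g'^{-1}(U)) = g^{-1}(f(U))$ is open in $Y'$; submersivity of $g$ then gives $f(U)$ open in $Y$. The closed case is identical. For submersivity, surjectivity of $f$ comes for free from $f\circ g' = g\circ f'$ together with surjectivity of $f'$ and $g$. For the topology: if $f^{-1}(E)$ is open, then $g'^{-1}(f^{-1}(E)) = f'^{-1}(g^{-1}(E))$ is open; since $f'$ is submersive, $g^{-1}(E)$ is open in $Y'$; since $g$ is submersive, $E$ is open in $Y$.

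For (ii), the ``only if'' direction is stability under base change, which is standard for each of the four properties. For the converse I will bootstrap from (i) after an auxiliary base change. For any $Y''\to Y$, let $g_{Y''}:Y'\times_Y Y''\to Y''$ be the base change of $g$, which is submersive because $g$ is universally submersive. The pullback of $f_{Y''}:X\times_Y Y''\to Y''$ along $g_{Y''}$ is canonically identified with the pullback of $f'$ along $Y'\times_Y Y''\to Y'$. If $f'$ is universally open (resp.\ closed, submersive), the latter is open (resp.\ closed, submersive), and (i) then gives the corresponding property for $f_{Y''}$; varying $Y''$ yields the universal version for $f$. For separatedness, apply the same argument to the diagonal: $\Delta_{f'}:X'\to X'\times_{Y'}X'$ is the pullback of $\Delta_f:X\to X\times_Y X$ along the base change $(X\times_Y X)\times_Y Y'\to X\times_Y X$ of $g$, which is submersive. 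Since $\Delta_f$ is always an immersion, $f$ is separated iff $\Delta_f(X)$ is closed; applying (i) in its closed-map form transfers closedness of $\Delta_{f'}(X')$ to closedness of $\Delta_f(X)$.

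For (iii), I will run exactly the argument of (i) in the constructible topology rather than the Zariski topology. The hypothesis $g^{\cons}$ submersive together with Proposition~\pref{P:constructible-topology}(i) makes $f^{\cons}$, $g^{\cons}$, $f'^{\cons}$, $g'^{\cons}$ a commutative square of continuous maps of topological spaces, and the set-theoretic identities above still hold. The translation between the properties is supplied by Proposition~\pref{P:constructible-topology}(ii): a morphism of schemes is quasi-compact if and only if the associated map in the constructible topology is closed (one direction is \loccit, the other is obtained by checking on an affine open $V\subseteq Y$, whose complement is pro-constructible). Thus if $f'$ is quasi-compact, $f'^{\cons}$ is closed; the argument of (i) applied in the constructible topology then forces $f^{\cons}$ to be closed, hence $f$ quasi-compact. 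The reverse direction is base change. The main technical point to get right here will be the equivalence between quasi-compactness of $f$ and closedness of $f^{\cons}$, which is where I expect the bulk of the bookkeeping to lie.
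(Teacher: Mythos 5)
Your treatment of (i) and (ii) is correct and follows essentially the same route as the paper: the set-theoretic identity $f'(g'^{-1}(U))=g^{-1}(f(U))$ handles the open and closed cases, the factorization $f\circ g'=g\circ f'$ handles submersivity, and separatedness is reduced to the closed-map case via the diagonal (your observation that an immersion which is a closed map is a closed immersion even lets you avoid the citation the paper uses there).

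Part (iii), however, contains a genuine gap at exactly the step you flag as "bookkeeping". You assert that a morphism of schemes is quasi-compact if and only if the associated map in the constructible topology is closed. The converse direction of this equivalence is false: take $X=\coprod_{n\in\mathbb{N}}\Spec(k)$ and $Y=\Spec(k)$. The structure map is not quasi-compact, yet $f^{\cons}$ is trivially closed because $Y^{\cons}$ is a one-point space. The correct criterion, and the one the paper invokes via \cite[Prop.~7.2.13~(v)]{egaI_NE}, is that $f$ is quasi-compact if and only if $f^{\cons}$ is \emph{proper}, i.e.\ closed with quasi-compact fibers. So after transferring closedness from $f'^{\cons}$ to $f^{\cons}$ exactly as you do, you must still verify that the fibers of $f^{\cons}$ are quasi-compact; this is where the full strength of the hypothesis that $f'$ is quasi-compact (rather than merely that $f'^{\cons}$ is closed) enters. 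The missing argument is short: each fiber $X'_{y'}$ is quasi-compact since quasi-compactness is stable under base change, hence $(X'_{y'})^{\cons}$ is quasi-compact by \cite[Prop.~7.2.13~(i)]{egaI_NE}, and since $g$ is surjective every fiber $(X^{\cons})_y$ is the continuous image of some such space and therefore quasi-compact. With that supplement your argument for (iii) closes up and coincides with the paper's.
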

\begin{proof}
(i) Assume that $f'$ is open (resp.\ closed) and let $Z\subseteq X$ be an open
(resp.\ closed) subset. Then $g^{-1}(f(Z))=f'(g'^{-1}(Z))$ is open
(resp.\ closed) and thus so is $f(Z)$ if $g$ is submersive. If $f'$ is
submersive then so is $g\circ f'=f\circ g'$ which shows that $f$ is
submersive.
The first three properties of (ii) follow easily from (i) and if $f$ is
separated then so is $f'$. If $f'$ is separated, then $\Delta_{X'/Y'}$ is
universally closed and it follows that $\Delta_{X/Y}$ is universally closed and
hence a closed immersion~\cite[Cor.~18.12.6]{egaIV}.

(iii) If $f$ is quasi-compact then $f'$ is quasi-compact. Assume that $f'$ is
quasi-compact and that $g^{\cons}$ is submersive. Then $f'^{\cons}$ is closed
by Proposition~\pref{P:constructible-topology} and it follows as in (i) that
$f^{\cons}$ is closed. Moreover, the fibers of $f$ are quasi-compact as the
fibers of $f'$ are quasi-compact. If $y\in Y$ then $(X_y)^{\cons}$ is
quasi-compact~\cite[Prop.~7.2.13 (i)]{egaI_NE} and so is the image
$(X^{\cons})_y$ of $(X_y)^{\cons}\to X^{\cons}$. Thus $f^{\cons}$ is proper
since it is closed with quasi-compact fibers, and it follows that $f$ is
quasi-compact by~\cite[Prop.~7.2.13 (v)]{egaI_NE}.
\end{proof}

\begin{remark}
Let us
indicate how to extend the results of this section from schemes to algebraic
spaces. Recall that associated to every algebraic space $X$ is an underlying
topological space $|X|$
and that a morphism $f$ of algebraic spaces induces a continuous map $|f|$ on
the underlying spaces~\cite[II.6]{knutson_alg_spaces}. By definition, a
morphism of algebraic spaces $\map{f}{X}{Y}$ is \emph{submersive} if $|f|$ is
submersive. If $U$ is a scheme and $\map{f}{U}{X}$ is \etale{} and surjective,
then $|f|$ is submersive. A morphism of algebraic spaces $\map{f}{X}{Y}$ is
\emph{universally submersive} if $\map{f'}{X\times_Y Y'}{Y'}$ is submersive for
every morphism $Y'\to Y$ of algebraic spaces. For $f$ to be universally
submersive it is sufficient that $f'$ is submersive for every (affine) scheme
$Y'$.

The constructible topology (resp.\ $S$-topology) on the set $|X|$ is
the quotient topology of the corresponding topology
on $|U|$ for an \etale{} presentation $U\to X$. This definition is readily
seen to be independent on the choice of presentation. 
The results \ref{P:char-of-pro-constr}--\ref{P:descent-of-top-props} then
follow by taking \etale{} presentations.

It is also possible to define the constructible topology and the $S$-topology
for a (quasi-separated) algebraic space intrinsically. In fact, the notions of
specializations, constructible, pro-constructible and ind-constructible sets
are meaningful for any topological space. To see that these two definitions
agree, it is enough to show that if $U$ is a scheme and $\map{f}{U}{X}$ is an
\etale{} presentation, then $f$ is submersive in both the constructible
topology and the $S$-topology. That $f$ is submersive in the $S$-topology
follows from~\cite[Cor.~5.7.1]{laumon}. That $f$ is submersive in the
constructible topology follows from Chevalley's
Theorem~\cite[Thm.~5.9.4]{laumon}
%
%
but its proof in \loccit{}\ uses~\cite[Cor.~5.9.2]{laumon} which appears
to have an incorrect proof as only locally closed subsets and not finite unions
of such are considered. We now give a different proof:

As the question is local, we can assume that $X$ is quasi-compact (and
quasi-separated).
Then $X$ has a finite stratification into locally closed constructible
subspaces $X_i$ such that the $X_i$'s are quasi-compact and quasi-separated
\emph{schemes}~\cite[Prop.~5.7.6]{raynaud-gruson}. The induced morphism
$\coprod_i X_i\to X$ is a universal homeomorphism in the constructible
topology and it follows that $f^{\cons}$ is submersive from the usual result
for schemes.
\end{remark}

\end{section}


\begin{section}{Subtrusive morphisms}\label{S:subtrusive}

In this section, we define and give examples of subtrusive morphisms. We then
give two valuative criteria and show that for noetherian schemes every
universally submersive morphism is universally subtrusive.

\begin{proposition}\label{P:S-subtrusive}
Let $\map{f}{X}{Y}$ be a morphism of schemes. The following are equivalent.
\begin{enumerate}
\item Every ordered pair $y\leq y'$ of points in $Y$ lifts to an
ordered pair of points $x\leq x'$ in $X$.
\label{PI:SS-pairs}
\item For every point $y\in Y$ we have that
$f(S(f^{-1}(y)))=S(y)$.
\item For every subset $Z\subseteq Y$ we have that
$f(S(f^{-1}(Z)))=S(Z)$.
\item For every pro-constructible subset $Z\subseteq Y$ we have that
$f\left(\overline{f^{-1}(Z)}\right)=\overline{Z}$.
\end{enumerate}
Under these equivalent conditions $f$ is submersive in the $S$-topology.
\end{proposition}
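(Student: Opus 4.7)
The plan is to prove the cycle (i) $\Rightarrow$ (iii) $\Rightarrow$ (ii) $\Rightarrow$ (i), then (iii) $\Leftrightarrow$ (iv), and finally deduce $S$-submersiveness. Throughout I will use that $f$ is Zariski continuous and so preserves the specialization order; in particular $f(S(f^{-1}(Z))) \subseteq S(Z)$ for every $Z \subseteq Y$, giving the easy inclusion in (ii) and (iii).

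For (i) $\Rightarrow$ (iii), take $y' \in S(Z)$, so $y' \leq y$ for some $y \in Z$; lift via (i) to a pair $x' \leq x$ in $X$ with $f(x) = y$ and $f(x') = y'$, whence $x' \in S(f^{-1}(Z))$ and $y' \in f(S(f^{-1}(Z)))$. The implication (iii) $\Rightarrow$ (ii) is the special case $Z = \{y\}$. For (ii) $\Rightarrow$ (i), given $y \leq y'$ in $Y$ we have $y \in S(\{y'\}) = f(S(f^{-1}(y')))$, which directly produces the desired lift $x \leq x'$ with $f(x) = y$ and $f(x') = y'$.

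The equivalence (iii) $\Leftrightarrow$ (iv) is obtained as follows. Since $f^{\cons}$ is continuous by Proposition~\pref{P:constructible-topology}(i), preimages of pro-constructible sets are pro-constructible; combined with Proposition~\pref{P:pro-constr-and-closure} this gives $\overline{Z} = S(Z)$ and $\overline{f^{-1}(Z)} = S(f^{-1}(Z))$ for pro-constructible $Z$, so (iv) is exactly the restriction of (iii) to pro-constructible $Z$. For the other direction, singletons $\{y\}$ are pro-constructible (the image of $\Spec \kappa(y) \to Y$, cf.\ Proposition~\pref{P:char-of-pro-constr} applied on an affine neighborhood together with the local definition of pro-constructibility), so applying (iv) to $\{y\}$ yields (ii) and hence (iii). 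For the final clause, (i) with $y = y'$ gives surjectivity of $f$, and Zariski continuity gives $S$-continuity; conversely, if $f^{-1}(E)$ is $S$-closed and $y' \leq y$ with $y \in E$, lifting the pair via (i) to $x' \leq x$ puts $x$ and hence $x'$ in $f^{-1}(E)$, so $y' = f(x') \in E$. The only mild subtlety is the pro-constructibility of preimages and of singletons, both of which are standard consequences of the cited propositions; the rest is essentially formal manipulation of the specialization order.
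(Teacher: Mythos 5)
Your proof is correct and follows essentially the same route as the paper's: the equivalence of (i)--(iii) by direct manipulation of the specialization order, the passage to (iv) via Proposition~\pref{P:pro-constr-and-closure} together with the pro-constructibility of points and of preimages of pro-constructible sets, and the final submersiveness claim from the lifting property. The only difference is the (immaterial) ordering of the implications and that you spell out details the paper leaves as ``clear.''
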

\begin{proof}
It is clear that (i)$\iff$(ii). As specialization commutes with unions, it is
clear that (ii)$\iff$(iii). By Proposition~\pref{P:pro-constr-and-closure} it
follows that (iii)$\implies$(iv). As every point of $Y$ is pro-constructible we
have that (iv)$\implies$(ii). Finally it is clear that $f$ is submersive in the
$S$\nobreakdash-topology when (iii) is satisfied.
\end{proof}

\begin{definition}\label{D:subtrusive}
We call a morphism of schemes \emph{$S$-subtrusive} if the equivalent
conditions of Proposition~\pref{P:S-subtrusive} are satisfied.
We say that a morphism is \emph{subtrusive} if it is $S$-subtrusive and
submersive in the constructible topology.
We say that $\map{f}{X}{Y}$ is \emph{universally subtrusive} if
$\map{f'}{X\times_Y Y'}{Y'}$ is subtrusive for every morphism $Y'\to Y$.
\end{definition}

\begin{remark}
Picavet only considers spectral spaces and spectral morphisms, i.e.,
topological spaces that are the spectra of affine rings and quasi-compact
morphisms of such spaces~\cite{hochster_prime_ideal_structure}.
In particular, surjective spectral morphisms are submersive in the constructible
topology. Taking this into account, Picavet's definition of subtrusive
morphisms~\cite[D\'ef.~2]{picavet_submersion} agrees with
Definition~\pref{D:subtrusive}. Instead of ``$S$-subtrusive'' Picavet uses
either ``strongly subtrusive in the $S$-topology'' or ``strongly submersive
of the first order in the $S$-topology''.

Note that the non-trivial results in this paper deal with
\emph{quasi-compact} subtrusive morphism. Nevertheless, we have chosen to give
the general definition of subtrusive morphisms as this clarifies the usage of
the constructible topology.
\end{remark}


Every subtrusive morphism is submersive by
Corollary~\pref{C:constr+S<=>Zariski}. It is furthermore clear that the
composition of two subtrusive morphisms is subtrusive and that if the
composition $g\circ f$ of two morphisms is subtrusive then so is $g$.

\begin{proposition}\label{P:locality-of-subtrusiveness}
Let $\map{f}{X}{Y}$ be a morphism of schemes.
\begin{enumerate}
\item $f$ is submersive (resp.\ subtrusive) if and only if $f_\red$ is
submersive (resp.\ subtrusive).
\item Let $Y=\bigcup Y_i$ be an open covering. Then $f$ is submersive
(resp.\ subtrusive) if and only if $f|_{Y_i}$ is submersive (resp.\ subtrusive)
for every $i$.
\item Let $W$ be a locally closed subscheme of $Y$. If $f$ is submersive
(resp.\ subtrusive) then so is $f|_W$.
\end{enumerate}
\end{proposition}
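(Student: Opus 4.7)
The plan is to handle each of the three parts by reducing the claim to elementary manipulations in the Zariski, constructible, and $S$-topologies, leaning on the results established earlier in this section. The overall strategy is to identify a ``convenient'' morphism (a universal homeomorphism, an open surjection, or the inclusion of a locally closed subscheme) and transfer submersiveness across it using Proposition~\pref{P:descent-of-top-props} together with the behaviour of the three topologies under such morphisms.

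For (i), I would observe that the nil-immersions $X_\red \inj X$ and $Y_\red \inj Y$ are universal homeomorphisms. By Proposition~\pref{P:constructible-topology}~(iv) they also induce homeomorphisms on the underlying constructible spaces, and they preserve the specialization ordering, so they are homeomorphisms in the $S$-topology as well. Hence the underlying continuous maps of $f$ and $f_\red$ coincide in each of the three topologies, and the criteria defining submersiveness and subtrusiveness match tautologically.

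For (ii), I would consider the morphism $g \colon \coprod_i Y_i \to Y$ associated to the cover. Then $g$ is surjective and open, hence universally submersive, submersive in the constructible topology by Proposition~\pref{P:submersions-in-cons}, and submersive in the $S$-topology since an open surjection is generizing. The base change of $f$ along $g$ is $f' = \coprod_i f|_{Y_i}$ with domain $\coprod_i f^{-1}(Y_i)$. Proposition~\pref{P:descent-of-top-props}~(i), applied in each topology, gives the implication: if every $f|_{Y_i}$ is submersive (resp.\ subtrusive), then so is $f$. For the converse, given $f$ submersive and a subset $E \subseteq Y_i$ with $f^{-1}(E)$ open in $X_i := f^{-1}(Y_i)$, since $X_i$ is open in $X$ the preimage $f^{-1}(E)$ is open in $X$; hence $E$ is open in $Y$ and thus in $Y_i$. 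The closed case follows by complementation, and the constructible and $S$-topology versions are handled identically using Proposition~\pref{P:constructible-topology}~(v) and the fact that specializations in an open subscheme are inherited from the ambient scheme.

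For (iii), I would write $W = U \cap Z$ with $U \subseteq Y$ open and $Z \subseteq Y$ closed, and handle the two cases separately. If $W = U$ is open, restriction preserves submersiveness (resp.\ subtrusiveness) by the same direct check as in the converse of (ii). If $W$ is closed in $Y$ and $E \subseteq W$ satisfies $(f|_W)^{-1}(E) = f^{-1}(E)$ closed in $f^{-1}(W)$, then since $f^{-1}(W)$ is closed in $X$ the preimage $f^{-1}(E)$ is closed in $X$; by $f$ submersive, $E$ is closed in $Y$, hence in $W$. The open case follows by complementation, and the constructible statement is proved the same way using Proposition~\pref{P:constructible-topology}~(iv),(vi). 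For $S$-subtrusiveness, a specialization pair $y \leq y'$ in $W$ is also such a pair in $Y$; lifting via $f$ gives $x \leq x'$ in $X$ with $f(x), f(x') \in W$, and the (local) closedness of $f^{-1}(W)$ ensures that $\overline{\{x'\}}$ meets $f^{-1}(W)$ in exactly the closure of $\{x'\}$ there, so the lifted pair lies in $f^{-1}(W)$ as required by Proposition~\pref{P:S-subtrusive}~(i). The main obstacle is purely bookkeeping: invoking the correct inheritance statement from Proposition~\pref{P:constructible-topology}~(iv)--(vi) at each step and checking the $S$-topology claim separately from the constructible one. Once this is organised, every step reduces to the definition of submersiveness or the lifting characterisation of $S$-subtrusiveness.
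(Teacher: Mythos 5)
Your proposal is correct and follows essentially the same route as the paper: reduce each assertion to the three separate statements ($f$ submersive in the Zariski topology, $f^{\cons}$ submersive, $f$ $S$-subtrusive) and verify each by elementary topology, using Proposition~\pref{P:constructible-topology}~(iv)--(vi) for the constructible topology and the pair-lifting characterization of Proposition~\pref{P:S-subtrusive}~(i) for the $S$-topology. The only nitpick is that in (ii) you cannot literally apply Proposition~\pref{P:descent-of-top-props}~(i) ``in the $S$-topology'' to get $S$-subtrusiveness (which is stronger than $S$-submersiveness), but the pair-lifting argument you invoke elsewhere handles that case immediately.
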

\begin{proof}
It is enough to verify the corresponding statements for: $f$ submersive,
$f^{\cons}$ submersive and $f$ $S$-subtrusive. This
follows easily from the topological definition of submersive,
Proposition~\pref{P:constructible-topology} and the characterization of
$S$-subtrusive given in~\ref{PI:SS-pairs} of Proposition~\pref{P:S-subtrusive}.
\end{proof}

\begin{remark}\label{R:subtrusive-examples}
Let $\map{f}{X}{Y}$ be a surjective morphism of schemes. Then $f$ is
universally subtrusive in the following cases:
\begin{enumerate}
\item[(1)] $f$ is universally specializing and $f^{\cons}$ is universally
submersive.
\item[(1a)] $f$ is proper.
\item[(1b)] $f$ is integral.
\item[(1c)] $f$ is essentially proper, i.e., universally specializing, separated
and locally of finite presentation (defined in~\cite[Rem.~18.10.20]{egaIV} for
$Y$ locally noetherian).
\item[(2)] $f$ is universally generizing and $f^{\cons}$ is universally
submersive.
\item[(2a)] $f$ is fpqc (faithfully flat and quasi-compact).
\item[(2b)] $f$ is fppf (faithfully flat and locally of finite presentation).
\item[(2c)] $f$ is universally open.
\end{enumerate}
Recall that quasi-compact morphisms, morphisms locally of finite presentation
and universally open morphisms are universally submersive in the constructible
topology, cf.~Proposition~\pref{P:submersions-in-cons}. Thus (1a)--(1c) are
special cases of (1) and as flat and open morphisms are generizing it is clear
that (2a)--(2c) are special cases of (2). That $f$ is universally subtrusive in
(1) and (2) is obvious.
\end{remark}

\begin{remark}\label{R:val-rings-and-flatness}
Let $V$ be a valuation ring. Then every finitely generated ideal in $V$ is
principal and a $V$-module is flat if and only if it is torsion
free~\cite[Ch.~VI, \S3, No.~6, Lem.~1]{bourbaki_alg_comm_5_6}.
In particular, if $B$ is a $V$-algebra and an integral domain, then $B$ is flat
if and only if $V\to B$ is injective.
\end{remark}

\begin{proposition}[{\cite[Part~II, Prop.~1.3.1]{raynaud-gruson},
\cite[Prop.~16]{picavet_submersion}}]
\label{P:subtrusive-over-valuation-ring}
Let $V$ be a valuation ring and $\map{f}{X}{\Spec(V)}$ a morphism of
schemes. The following are equivalent:
\begin{enumerate}
\item $f$ is universally subtrusive.\label{PI:st-vr:univ-subt}
\item $f$ is subtrusive.\label{PI:st-vr:subt}
\item $f$ is $S$-subtrusive.\label{PI:st-vr:s-subt}
\item The closure of the generic fiber $X\times_V \Spec(K)$ in $X$
surjects onto~$V$.\label{PI:st-vr:closure}
\item The pair $\im\leq (0)$ in $\Spec(V)$ lifts to $x\leq x'$
in $X$.\label{PI:st-vr:lifting}
\item There is a valuation ring $W$ and a morphism $\Spec(W)\to X$ such that
the composition $\Spec(W)\to X\to \Spec(V)$ is surjective.
\label{PI:st-vr:lifting-to-val-ring}
\item Any chain of points in $\Spec(V)$ lifts to a chain of points in
$X$.\label{PI:st-vr:chain-lifting}
\item There is a closed subscheme $Z\inj X$ such that $f|_Z$ is faithfully
flat.\label{PI:st-vr:flat}
\end{enumerate}
If $V$ is a discrete valuation ring, then the above conditions are equivalent
with the following:
\begin{enumerate}\setcounter{enumi}{8}
\item $f$ is submersive.\label{PI:st-vr:subm}
\end{enumerate}
\end{proposition}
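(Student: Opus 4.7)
The plan is to establish the equivalence via a cycle of implications. The trivial ones are: (i)$\Rightarrow$(ii)$\Rightarrow$(iii) by definition; (iii)$\Rightarrow$(v) by applying condition (i) of Proposition~\pref{P:S-subtrusive} to the pair $\im\leq (0)$; (iv)$\iff$(v), since the closure of the generic fiber is the union of specialization closures of points above $(0)$; and (vii)$\Rightarrow$(v), as a single comparison is a chain. The real content lies in the chain (v)$\Rightarrow$(viii)$\Rightarrow$(vi)$\Rightarrow$(i), together with the auxiliary (vi)$\Rightarrow$(vii).

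For (v)$\Rightarrow$(viii), let $x\leq x'$ lift $\im\leq (0)$ and take $Z=\overline{\{x'\}}$ with reduced scheme structure. Then $Z$ is integral with generic point $x'$ above $(0)$, so every stalk $\sO_{Z,z}$ embeds into the function field $K(Z)\supseteq K$, and hence $V\hookrightarrow\sO_{Z,z}$ is injective. By Remark~\pref{R:val-rings-and-flatness}, $Z$ is $V$-flat. Since $x\in Z$ lies over $\im$, going-down applied to $\sO_{Z,x}$ produces, for every prime $\ip\in\Spec(V)$, a point of $Z$ above $\ip$; hence $Z\to\Spec(V)$ is surjective, thus faithfully flat. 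For (viii)$\Rightarrow$(vi), pick $z\in Z$ above $(0)$, let $\ip$ be a minimal prime of $\sO_{Z,z}$, and choose a valuation ring $W$ of $\mathrm{Frac}(\sO_{Z,z}/\ip)$ dominating $\sO_{Z,z}/\ip$; then $W\cap K=V$ (a valuation ring admits no proper over-ring in its own fraction field dominating it), and $\Spec(W)\to\Spec(V)$ is surjective since convex subgroups of the value group $\Gamma_V$ lift to convex subgroups of $\Gamma_W$ via convex hulls. For (vi)$\Rightarrow$(vii), $\Spec(W)$ is totally ordered and $\Spec(W)\to\Spec(V)$ is surjective and order-preserving, so chains lift inductively. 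For (vi)$\Rightarrow$(i), the morphism $\Spec(W)\to\Spec(V)$ is affine and faithfully flat, hence fpqc and universally subtrusive by Remark~\pref{R:subtrusive-examples}; it factors through $f$, and since the second factor of a universally subtrusive composition is universally subtrusive, we conclude (i).

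For the DVR case, (v)$\Rightarrow$(ix) is immediate. Conversely, suppose $f$ is submersive and that (v) fails. Then for every $x\in X_\im$, no generization of $x$ lies over $(0)$, i.e., $\sO_{X,x}[\pi^{-1}]=0$ where $\pi$ is a uniformizer. Hence $\pi$ is locally nilpotent near $x$, yielding an open neighborhood of $x$ contained in $X_\im$. So $X_\im$ is open in $X$, but $\{\im\}$ is not open in $\Spec(V)$, contradicting submersivity. The main obstacle in the plan is the surjectivity step in (viii)$\Rightarrow$(vi): for a valuation ring with many intermediate primes, one must ensure the chosen $W$ satisfies $\Spec(W)\to\Spec(V)$ surjective and not merely that $W$ dominates $V$, which requires the convex-hull analysis of the value group extension $\Gamma_V\hookrightarrow\Gamma_W$.
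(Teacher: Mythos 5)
Your overall architecture is essentially the paper's: the cycle (i)$\Rightarrow$(ii)$\Rightarrow$(iii)$\Rightarrow$(v)$\Rightarrow$(viii)$\Rightarrow$(vi)$\Rightarrow$(i) with (iv) and (vii) attached, resting on the same two inputs the paper uses, namely that $Z=\overline{\{x'\}}$ is a torsion-free domain over $V$ and hence flat (Remark~\pref{R:val-rings-and-flatness}), and that an fpqc morphism through which $f$ factors forces $f$ to be universally subtrusive. Your steps (v)$\Rightarrow$(viii), (vi)$\Rightarrow$(i), (vi)$\Rightarrow$(vii) and the DVR implication (ix)$\Rightarrow$(v) are all correct.

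The step (viii)$\Rightarrow$(vi) fails as written, and it is precisely the step you flag as the crux. You pick $z\in Z$ above $(0)$. Then the maximal ideal of $\sO_{Z,z}$ contracts to $(0)$ in $V$, so every nonzero element of $V$ is a unit in $\sO_{Z,z}$; thus $\sO_{Z,z}/\ip$ is a $K$-algebra, any valuation ring $W$ dominating it contains $K$, the claim $W\cap K=V$ is false (it equals $K$), and the image of $\Spec(W)\to\Spec(V)$ is the single point $(0)$ --- the opposite of surjective. The convex-hull analysis of $\Gamma_V\hookrightarrow\Gamma_W$ is sound in itself but inapplicable here, since it presupposes that $W$ dominates $V$, i.e.\ that the closed point of $\Spec(W)$ lies over $\im$. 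The repair is to choose $z\in Z$ over the \emph{closed} point $\im$ (possible since $f|_Z$ is surjective); a minimal prime $\ip$ of $\sO_{Z,z}$ then contracts to $(0)$ by torsion-freeness, $\sO_{Z,z}/\ip$ is a local domain dominating $V$, and a valuation ring $W$ dominating it has its closed point over $\im$. At that point the value-group argument is unnecessary: $V\to W$ is an injection into a domain, hence flat by Remark~\pref{R:val-rings-and-flatness}, hence generizing, and an image that is stable under generization and contains $\im$ is all of $\Spec(V)$. This is exactly how the paper obtains surjectivity (it builds $W$ dominating $\sO_{Z,x}$ with $x$ over $\im$ directly from condition (v)). A smaller point: your one-line justification of (iv)$\Leftrightarrow$(v) needs $\overline{X\times_V\Spec(K)}=S(X\times_V\Spec(K))$, which requires the pro-constructibility of the generic fiber via Proposition~\pref{P:pro-constr-and-closure}, and the direction (v)$\Rightarrow$(iv) additionally needs the going-down argument to reach the intermediate primes of $V$; this is recoverable from your (v)$\Rightarrow$(viii), but not from the sentence as stated.
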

\begin{proof}
It is clear that \ref{PI:st-vr:univ-subt}$\implies$\ref{PI:st-vr:subt}%
$\implies$\ref{PI:st-vr:s-subt}
$\implies$\ref{PI:st-vr:closure}$\implies$\ref{PI:st-vr:lifting} and that
\ref{PI:st-vr:lifting-to-val-ring}$\implies$\ref{PI:st-vr:chain-lifting}%
$\implies$\ref{PI:st-vr:lifting}. We will now show that~\ref{PI:st-vr:lifting}
implies~\ref{PI:st-vr:lifting-to-val-ring} and~\ref{PI:st-vr:flat}.
We let $Z=\overline{\{x'\}}$ which is
an integral closed subscheme of $X$ dominating $\Spec(V)$. We let $W$ be a
valuation ring dominating $\sO_{Z,x}$. Then both $\Spec(W)\to\Spec(V)$ and
$Z\to\Spec(V)$ are flat by Remark~\pref{R:val-rings-and-flatness}. As
the images in $\Spec(V)$ of $\Spec(W)$ and $Z$ contain the closed point,
we have that $\Spec(W)\to\Spec(V)$ and $Z\to\Spec(V)$ are surjective.

If~\ref{PI:st-vr:flat} is satisfied then we let $x\in Z$ be a point over the
closed point of $\Spec(V)$. The morphism $\Spec(\sO_{Z,x})\to Z\inj X\to
\Spec(V)$ is faithfully flat and quasi-compact and hence universally subtrusive
by case (2a) in Remark~\pref{R:subtrusive-examples}. In particular, we have
that $X\to\Spec(V)$ is universally subtrusive.

Finally~\ref{PI:st-vr:subt} always implies~\ref{PI:st-vr:subm} and if $V$ is a
discrete valuation ring, then~\ref{PI:st-vr:subm}
implies~\ref{PI:st-vr:closure}.
\end{proof}

In the proof of the following theorem we use
Corollary~\pref{C:univ-submersive-An}. This corollary is independent
of Theorem~\pref{T:submersive=subtrusive-if-noeth} as the results of
\S\S\ref{N:limit}--\ref{C:univ-submersive-An} only uses the basic properties
\S\S\ref{P:S-subtrusive}--\ref{P:locality-of-subtrusiveness} of subtrusive
morphisms.

\begin{theorem}[{\cite[Thm.~29, Thm.~37]{picavet_submersion}}]
\label{T:valuative_criterion_for_subtrusions}
\label{T:submersive=subtrusive-if-noeth}
Let $\map{f}{X}{Y}$ be a morphism such that $f^{\cons}$ is universally
submersive (e.g.\ $f$ quasi-compact).
\begin{enumerate}
\item $f$ is universally subtrusive if and only if, for any valuation ring $V$
and morphism $Y'\to Y$ with $Y'=\Spec(V)$, the pull-back $\map{f'}{X'}{Y'}$ is
subtrusive.\label{I:VC-subt}
\item $f$ is universally submersive if and only if, for any valuation ring $V$
and morphism $Y'\to Y$ with $Y'=\Spec(V)$, the pull-back $\map{f'}{X'}{Y'}$ is
submersive.\label{I:VC-subm}
\end{enumerate}
If $Y$ is locally noetherian then it is enough to consider discrete valuation
rings in~\ref{I:VC-subt} and~\ref{I:VC-subm}, and $f$ is universally subtrusive
if and only if $f$ is universally submersive.
\end{theorem}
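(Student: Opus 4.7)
The converses in (i) and (ii) are immediate, since the pullback hypothesis is a special case of universal subtrusiveness (resp.\ submersiveness). For the forward directions, the strategy is to separate the constructible and the $S$-topology content: the hypothesis that $f^{\cons}$ is universally submersive automatically propagates to $(f')^{\cons}$ submersive on every pullback $f'\colon X' \to Y'$, so only the $S$-topology aspect needs to be verified, and it is verified by transporting the problem to $\Spec(V)$ via a valuation ring dominating an appropriate local ring in $Y'$.

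For (i), given any $Y'\to Y$, to show that $f'\colon X'\to Y'$ is subtrusive it suffices by Definition~\pref{D:subtrusive} and~\ref{PI:SS-pairs} of Proposition~\pref{P:S-subtrusive} to lift every pair $y\le y'$ in $Y'$. Choose a valuation ring $V$ of $\kappa(y')$ dominating $\sO_{\overline{\{y'\}},y}$; the induced morphism $\phi\colon\Spec(V)\to Y'$ sends $(0)\mapsto y'$ and $\im\mapsto y$. Applying the hypothesis to the composite $\Spec(V)\to Y'\to Y$ gives that $X\times_Y\Spec(V)\to\Spec(V)$ is subtrusive, hence $S$-subtrusive, so the chain $\im\le(0)$ lifts to a chain in the pullback. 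Projecting along the natural map $X\times_Y\Spec(V)\to X'$ yields the desired lift of $y\le y'$.

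For (ii), by Corollary~\pref{C:constr+S<=>Zariski} and the constructible hypothesis it suffices to verify: whenever $(f')^{-1}(E)$ is closed in $X'$, $y'\in E$ and $y\le y'$ in $Y'$, then $y\in E$. Take $\phi\colon\Spec(V)\to Y'$ as in (i) and set $Z=X\times_Y\Spec(V)$ with projections $g\colon Z\to\Spec(V)$ and $h\colon Z\to X'$. By hypothesis $g$ is submersive, and $g^{-1}(\phi^{-1}(E))=h^{-1}((f')^{-1}(E))$ is closed in $Z$ by continuity of $h$. The quotient property of $g$ makes $\phi^{-1}(E)$ closed in $\Spec(V)$; since $(0)\in\phi^{-1}(E)$ and $\overline{\{(0)\}}=\Spec(V)$, we obtain $\im\in\phi^{-1}(E)$ and hence $y=\phi(\im)\in E$. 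Surjectivity of $f'$ follows analogously by applying the hypothesis with $V=\kappa(y')$ (a field being trivially a valuation ring).

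For the locally noetherian conclusion, Proposition~\pref{P:subtrusive-over-valuation-ring} shows that subtrusive and submersive coincide on pullbacks over the spectrum of a DVR, so the DVR versions of (i) and (ii) impose the same condition on $f$; once DVR-sufficiency is established, this coincidence immediately yields the final equivalence ``universally subtrusive $\iff$ universally submersive''. DVR-sufficiency itself rests on Krull's theorem, that every noetherian local domain is dominated by a DVR inside its fraction field: given $\Spec(V)\to Y$ with images $y\le y'$, one finds a DVR $R\subseteq\kappa(y')$ dominating $\sO_{\overline{\{y'\}},y}$ and hence a morphism $\Spec(R)\to Y$ lifting $y\le y'$. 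The DVR hypothesis makes the pullback over $\Spec(R)$ subtrusive, and one transports the lifted chain back to $X\times_Y\Spec(V)$ over the common image chain in $Y$ to obtain the needed $S$-subtrusiveness. This descent comparison is the chief technical obstacle, handled as in~\cite[Thm.~29, Thm.~37]{picavet_submersion}.
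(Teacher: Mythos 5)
Your handling of the two valuative criteria over general valuation rings is correct and essentially coincides with the paper's proof: necessity is trivial; for \ref{I:VC-subt} you lift an ordered pair $y\leq y'$ of $Y'$ to $\Spec(V)$ for a valuation ring $V$ of $\kappa(y')$ dominating $\sO_{\overline{\{y'\}},y}$ and use $X\times_Y\Spec(V)=X'\times_{Y'}\Spec(V)$; for \ref{I:VC-subm} you combine pro-constructibility of the subset (coming from submersiveness of $f^{\cons}$) with stability under specialization tested over valuation rings, and conclude by Corollary~\pref{C:constr+S<=>Zariski}. The deduction of ``universally subtrusive $\iff$ universally submersive'' from the coincidence of the two notions over a discrete valuation ring (Proposition~\pref{P:subtrusive-over-valuation-ring}) is also the paper's argument.

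The gap is in the reduction to discrete valuation rings when $Y$ is locally noetherian. You propose to pass from the DVR hypothesis to the valuation-ring hypothesis directly: given $\Spec(V)\to Y$ with images $y\leq y'$, find a DVR $R\subseteq\kappa(y')$ with the same images, lift $\im\leq(0)$ into $X\times_Y\Spec(R)$, and then ``transport the lifted chain back to $X\times_Y\Spec(V)$''. That transport is precisely the crux, and you do not supply it: a pair $x\leq x'$ in $X$ above $y\leq y'$ does not by itself produce a pair in $X\times_Y\Spec(V)$ above $\im_V\leq(0)_V$. One would need a valuation ring simultaneously dominating $V$ and a suitable local ring of $\overline{\{x'\}}$, and since $\overline{\{x'\}}\to Y$ need not be closed, the required center need not exist; labelling this step as ``handled as in Picavet'' amounts to citing the statement being proved. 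The paper avoids the issue entirely by a different reduction: Corollary~\pref{C:univ-submersive-An} (whose proof rests on stability of subtrusiveness under inverse limits, Proposition~\pref{P:subtrusiveness-stable-under-limits}) shows that universal subtrusiveness of $f$ need only be tested on the base changes $Y\times_{\Z}\A{n}_{\Z}$, which are locally noetherian. Over a locally noetherian base every ordered pair lifts to a DVR by~\cite[Prop.~7.1.7]{egaII}, so the argument of \ref{I:VC-subt} runs verbatim with $V$ a DVR, and no comparison between two different valuation rings with the same images is ever needed. You should either supply the missing transport argument or adopt this limit-based reduction.
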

\begin{proof}
The necessity of the conditions is clear. To prove the sufficiency
of~\ref{I:VC-subt}, take any base change $Y'\to Y$ and let $y\leq y'$ be an
ordered pair in $Y'$. We have to show that $y\leq y'$ can be lifted to
$X'$. There is a valuation ring $V$ and a morphism $\Spec(V)\to Y'$ such that
the pair $\im\leq (0)$ in $\Spec(V)$ lifts $y\leq y'$, cf.~\cite[Ch.~VI, \S1,
No.~2, Cor.]{bourbaki_alg_comm_5_6}.
As $X'\times_{Y'} \Spec(V)\to \Spec(V)$ is subtrusive by assumption we can then
lift $\im\leq (0)$ to an ordered pair in $X'\times_{Y'} \Spec(V)$ which after
projection onto $X'$ gives a lifting of $y\leq y'$.

To prove the sufficiency of~\ref{I:VC-subm}, assume that $f'$ is submersive
whenever $Y'$ is the spectrum of a valuation ring. It is then enough to show
that $f$ is submersive. Let $W\subseteq Y$ be a subset such that $f^{-1}(W)$ is
closed. Then as $f^\cons$ is submersive it follows that $W$ is
pro-constructible. We will now show that $W$ is closed under specialization.
Then $W$ is closed and it follows that $f$ is submersive. Let $y\leq y'$ be an
ordered pair in $Y$ with $y'\in W$ and choose a valuation ring $V$ with a
morphism $Y'=\Spec(V)\to Y$ such that the pair $\im\leq (0)$ in $Y'$ lifts
$y\leq y'$. Let $W'$ be the inverse image of $W$ along $Y'\to Y$. As
$\map{f'}{X'}{Y'}$ is submersive by assumption, we have that $W'$ is closed. As
$(0)\in W'$ it follows that $\im\in W'$ and thus $y\in W$.

When $Y$ is locally noetherian, it is enough to consider locally noetherian
base changes $Y'\to Y$ in~\ref{I:VC-subt} by
Corollary~\pref{C:univ-submersive-An}.
As every ordered pair in a noetherian scheme can be lifted to a discrete
valuation ring~\cite[Prop.~7.1.7]{egaII}, it is thus enough to consider
discrete valuation rings in~\ref{I:VC-subt}. Every universally subtrusive
morphism is universally submersive. To show the remaining statements, it is
thus enough to show that the valuative criteria in~\ref{I:VC-subt}
and~\ref{I:VC-subm} are equivalent over discrete valuation rings. This is the
equivalence of~\ref{PI:st-vr:subt} and~\ref{PI:st-vr:subm} in
Proposition~\pref{P:subtrusive-over-valuation-ring}.
\end{proof}

\begin{corollary}
Let $\map{f}{X}{Y}$ be a morphism such that $f^{\cons}$ is universally
submersive (e.g.\ $f$ quasi-compact). Then the following are equivalent:
\begin{enumerate}
\item $f$ is universally subtrusive.
\item For every valuation ring $V$ and diagram of solid arrows
$$\xymatrix{
\Spec(V')\ar@{.>}[r]\ar@{.>}[d] & X\ar[d]\\
\Spec(V)\ar[r] & Y
}$$
\end{enumerate}
there is a valuation ring $V'$ and morphisms such that the diagram
becomes commutative and such that the left vertical morphism is surjective.
\end{corollary}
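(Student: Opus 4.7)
The plan is to deduce both directions directly from Theorem~\pref{T:valuative_criterion_for_subtrusions} together with Proposition~\pref{P:subtrusive-over-valuation-ring}; the latter records, in the equivalence of~\ref{PI:st-vr:subt} and~\ref{PI:st-vr:lifting-to-val-ring}, that subtrusiveness of a morphism to the spectrum of a valuation ring $V$ is characterized by the existence of a valuation ring $V'$ together with a morphism $\Spec(V')\to X'$ whose composition with the structure map surjects onto $\Spec(V)$. The corollary is essentially a reformulation of these two results in a purely valuative style.

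For (i)$\Rightarrow$(ii), given any $\Spec(V)\to Y$ I would form the pull-back $f'\colon X'=X\times_Y\Spec(V)\to\Spec(V)$. Since $f$ is universally subtrusive and $f^{\cons}$ is universally submersive, $f'$ is subtrusive, so by \ref{PI:st-vr:lifting-to-val-ring} of Proposition~\pref{P:subtrusive-over-valuation-ring} there is a valuation ring $V'$ and a morphism $\Spec(V')\to X'$ with $\Spec(V')\to\Spec(V)$ surjective. Composing $\Spec(V')\to X'$ with the projection $X'\to X$ fills in the required dotted arrows so that the square commutes and the left vertical map is surjective.

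For (ii)$\Rightarrow$(i), the hypothesis that $f^{\cons}$ is universally submersive is exactly what is needed to invoke Theorem~\pref{T:valuative_criterion_for_subtrusions}\ref{I:VC-subt}, which reduces universal subtrusiveness of $f$ to subtrusiveness of every pull-back $f'\colon X'\to\Spec(V)$ along a morphism $\Spec(V)\to Y$ with $V$ a valuation ring. For such a pull-back, condition (ii) produces a valuation ring $V'$, a morphism $\Spec(V')\to X$ over $Y$, and a surjective morphism $\Spec(V')\to\Spec(V)$. By the universal property of the fibre product the morphism $\Spec(V')\to X$ factors through $X'$, giving precisely the datum demanded by \ref{PI:st-vr:lifting-to-val-ring} of Proposition~\pref{P:subtrusive-over-valuation-ring}; hence $f'$ is subtrusive and $f$ is universally subtrusive.

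There is no real obstacle in this argument: both implications are a direct unwinding of definitions once the two cited results are applied. The only point worth tracking is that the surjection $\Spec(V')\to\Spec(V)$ appearing in the corollary is the same data as the surjective composition appearing in \ref{PI:st-vr:lifting-to-val-ring} of Proposition~\pref{P:subtrusive-over-valuation-ring}, and that the universal property of the fibre product allows one to pass freely between morphisms into $X$ over $Y$ and morphisms into $X'$ over $\Spec(V)$.
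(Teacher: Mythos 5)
Your proposal is correct and follows exactly the route the paper intends: the paper's proof is the one-line statement that the corollary ``follows immediately from Proposition~\pref{P:subtrusive-over-valuation-ring} and Theorem~\pref{T:valuative_criterion_for_subtrusions},'' and your argument is precisely the unwinding of those two results (the equivalence of subtrusiveness with condition \ref{PI:st-vr:lifting-to-val-ring} over a valuation ring, combined with the valuative criterion~\ref{I:VC-subt}). The only cosmetic remark is that in (i)$\Rightarrow$(ii) the hypothesis on $f^{\cons}$ is not actually needed, since universal subtrusiveness already passes to the pull-back by definition.
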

\begin{proof}
Follows immediately from Proposition~\pref{P:subtrusive-over-valuation-ring}
and Theorem~\pref{T:valuative_criterion_for_subtrusions}.
\end{proof}

\begin{corollary}[{\cite[Prop.~3.7]{kollar_quotients},
\cite[Exp.~IX, Rem.~2.6]{sga1}}]\label{C:DVR-criterion-of-submersive}
Let $Y$ be \emph{locally noetherian} and let $\map{f}{X}{Y}$ be a morphism
\emph{locally of finite type}. Then the following are equivalent
\begin{enumerate}
\item $f$ is universally submersive.
\item $f$ is universally subtrusive.
\item For every discrete valuation ring $D$ and diagram of solid arrows
$$\xymatrix{
\Spec(D')\ar@{.>}[r]\ar@{.>}[d] & X\ar[d]\\
\Spec(D)\ar[r] & Y
}$$
\end{enumerate}
there is a discrete valuation ring $D'$ and morphisms making the diagram
commutative and such that the left vertical morphism is surjective.
\end{corollary}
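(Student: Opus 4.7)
The strategy is to apply Theorem~\pref{T:valuative_criterion_for_subtrusions} to reduce the problem to the case where $Y$ is the spectrum of a discrete valuation ring, and then match the resulting criterion against condition (iii) using Proposition~\pref{P:subtrusive-over-valuation-ring}.

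First I would verify that Theorem~\pref{T:valuative_criterion_for_subtrusions} applies, i.e.\ that $f^{\cons}$ is universally submersive. Since $Y$ is locally noetherian and $f$ is locally of finite type, $f$ is locally of finite presentation, a property stable under arbitrary base change. By Proposition~\pref{P:constructible-topology}(iii) every surjective base change $f'$ therefore has $f'^{\cons}$ open, and hence submersive. The noetherian portion of Theorem~\pref{T:valuative_criterion_for_subtrusions} then yields the equivalence (i)$\iff$(ii) and reduces checking (ii) to verifying that, for every morphism $\Spec(D)\to Y$ with $D$ a DVR, the base change $f_D\colon X_D\to\Spec(D)$ is subtrusive.

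It remains to match this DVR-subtrusivity criterion with condition (iii). For (iii)$\Rightarrow$(ii): any DVR $D'$ fitting into the diagram in (iii) is in particular a valuation ring, which supplies condition~\ref{PI:st-vr:lifting-to-val-ring} of Proposition~\pref{P:subtrusive-over-valuation-ring} for the base change $f_D$, so $f_D$ is subtrusive. For (ii)$\Rightarrow$(iii): subtrusivity of $f_D$ yields, via condition~\ref{PI:st-vr:closure} of the same proposition, a closed integral subscheme $Z\subseteq X_D$ dominating $\Spec(D)$; since $f$ is locally of finite type, $Z$ is locally noetherian, so choosing any point $x\in Z$ above the closed point of $\Spec(D)$, the local ring $\sO_{Z,x}$ is noetherian local, and by~\cite[Prop.~7.1.7]{egaII} can be dominated by a DVR $D'$. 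The composition $\Spec(D')\to\Spec(\sO_{Z,x})\to X_D\to X$, together with the surjection $\Spec(D')\to\Spec(D)$ coming from domination, furnishes the required diagram.

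The only non-formal step is the final upgrade from an arbitrary dominating valuation ring (as in Proposition~\pref{P:subtrusive-over-valuation-ring}~\ref{PI:st-vr:lifting-to-val-ring}) to a discrete one. This is precisely where both hypotheses of the corollary intervene---locally noetherian base plus locally finite-type $f$ force $\sO_{Z,x}$ to be noetherian local, and then~\cite[Prop.~7.1.7]{egaII} delivers the DVR. Granting this upgrade, the corollary is an immediate combination of Proposition~\pref{P:subtrusive-over-valuation-ring} and Theorem~\pref{T:valuative_criterion_for_subtrusions}.
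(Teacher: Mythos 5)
Your proof is correct and follows essentially the same route as the paper's: reduce to discrete valuation rings via Theorem~\pref{T:valuative_criterion_for_subtrusions}, then translate between (ii) and (iii) using Proposition~\pref{P:subtrusive-over-valuation-ring} together with the local noetherianity of $X\times_Y\Spec(D)$ and \cite[Prop.~7.1.7]{egaII} for the upgrade to a discrete valuation ring. The one point you leave implicit is that each of (i)--(iii) already forces $f$ to be surjective, which is needed before ``locally of finite presentation'' yields that $f^{\cons}$ is \emph{universally} submersive rather than merely universally open (under (iii) this follows by feeding $\Spec(k(y)[[t]])\to Y$ into the lifting criterion for each $y\in Y$); the paper records this verification as the first line of its proof.
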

\begin{proof}
Note that each of (i), (ii) and (iii) implies that $f$ is surjective. As $f$ is
locally of finite presentation, it is thus universally submersive in the
constructible topology by Proposition~\pref{P:submersions-in-cons}
under any of these conditions.

The equivalence of (i) and (ii) follows from
Theorem~\pref{T:submersive=subtrusive-if-noeth}. If (ii) is satisfied and $D$
is a discrete valuation ring with a morphism to $Y$ then $X\times_Y \Spec(D)\to
\Spec(D)$ is subtrusive. We can thus find an ordered pair $x\leq x'$ in
$X\times_Y \Spec(D)$ above $\im\leq (0)$ in $\Spec(D)$. As $f$ is locally of
finite type we have that $X$ is locally noetherian and we can find a discrete
valuation ring $D'$ with a morphism $\Spec(D')\to X\times_Y \Spec(D)$ with
image $\{x,x'\}$. This shows that (ii) implies (iii).

Finally, assume that we have a diagram as in (iii). Then the morphism
$\Spec(D')\to \Spec(D)$ is submersive and hence so is $X\times_Y \Spec(D)\to
\Spec(D)$.
Thus (iii) implies (ii) by
Theorem~\pref{T:valuative_criterion_for_subtrusions}.
\end{proof}

For completeness we mention the following result which is an immediate
consequence of Proposition~\pref{P:subtrusive-over-valuation-ring} and
a result of Kang and Oh~\cite{kang-oh_lifting-chains}.

%
%

\begin{proposition}[{\cite[Thm.~3.26]{picavet_GGD}}]
\label{P:lifting-of-chains}
Let $\map{f}{X}{Y}$ be a universally subtrusive morphism and let $\{y_\alpha\}$
be a chain of points in $Y$. Assume that $\{y_\alpha\}$ has a lower bound in
$Y$ or equivalently that $\{y_\alpha\}$ is contained in an affine open subset
of $Y$.
There is then a chain $\{x_\alpha\}$ of points in $X$ which lifts the
chain in $Y$, i.e., such that $f(x_\alpha)=y_\alpha$ for every $\alpha$.
\end{proposition}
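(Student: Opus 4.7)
The proof is an application of the two ingredients mentioned in the statement. My plan is the following.

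\textbf{Reduction to the affine case.} Since $\{y_\alpha\}$ is contained in some affine open $Y_0=\Spec(A)\subseteq Y$ (the equivalence of the two formulations of the hypothesis is elementary), I replace $Y$ by $Y_0$ and $X$ by $f^{-1}(Y_0)$. By Proposition~\pref{P:locality-of-subtrusiveness}~(iii) applied after arbitrary base change, $f^{-1}(Y_0)\to Y_0$ is again universally subtrusive. Thus I may assume $Y=\Spec(A)$.

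\textbf{Lifting the chain in $Y$ to a valuation ring.} Let $\{\ip_\alpha\}$ be the chain of primes of $A$ corresponding to $\{y_\alpha\}$; the specialization order $y_\alpha\leq y_\beta$ corresponds to reverse inclusion $\ip_\alpha\supseteq\ip_\beta$, so $\{\ip_\alpha\}$ is a chain under inclusion. By the result of Kang and Oh~\cite{kang-oh_lifting-chains}, there is a valuation ring $V$ and a homomorphism $\varphi:A\to V$ together with a chain of primes $\{\mathfrak q_\alpha\}$ in $V$ such that $\varphi^{-1}(\mathfrak q_\alpha)=\ip_\alpha$ for every $\alpha$. Translated geometrically, there is a morphism $h:\Spec(V)\to Y$ and a chain of points $\{v_\alpha\}$ in $\Spec(V)$ with $h(v_\alpha)=y_\alpha$ for all $\alpha$.

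\textbf{Lifting the chain from $\Spec(V)$ to $X$.} Form the cartesian square with $f_V:X_V=X\times_Y\Spec(V)\to\Spec(V)$ and projection $p:X_V\to X$. Since $f$ is universally subtrusive, $f_V$ is subtrusive. By the equivalence of~\ref{PI:st-vr:subt} and~\ref{PI:st-vr:chain-lifting} in Proposition~\pref{P:subtrusive-over-valuation-ring}, the chain $\{v_\alpha\}$ in $\Spec(V)$ lifts to a chain $\{\xi_\alpha\}$ in $X_V$ with $f_V(\xi_\alpha)=v_\alpha$. Setting $x_\alpha=p(\xi_\alpha)$ yields a chain in $X$ (continuous images of chains are chains), and
\[
f(x_\alpha)=f(p(\xi_\alpha))=h(f_V(\xi_\alpha))=h(v_\alpha)=y_\alpha.
\]

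\textbf{Where the difficulty lies.} With the two cited ingredients available, everything else is formal. The only nontrivial input is the Kang--Oh lifting of an \emph{arbitrary} (in particular possibly transfinite, not necessarily well-ordered) chain of primes into a single valuation ring; this is precisely the generalization from pairs (Bourbaki's lemma used in the proof of Theorem~\pref{T:valuative_criterion_for_subtrusions}) to arbitrary chains, and it is the role of the hypothesis that $\{y_\alpha\}$ admits a lower bound, which ensures that the chain actually lives in a domain obtained from $A$ by localization and residue, whence Kang--Oh applies.
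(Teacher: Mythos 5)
Your proof is correct and follows the same route as the paper: reduce to the affine (and, via the irreducible closure of the chain, integral) case, lift the chain to a valuation ring by Kang--Oh, lift again over $\Spec(V)$ using the chain-lifting condition of Proposition~\pref{P:subtrusive-over-valuation-ring}, and project back to $X$. The only cosmetic difference is that the paper carries out the reduction to $Y$ integral explicitly in the body of the argument (replacing $Y$ by the irreducible closure of the chain with its reduced structure), whereas you defer this point to your closing remark; it is needed before Kang--Oh can be invoked, but you clearly have it.
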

\begin{proof}
We can assume that $Y$ is affine. The closure in the Zariski topology of the
chain $\{y_\alpha\}$ is irreducible so we can also assume that $Y$ is integral.
By~\cite{kang-oh_lifting-chains} there exists a valuation ring $V$, a morphism
$\Spec(V)\to Y$ and a lifting of the chain $\{y_\alpha\}$ to a chain
$\{v_\alpha\}$ in $\Spec(V)$. By
Proposition~\pref{P:subtrusive-over-valuation-ring} there is then a lifting of
the chain $\{v_\alpha\}$ to a chain in $X\times_Y \Spec(V)$. The projection of
this chain onto $X$ gives a lifting of the chain $\{y_\alpha\}$.
\end{proof}


The results of this section, except possibly
Proposition~\pref{P:lifting-of-chains}, readily generalize to algebraic spaces.
Proposition~\pref{P:lifting-of-chains} is at least valid for finite chains as
such lift over \etale{} surjective morphisms.
\end{section}


\begin{section}{Structure theorem for finitely presented subtrusions}
\label{S:structure-theorems}

In this section, we give a structure theorem for finitely presented universally
subtrusive morphisms. This result is an extension
of~\cite[Thm.~3.1.9]{voevodsky_homology} to the non-noetherian case. If
$\map{f}{X}{Y}$ is universally subtrusive of finite presentation, then we will
show the existence of a refinement $X'\to X\to Y$ of $f$ such that there is a
factorization $X'\to Y'\to Y$ where the first morphism is an open covering and
the second is a surjective, proper and finitely presented morphism. If in
addition $f$ is quasi-finite then there is a similar refinement with a
factorization in which the second morphism is finite.

\begin{notation}
Let $X$ be a scheme. We denote by $X_\genpts$
the set of maximal points of $X$, i.e., the generic points of the irreducible
components of $X$.
\end{notation}

The main tools we will use are the ``flatification by blowup''-result of
Raynaud and Gruson~\cite{raynaud-gruson} and the following lemma:

\begin{lemma}\label{L:surjectivity-of-subtrusions}
Let $\map{f}{X}{Y}$ be a morphism of schemes and let $U\subseteq Y$ be
any subset containing $Y_\genpts$. Let $V=\overline{f^{-1}(U)}$ be the closure
in the Zariski topology. If $f$ is $S$-subtrusive then $f|_V$ is surjective.
\end{lemma}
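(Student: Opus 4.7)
The plan is to show directly that every $y\in Y$ has a preimage in $V$. The key observation is that because $U$ contains all maximal points of $Y$, every point of $Y$ admits a generization lying in $U$, and then $S$-subtrusiveness lets us lift the corresponding specialization relation to $X$.

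More precisely, I would proceed as follows. Fix $y\in Y$. First, produce a maximal generization $y'\in Y_\genpts$ of $y$: the singleton $\{y\}$ is irreducible, so by Zorn's lemma it is contained in a maximal irreducible subspace of $Y$, i.e.\ an irreducible component whose generic point $y'$ satisfies $y\in\overline{\{y'\}}$, that is, $y\leq y'$. By hypothesis $y'\in Y_\genpts\subseteq U$.

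Next, apply the characterization~\ref{PI:SS-pairs} of $S$-subtrusiveness from Proposition~\pref{P:S-subtrusive} to the ordered pair $y\leq y'$: this lifts to an ordered pair $x\leq x'$ in $X$ with $f(x)=y$ and $f(x')=y'$. Since $y'\in U$, the point $x'$ lies in $f^{-1}(U)$, so $\overline{\{x'\}}\subseteq \overline{f^{-1}(U)}=V$. Because $x\in\overline{\{x'\}}$, we get $x\in V$, which gives the desired preimage of $y$ in $V$.

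There is no real obstacle here: both steps are essentially formal. The only mildly subtle point is justifying the existence of a maximal generization $y'\in Y_\genpts$ for an arbitrary (not necessarily noetherian) scheme $Y$, which is handled by the Zorn's-lemma argument on irreducible subsets above. Everything else is a direct application of condition~\ref{PI:SS-pairs} of Proposition~\pref{P:S-subtrusive} together with the fact that $V$ is Zariski-closed and therefore stable under specialization.
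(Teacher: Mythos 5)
Your proof is correct and follows essentially the same route as the paper: pick a generization $y'\in U$ of $y$, lift the pair $y\leq y'$ to $x\leq x'$ using condition~(i) of Proposition~\ref{P:S-subtrusive}, and observe that $x\in\overline{\{x'\}}\subseteq\overline{f^{-1}(U)}=V$. The only difference is that you spell out, via Zorn's lemma, why a maximal generization in $Y_\genpts\subseteq U$ exists, a point the paper leaves implicit.
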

\begin{proof}
Let $y\in Y$ and choose a generization $y'\in U$. As $f$ is $S$-subtrusive, the
pair $y\leq y'$ lifts to a pair $x\leq x'$. We have that
$x\in S(f^{-1}(U))\subseteq \overline{f^{-1}(U)}$.
\end{proof}

\begin{definition}
We say that a morphism $\map{p}{\widetilde{S}}{S}$ is a \emph{blow-up}, if
there is a closed subscheme $Z\inj S$ given by a finitely generated ideal
sheaf, such that $\widetilde{S}$ is the blow-up of $S$ in $Z$. Then $p$ is
proper and an isomorphism over the retrocompact open subset $U=S\setminus
Z$. Let $\map{f}{X}{S}$ be another morphism. The \emph{strict transform}
$\widetilde{X}$ of $X$ under $p$ is the schematic closure of $f^{-1}(U)$ in
$X\times_S \widetilde{S}$. The strict transform $\widetilde{f}$ of $f$ under
$p$ is the composition $\map{\widetilde{f}}{\widetilde{X}\inj X\times_S
\widetilde{S}}{\widetilde{S}}$.
\end{definition}

\begin{remark}\label{R:surj-of-strict-tfm-of-subtrusion}
Let $\map{f}{X}{Y}$ be universally subtrusive and let
$\map{p}{\widetilde{Y}}{Y}$ be a blow-up. By
Lemma~\pref{L:surjectivity-of-subtrusions} it follows that the strict
transform $\widetilde{f}$ of $f$ is surjective.
\end{remark}

To begin with, we will need the technical condition that $Y_\genpts$ is
quasi-compact. Note that if $Y$ is quasi-separated then $Y_\genpts$ is always
Hausdorff, cf.~\cite[Ch.~I, Lem.~2.8]{lazard_autour}. Rings $A$ such that
$\Min(A)=\Spec(A)_\genpts$ is compact are studied
in~\cite[Ch.~II]{olivier_sem_samuel}.


\begin{lemma}\label{L:gen-flatness}
Let $Y$ be a reduced quasi-compact and quasi-separated scheme such that
$Y_\genpts$ is quasi-compact, e.g., $Y$ is reduced and noetherian. Let
$\map{f}{X}{Y}$ be a finitely presented morphism. There is then an open dense
quasi-compact subset $U\subseteq Y$ such that $f$ is flat over $U$.
\end{lemma}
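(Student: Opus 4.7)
My plan is to produce, around each maximal point of $Y$, a quasi-compact open neighborhood over which $f$ is flat, and then patch these together using the quasi-compactness of $Y_\genpts$.

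For each $y\in Y_\genpts$, since $Y$ is reduced, the local ring $\sO_{Y,y}$ has only one prime ideal and is reduced, hence is a field. The base change $X\times_Y\Spec(\sO_{Y,y})\to\Spec(\sO_{Y,y})$ is therefore trivially flat. The central step---the hard part, and the only one that uses the finite presentation of $f$---is to spread this flatness from the formal point $\Spec(\sO_{Y,y})$ to a genuine open neighborhood of $y$ in $Y$. Here I would identify $\Spec(\sO_{Y,y})$ as the cofiltered inverse limit of the affine open neighborhoods $V$ of $y$ (the transition open immersions, being morphisms between affine schemes, are automatically affine) and then invoke the standard limit theorem for flatness of finitely presented morphisms (\cite[\S11]{egaIV}) to produce an affine open $V_y\ni y$ such that $f^{-1}(V_y)\to V_y$ is already flat.

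The remainder is routine. The family $\{V_y\}_{y\in Y_\genpts}$ is an open cover of the quasi-compact set $Y_\genpts$, so a finite subcover $V_{y_1},\dots,V_{y_n}$ exists, and $U:=V_{y_1}\cup\dots\cup V_{y_n}$ is a quasi-compact open of $Y$ over which $f$ is flat. For density, I would note that every point of a scheme is a specialization of some maximal point---Zorn's lemma applied to the chain of irreducible closed subsets containing $\overline{\{y\}}$ produces an irreducible component of $Y$ through $y$, whose generic point lies in $Y_\genpts$. Hence $Y=\overline{Y_\genpts}$, and $U\supseteq Y_\genpts$ then forces $\overline{U}=Y$, so $U$ is dense as required.
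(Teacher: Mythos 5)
Your argument is correct, but it runs on a different engine than the paper's. The paper gets the flat locus in one stroke: since $\sO_{Y,y}$ is a field for every maximal point $y$ of the reduced scheme $Y$, the morphism $f$ is flat over all of $Y_\genpts$, and the openness of the flat locus for finitely presented morphisms (\cite[Cor.~11.3.2]{egaIV}) immediately yields an open $V\supseteq Y_\genpts$ over which $f$ is flat; the quasi-compactness of $Y_\genpts$ is then used only at the very end, to shrink $V$ to a quasi-compact open $U\supseteq Y_\genpts$. You instead work pointwise: writing $\Spec(\sO_{Y,y})=\varprojlim V$ over the affine open neighborhoods of $y$ and spreading flatness out from the limit to some $V_y$ via the limit theorem \cite[Thm.~11.2.6]{egaIV}, then extracting a finite subcover of $Y_\genpts$. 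Both routes are valid and both rest on EGA~IV, \S 11; yours trades the openness-of-the-flat-locus statement on the base for the limit formalism (which this paper uses heavily elsewhere anyway, e.g.\ in the proof of Proposition~\pref{P:ST-aff+q-compact}), at the cost of a somewhat longer argument. The supporting steps --- that maximal points of a reduced scheme have fields as local rings, that the transition maps between affine opens are affine so the limit description of the stalk applies, that $\overline{Y_\genpts}=Y$ gives density, and that flatness is local on the target so it glues over the finite union --- are all handled correctly.
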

\begin{proof}
As $Y$ is reduced $f$ is flat over $Y_\genpts$ and hence flat over an open
subset $V\subseteq Y$ containing $Y_\genpts$, cf.~\cite[Cor.~11.3.2]{egaIV}.
As $Y_\genpts$ is quasi-compact, there is an open quasi-compact subset
$U\subseteq V$ containing $Y_\genpts$.
\end{proof}

\begin{proposition}\label{P:ST-qc}
Let $Y$ be a \emph{reduced} quasi-compact and quasi-separated scheme such that
$Y_\genpts$ is quasi-compact, e.g., $Y$ noetherian. Let $\map{f}{X}{Y}$ be a
universally subtrusive morphism of finite presentation. Then there is a
surjective blow-up $\widetilde{Y}\to Y$ (of finite type) such that the
strict transform $\map{\widetilde{f}}{\widetilde{X}}{\widetilde{Y}}$ is
faithfully flat of finite presentation.
\end{proposition}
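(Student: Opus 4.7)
The plan is to combine the generic flatness of Lemma~\pref{L:gen-flatness}, the flatification by blow-up of Raynaud and Gruson~\cite{raynaud-gruson}, and the surjectivity consequence of subtrusiveness recorded in Remark~\pref{R:surj-of-strict-tfm-of-subtrusion}. Flatification will produce a blow-up whose strict transform is flat and finitely presented; subtrusiveness will then upgrade this to faithful flatness.

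First, Lemma~\pref{L:gen-flatness} applied to $f$ yields an open, dense, quasi-compact subset $U\subseteq Y$ over which $f$ is flat. Since $U$ is a quasi-compact open subset of the quasi-compact and quasi-separated scheme $Y$, it is retrocompact; equivalently, $Y\setminus U$ carries the structure of a closed subscheme defined by a finitely generated quasi-coherent ideal sheaf. This is the data needed to speak of $U$-admissible blow-ups.

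Next, I would apply the flatification theorem of Raynaud and Gruson to the finitely presented morphism $f$ and the retrocompact open $U$ of flatness. This produces a $U$-admissible blow-up $\map{p}{\widetilde{Y}}{Y}$---that is, a blow-up along a finitely generated ideal sheaf with support in $Y\setminus U$---such that the strict transform $\map{\widetilde{f}}{\widetilde{X}}{\widetilde{Y}}$ is flat and of finite presentation. The blow-up $p$ is automatically of finite type and surjective, being an isomorphism over the nonempty open $U$ and surjecting onto the center along the exceptional divisor.

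It remains only to upgrade flat to faithfully flat, i.e.\ to verify surjectivity of $\widetilde{f}$. Since $f$ is universally subtrusive, Remark~\pref{R:surj-of-strict-tfm-of-subtrusion} applied to the blow-up $p$ is exactly the statement that $\widetilde{f}$ is surjective; combined with flatness this is faithful flatness. The only real subtlety is bookkeeping the hypotheses needed to invoke Raynaud--Gruson---one needs the flat locus of $f$ to contain a \emph{retrocompact} open subset of $Y$, which is precisely what the quasi-compactness of $Y_{\genpts}$ delivers via Lemma~\pref{L:gen-flatness}; without that hypothesis the naive flat locus produced by generic flatness need not be quasi-compact and the resulting blow-up would fail to be of finite type.
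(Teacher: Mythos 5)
Your proof is correct and follows essentially the same route as the paper: generic flatness over a quasi-compact dense open $U$ via Lemma~\pref{L:gen-flatness}, flatification by a $U$-admissible blow-up via Raynaud--Gruson, and surjectivity of the strict transform from Remark~\pref{R:surj-of-strict-tfm-of-subtrusion}. The additional bookkeeping you supply (retrocompactness of $U$, surjectivity of $p$ from density of $U$ and reducedness of $Y$) is accurate and matches what the paper leaves implicit.
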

\begin{proof}
By Lemma~\pref{L:gen-flatness} there is an open quasi-compact dense subset
$U$ over which $f$ is flat. By~\cite[Thm.~5.2.2]{raynaud-gruson}, there is a
blow-up $\map{p}{\widetilde{Y}}{Y}$ such that $p$ is an isomorphism over $U$
and such that the strict transform $\widetilde{f}$ is flat and finitely
presented. By Remark~\pref{R:surj-of-strict-tfm-of-subtrusion} the morphism
$\widetilde{f}$ is surjective.
\end{proof}

\begin{proposition}\label{P:ST-aff+q-compact}
Let $Y$ be affine and such that $Y_\genpts$ is quasi-compact, e.g., $Y$
irreducible. Let $\map{f}{X}{Y}$ be a universally subtrusive morphism of finite
presentation. Then there is a refinement $\map{f'}{X'}{Y}$ of $f$ and a
factorization of $f'$ into a faithfully flat morphism $X'\to Y'$ of finite
presentation followed by a proper surjective morphism $Y'\to Y$ of finite
presentation. If in addition $f$ is universally open, then we may choose
$f'$ such that $X'\to X\times_Y Y'$ is a nil-immersion.
\end{proposition}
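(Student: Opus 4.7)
The plan is to reduce to Proposition~\pref{P:ST-qc} applied to the reduction $Y_\red$, and then lift the resulting factorization back to $Y$ using the limit methods of EGA~IV~\S8. Let $Z := Y_\red$. The morphism $X_Z := X \times_Y Z \to Z$ is of finite presentation (by base change) and universally subtrusive (by restriction to the closed subscheme $Z \inj Y$, cf.\ Proposition~\pref{P:locality-of-subtrusiveness}). Since $(Y_\red)_\genpts = Y_\genpts$ is quasi-compact, Proposition~\pref{P:ST-qc} applies and produces a blow-up $\tilde p\colon \widetilde Z \to Z$ at a finitely generated ideal $\overline I \subset \sO_Z$, whose strict transform $\widetilde{X_Z} \inj X \times_Y \widetilde Z$ is fppf over $\widetilde Z$.

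Next I would lift this data to a finitely presented thickening of $Z$ inside $Y$. Writing $Y = \Spec A$ with nil-radical $J = \sqrt 0$, one has $Z = \varprojlim_\alpha Y_\alpha$ where $Y_\alpha := \Spec(A/J_\alpha)$ runs over finitely generated nil-subideals $J_\alpha \subset J$; each $Y_\alpha \inj Y$ is a nil-immersion of finite presentation. By the limit theorems of EGA~IV~\S8 (notably 8.8.2, 8.10.5, and 11.2.6), for sufficiently large $\alpha$ the blow-up $\widetilde Z \to Z$ and the closed immersion $\widetilde{X_Z} \inj X \times_Y \widetilde Z$ (with $\widetilde{X_Z} \to \widetilde Z$ fppf) descend to $\widetilde{Y_\alpha} \to Y_\alpha$ (a blow-up at a finitely generated ideal of $Y_\alpha$ lifting $\overline I$) and a closed immersion $\widetilde{X_\alpha} \inj X \times_Y \widetilde{Y_\alpha}$ with $\widetilde{X_\alpha} \to \widetilde{Y_\alpha}$ fppf. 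Setting $Y' := \widetilde{Y_\alpha}$ and $X' := \widetilde{X_\alpha}$, the composition $Y' \to Y_\alpha \to Y$ is a blow-up of a finitely generated ideal followed by a finitely presented closed immersion, hence proper and of finite presentation; it is surjective since $Y_\alpha \to Y$ is a homeomorphism and $\widetilde{Y_\alpha} \to Y_\alpha$ pulls back along the homeomorphism $Z \to Y_\alpha$ to $\widetilde Z \to Z$, which is surjective by Remark~\pref{R:surj-of-strict-tfm-of-subtrusion}. The morphism $X' \to Y'$ is fppf by construction, while $X' \to Y$ factors through $X$ via the closed immersion $X' \inj X \times_Y Y'$; so $X'$ is a refinement of $f$ with the desired factorization.

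For the universally open addition, I expect the same construction to automatically yield a nil-immersion $X' \inj X \times_Y Y'$. Indeed, since $f$ is universally open, so is the projection $X \times_Y \widetilde Z \to \widetilde Z$. The open $\widetilde Z|_{U_Z} \subset \widetilde Z$ is dense (its complement, the exceptional divisor, lies over $V(\overline I)$, which is disjoint from $Z_\genpts$ by the construction of $\widetilde Z$), so by openness of the projection the preimage $f_Z^{-1}(U_Z) \subset X \times_Y \widetilde Z$ is also dense. Consequently the strict transform $\widetilde{X_Z} \inj X \times_Y \widetilde Z$ is a nil-immersion, and this property (closed immersion with surjective underlying map) descends along the limit, giving $X' \inj X \times_Y Y'$ a nil-immersion for large enough $\alpha$. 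The main obstacle lies in the second step: carefully coordinating the simultaneous descent of the blow-up structure, the closed immersion into $X \times_Y \widetilde{Y_\alpha}$, the fppf property, and (in the universally open case) the nil-immersion property, all through the EGA~IV~\S8 limit machinery.
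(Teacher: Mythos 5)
There is a genuine gap at the descent step, and it is precisely the difficulty that forces the paper to take a different route. The blow-up $\map{\tilde p}{\widetilde Z}{Z}$ produced by Proposition~\pref{P:ST-qc} over the (in general non-noetherian) reduced scheme $Z=Y_\red$ is proper and of finite \emph{type}, but not of finite \emph{presentation}: the affine charts of $\mathrm{Proj}$ of the Rees algebra are quotients of polynomial rings over $\sO_Z$ by ideals that need not be finitely generated, and indeed the statement of Proposition~\pref{P:ST-qc} deliberately claims only ``of finite type''. Consequently \cite[Thm.~8.8.2]{egaIV} does not apply to descend $\widetilde Z\to Z$ along $Z=\varprojlim_\alpha Y_\alpha$ --- that theorem descends finitely presented schemes only --- and without such a descent you also cannot conclude that $Y'\to Y$ is of finite presentation, which is part of the assertion to be proved. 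The same problem is already visible when $Y$ is reduced, where your argument degenerates to quoting Proposition~\pref{P:ST-qc}, whose conclusion is strictly weaker than the one required here.

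The paper avoids this by arranging for the blow-up to take place over a \emph{noetherian} scheme: it writes $Y=\varprojlim_\lambda Y_\lambda$ with $Y_\lambda$ noetherian affine, descends $f$ and the dense quasi-compact open $U$ of flatness to some $Y_\lambda$, applies Raynaud--Gruson over $(Y_\lambda)_\red$ --- where finite type equals finite presentation and $(Y_\lambda)_\red\inj Y_\lambda$ is automatically a finitely presented closed immersion --- and then pulls $\widetilde{Y_\lambda}\to(Y_\lambda)_\red\inj Y_\lambda$ back to $Y$. The price of this route is that the noetherian model $f_\lambda$ is not known to be universally subtrusive (that is Theorem~\pref{T:subtrusive-limit-of-subtrusive}, proved later using this very proposition), so the surjectivity of $X'\to Y'$, which your argument would get for free from Proposition~\pref{P:ST-qc}, must instead be established over $Y$ itself by applying Lemma~\pref{L:surjectivity-of-subtrusions} to the closure of $X\times_Y U_\red$ in $X\times_Y Y'$. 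Your remaining steps (density of the preimage of the flat locus, the nil-immersion claim in the universally open case, and the use of \cite[Thm.~8.10.5, Thm.~11.2.6]{egaIV} for the fppf part) are sound, but the proof cannot be completed along the proposed lines without first repairing the finite-presentation problem for the proper morphism.
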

\begin{proof}
Write $Y$ as an inverse limit of noetherian affine schemes $Y_\lambda$. By
Lemma~\pref{L:gen-flatness} there is an open quasi-compact dense subset $U$
such that $f$ is flat over $U_\red$. By~\cite[Cor.~8.2.11]{egaIV} there is an
index $\lambda$ and an open subset $U_\lambda\subseteq Y_\lambda$ such that
$U=U_\lambda\times_{Y_\lambda} Y$. Increasing $\lambda$, we may then assume
that there is a finitely presented morphism
$\map{f_\lambda}{X_\lambda}{Y_\lambda}$ such that $X\iso
X_\lambda\times_{Y_\lambda} Y$ and such that $f_\lambda$ is flat over
$(U_\lambda)_\red$~\cite[Thm.~11.2.6]{egaIV}.
By~\cite[Thm.~5.2.2]{raynaud-gruson}, there is a blow-up
$\map{p}{\widetilde{Y_\lambda}}{(Y_\lambda)_\red}$ such that $p$ is an
isomorphism over $(U_\lambda)_\red$ and such that the strict transform
$\map{\widetilde{f'_\lambda}}{\widetilde{X_\lambda}}{\widetilde{Y_\lambda}}$ of
$\map{f'_\lambda}{X_\lambda\times_{Y_\lambda}
(Y_\lambda)_\red}{(Y_\lambda)_\red}$ along $p$ is flat. Note that
$\widetilde{f'_\lambda}$ is not necessarily surjective.

Let $\map{f_1}{X'}{Y'}$ (resp.\ $\map{f_2}{Y'}{Y}$) be the pull-back of
$\map{\widetilde{f'_\lambda}}{\widetilde{X_\lambda}}{\widetilde{Y_\lambda}}$
(resp.\ $\widetilde{Y_\lambda}\to (Y_\lambda)_\red\inj Y_\lambda$) along $Y\to
Y_\lambda$. Then $f_1$ is flat and of finite presentation, and $f_2$ is proper,
surjective and of finite presentation. We will now show that $f_1$ is
surjective. Note that $f_2$ is an isomorphism over $U_\red$ and that $X'\inj
X\times_Y Y'$ is an isomorphism over $X\times_Y \left(U_\red\right)$.

Let $\widetilde{X}$ be the closure of $X\times_Y \left(U_\red\right)$ in
$X\times_Y Y'$. We then have a canonical factorization $\widetilde{X}\inj
X'\inj X\times_Y Y'$. As $f$ is universally subtrusive $\widetilde{X}\to Y'$
is surjective by Lemma~\pref{L:surjectivity-of-subtrusions}. Thus $X'\to Y'$ is
surjective. If in addition $f$ is universally open, then $\widetilde{X}\inj
X\times_Y Y'$ is a nil-immersion and it follows that $X'\inj X\times_Y Y'$ is a
nil-immersion.
\end{proof}

To treat
the case where $Y_\genpts$ is not compact, we use
the \emph{total integral closure}.

\begin{definition}\label{D:TIC}
A scheme $X$ is said to be \emph{totally integrally closed} or TIC if:
\begin{enumerate}
\item $X$ is reduced.
\item For every $x\in X$, the closed subscheme $\overline{\{x\}}$ is
normal and has an algebraically closed field of fractions.
\item The underlying topological space of $X$ is
  extremal~\cite[\S2]{hochster_TIC}.
\end{enumerate}
\end{definition}

\begin{properties}\label{X:TIC}
We briefly list the basic properties of TIC~schemes.
\begin{enumerate}
\item $X$ is TIC if and only if $X$ is TIC on an open covering.
\item An affine TIC~scheme is the spectrum of a totally integrally closed
ring~\cite[Thm.~1]{hochster_TIC}.
\item If $X$ is TIC, quasi-compact and quasi-separated then $X_\genpts$ is
compact~\cite[Prop.~5]{hochster_TIC}.
%
\item If $X$ is TIC then for every $x\in X$, the local ring $\sO_{X,x}$ is
a strictly henselian normal domain~\cite[Prop.~7]{hochster_TIC},
\cite[Prop.~1.4]{artin_Hensel-join}.
%
%
%
\item If $\map{f}{X'}{X}$ is an affine morphism and $X'$ is TIC then
the integral closure of $X$ relative to $X'$ is TIC.\label{XTIC:IC-in-TIC}
\item Every reduced ring $A$ has an injective and integral
homomorphism into a totally integrally closed ring $\TIC(A)$,
cf.~\cite[p.~769]{hochster_TIC}. If $X=\Spec(A)$ then we denote the
corresponding TIC~scheme with $\TIC(X)=\Spec(\TIC(A))$. For an arbitrary affine
scheme $X$ we let $\TIC(X)=\TIC(X_\red)$.
\item If $X$ has a finite number of irreducible components, e.g., if $X$ is
noetherian, then there is a surjective and integral morphism $\TIC(X)\to X$
such that $\TIC(X)$ is totally integrally closed. Concretely, if
$x_1,x_2,\dots,x_n$ are the generic points of $X$ then $\TIC(X)$ is the
integral closure of $X_\red$ in $\Spec\bigl(\prod_i \overline{k(x_i)}\bigr)$. This is the
\emph{absolute integral closure} of $X$ introduced by
Artin~\cite[\S1]{artin_Hensel-join}.
\item Every monic polynomial with coefficients in a TIC~ring factors
completely into monic linear factors~\cite[p.~769]{hochster_TIC}.
\label{XTIC:monic}
\item If $X$ is an affine TIC~scheme and $Z\to X$ is a finite morphism of
schemes then there is a finite and finitely presented surjective morphism
$Z'\to Z$ such that $Z'$ is a disjoint union of closed subschemes
$Z_i\inj X$. This follows from~\ref{XTIC:monic}.
\label{XTIC:closed-cover}
\end{enumerate}
Note that, as with the algebraic closure of a field, $\TIC(A)$ is only unique
up to non-unique isomorphism and thus this construction does not immediately
extend to arbitrary schemes. It is possible to show that if $X$ is a
quasi-compact and quasi-separated scheme, then there is a TIC~scheme $X'$
together with a surjective integral morphism $X'\to X$. However, this
construction is slightly awkward and does not yield a unique $X'$.
\end{properties}

\begin{theorem}\label{T:ST-aff}
Let $Y$ be an affine or noetherian scheme. Let $\map{f}{X}{Y}$ be a universally
subtrusive morphism of finite presentation. Then there is a refinement
$\map{f'}{X'}{Y}$ of $f$ and a factorization of $f'$ into a faithfully flat
morphism $X'\to Y'$ of finite presentation followed by a proper surjective
morphism $Y'\to Y$ of finite presentation. If in addition $f$ is universally
open, then we may choose $f'$ such that $X'\to X\times_Y Y'$ is a
nil-immersion.
\end{theorem}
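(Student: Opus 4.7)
The strategy is twofold. For $Y$ noetherian, Proposition~\pref{P:ST-qc} already produces a surjective blow-up $\widetilde Y \to Y_\red$ whose strict transform $\widetilde X \to \widetilde Y$ is faithfully flat of finite presentation, and blow-ups of noetherian schemes are automatically proper of finite presentation. For $Y$ affine, the plan is to base change $f$ along the total integral closure $\TIC(Y)$---so that the generic points of the base become quasi-compact, making Proposition~\pref{P:ST-aff+q-compact} applicable---and then descend the resulting factorization via finite approximation.

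For the noetherian case, one first reduces to $Y$ reduced: since $Y$ is noetherian, $Y_\red \hookrightarrow Y$ is a finitely presented closed immersion and therefore proper, surjective and of finite presentation, so a solution for $f_\red\colon X \times_Y Y_\red \to Y_\red$ composes with $Y_\red \hookrightarrow Y$ to yield a solution for $f$. Applying Proposition~\pref{P:ST-qc} to $f_\red$ gives a surjective blow-up $p\colon \widetilde Y \to Y_\red$ of finite type with fppf strict transform $\widetilde X \to \widetilde Y$; the composition $\widetilde Y \to Y_\red \hookrightarrow Y$ is then proper, surjective and of finite presentation. Take $Y' = \widetilde Y$ and $X' = \widetilde X$. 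For the universally open refinement, note that $X \times_Y \widetilde Y \to \widetilde Y$ is universally open as a base change of $f$, so the preimage of the dense open $U \subseteq \widetilde Y$ (the isomorphism locus of $p$) is dense in $X \times_Y \widetilde Y$: any non-empty open of $X \times_Y \widetilde Y$ projects to an open subset of $\widetilde Y$ that meets the dense $U$. The schematic closure $\widetilde X$ of this preimage therefore has underlying space all of $X \times_Y \widetilde Y$, so $\widetilde X \hookrightarrow X \times_Y \widetilde Y$ is a nil-immersion.

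For the affine case, let $Y_0 = \TIC(Y)$, which by Properties~\pref{X:TIC} comes equipped with a surjective integral morphism $Y_0 \to Y$ and satisfies $(Y_0)_\genpts$ quasi-compact. The base change $f_0 \colon X \times_Y Y_0 \to Y_0$ is universally subtrusive and of finite presentation (and universally open when $f$ is), so Proposition~\pref{P:ST-aff+q-compact} applies to $f_0$ and produces a refinement $X_0' \to X \times_Y Y_0$ together with a factorization $X_0' \to Y_0' \to Y_0$ in which $X_0' \to Y_0'$ is fppf, $Y_0' \to Y_0$ is proper, surjective and of finite presentation, and $X_0' \hookrightarrow X \times_Y Y_0'$ is a nil-immersion in the universally open case.

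The main obstacle is descending this factorization to a morphism of finite presentation over $Y$. Since $Y_0 \to Y$ is integral but not of finite presentation, one writes $Y_0 = \varprojlim_\lambda Y_\lambda$ as a filtered inverse limit with $Y_\lambda \to Y$ finite and of finite presentation; the splitting of monic polynomials over TIC rings (Properties~\pref{X:TIC}) is the crucial input, as it realizes every finitely generated piece of $\TIC(Y)$ as a quotient of the finitely presented $A$-algebra $A[x_1,\dots,x_n]/(p_1(x_1),\dots,p_n(x_n))$ with $p_i$ monic. The approximation results~\cite[Thm.~8.8.2, Thm.~8.10.5, Thm.~11.2.6]{egaIV} then descend $X_0' \to Y_0' \to Y_0$ to $X_\lambda' \to Y_\lambda' \to Y_\lambda$ for some sufficiently large $\lambda$, with $X_\lambda' \to Y_\lambda'$ fppf and $Y_\lambda' \to Y_\lambda$ proper, surjective and of finite presentation (and $X_\lambda' \hookrightarrow X \times_Y Y_\lambda'$ a nil-immersion in the universally open case). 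Setting $Y' = Y_\lambda'$ and $X' = X_\lambda'$, the composition $Y_\lambda' \to Y_\lambda \to Y$ is proper, surjective and of finite presentation, while $X_\lambda' \to X \times_Y Y_\lambda \to X$ provides the required factorization through $f$.
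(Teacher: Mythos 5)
Your proposal is correct and follows essentially the same route as the paper: the noetherian case via Proposition~\pref{P:ST-qc} (you usefully make explicit the reduction to $Y_\red$ and the density argument showing the strict transform is a nil-immersion when $f$ is universally open), and the affine case by base change to $\TIC(Y)$, Proposition~\pref{P:ST-aff+q-compact}, and descent along the presentation of $\TIC(Y)\to Y$ as a filtered limit of finite, finitely presented $Y$-schemes. One small correction: the ingredient needed for that last limit presentation is just the \emph{integrality} of $\TIC(Y)$ over $Y$ (the paper cites EGA~IV, Lem.~11.5.5.1, which also handles the point that a finite subalgebra need not be finitely presented), not the splitting of monic polynomials over TIC rings, which is a different property used in Theorem~\pref{T:STf-noeth/aff}.
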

\begin{proof}
If $Y$ is noetherian, the theorem follows from Proposition~\pref{P:ST-qc}. If
$Y$ is affine, then we have a surjective integral morphism $\TIC(Y)\to Y$ from
a TIC scheme. As $\TIC(Y)_\genpts$ is quasi-compact, we can by
Proposition~\pref{P:ST-aff+q-compact} find a refinement $X''\to X\times_Y
\TIC(Y)\to\TIC(Y)$ such that there is a factorization $X''\to Y''\to \TIC(Y)$
where the first morphism is faithfully flat of finite presentation and the
second is proper, surjective and finitely presented. If $f$ is universally
open, we may also assume that $X''\to X\times_Y Y''$ is a nil-immersion.

As the integral morphism $\TIC(Y)\to Y$ is the inverse limit of finite and
finitely presented $Y$-schemes $Y_\lambda$~\cite[Lem.~11.5.5.1]{egaIV}, it
follows that there is an index $\lambda$ and morphisms $X''_\lambda\to
Y''_\lambda\to Y_\lambda$, $X''_\lambda\to X$ with the same properties as
$X''\to Y''\to \TIC(Y)$~\cite[Thm.~8.10.5 (xii) and Thm.~11.2.6]{egaIV}. If in
addition $f$ is universally open, it follows from~\cite[Thm.~8.10.5 (ii) and
(vi)]{egaIV} that after increasing $\lambda$, we can assume that
$X''_\lambda\to X\times_Y Y''_\lambda$ is a nil-immersion. Putting
$X'=X''_\lambda$ and $Y'=Y''_\lambda$ gives a refinement with the required
factorization.
\end{proof}

\begin{theorem}\label{T:STf-noeth/aff}
Let $Y$ be an affine or noetherian scheme. Let $\map{f}{X}{Y}$ be a
\emph{quasi-finite} universally subtrusive morphism of finite
presentation. Then there is a refinement $X'\to Y$ of $f$ which is the
composition of an open covering $X'\to Y'$ of finite presentation and a finite
surjective morphism $Y'\to Y$ of finite presentation.
\end{theorem}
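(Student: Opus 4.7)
My plan is to combine Zariski's Main Theorem with the sheet decomposition~\pref{XTIC:closed-cover} of finite morphisms over totally integrally closed bases, following the same limit-argument template as in the proof of~\pref{T:ST-aff}.

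First, I reduce to the case $Y$ noetherian affine by a limit argument paralleling that in the proof of~\pref{T:ST-aff}: write an affine $Y=\varprojlim Y_\lambda$ with $Y_\lambda$ noetherian affine, use~\cite[Thm.~8.8.2, Thm.~8.10.5, Thm.~11.2.6]{egaIV} to descend $f$ to a quasi-finite finitely presented morphism $f_\lambda\colon X_\lambda\to Y_\lambda$, observe that $f_\lambda$ is universally subtrusive for sufficiently large $\lambda$ (by the same kind of argument used in~\pref{T:ST-aff}), and, after constructing the refinement over $Y_\lambda$, base change to $Y$. In the noetherian affine case, apply Zariski's Main Theorem in its finitely presented form to factor $f$ as $X\hookrightarrow\bar X\to Y$, with $X\hookrightarrow\bar X$ an open immersion of finite presentation and $\bar X\to Y$ finite of finite presentation; surjectivity of $f$ makes $\bar X\to Y$ surjective.

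Next, I base-change to $\TIC(Y)\to Y$ (integral and surjective) and apply~\pref{XTIC:closed-cover} to the finite morphism $\bar X_{\TIC}\to\TIC(Y)$: there is a finite finitely presented surjective morphism $Z\to\bar X_{\TIC}$ with $Z=\coprod_j Z_j$, each $Z_j\hookrightarrow\TIC(Y)$ a closed immersion, and the induced $\TIC(Y)$-morphism $Z_j\to\bar X_{\TIC}$ is a section of $\bar X_{\TIC}\to\TIC(Y)$ over $Z_j$, hence also a closed immersion by separatedness. Pulling back the open immersion $X_{\TIC}\hookrightarrow\bar X_{\TIC}$ along each $Z_j\hookrightarrow\bar X_{\TIC}$ yields open subschemes $U_j\subseteq Z_j$; the universal subtrusiveness of $f$ then gives the \emph{joint covering} property: for every $y\in\TIC(Y)$, a preimage $x\in X_{\TIC}$ above $y$ exists, and the sheet $Z_j$ containing $x$ satisfies $y\in U_j$.

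Finally, I descend the splitting and the $U_j$'s to a finite finitely presented surjective cover $Y_\lambda\to Y$ approximating $\TIC(Y)$, take $Y'=\coprod_j Z_{\lambda,j}$ (finite surjective over $Y$) and $X'=\coprod_j U_{\lambda,j}$, with the componentwise open immersions $U_{\lambda,j}\hookrightarrow Z_{\lambda,j}$ defining $X'\to Y'$. The compositions $U_{\lambda,j}\hookrightarrow Z_{\lambda,j}\hookrightarrow\bar X_\lambda$ land in $X_\lambda$ by construction, furnishing the refinement map $X'\to X$. The main obstacle is verifying that $X'\to Y'$ is a genuine open covering, i.e., that each $U_{\lambda,j}$ fills out its sheet $Z_{\lambda,j}$; this is not automatic from the joint covering alone, and will be addressed either by refining the splitting so that every sheet is entirely inside or entirely outside $X$, or by discarding the incomplete sheets and invoking universal subtrusiveness once more to preserve surjectivity of $Y'\to Y$.
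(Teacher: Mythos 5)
Your skeleton---Zariski's Main Theorem, base change to $\TIC(Y)$, the sheet decomposition of Properties~\pref{X:TIC}~\ref{XTIC:closed-cover}, and a limit argument to come back down to a finite finitely presented cover of $Y$---is exactly the paper's. But the step you defer as ``the main obstacle'' is the mathematical heart of the proof, and neither of your two proposed patches works. Take $Y$ irreducible, affine and totally integrally closed, let $V_1,V_2\subsetneq Y$ be quasi-compact opens with $V_1\cup V_2=Y$, and let $X=V_1\amalg V_2$. This $f$ is universally open, surjective, quasi-finite and finitely presented; the ZMT hull is $\bar X=Y\amalg Y$ and both sheets are ``incomplete'' (each $V_i$ is dense but proper in its sheet). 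Discarding incomplete sheets leaves nothing, so no appeal to subtrusiveness can restore surjectivity; and since $Y$ is irreducible, any finite surjective $W\to Y$ with $W$ a finite disjoint union of closed subschemes of $Y$ must contain a sheet equal to all of $Y$, which is neither inside nor outside $V_1$---so no refinement of the splitting makes every sheet ``entirely in or entirely out''. Demanding $U_j=Z_j$ is simply the wrong target.

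The correct fix, which is what the paper does, is to shrink each $U_j$ to its interior \emph{as a subset of $Y$} (equivalently, to set $X'=\overline{f^{-1}(Y_\genpts)}$) and to prove that $\coprod_j\interior(U_j)\to Y$ is surjective, hence an open covering \emph{of $Y$ itself}; the finite morphism $Y'\to Y$ in the statement then comes solely from the approximation $Y_\lambda\to Y$ of $\TIC(Y)\to Y$, not from the sheets. Surjectivity is Lemma~\pref{L:surjectivity-of-subtrusions} plus one more lift: subtrusiveness lifts $y\leq\eta$ (with $\eta$ the generic point over $y$) to $X$, and then through $Z\to\bar X$ since finite surjections are subtrusive, landing in some $U_j$ that contains both $\eta$ and $y$. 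The identification $\overline{Y_\genpts\cap U_j}=\interior(U_j)$ is where total integral closedness is genuinely used: $U_j$ is a finitely presented, hence constructible, locally closed subset of $Y$, so by Proposition~\pref{P:pro-constr-and-closure} one has $y\in\interior(U_j)$ if and only if $\Spec(\sO_{Y,y})\subseteq U_j$; and because $\sO_{Y,y}$ is a domain, $\Spec(\sO_{Y,y})$ is squeezed between $\overline{\{\eta\}}$ and any open neighbourhood of $y$, hence lies in $U_j$ as soon as $U_j$ contains both $y$ and $\eta$. Without this argument (or an equivalent) your construction does not close up. A minor further remark: your opening reduction to noetherian $Y$ is both unnecessary---you immediately leave the noetherian world again by passing to $\TIC(Y)$---and not justified by ``the same kind of argument as in~\pref{T:ST-aff}'': descending universal subtrusiveness to a finite level is Theorem~\pref{T:subtrusive-limit-of-subtrusive}, which itself relies on Theorem~\pref{T:ST-aff}.
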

\begin{proof}
Replacing $X$ with an open covering, we can assume that $f$ is separated. By
Zariski's Main Theorem~\cite[Thm.~8.12.6]{egaIV} there is then a factorization
$X\to Y'\to Y$ where $\map{f_1}{X}{Y'}$ is an open immersion and
$\map{f_2}{Y'}{Y}$ is finite. If $Y$ is noetherian then $f_2$ is of finite
presentation. If $Y$ is affine then by~\cite[Rem.~8.12.7]{egaIV} we can find a
factorization such that $f_2$ is of finite presentation.

If we can obtain a refinement $X'\to \TIC(Y)$ of $X\times_Y \TIC(Y)\to \TIC(Y)$
with a factorization of the specified form, then by a limit argument there is a
similar refinement $X'_\lambda\to Y_\lambda$ of $X\times_Y Y_\lambda\to
Y_\lambda$ for some finitely presented finite morphism $Y_\lambda\to Y$. The
refinement $X'_\lambda\to Y_\lambda\to Y$ of $X\to Y$ then has a factorization
of the requested form. We can thus assume that $Y=\TIC(Y)$ is totally
integrally closed.

We will now show that $\map{f=f_2\circ f_1}{X}{Y'\to Y}$ has a refinement
$X'\to Y$ which is an open covering. To show this, we can replace $Y$ with an
open covering and assume that $Y$ is affine. Now as $Y$ is totally integrally
closed and affine, there is a finite and finitely presented surjective morphism
$Y''\to Y'$ such that $Y''$ is a finite disjoint union of closed subschemes
$Y_i\inj Y$, cf.\ Properties~\pref{X:TIC}\ \ref{XTIC:closed-cover}. Let
$X_i=X\times_{Y'} Y_i$. Then $X_i\to Y$ is the composition of an open
quasi-compact immersion $X_i\to Y_i$ and a closed immersion $Y_i\to Y$ of
finite presentation. We can replace $X$ with $\coprod_i X_i$ and $Y'$ with
$\coprod_i Y_i$.

Let $X'=\overline{f^{-1}(Y_\genpts)}\inj X$ with the reduced structure.
Then $X'\to Y$ is surjective by Lemma~\pref{L:surjectivity-of-subtrusions}.
We will now show that $X'=\coprod_i \interior(X_i)$ so that $X'\to Y$ is an
open covering. The key observation is that the immersion $X_i\to
Y_i\to Y$ is of finite presentation and hence \emph{constructible}.
Since every local ring of $Y$ is irreducible, it thus follows from
Proposition~\pref{P:pro-constr-and-closure} that the interior
of $X_i$ coincides with the closure of $Y_\genpts\cap X_i$ in $X_i$.
\end{proof}

The following theorem is~\cite[Thm.~3.1.9]{voevodsky_homology} except that
we do not require that $Y$ is an excellent noetherian scheme:

\begin{theorem}\label{T:ST-voevodsky}
Let $Y$ be an affine or noetherian scheme. Let $\map{f}{X}{Y}$ be a
universally subtrusive morphism of finite presentation. Then there is a
refinement $X'\to Y$ of $f$ which factors as a quasi-compact open covering
$X'\to Y'$ followed by a proper surjective morphism $Y'\to Y$ of finite
presentation.
\end{theorem}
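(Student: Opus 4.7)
The plan is to combine Theorem~\pref{T:ST-aff} with Theorem~\pref{T:STf-noeth/aff}, using the existence of quasi-finite flat quasi-sections for fppf morphisms to bridge between them. First I apply Theorem~\pref{T:ST-aff} to obtain a refinement $g\colon X'\to X$ of $f$ and a factorization $f\circ g\colon X'\xrightarrow{f_1} Y'\xrightarrow{f_2} Y$ with $f_1$ faithfully flat of finite presentation and $f_2$ proper, surjective, and of finite presentation. Since the composition of a finite, surjective morphism of finite presentation with $f_2$ is again proper, surjective, and of finite presentation, the task reduces to refining $f_1$ by a quasi-compact open covering followed by a finite, surjective morphism of finite presentation.

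Since $f_1$ is fppf of finite presentation and $Y'$ is quasi-compact, the existence of quasi-finite flat quasi-sections (\cite[Prop.~17.16.3]{egaIV}), applied locally on $Y'$ and then glued using the quasi-compactness of $Y'$, produces a quasi-finite, faithfully flat morphism $h\colon Z\to Y'$ of finite presentation together with a $Y'$-morphism $Z\to X'$. In particular, $h$ is quasi-finite, universally subtrusive (being fppf, cf.\ Remark~\pref{R:subtrusive-examples}(2b)), and of finite presentation.

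In the noetherian case, $Y'$ is noetherian (proper over $Y$), so Theorem~\pref{T:STf-noeth/aff} applies directly to $h$: it produces a refinement $W\to Z$ such that $W\to Y'$ factors as a quasi-compact open covering $W\to Y''$ followed by a finite, surjective morphism $Y''\to Y'$ of finite presentation, and we conclude with $W\to X'\to X$ and factorization $W\to Y''\to Y$. In the affine case, $Y'$ is only quasi-compact and quasi-separated, so Theorem~\pref{T:STf-noeth/aff} does not apply directly to $h$. Here I mimic the affine-case strategy of Theorem~\pref{T:ST-aff}: applying Proposition~\pref{P:ST-aff+q-compact} to $X\times_Y \TIC(Y)\to\TIC(Y)$, the fppf factor is obtained by pullback from a fppf morphism $\tilde f_\mu\colon \tilde X_\mu\to\tilde W_\mu$ of finite presentation over a \emph{noetherian} scheme $\tilde W_\mu$ (the strict transform of a noetherian approximation of $\TIC(Y)$). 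I apply the quasi-section construction and Theorem~\pref{T:STf-noeth/aff} to $\tilde f_\mu$ at this noetherian level, then pull back to $\TIC(Y)$ and finally descend along the presentation $\TIC(Y)=\varprojlim_\lambda Y_\lambda$ of $\TIC(Y)$ as an inverse limit of finite, finitely presented $Y$-schemes, invoking the limit results~\cite[Thm.~8.8.2, Thm.~8.10.5, Thm.~11.2.6]{egaIV} to transport the refinement back to~$Y$.

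The main obstacle is the bookkeeping needed in the affine case, where the open-covering refinement must be produced at the noetherian level inside the proof of Proposition~\pref{P:ST-aff+q-compact} and transported through two successive limit arguments back to~$Y$, while verifying that every morphism remains of finite presentation and that the final morphism still refines~$f$. Conceptually, however, the only new input beyond the previously established structure theorems is the use of quasi-finite flat quasi-sections to replace the fppf factor of Theorem~\pref{T:ST-aff} by a quasi-finite fppf morphism, to which Theorem~\pref{T:STf-noeth/aff} can then be applied.
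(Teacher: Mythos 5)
Your overall strategy is the paper's: apply Theorem~\pref{T:ST-aff}, replace the fppf factor by a quasi-finite flat quasi-section, and then invoke Theorem~\pref{T:STf-noeth/aff}. The noetherian case of your argument is correct and coincides with the paper's. The affine case, however, has a genuine gap. You propose to re-open the proof of Proposition~\pref{P:ST-aff+q-compact} and apply the quasi-section construction and Theorem~\pref{T:STf-noeth/aff} to the strict transform $\map{\tilde f_\mu}{\tilde X_\mu}{\tilde W_\mu}$ at the noetherian level. But $\tilde f_\mu$ is \emph{not surjective} in general --- the proof of Proposition~\pref{P:ST-aff+q-compact} explicitly notes this; surjectivity of the flat factor is only recovered after base change to $\TIC(Y)$, via Lemma~\pref{L:surjectivity-of-subtrusions} and the universal subtrusiveness of $f$. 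Consequently neither $\tilde f_\mu$ nor its quasi-finite flat quasi-section over $\tilde W_\mu$ is universally subtrusive, so the hypotheses of Theorem~\pref{T:STf-noeth/aff} fail at that level; and the surjectivity is essential there, since the conclusion is an \emph{open covering} and its proof rests on Lemma~\pref{L:surjectivity-of-subtrusions}. Your argument could be repaired by first pulling back to $\TIC(Y)$ to regain surjectivity and then re-descending to a sufficiently large noetherian index (surjectivity descends by~\cite[Thm.~8.10.5]{egaIV}), but as written the step fails.

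The paper avoids this entirely and much more cheaply: it does \emph{not} revisit the internals of Theorem~\pref{T:ST-aff}. Once the factorization $X'\to Y'\to Y$ with $X'\to Y'$ quasi-finite, faithfully flat and of finite presentation and $Y'\to Y$ proper, surjective and of finite presentation is in hand over the affine $Y$, one writes $Y$ as an inverse limit of noetherian affine schemes and descends the whole diagram --- including surjectivity and faithful flatness of $X'\to Y'$ --- to a finite level by~\cite[Thm.~8.10.5, Thm.~11.2.6]{egaIV}. At that level $Y'$ is noetherian (being of finite type over a noetherian scheme), so Theorem~\pref{T:STf-noeth/aff} applies directly, and the resulting refinement pulls back to $Y$. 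You should replace your affine-case bookkeeping with this single limit argument performed over $Y$ itself.
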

\begin{proof}
By Theorem~\pref{T:ST-aff} we have a refinement
$X'\to Y$ of $f$ together with a factorization $X'\to Y'\to Y$ where $X'\to Y'$
is fppf and $Y'\to Y$ is proper. Taking a
quasi-section~\cite[Cor.~17.16.2]{egaIV} we can in addition assume that $X'\to
Y'$ is quasi-finite. If $Y$ is not noetherian but affine, we can write $Y$ as a
limit of noetherian schemes and consequently we can assume that $Y$ and $Y'$
are noetherian.
By Theorem~\pref{T:STf-noeth/aff} we can now refine $X'\to Y'$ into an open
covering followed by a finite morphism.
\end{proof}

\begin{remark}
Using the limit methods of Thomason and
Trobaugh~\cite[App.~C]{thomason-trobaugh}, we can replace the condition that
$Y$ is affine or noetherian in
Theorems~\pref{T:ST-aff}--\pref{T:ST-voevodsky}
with the condition that $Y$ is quasi-compact and quasi-separated.
\end{remark}

\begin{remark}\label{R:st-non-fp}
If $Y$ is quasi-compact and quasi-separated and $\map{f}{X}{Y}$ is a
quasi-separated morphism of finite type, then there is a finitely presented
morphism $\map{f'}{X'}{Y}$ and a closed immersion $X\inj X'$ of
$Y$-schemes. This follows from similar limit methods as
in~\cite[App.~C]{thomason-trobaugh}, cf.\ \cite[Thm.~4.3]{conrad_nagata}.
Using this fact, analogues of Theorems~\pref{T:ST-aff}--\pref{T:ST-voevodsky}
for universally subtrusive morphisms of \emph{finite type} can be proved, at
least if the base scheme has a finite number of irreducible components. In
these analogues, the flat and open coverings are of finite presentation but the
proper and the finite morphisms need not be. For example,
Proposition~\pref{P:ST-qc} for $\map{f}{X}{Y}$ of finite type and $Y$ with a
finite number of components follows from~\cite[Thm.~3.4.6]{raynaud-gruson}.
\end{remark}

\end{section}


\begin{section}{Descent of locally closed subsets}\label{S:loc-closed}
Recall that a subset $E\subseteq X$ is locally closed if every point $x\in E$
admits an open neighborhood $U$ such that $E\cap U$ is closed in
$U$. Equivalently, $E$ is the intersection of an open subset and a closed
subset.
Recall that a locally closed subset $E\subseteq X$ is retrocompact if and only
if $E$ is pro-constructible and if and only if $E\to X$ is quasi-compact.

Let $\map{f}{S'}{S}$ be a faithfully flat and quasi-compact morphism of schemes
and let $E\subseteq S$ be a subset. Then $E$ is locally closed and
retrocompact, if and only if $f^{-1}(E)\subseteq S'$ is locally closed and
retrocompact~\cite[Prop.~7.3.7]{egaI_NE}.
In this section, we give generalizations of this result for
universally subtrusive morphisms. The proof of Theorem~\pref{T:loc-closed-1}
only requires the results of \S\pref{S:subtrusive} whereas
Theorem~\pref{T:loc-closed-2} depends upon a structure theorem in
\S\pref{S:structure-theorems}.

\begin{theorem}\label{T:loc-closed-1}
Let $\map{f}{S'}{S}$ be a universally subtrusive morphism of schemes and let
$E\subseteq S$ be a subset. Then $E$ is locally closed and constructible if and
only if $f^{-1}(E)$ is locally closed and constructible. If
$f$ is also quasi-compact,
then $E$ is locally closed and retrocompact if and only if $f^{-1}(E)$ is
locally closed and retrocompact.
\end{theorem}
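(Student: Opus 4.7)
The ``only if'' direction is immediate, since constructibility and local closedness are both stable under pullback. For the ``if'' direction, I would split the argument into descent of constructibility followed by descent of local closedness. For constructibility: $f$ universally subtrusive implies $f^{\cons}$ submersive (Definition~\pref{D:subtrusive}), and the hypothesis that $f^{-1}(E)$ is constructible means both $f^{-1}(E)$ and $S'\setminus f^{-1}(E)$ are pro-constructible (closed in $|S'|^{\cons}$); transporting through $f^{\cons}$ shows the same for $E$ and its complement, so $E$ is constructible.

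For local closedness, the constructibility of $E$ makes $\overline{E}\setminus E$ pro-constructible, so by Proposition~\pref{P:pro-constr-and-closure} it suffices to prove $\overline{E}\setminus E$ is stable under specialization in $S$. I would argue by contradiction: assume $y\leq y'$ in $S$ with $y\in E$ and $y'\in\overline{E}\setminus E$. By condition~(iv) of Proposition~\pref{P:S-subtrusive} applied to $E$, some $w'\in\overline{f^{-1}(E)}$ maps to $y'$; using $\overline{f^{-1}(E)}=S(f^{-1}(E))$ (Proposition~\pref{P:pro-constr-and-closure} applied to $f^{-1}(E)$), choose $z'\in f^{-1}(E)$ with $w'\leq z'$, and set $y'':=f(z')\in E$. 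This produces the $3$-chain $y\leq y'\leq y''$ in $S$ whose endpoints lie in $E$ but whose middle does not.

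The crux of the argument, and the main obstacle, is that the pair-lifting content of $S$-subtrusiveness is not strong enough to coordinate two separate pair-lifts (of $y\leq y'$ and of $y'\leq y''$) along a common middle point. Instead I would invoke Proposition~\pref{P:lifting-of-chains}: since our chain has the lower bound $y$, universal subtrusiveness yields a simultaneous lift $\tilde x_1\leq\tilde x_2\leq\tilde x_3$ in $S'$ with $f(\tilde x_i)$ equal to $y,y',y''$ respectively. Writing $f^{-1}(E)=U\cap V$ with $U$ open and $V=\overline{f^{-1}(E)}$ closed, both $\tilde x_1$ and $\tilde x_3$ lie in $U\cap V$. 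Closedness of $V$ together with $\tilde x_2\leq\tilde x_3$ gives $\tilde x_2\in V$; openness of $U$ together with $\tilde x_1\leq\tilde x_2$ gives $\tilde x_2\in U$. Hence $\tilde x_2\in f^{-1}(E)$, contradicting $f(\tilde x_2)=y'\notin E$. Thus $\overline{E}\setminus E$ is stable under specialization and $E$ is locally closed.

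For the retrocompact variant, the local closedness descent is the same, and ``retrocompact''---equivalent to quasi-compactness of the inclusion---descends because $f$ is quasi-compact: Proposition~\pref{P:descent-of-top-props}(iii) applied to the inclusion $E\hookrightarrow S$ with its pullback $f^{-1}(E)\hookrightarrow S'$ yields exactly this equivalence.
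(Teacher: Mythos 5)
Your treatment of the constructible case is correct and is essentially the paper's own argument: the same reduction to showing that $Z=\overline{E}\setminus E$ is pro-constructible and stable under specialization, and the same key input, namely a three-point chain lifted via Proposition~\pref{P:lifting-of-chains}. (You phrase it by contradiction with both endpoints of the chain in $E$; the paper argues directly with a chain $s\leq z\leq e$ having only its top point in $E$; the mechanism --- a locally closed set is open in its closure, so the middle point of a lifted chain is trapped between the open and the closed condition --- is identical.) Your identification of chain lifting, rather than mere pair lifting, as the crux is exactly right.

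The gap is in the retrocompact case, where you assert that ``the local closedness descent is the same.'' It is not: there $f^{-1}(E)$ is only pro-constructible, so submersiveness of $f^{\cons}$ gives you only that $E$ is pro-constructible, and then $Z=\overline{E}\cap(S\setminus E)$ is the intersection of a pro-constructible set with an \emph{ind}-constructible one, which is not automatically pro-constructible --- so the appeal to Proposition~\pref{P:pro-constr-and-closure} (equivalently, Corollary~\pref{C:constr+S<=>Zariski}) is not yet available. The paper closes exactly this hole with an extra step: writing $E'=f^{-1}(E)$, the restriction $\overline{E'}\to\overline{E}$ is quasi-compact and surjective (surjectivity by $S$-subtrusiveness), hence submersive in the constructible topology; since $E'$ is \emph{open} in $\overline{E'}$, it follows that $E$ is ind-constructible in $\overline{E}$, hence constructible in $\overline{E}$, and therefore $Z$ is pro-constructible after all. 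You need this step (or a substitute) before your specialization argument can be converted into closedness of $Z$. Your derivation of retrocompactness of $E$ itself via Proposition~\pref{P:descent-of-top-props}~(iii) is fine --- the paper instead just quotes that a locally closed pro-constructible subset is retrocompact --- but it only applies once local closedness has been established.
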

\begin{proof}
If $E$ is locally closed and constructible (resp.\ retrocompact) then so is
$f^{-1}(E)$. Assume that $E'=f^{-1}(E)$ is locally closed and constructible
(resp.\ retrocompact). Then $E$ is constructible (resp.\ pro-constructible) since
$f$ is submersive in the constructible topology, and we have that
$\overline{E}=S(E)$ by Proposition~\pref{P:pro-constr-and-closure}. The theorem
follows if we show that $Z=\overline{E}\setminus E=S(E)\setminus E$ is
closed. By Corollary~\pref{C:constr+S<=>Zariski} it is enough to show that $Z$
is pro-constructible and stable under specialization.

If $f$ is quasi-compact, then
$\map{f|_{\overline{E'}}}{\overline{E'}}{\overline{E}}$ is quasi-compact and
surjective since $f$ is $S$-subtrusive. In particular, we have that
$f|_{\overline{E'}}$ is submersive in the constructible topology. It follows
that $E$ is ind-constructible in $\overline{E}$ since $f^{-1}(E)=E'$ is open in
$\overline{E'}$. Thus, in both cases $E$ is constructible as a subset
of $\overline{E}$ and so is its complement $Z$.

Let $z\in Z$ and let $s\in S$ be a specialization of $Z$. Then there exists a
generization $e\in E$ of $z$ and we obtain the ordered triple $s\leq z\leq
e$ in $\overline{E}$. As $f$ is \emph{universally} subtrusive, there exists by
Proposition~\pref{P:lifting-of-chains} a lifting $s'\leq z'\leq e'$ of this
chain to $S'$ where $e'\in E'$ and $z'\notin E'$. As $E'$ is locally closed it
follows that $s'\notin E'$ and hence $s\in Z=\overline{E}\setminus E$.
\end{proof}

\begin{theorem}\label{T:loc-closed-2}
Let $\map{f}{S'}{S}$ be a morphism of algebraic spaces which is either
\begin{enumerate}
\item open and surjective,
\item closed and surjective,
\item universally subtrusive of finite presentation.
\end{enumerate}
Then a subset $E\subseteq S$ is locally closed if and only if
$f^{-1}(E)\subseteq S'$ is locally closed.
\end{theorem}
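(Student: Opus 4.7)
The ``only if'' direction is immediate from continuity of $f$, so the plan is to prove the converse using the criterion that $E\subseteq S$ is locally closed if and only if $\overline{E}\setminus E$ is closed in $S$. I will treat the three cases separately, with~(iii) reduced to~(i) and~(ii) via the structure theorem for universally subtrusive morphisms.

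For~(i), the key observation is that an open surjective morphism commutes with closures of subsets of the target in the sense $\overline{f^{-1}(E)}=f^{-1}(\overline{E})$, which follows directly from openness of $f$. Consequently $f^{-1}(\overline{E}\setminus E)=\overline{f^{-1}(E)}\setminus f^{-1}(E)$ is closed in $S'$, and submersiveness of~$f$ descends this closedness to~$S$.

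For~(ii), the dual identity $f(\overline{A})=\overline{f(A)}$ valid for closed~$f$, applied to $A=f^{-1}(E)$ together with surjectivity of~$f$, gives $f(\overline{f^{-1}(E)})=\overline{E}$; a short set-theoretic verification sharpens this to
\[
f\bigl(\overline{f^{-1}(E)}\setminus f^{-1}(E)\bigr)=\overline{E}\setminus E.
\]
This exhibits $\overline{E}\setminus E$ as the image under the closed map~$f$ of the closed set $\overline{f^{-1}(E)}\setminus f^{-1}(E)$, hence as closed in~$S$.

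For~(iii) the plan is to reduce to~(i) and~(ii). Since local closedness is \etale-local on the target, I would first reduce to $S$ quasi-compact and quasi-separated, and then combine an \etale{} scheme-chart with case~(i) applied to this \etale{} surjection to reduce further to the case where $S$ and $S'$ are qcqs schemes. Theorem~\pref{T:ST-voevodsky}, in the qcqs-base form from the remark following it, then yields a refinement $\map{h}{X'}{S}$ of~$f$ factoring as a quasi-compact open covering $\map{h_1}{X'}{Y'}$ followed by a proper surjective morphism $\map{h_2}{Y'}{S}$ of finite presentation. The preimage of~$E$ in $X'$ coincides with the preimage of $f^{-1}(E)$ along $X'\to S'$, hence is locally closed by hypothesis; applying case~(i) to $h_1$ yields local closedness of $h_2^{-1}(E)$ in $Y'$, and applying case~(ii) to the closed surjection $h_2$ then gives that $E$ is locally closed in~$S$. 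I anticipate the main obstacle to be threading the reduction from algebraic spaces to schemes compatibly with the structure theorem, which is stated only for schemes; the closure manipulations of~(i) and~(ii) are essentially routine topology.
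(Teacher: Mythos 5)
Your proposal is correct and follows essentially the same route as the paper: the paper also dismisses (i) and (ii) as an easy exercise (your closure identities $\overline{f^{-1}(E)}=f^{-1}(\overline{E})$ for open $f$ and $f(\overline{A})=\overline{f(A)}$ for closed $f$ are exactly the intended argument), reduces (iii) to the scheme case by \'etale-localizing via case (i), and then invokes the structure theorem to refine $f$ into an open surjection followed by a closed surjection. The only cosmetic difference is that the paper cites Theorem~\pref{T:ST-aff} (fppf followed by proper) rather than Theorem~\pref{T:ST-voevodsky}; both yield the needed open-then-closed factorization.
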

\begin{proof}
The condition is clearly necessary and the sufficiency when $f$ is as in (i)
or (ii) is an easy exercise left to the reader.
%
%
Let $f$ be as in (iii)
and assume that
$f^{-1}(E)$ is locally closed.
According to (i) the question is local in the \etale{} topology so we
can assume that $S$ and $S'$ are affine schemes.
By Theorem~\pref{T:ST-aff} there is a refinement $S''\to S$ of $f$ which
factors as an open surjective morphism followed by a closed surjective
morphism. It follows that $E$ is locally closed from
the cases (i) and (ii).
\end{proof}


The following example shows that neither theorem is true if we replace
universally subtrusive with universally submersive.

\begin{example}\label{E:submersive-non-subtrusive}
Let $S$ be the spectrum of a valuation ring $V$ of dimension two. Then
$\Spec(S)=\{x_0\leq x_1\leq x_2\}$. Let $s,t\in V$ be elements such that
$\Spec(V/s)=\{x_0,x_1\}$ and $\Spec(V_t)=\{x_1,x_2\}$. Let $S'=\Spec(V/s\times
V_t)$ with the natural morphism $\map{f}{S'}{S}$. Then $f$ is a universally
submersive morphism of finite presentation, cf.\
\cite[Cor.~33]{picavet_submersion}. Let $E=\{x_0,x_2\}\subset S$ be the subset
consisting of the minimal and the maximal point. Then $E$ is not locally closed
but $f^{-1}(E)$ is locally closed and constructible.
\end{example}

\end{section}


\begin{section}{Effective descent of \etale{} morphisms}\label{S:descent}

In this section, we will show that quasi-compact universally subtrusive
morphisms are morphisms of effective descent for the fibered category of
quasi-compact and separated \etale{} schemes. We will also show that this holds
for the fibered category of quasi-compact, but not necessarily separated,
\etale{} \emph{algebraic spaces}.

There is no need to include algebraic spaces when considering separated
\etale{} morphisms as any separated locally quasi-finite morphism of algebraic
spaces is representable by schemes. On the other hand, starting with a
non-separated \etale{} scheme equipped with a descent datum, this can descend
to an algebraic space which is not a scheme. We therefore need to extend the
basic results about \etale{} morphisms to algebraic spaces and this is done in
Appendix~\ref{S:etale-henselian}. The methods and results of this section are
similar to and extend those of~\cite[Exp.~IX]{sga1}.

\begin{notation}
Let $\Sch$ be the category of quasi-separated schemes. Let $\catE$ be the
following fibered category over $\Sch$: The objects of $\catE$ are \etale{}
morphisms $X\to S$ where $X$ is an algebraic space. The morphisms of $\catE$
are commutative squares $(X',S')\to (X,S)$. The structure functor
$\catE\to\Sch$ is the forgetful functor taking an object $X\to S$ to its target
$S$ and a morphism $(X',S')\to (X,S)$ to the morphism $S'\to S$.
We will also consider the following fibered full subcategories of $\catE$ where
the objects are:
\begin{align*}
\catE_{\sep} &= \{ \text{\etale{} and separated morphisms} \} \\
\catE_{\qc} &= \{ \text{\etale{} and quasi-compact morphisms} \} \\
\catE_{\sep,\qc} &= \{ \text{\etale{}, separated and
quasi-compact morphisms} \} \\
\catE_{\finite} &= \{ \text{\etale{} and finite morphisms} \}.
\end{align*}
It follows from Proposition~\pref{P:etale+sep-is-repr} that the objects of
$\catE_{\finite}\subseteq \catE_{\sep,\qc}\subseteq \catE_{\sep}$ are morphisms
of \emph{schemes}.
\end{notation}

\begin{remark}
We have chosen to use $\Sch$ as the base category for convenience. We could
instead have used the category of affine schemes or the category of algebraic
spaces and all results would have remained valid as can be seen from
Proposition~\pref{P:fppf-descent}.
Note that as algebraic spaces are assumed to be quasi-separated, the objects of
$\catE_{\qc}$ are of finite presentation. In particular, the category
$\catE_{\qc/S}$ is equivalent to the category of constructible sheaves on
$S$, cf.\ proof of Proposition~\pref{P:hensel-equiv}.
\end{remark}

\begin{proposition}[{\cite[Exp.~IX, Cor.~3.3]{sga1}}]
\label{P:univ-sub-is-m.o.d.}
Let $\map{f}{S'}{S}$ be a universally submersive morphism of schemes. Then $f$
is a morphism of $\catE$-descent. This means that for \etale{} morphisms $X\to
S$ and $Y\to S$ the sequence
$$\xymatrix{
\Hom_S(X,Y)\ar[r] & \Hom_{S'}(X',Y')\ar@<.5ex>[r]\ar@<-.5ex>[r]
 & \Hom_{S''}(X'',Y'')}$$
is exact, where $X'$ and $Y'$ are the pull-backs of $X$ and $Y$ along $S'\to
S$, and $X''$ and $Y''$ are the pull-backs of $X$ and $Y$ along $S''=S'\times_S
S'\to S$.
\end{proposition}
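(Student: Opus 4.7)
The plan is to reformulate $S$-morphisms $X\to Y$ as ``graphs'' and descend these graphs using only the submersivity of $f$. Since $Y\to S$ is \'etale, the projection $Z:=X\times_S Y\to X$ is \'etale, so its diagonal is an open immersion, and every section of it is an open immersion. Consequently $\Hom_S(X,Y)$ is in bijection with the set of open subschemes $\Gamma\subseteq Z$ for which the projection $\Gamma\to X$ is an isomorphism. I will carry out both the injectivity and the effectivity of the descent at this level.

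For injectivity, two morphisms $\varphi_1,\varphi_2\in\Hom_S(X,Y)$ with equal pullback to $S'$ give graphs $\Gamma_1,\Gamma_2\subseteq Z$ with equal preimage in $Z':=Z\times_S S'$. Since $f$ is surjective (being universally submersive), so is $Z'\to Z$, and therefore $\Gamma_1=\Gamma_2$ as sets, hence as open subschemes of $Z$.

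For effectivity, let $\varphi'\colon X'\to Y'$ be a datum whose two pullbacks to $X''$ agree, and write $\Gamma'\subseteq Z'$ for its graph. The cocycle condition says $\mathrm{pr}_1^{-1}(\Gamma')=\mathrm{pr}_2^{-1}(\Gamma')$ inside $Z'':=Z\times_S S''$, which is exactly the statement that $\Gamma'$ is the preimage, under $f_Z\colon Z'\to Z$, of its own set-theoretic image $\Gamma\subseteq Z$. Since $f_Z$ is a base change of the universally submersive $f$, it is submersive, and $\Gamma$ is therefore open in $Z$. It remains to verify that the open subscheme $\Gamma\subseteq Z$ projects isomorphically onto $X$; this will then be the graph of the descended morphism $\varphi\in\Hom_S(X,Y)$.

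The main obstacle is this last verification: since $f$ is not assumed flat, standard fppf descent of ``being an isomorphism'' does not apply, and I will instead exploit that $\Gamma\to X$ is \'etale (inherited from $Z\to X$) and becomes the isomorphism $\Gamma'\to X'$ after base change along the surjection $X'\to X$. Set-theoretic surjectivity descends along surjections, so $\Gamma\to X$ is surjective. For the monomorphism property, \'etaleness makes the diagonal $\Gamma\to\Gamma\times_X \Gamma$ an open immersion, and its base change along $X'\to X$ is the diagonal of the isomorphism $\Gamma'\to X'$, hence itself an isomorphism, in particular surjective; descending surjectivity once more, the diagonal of $\Gamma\to X$ is a surjective open immersion, hence an isomorphism. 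Thus $\Gamma\to X$ is an \'etale monomorphism, therefore an open immersion, and being also surjective it is an isomorphism, which completes the descent.
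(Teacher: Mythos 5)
Your proof is correct and is essentially the paper's own argument: the paper reduces the statement to Corollary~\pref{C:WWW}, which encodes exactly your identification of $\Hom_S(X,Y)$ with open "graphs" $\Gamma\subseteq X\times_S Y$, after which descent amounts to descending an open subset along a submersion together with the set-theoretic conditions on $\Gamma\to X$. The only cosmetic difference is that the paper phrases the graph condition as "universally injective and surjective" (which descends along surjections immediately), whereas you re-derive universal injectivity by descending surjectivity of the diagonal; both amount to the same verification.
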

\begin{proof}
Follows easily from Corollary~\pref{C:WWW}.
\end{proof}

\begin{proposition}
Let $\map{f}{S'}{S}$ be a universally \emph{subtrusive} morphism of schemes.
Let $X\to S$ be an \etale{} morphism. If $X\times_S S'$ has one of the
properties: universally closed, separated, quasi-compact; then so has
$X\to S$. In particular, if $X\times_S S'\to S'$ lies in one of the categories:
$\catE_{\sep}$, $\catE_{\qc}$, $\catE_{\sep,\qc}$, $\catE_{\finite}$; then so
does $X\to S$.
\end{proposition}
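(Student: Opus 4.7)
The plan is to reduce everything to Proposition \pref{P:descent-of-top-props} applied with $g=f$ and to the projection $f'\colon X\times_S S'\to S'$. Since $f$ is universally subtrusive, every base change $f_T\colon S'\times_S T\to T$ is subtrusive, hence submersive by Corollary \pref{C:constr+S<=>Zariski}; so $f$ is universally submersive. Moreover, by the very definition of subtrusive, $f^{\cons}$ is submersive, and in fact $(f^{\cons})_T = (f_T)^{\cons}$ is submersive for every base change, so $f^{\cons}$ is universally submersive as well. Note that, by the remark at the end of Section 2, all of this applies when $X$ and $X'$ are algebraic spaces (we may pass to \'etale presentations).

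For separatedness and universal closedness, I would invoke Proposition \pref{P:descent-of-top-props}(ii) directly: since $g=f$ is universally submersive, a property such as \emph{separated} or \emph{universally closed} holds for $X\to S$ if and only if it holds for the pull-back $X\times_S S' \to S'$. For quasi-compactness, I would apply Proposition \pref{P:descent-of-top-props}(iii), which needs only that $g^{\cons}=f^{\cons}$ is submersive, as established above.

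The three categories $\catE_{\sep}$, $\catE_{\qc}$, $\catE_{\sep,\qc}$ then follow by combining the three properties with the already-assumed \'etaleness of $X\to S$. For $\catE_{\finite}$, I would use that a morphism of algebraic spaces is finite \'etale if and only if it is \'etale, separated, quasi-compact and universally closed: indeed, quasi-compact plus \'etale gives finite presentation and (locally) quasi-finite, so together with universally closed and separated one has proper plus quasi-finite, hence finite (via Zariski's main theorem, Theorem~\cite[Thm.~8.12.6]{egaIV}, in the form for quasi-finite separated algebraic spaces recalled in Appendix~\ref{S:etale-henselian}). Applying the three descent statements above to $X\times_S S'\to S'\in\catE_{\finite}$ yields the four properties for $X\to S$, hence $X\to S\in\catE_{\finite}$.

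The only mildly delicate point is the implicit use of algebraic spaces: one must know that Proposition \pref{P:descent-of-top-props} still holds when $X, X'$ are algebraic spaces (not just schemes). This is covered by the remark following Proposition \pref{P:descent-of-top-props}, so no new argument is needed. Everything else is a direct citation.
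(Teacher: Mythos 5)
Your proposal is correct and follows essentially the same route as the paper: the paper's proof simply cites Proposition~\pref{P:descent-of-top-props} (parts (ii) and (iii), with $g=f$ universally submersive and $f^{\cons}$ submersive, exactly as you spell out) and then handles $\catE_{\finite}$ by recalling that an \etale{} morphism is finite if and only if it is separated, quasi-compact and universally closed. The only cosmetic difference is that the paper cites \cite[Thm.~8.11.1]{egaIV} for that last characterization rather than rederiving it from Zariski's main theorem, and your explicit attention to the algebraic-space case of Proposition~\pref{P:descent-of-top-props} is a sound (if implicit in the paper) precaution.
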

\begin{proof}
This follows immediately from Proposition~\pref{P:descent-of-top-props}. For
the last statement, recall that the \etale{} morphism $X\to S$ is finite if and
only if it is separated, quasi-compact and universally
closed~\cite[Thm.~8.11.1]{egaIV}.
\end{proof}

\begin{xpar}[Descent data]
Let $S'\to S$ be any morphism and let $S''=S'\times_S S'$ and $S'''=S'\times_S
S'\times_S S'$. Let $X\to S$ be an \etale{} morphism, $X'=X\times_S S'$ and
$X''=X\times_S S''$. Then $X''$ is canonically $S''$-isomorphic with
$\pi_1^{*}X'$ and $\pi_2^{*}X'$ where $\map{\pi_1,\pi_2}{S''}{S'}$ are the two
projections. In particular we have an $S''$-isomorphism
$\map{\varphi}{\pi_1^{*}X'}{\pi_2^{*}X'}$ satisfying the \emph{cocycle
condition}, i.e., if $\map{\pi_{ij}}{S'''}{S''}$ denotes the projection on the
$i$\textsuperscript{th} and $j$\textsuperscript{th} factors then
\newcommand{\can}{\mathrm{can}}
$$\xymatrix@C=1mm@R=8.6mm{%
& \pi_{12}^{*}\pi_2^{*}X'\ar[rrrr]^{\can}_{\iso}
&&&& \pi_{23}^{*}\pi_1^{*}X'\ar[dr]^{\pi_{23}^*(\varphi)} & \\
\pi_{12}^{*}\pi_1^{*}X'\ar[ur]^{\pi_{12}^*(\varphi)}
&&&\circ&&& \pi_{23}^{*}\pi_2^{*}X'\ar[dl]_{\can}^{\iso} \\
& \pi_{31}^{*}\pi_2^{*}X'\ar[ul]_{\can}^{\iso}
&&&& \pi_{31}^{*}\pi_1^{*}X'\ar[llll]_{\pi_{31}^*(\varphi)} &
}$$
commutes.
Conversely, given an \etale{} morphism $X'\to S'$ we say that an
$S''$-isomorphism $\map{\varphi}{\pi_1^{*}X'}{\pi_2^{*}X'}$ satisfying the
cocycle condition is a \emph{descent datum} for $X'\to
S'$. We say that $(X'\to S',\varphi)$ is \emph{effective} if it is isomorphic
to the canonical descent datum associated with an \etale{} morphism $X\to S$ as
above. If $S'\to S$ is a morphism of $\catE$-descent, e.g., universally
submersive, then there is at most one morphism $X\to S$ which descends $(X'\to
S',\varphi)$.

We say that $S'\to S$ is a morphism of \emph{effective} $\catE$-descent if
every object $(X'\to S')\in \catE_{/S'}$ equipped with a descent datum is
effective. We say that $S'\to S$ is a morphism of universal $\catE$-descent
(resp.\ universal effective $\catE$-descent) if $S'\times_S T\to T$ is a
morphism of $\catE$-descent (resp.\ effective $\catE$-descent) for any base
change $T\to S$.
\end{xpar}

We briefly state some useful reduction results.

\begin{proposition}[{\cite[Prop.~10.10, Prop.~10.11]{giraud_descente}}]
\label{P:giraud-composition}
Let $\catF$ be a category fibered over $\Sch$. Let $\map{f}{X}{Y}$ and
$\map{g}{Y}{S}$ be morphisms of schemes. If $f$ and $g$ are morphisms of
universal effective $\catF$-descent, then so is $g\circ f$. If $g\circ f$ is a
morphism of universal effective $\catF$-descent, then so is $g$.
\end{proposition}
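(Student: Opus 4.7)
The strategy is to work with the canonical factorization
\[
\catF_{/S}\xrightarrow{\alpha}\mathrm{Desc}(Y/S)\xrightarrow{\beta}\mathrm{Desc}(X/S)
\]
of the descent functor for $h=g\circ f$, where $\alpha(\zeta)=(g^{*}\zeta,\mathrm{can})$ and $\beta(\eta,\psi)=(f^{*}\eta,(f\times_S f)^{*}\psi)$. The first assertion amounts to proving $\beta\circ\alpha$ is an equivalence knowing that each factor is (after any base change), and the second to proving $\alpha$ is an equivalence given that $\beta\circ\alpha$ is.

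For part (i), given a descent datum $(\xi,\phi)\in\mathrm{Desc}(X/S)$, I would first restrict $\phi$ along $X\times_Y X\to X\times_S X$ to obtain a descent datum for $\xi$ relative to $f:X\to Y$ (the cocycle and compatibility with the canonical datum on the diagonal follow from the cocycle for $\phi$ and the triviality of $\psi$ on diagonals). Effective descent for $f$ then yields $\eta\in\catF_{/Y}$ with $f^{*}\eta\iso\xi$. Next I would descend the original $\phi$ on $X\times_S X$ along $f\times_S f:X\times_S X\to Y\times_S Y$ to produce the required $\psi$ on $Y\times_S Y$; for this I need $f\times_S f$ to be a morphism of $\catF$-descent, which I would establish as a sub-lemma by factoring it into two base changes of $f$ and showing that successive base changes of a morphism of universal descent compose to a morphism of descent. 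The cocycle for $\psi$ descends similarly from the cocycle for $\phi$ along $f^{\times 3}$. Effective descent for $g$ then delivers the sought $\zeta\in\catF_{/S}$, and the same argument applied after any base change $T\to S$ gives universality.

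For part (ii), faithfulness of $\alpha$ is immediate from that of $\beta\circ\alpha$. For fullness and essential surjectivity the key move is to exploit the \emph{universality} of the hypothesis: base-changing $h$ along $g:Y\to S$ yields a morphism $h_Y:X\times_S Y\to Y$ of effective $\catF$-descent, factoring as $X\times_S Y\xrightarrow{f\times\mathrm{id}_Y}Y\times_S Y\xrightarrow{\pi_2}Y$. For essential surjectivity, given $(\eta,\psi)\in\mathrm{Desc}(Y/S)$ I would form $\beta(\eta,\psi)$ and use effective descent of $h$ to extract $\zeta\in\catF_{/S}$ together with an isomorphism $\omega:h^{*}\zeta\iso f^{*}\eta$ matching descent data after $(f\times_S f)^{*}$. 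Pulling $\omega$ back along $\mathrm{pr}_X:X\times_S Y\to X$ and composing with $(f\times\mathrm{id}_Y)^{*}\psi:\mathrm{pr}_X^{*}f^{*}\eta\to\mathrm{pr}_Y^{*}\eta=h_Y^{*}\eta$ yields an isomorphism $h_Y^{*}g^{*}\zeta\iso h_Y^{*}\eta$ on $X\times_S Y$; the cocycle condition on $\psi$ ensures compatibility with the canonical descent data relative to $h_Y$. Effective descent of $h_Y$ then descends it to the required isomorphism $g^{*}\zeta\iso\eta$ in $\catF_{/Y}$, which a final check confirms lies in $\mathrm{Desc}(Y/S)$. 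Fullness is the analogous but simpler argument.

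The hard part will be the essential surjectivity in part (ii): with no descent hypothesis placed on $f$ itself, the isomorphism $\omega$ on $X$ produced by effective descent for $h$ cannot be descended along $f$ directly. The remedy is to package it together with the datum $\psi$ (pulled back via $f\times\mathrm{id}_Y$) and descend the combined isomorphism along the base change $h_Y$, which is available only because the hypothesis on $h$ is \emph{universal}, so that its base change along $g$ remains of effective descent.
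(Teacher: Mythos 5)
The paper offers no proof of this proposition at all---it is quoted directly from Giraud's \emph{M\'ethode de la descente} (Prop.~10.10 and 10.11)---and your argument is correct and is in substance Giraud's own: restriction of the datum to $X\times_Y X$, descent of $\varphi$ along $f\times_S f$ via its factorization into base changes of $f$ for part one, and the base change $h_Y=h\times_S Y$ to circumvent the absence of any descent hypothesis on $f$ for part two. The only cosmetic remark is that descending the isomorphism along $h_Y$ in part two uses only that $h_Y$ is a morphism of descent (full faithfulness), which is part of the universal-effective-descent package rather than effectiveness proper.
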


\begin{proposition}[{\cite[Thm.~10.8 (ii)]{giraud_descente}}]
\label{P:giraud-pullbacks}
Let $\catF$ be a category fibered over $\Sch$. Let $\map{f}{S'}{S}$ be a
morphism of universal $\catF$-descent and let $\map{g}{T}{S}$ be a morphism of
universal effective $\catF$-descent. Let $\map{f'}{T'}{T}$ be the pull-back of
$f$ along $g$. Let $x'\in\catF_{/S'}$ be an object equipped with a descent datum
$\varphi$. Let $y'\in\catF_{/T'}$ and $\varphi_T$ be the pull-back of $x'$ and
$\varphi$ along~$g$. If $(y',\varphi_T)$ is effective, then so is
$(x',\varphi)$. In particular, if $f'$ and $g$ are morphisms of universal
effective $\catF$-descent, then so is $f$.
\end{proposition}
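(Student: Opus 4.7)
The plan is to first descend $y'$ along $f'$ to an object $y\in\catF_{/T}$, then to equip $y$ with a descent datum with respect to $g$, and finally to invoke the universal effective descent property of $g$ to obtain the desired $x\in\catF_{/S}$ descending $(x',\varphi)$.

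The first step is immediate: by hypothesis $(y',\varphi_T)$ is effective, so there exists $y\in\catF_{/T}$ with $(f')^{*}y\cong y'$ compatibly with $\varphi_T$. To equip $y$ with a descent datum $\psi\colon q_1^{*}y\to q_2^{*}y$ on $T\times_S T$, where $q_1,q_2$ denote the two projections to $T$, I work on the covering $h\colon T'\times_S T'\to T\times_S T$. This morphism is a composition of two pullbacks of $f$ and is therefore a morphism of universal $\catF$-descent. Unwinding the identifications $y'\cong x'|_{T'}$ and $(f')^{*}y\cong y'$, both $h^{*}q_1^{*}y$ and $h^{*}q_2^{*}y$ become canonically isomorphic to the two pullbacks of $x'$ along the projections $T'\times_S T'\to S'$; consequently, pulling back $\varphi$ from $S'\times_S S'$ to $T'\times_S T'$ produces an isomorphism $\widetilde\psi\colon h^{*}q_1^{*}y\to h^{*}q_2^{*}y$.

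To see that $\widetilde\psi$ descends to a morphism $\psi$ on $T\times_S T$, I verify its compatibility on the double fibre product $(T'\times_S T')\times_{T\times_S T}(T'\times_S T')$, and then check the cocycle condition for $\psi$ on $T\times_S T\times_S T$ by pulling further back to an appropriate triple fibre product of copies of $T'$ over $S$. In both cases the verification reduces, after tracing through the identifications, to the cocycle condition satisfied by $\varphi$ on $S'\times_S S'\times_S S'$. The combinatorial bookkeeping over these triple and quadruple fibre products is the most calculational part of the argument and where I expect the real work to lie; everything else is formal once $\psi$ is in place.

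With the pair $(y,\psi)$ in hand, the universal effective $\catF$-descent property of $g$ produces $x\in\catF_{/S}$ with $g^{*}x\cong y$ compatibly with $\psi$. To verify that $f^{*}x\cong x'$ compatibly with $\varphi$, observe that both $f^{*}x$ and $x'$ pull back to $y'$ over $T'=T\times_S S'$; since $T'\to S'$ is the pullback of $g$ along $f$ and $g$ is of universal $\catF$-descent, this isomorphism on $T'$ descends to the desired isomorphism $f^{*}x\cong x'$ over $S'$, and its compatibility with $\varphi$ is built into the construction of $\psi$. The ``in particular'' clause is then immediate: if $f'$ is also of universal effective $\catF$-descent, then any pulled-back datum $(y',\varphi_T)$ is automatically effective, so the preceding argument applies; universality of the conclusion for $f$ follows from the stability of the hypotheses on $f'$ and $g$ under arbitrary base change $T_0\to S$.
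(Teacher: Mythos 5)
The paper gives no proof of this proposition at all---it is quoted verbatim from Giraud's \textit{M\'ethode de la descente} [Thm.~10.8~(ii)]---so there is no internal argument to compare against; your proof is the standard one and is in substance Giraud's. Your strategy (descend $y'$ along $f'$ to $y$, transport $\varphi$ to a descent datum $\psi$ for $y$ relative to $g$ by working over the $\catF$-descent morphism $T'\times_S T'\to T\times_S T$, apply effectiveness of $g$ to get $x$, and finally descend the comparison isomorphism along the $\catF$-descent morphism $T'\to S'$) is correct, and the verifications you defer do reduce, as you say, to the cocycle condition for $\varphi$ together with the compatibility of the isomorphism $(f')^{*}y\cong y'$ with $\varphi_T$.
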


\begin{proposition}\label{P:effectiveness-under-limits}
Let $\catF\subseteq \catE_{\qc}$ be a category fibered over $\Sch$. Let
$S=\Spec(A)$ be affine and let $S'=\varprojlim_\lambda S'_\lambda$ be an
inverse limit
of affine $S$-schemes such that $S'\to S$ is universally submersive. Then
$S'\to S$ is a morphism of universal effective $\catF$-descent if and only if
$S'_\lambda\to S$ is a morphism of universal effective $\catF$-descent for
every $\lambda$.
\end{proposition}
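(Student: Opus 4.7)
The plan is to prove both implications, the forward one being essentially formal and the backward one reducing to the classical limit theorems of~\cite[\S\S8,17]{egaIV}.

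For the forward direction, given the factorization $S'\to S'_\lambda\to S$, the second assertion of Proposition~\pref{P:giraud-composition} forces each $S'_\lambda\to S$ to be a morphism of universal effective $\catF$-descent as soon as the composition $S'\to S$ is.

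For the backward direction, I first reduce to proving effective (not universal) $\catF$-descent for $S'\to S$: given any base change $T\to S$ we may assume $T$ is affine by Proposition~\pref{P:fppf-descent}, and then $S'\times_S T=\varprojlim_\lambda(S'_\lambda\times_S T)$ is again an inverse limit of affine $T$-schemes with universally submersive structure map, each $S'_\lambda\times_S T\to T$ being of universal effective $\catF$-descent by hypothesis, so the non-universal statement over $T$ suffices. Now fix $(X',\varphi)$ with $X'\to S'$ in $\catF\subseteq\catE_{\qc}$ and $\varphi$ a descent datum on $S''=S'\times_S S'$. Since $X'\to S'$ is \etale{} and of finite presentation and $S'=\varprojlim_\lambda S'_\lambda$, there exists $\lambda_0$ and an \etale{} finitely presented $X'_{\lambda_0}\to S'_{\lambda_0}$ with pullback $X'$; enlarging $\lambda$, I descend $\varphi$ to an $S''_\lambda$-isomorphism $\varphi_\lambda$ of the two pullbacks of $X'_\lambda$, verify the cocycle identity for $\varphi_\lambda$ on $S'''_\lambda$ (two morphisms of finitely presented \etale{} schemes that agree after pullback to the inverse limit must agree at some finite level), and arrange that $X'_\lambda$ itself lies in $\catF_{/S'_\lambda}$. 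Applying the hypothesis to $(X'_\lambda,\varphi_\lambda)$ produces an object $X\to S$ in $\catF_{/S}$ whose pullback along $S'\to S'_\lambda\to S$ is canonically $(X',\varphi)$, which is the required effective descent.

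The principal technical obstacle is exactly this simultaneous descent of the object $X'$, the isomorphism $\varphi$, the cocycle identity, and the property ``lies in $\catF$'' to a common finite level $\lambda$. It is the inclusion $\catF\subseteq\catE_{\qc}$, forcing the morphisms in question to be of finite presentation, that makes the limit formalism of~\cite[\S8]{egaIV} applicable and allows each of these descent steps to go through.
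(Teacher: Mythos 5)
Your argument is correct and follows essentially the same route as the paper: necessity via the second assertion of Proposition~\pref{P:giraud-composition}, and sufficiency by using that objects of $\catF\subseteq\catE_{\qc}$ are of finite presentation to descend the object and its descent datum to some finite level $\lambda$ via~\cite[Thm.~8.8.2, Prop.~17.7.8, Cor.~8.8.2.5]{egaIV} and then invoking the hypothesis there. You spell out a few points the paper leaves implicit (the reduction of universality to affine base changes and the descent of the cocycle identity), and your remaining unproved step --- that $X'_\lambda$ can be arranged to lie in $\catF_{/S'_\lambda}$ --- is exactly the same tacit assumption the paper makes, harmless for the fibered categories actually used.
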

\begin{proof}
The necessity follows from Proposition~\pref{P:giraud-composition}. For
sufficiency, we only need to show effectiveness by
Proposition~\pref{P:univ-sub-is-m.o.d.}. Effectiveness follows easily from the
fact that any object $X'\to S'$ in $\catF_{/S'}$ is of finite presentation. In
fact, there is an index $\lambda$ and a \etale{} morphism $X'_\lambda\to
S'_\lambda$ such that $X'=X'_\lambda\times_{S'_\lambda} S'$,
cf.~\cite[Thm.~8.8.2, Prop.~17.7.8]{egaIV}. If $X'\to S'$ is equipped with
a descent datum, i.e., an $S'\times_S S'$-isomorphism $\map{\varphi}{X'\times_S
S'}{S'\times_S X'}$, then there is $\lambda'\geq \lambda$ such that
$X_{\lambda'}=X_\lambda\times_{S_\lambda} S_{\lambda'}$ has a descent datum
$\varphi_{\lambda'}$ which coincides with the descent datum $\varphi$ after the
pull-back along $S'\times_S S'\to S'_\lambda\times_S S'_\lambda$.
This follows from~\cite[Cor.~8.8.2.5]{egaIV}.
\end{proof}

\begin{proposition}\label{P:effectiveness-under-limits-2}
Let $\catF\subseteq \catE_{\qc}$ be a category fibered over $\Sch$. Let
$S=\Spec(A)$ be affine and $T=\varprojlim_\lambda T_\lambda$ an inverse limit
of affine $S$-schemes. Let $S'\to S$ be a morphism of universal $\catF$-descent
and let $X'\to S'$ be an element of $\catF_{/S'}$ together with a descent datum
$\varphi$. We let $X'_{T_\lambda}\to T'_\lambda=T_\lambda\times_S S'$ be the
pull-back of $X'\to S'$ along $T_\lambda\to S$ and $\varphi_{T_\lambda}$ the
corresponding descent datum. We define $X'_T$ and $\varphi_T$ in the obvious
way.
If $(X'_T,\varphi_T)$ is effective, then $(X'_{T_\lambda},\varphi_{T_\lambda})$
is effective for some index $\lambda$.
\end{proposition}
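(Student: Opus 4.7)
The plan is to reduce the statement to three standard limit-approximation assertions, each an instance of \cite[Thm.~8.8.2, Cor.~8.8.2.5, Prop.~17.7.8]{egaIV}, applied because every object of $\catF\subseteq\catE_{\qc}$ is étale of finite presentation. Throughout, write $T'=T\times_S S'$ and $T'_\lambda=T_\lambda\times_S S'$, and note that the transition morphisms $T'\to T'_\lambda$ form an inverse system with $T'=\varprojlim_\lambda T'_\lambda$ of affine schemes over $S'$.

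First I would unpack the hypothesis. Effectiveness of $(X'_T,\varphi_T)$ gives an étale morphism $Y\to T$, which is quasi-compact (and hence of finite presentation), together with a $T'$-isomorphism
$$\psi\,:\,Y\times_T T'\xrightarrow{\ \iso\ } X'_T$$
compatible with the canonical descent datum on $Y\times_T T'$ and $\varphi_T$ on $X'_T$. Since $Y\to T$ is étale and of finite presentation and $T=\varprojlim T_\lambda$, there exist an index $\lambda_0$ and an étale morphism $Y_{\lambda_0}\to T_{\lambda_0}$ whose pull-back along $T\to T_{\lambda_0}$ is $Y$; for $\lambda\geq\lambda_0$ we set $Y_\lambda=Y_{\lambda_0}\times_{T_{\lambda_0}}T_\lambda$.

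Next I would descend the isomorphism $\psi$. Both $Y\times_T T'$ and $X'_T$ are of finite presentation over $T'$ and arise as pull-backs from $T'_\lambda$ (the former by construction, the latter as $X'_{T_\lambda}\times_{T'_\lambda}T'$). Applying \cite[Cor.~8.8.2.5]{egaIV} to the inverse system $T'=\varprojlim T'_\lambda$, the isomorphism $\psi$ descends, for some $\lambda\geq\lambda_0$, to a $T'_\lambda$-isomorphism
$$\psi_\lambda\,:\,Y_\lambda\times_{T_\lambda}T'_\lambda\xrightarrow{\ \iso\ } X'_{T_\lambda}.$$

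Finally I would propagate the cocycle/compatibility identity. The assertion that $\psi$ respects descent data is the equality of two morphisms in $\catE_{\qc/T'\times_T T'}$ between finitely presented objects. Since $T'\times_T T'=\varprojlim(T'_\lambda\times_{T_\lambda}T'_\lambda)$ is an inverse limit of affine schemes, another application of \cite[Cor.~8.8.2.5]{egaIV} shows that after enlarging $\lambda$ the two pull-backs of $\psi_\lambda$ also agree, so $\psi_\lambda$ is compatible with $\varphi_{T_\lambda}$. Then $Y_\lambda\to T_\lambda$ witnesses the effectiveness of $(X'_{T_\lambda},\varphi_{T_\lambda})$. The only mild subtlety is the bookkeeping in the third step, ensuring that the cocycle condition, which is an equation over $T'\times_T T'$, is truly one between maps of finitely presented schemes over an affine inverse limit; this is guaranteed because $S$ is affine (so the fibre products $T'_\lambda\times_{T_\lambda}T'_\lambda$ are affine) and the objects involved are étale quasi-compact.
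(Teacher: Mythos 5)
Your proof follows the same route as the paper's own: unpack effectiveness to obtain a finitely presented \'etale object over $T$, then use the standard limit results of \cite[Thm.~8.8.2, Cor.~8.8.2.5, Prop.~17.7.8]{egaIV} to descend the object, the isomorphism, and its compatibility with the descent datum to some finite level $T_\lambda$. The paper's argument is simply a terser version of yours; the only cosmetic caveat is that objects of $\catE_{\qc}$ may be non-separated algebraic spaces rather than schemes (and $T'_\lambda$ need only be affine over $S'$, not absolutely affine), neither of which affects the limit arguments.
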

\begin{proof}
As $(X'_T,\varphi_T)$ is effective there is an \etale{} and quasi-compact
morphism $X_T\to T$ together with an isomorphism $X_T\times_T T'\iso X'_T$
compatible with the descent datum. As $X_T\to T$ is of finite presentation,
there is an index $\lambda$ and an \etale{} and quasi-compact scheme
$X_{T_\lambda}\to T_\lambda$. After increasing $\lambda$, we can assume
that there is an isomorphism $X_{T_\lambda} \times_{T_\lambda} T'_\lambda
\iso X'_{T_\lambda}$ and that this is compatible with the descent datum
$\varphi_{T_\lambda}$.
\end{proof}

The following proposition is an immediate consequence of effective
fpqc-descent for quasi-affine schemes~\cite[Exp.~VIII, Cor.~7.9]{sga1}
as separated, quasi-compact and \etale{} morphisms are quasi-affine by
Zariski's main theorem~\cite[Thm.~8.12.6]{egaIV}.

\begin{proposition}[{\cite[Exp.~IX, Prop.~4.1]{sga1}}]\label{P:fpqc-descent}
Let $\map{f}{S'}{S}$ be faithfully flat and quasi-compact. Then $f$ is
a morphism of universal \emph{effective} $\catE_{\sep,\qc}$-descent.
\end{proposition}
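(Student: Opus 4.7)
The plan is to reduce the statement to effective fpqc-descent for quasi-affine schemes, which is precisely the cited result~\cite[Exp.~VIII, Cor.~7.9]{sga1}. The morphism-of-descent part and the effectiveness part will be handled separately.

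For the descent part: faithfully flat quasi-compact morphisms are universally subtrusive by case~(2a) of Remark~\pref{R:subtrusive-examples}, hence in particular universally submersive, and so by Proposition~\pref{P:univ-sub-is-m.o.d.} $f$ is a morphism of universal $\catE$-descent. Restricting to the full fibered subcategory $\catE_{\sep,\qc}\subseteq \catE$ preserves this property (exactness of the $\Hom$-sequence is inherited from the ambient category). Since the class of faithfully flat quasi-compact morphisms is stable under base change, it suffices to prove effectiveness itself; universal effectiveness then follows.

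For effectiveness: let $X'\to S'$ be separated, quasi-compact and \etale{}, equipped with a descent datum $\varphi$ relative to $f$. By Zariski's Main Theorem~\cite[Thm.~8.12.6]{egaIV} every separated quasi-compact \etale{} morphism is quasi-affine (it factors as an open quasi-compact immersion into a finite morphism, hence is quasi-affine). Thus $(X'\to S',\varphi)$ is a quasi-affine $S'$-scheme with descent datum, so~\cite[Exp.~VIII, Cor.~7.9]{sga1} produces a quasi-affine morphism $X\to S$ together with a compatible $S'$-isomorphism $X\times_S S'\iso X'$ realizing $\varphi$.

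The remaining point is to check that $X\to S$ lies back in $\catE_{\sep,\qc}$, i.e., that the properties \etale{}, separated and quasi-compact descend along the faithfully flat quasi-compact morphism $f$. This is standard fpqc-descent of morphism properties~\cite[\S2, \S17]{egaIV}: flatness, being locally of finite presentation, unramifiedness, separatedness and quasi-compactness all descend along fpqc morphisms, which is exactly what is needed. This completes the effectiveness, and combining with the descent part gives the proposition. The only non-obvious ingredient is the reduction via Zariski's Main Theorem to the quasi-affine case; everything else is a direct appeal to standard fpqc descent.
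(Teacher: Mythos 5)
Your proposal is correct and follows essentially the same route as the paper, which also deduces the result from effective fpqc-descent for quasi-affine schemes~\cite[Exp.~VIII, Cor.~7.9]{sga1} together with the observation that separated, quasi-compact \etale{} morphisms are quasi-affine by Zariski's main theorem. You merely spell out the routine verifications (descent of the defining properties along fpqc morphisms, reduction of universal effectiveness to effectiveness via stability under base change) that the paper leaves implicit.
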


\begin{proposition}\label{P:fppf-descent}
Let $\map{f}{S'}{S}$ be faithfully flat and locally of finite presentation.
Then $f$ is a morphism of universal \emph{effective} $\catE$-descent.
\end{proposition}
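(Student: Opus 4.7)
Since $f$ is faithfully flat and locally of finite presentation, it is universally open, surjective, and hence universally submersive, so Proposition~\pref{P:univ-sub-is-m.o.d.} already shows that $f$ is a morphism of $\catE$-descent; only \emph{effectiveness} needs to be proved. Applying Propositions~\pref{P:giraud-composition} and~\pref{P:giraud-pullbacks} to Zariski covers of $S$ and of $S'$ by affine opens, I may reduce to the case where both $S$ and $S'$ are affine, so that $S''=S'\times_S S'$ is affine as well.

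Fix an object $(X'\to S',\varphi)$ of $\catE_{/S'}$ with descent datum. The strategy is to present $X'$ by an étale equivalence relation whose terms lie in $\catE_{\sep,\qc/S'}$, to which Proposition~\pref{P:fpqc-descent} applies. As a preliminary step I reduce to the case $X'$ quasi-compact: cover $X'$ by quasi-compact open subspaces, saturate the cover under $\varphi$ (pullback along either projection $S''\to S'$ preserves quasi-compact opens and $\varphi$ is an isomorphism), descend each restricted piece separately, and glue along the descended quasi-compact étale overlaps. With $X'$ quasi-compact, choose an étale presentation $U'\to X'$ where $U'$ is a finite disjoint union of affine schemes, and set $R'=U'\times_{X'}U'$. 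Then both $U'\to S'$ and $R'\to S'$ lie in $\catE_{\sep,\qc/S'}$: separatedness follows from $U'$ being separated over $\Z$, and quasi-compactness of $R'$ uses that $R'\hookrightarrow U'\times_{S'}U'$ is an étale monomorphism of schemes (hence an open immersion) combined with the quasi-compactness of $\Delta_{X'/S'}$ coming from the assumed quasi-separatedness of $X'$.

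The descent datum $\varphi$ does not automatically restrict to $U'$, but by enlarging $U'$ by finitely many additional affine components that cover the image of the original presentation under $\varphi$---a finite saturation which terminates because both $X'$ and $S''$ are quasi-compact---I obtain a presentation on which $\varphi$ induces a descent datum $\psi$ satisfying the cocycle condition, together with the induced descent datum on $R'$. Proposition~\pref{P:fpqc-descent} now descends $(U',\psi)$ and $(R',\psi\times\psi)$ to étale, separated, quasi-compact schemes $U,R\to S$, and the source, target and composition morphisms of the equivalence relation $R'\rightrightarrows U'$ descend by the uniqueness half of Proposition~\pref{P:univ-sub-is-m.o.d.}. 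The quotient $X=U/R$ is then an étale algebraic space over $S$ whose base change to $S'$ is canonically isomorphic to $X'$ compatibly with $\varphi$, proving effectiveness. The principal technical obstacle is the saturation step: arranging for the chosen étale presentation to carry a descent datum compatible with $\varphi$ while remaining inside $\catE_{\sep,\qc/S'}$; this is precisely the content beyond a direct citation of Proposition~\pref{P:fpqc-descent}.
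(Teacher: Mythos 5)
The paper's own proof is a one-line citation of \cite[Cor.~10.4.2]{laumon}, i.e.\ the ``bootstrap'' theorem of Laumon--Moret-Bailly that the quotient of an algebraic space by an fppf equivalence relation is again an algebraic space. Your outer skeleton --- reduction to $S$ and $S'$ affine, reduction to $X'$ quasi-compact by saturating a quasi-compact open cover under the descent datum, descending an affine atlas $U'$ and the relation $R'=U'\times_{X'}U'$ via Proposition~\pref{P:fpqc-descent}, descending the structure maps via Proposition~\pref{P:univ-sub-is-m.o.d.}, and forming the quotient --- is sound. But the step you flag as ``the principal technical obstacle'' is not an obstacle you have overcome: it is a genuine gap, and the advertised finite saturation does not close it.

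A descent datum on a presentation $U'\to X'$ is an \emph{isomorphism} $\map{\psi}{\pi_1^{*}U'}{\pi_2^{*}U'}$ over $S''$ lying over $\varphi$ and satisfying the cocycle condition. Adjoining affine components to $U'$ enlarges $\pi_1^{*}U'$ and $\pi_2^{*}U'$ simultaneously, so there is no reason the process converges to an isomorphism, and quasi-compactness of $X'$ and $S''$ bounds each enlargement but not the number of enlargements. The situation where this idea does work is a finite Galois cover $S'\to S$, where one replaces $U'$ by the union of its finitely many translates $g^{*}U'$; for a general fppf $f$ the ``translates'' of $U'$ are parametrized by the fibers of $\map{\pi_2}{S''}{S'}$, which are not finite, and the natural candidate for a saturated chart, the composite $\varphi_{*}\pi_1^{*}U'\to\pi_2^{*}X'\to X'$, is faithfully flat of finite presentation but no longer \etale{} over $X'$, so it cannot be fed back in as a chart of the \etale{} equivalence relation. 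Worse, a $\varphi$-equivariant atlas in $\catE_{\sep,\qc/S'}$ exists \emph{if and only if} $(X',\varphi)$ is effective (given effectiveness, pull back an affine atlas of the descended space; conversely your argument applies), so the saturation step is equivalent to the proposition being proved --- the argument is circular at exactly that point. What is actually needed is the representability of the quotient of $X'$ by the fppf, non-\etale{}, equivalence relation that $\varphi$ defines (the two maps $\pi_1^{*}X'\to X'$ being base changes of $\pi_1,\pi_2$, hence only flat), and that is precisely the content of \cite[Thm.~10.4.1, Cor.~10.4.2]{laumon}; citing it is the proof.
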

\begin{proof}
This follows from~\cite[Cor.~10.4.2]{laumon}.
\end{proof}

\begin{proposition}\label{P:locality-of-effectiveness}
Let $\map{f}{S'}{S}$ be a universally submersive morphism and let $X'\to S'$ be
an object in $\catE_{\qc}$ equipped with a descent datum $\varphi$. For
any morphism $T\to S$ we let $X'_T\to T'=T\times_S S'$ be the pull-back of
$X'\to S'$ and $\varphi_T$ the corresponding descent datum. The following
are equivalent:
\begin{enumerate}
\item $(X',\varphi)$ is effective.
\item $(X'_T,\varphi_T)$ is effective for every $T$ such that
$T=\Spec(\sO_{S,s})$ for some $s\in S$.
\item $(X'_T,\varphi_T)$ is effective for every $T$ such that
$T=\Spec(\shensel{\sO_{S,s}})$ is the strict henselization of $S$ at
some point $s\in S$.
\end{enumerate}
If in addition $S$ is locally noetherian and $X'\to S'$ is in
$\catE_{\qc,\sep}$ then these statements are equivalent to the following:
\begin{enumerate}\setcounter{enumi}{3}
\item $(X'_T,\varphi_T)$ is effective for every $T$ such that
$T=\Spec(\widehat{\sO_{S,s}})$ is the completion of $S$ at some point $s\in S$.
\end{enumerate}
\end{proposition}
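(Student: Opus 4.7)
The implications (i) $\Rightarrow$ (ii), (iii), (iv) are immediate: if $(X',\varphi)$ descends to $X\to S$, then for any $T\to S$ the pair $(X'_T, \varphi_T)$ is the canonical descent datum attached to $X\times_S T\to T$. The content is in the converses, all of which follow the same pattern: combine the limit result of Proposition~\pref{P:effectiveness-under-limits-2} with the descent gluing of Proposition~\pref{P:giraud-pullbacks} applied to some flat base change. At the outset I would reduce to $S$ affine; this is legitimate because an affine open cover $\{U_i\}$ of $S$ gives an fppf morphism $\coprod U_i\to S$, and Propositions~\pref{P:fppf-descent} and~\pref{P:giraud-pullbacks} then let us check effectiveness on each $U_i$ separately.

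For (ii) $\Rightarrow$ (i) with $S$ affine, for each $s\in S$ write $\Spec(\sO_{S,s})=\varprojlim_{U\ni s} U$ as an inverse limit of affine open neighborhoods of $s$. Since $(X'_{\sO_{S,s}},\varphi_{\sO_{S,s}})$ is effective, Proposition~\pref{P:effectiveness-under-limits-2} produces an affine open $U_s\ni s$ such that $(X'_{U_s},\varphi_{U_s})$ is effective. The $U_s$ cover $S$, so the reduction of the previous paragraph concludes.

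For (iii) $\Rightarrow$ (ii), let $R=\sO_{S,s}$ and $R^{\mathrm{sh}}=\shensel{\sO_{S,s}}$. After base change to $\Spec R$, write $\Spec(R^{\mathrm{sh}})=\varprojlim_\mu V_\mu$ where $V_\mu=\Spec(R_\mu)$ runs over affine étale neighborhoods of the closed point. Proposition~\pref{P:effectiveness-under-limits-2} yields effectiveness of $(X'_{V_\mu},\varphi_{V_\mu})$ for some $\mu$. The morphism $V_\mu\to\Spec R$ is étale, and its image is an open subset containing the closed point of the local scheme $\Spec R$, hence all of $\Spec R$; so $V_\mu\to\Spec R$ is fppf, and Propositions~\pref{P:fppf-descent} and~\pref{P:giraud-pullbacks} descend effectiveness to $\Spec R$. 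For (iv) $\Rightarrow$ (ii) in the noetherian separated setting the argument is even shorter: the morphism $\Spec(\widehat{\sO_{S,s}})\to\Spec(\sO_{S,s})$ is faithfully flat (noetherian hypothesis) and quasi-compact, hence of universal effective $\catE_{\sep,\qc}$-descent by Proposition~\pref{P:fpqc-descent}, so Proposition~\pref{P:giraud-pullbacks} applied to $f\colon S'_R\to\Spec R$ and $g\colon\Spec\widehat{R}\to\Spec R$ gives the conclusion.

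The proof is essentially bookkeeping; nothing genuinely new is needed beyond the tools already assembled. The only step requiring care is ensuring the inverse systems used in Proposition~\pref{P:effectiveness-under-limits-2} have affine transition maps and a common affine base so the hypotheses of that proposition are met; once the correct base changes are made, the flat/fppf surjectivity checks (e.g.\ that an étale neighborhood of the closed point of a local scheme is faithfully flat over the whole local scheme) are routine.
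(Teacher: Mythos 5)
Your proof is correct and takes essentially the same route as the paper: the forward implications are formal, and the converses combine Proposition~\pref{P:effectiveness-under-limits-2} (to spread effectiveness from $\Spec(\sO_{S,s})$ or $\Spec(\shensel{\sO_{S,s}})$ to an open, resp.\ \etale{}, neighborhood) with Propositions~\pref{P:fppf-descent} and~\pref{P:giraud-pullbacks} to descend along the resulting fppf covering, the completion case being handled by fpqc descent via Proposition~\pref{P:fpqc-descent}. The only cosmetic difference is that you route (iii) and (iv) through (ii) rather than directly to (i).
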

\begin{proof}
It is clear that (i)$\implies$(ii)$\implies$(iii)$\implies$(iv). Assume that
(ii) holds, then it follows from
Proposition~\pref{P:effectiveness-under-limits-2} that $(X',\varphi)$ is
effective in an open neighborhood of any point. In particular, there is an open
covering $T\to S$ such that $(X'_T,\varphi_T)$ is effective. By
Proposition~\pref{P:fppf-descent} and Proposition~\pref{P:giraud-pullbacks} it
follows that $(X',\varphi)$ is effective. Similarly, if (iii) holds, there is
an \etale{} covering over which $(X',\varphi)$ is effective and we can again
conclude that $(X',\varphi)$ is effective by Proposition~\pref{P:fppf-descent}.
Finally (iv)$\implies$(iii) by Proposition~\pref{P:fpqc-descent}.
\end{proof}

\begin{remark}\label{R:popescu}
If $S$ is \emph{excellent}, then $\widehat{\sO_{S,s}}$ is a direct limit of
smooth $\sO_{S,s}$-algebras by Popescu's
theorem~\cite{swan_popescus_theorem,spivakovsky_popescus_theorem}. It thus
follows from Propositions~\pref{P:effectiveness-under-limits-2}
and~\pref{P:fppf-descent} that (iv) implies (i) also for the fibered category
$\catE_{\qc}$ when $S$ is excellent. We will not use this fact.
\end{remark}

\begin{proposition}[{\cite[Exp.~IX, Thm.~4.12]{sga1}}]
\label{P:proper-descent-for-etfin}
Proper surjective morphisms of finite presentation are morphisms of
effective $\catE_{\finite}$-descent.
\end{proposition}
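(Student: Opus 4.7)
The plan is to combine noetherian approximation, the locality-of-effectiveness result, and the henselian form of proper base change recalled in Appendix~\ref{S:etale-henselian}. Since effectiveness is Zariski-local on $S$, I may assume $S = \Spec(A)$ is affine. Writing $A$ as the filtered colimit of its finitely generated $\Z$-subalgebras $A_\lambda$, the data $(f, X' \to S', \varphi)$ descends to an index $\lambda$ by \cite[Thm.~8.8.2, Thm.~8.10.5, Thm.~11.2.6, Cor.~8.8.2.5]{egaIV}, yielding a proper surjective $f_\lambda\colon S'_\lambda \to S_\lambda = \Spec(A_\lambda)$ of finite presentation together with a finite \etale{} $X'_\lambda \to S'_\lambda$ and descent datum $\varphi_\lambda$; an effective descent at level $\lambda$ pulls back to one over $S$, so I may assume $S$ is noetherian. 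Since $\catE_{\finite} \subseteq \catE_{\sep,\qc}$, Proposition~\pref{P:locality-of-effectiveness}(iii) then further reduces the problem to the case where $S = \Spec(A)$ with $A$ strictly henselian local; write $k$ for its (separably closed) residue field.

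In this situation, $S' \to S$ is proper over a strictly henselian local base, and the henselian form of proper base change recalled in Appendix~\ref{S:etale-henselian} gives an equivalence of categories between finite \etale{} covers of $S'$ and finite \etale{} covers of the closed fibre $S'_0 := S' \times_S \Spec(k)$, functorially under base change on $S$. Applying this equivalence to $S'$, $S' \times_S S'$, and $S' \times_S S' \times_S S'$, the descent datum $(X', \varphi)$ corresponds bijectively to a finite \etale{} cover $X'_0 \to S'_0$ equipped with a descent datum $\varphi_0$ along the proper surjection $\bar f\colon S'_0 \to \Spec(k)$. Every finite \etale{} cover of $\Spec(k)$ has the form $\coprod_I \Spec(k)$ for a finite set $I$, so effectiveness of $(X', \varphi)$ is equivalent to producing an isomorphism $X'_0 \iso \coprod_I S'_0$ over $S'_0$ that is compatible with $\varphi_0$.

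The final step is the main obstacle. Over the separably closed field $k$, every ordered pair of geometric points of $S'_0$ is a $k$-valued point of $S'_0 \times_k S'_0$, so $\varphi_0$ furnishes bijections between any two geometric fibres of $X'_0 \to S'_0$, which therefore all have the same finite cardinality. The cocycle condition together with the fact that $\varphi_0$ restricts to the identity on the diagonal forces these bijections to be compatible with \etale{} parallel transport along $S'_0$, which trivializes the monodromy representation of $\pi_1^{\metale}(S'_0)$ on any fibre. Thus the locally constant sheaf of sets associated to $X'_0$ is in fact constant, yielding the desired trivialization $X'_0 \iso \coprod_I S'_0$; the finite \etale{} $X := \coprod_I S$ then descends $(X', \varphi)$, and uniqueness follows from Proposition~\pref{P:univ-sub-is-m.o.d.}.
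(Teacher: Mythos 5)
Your overall architecture matches the paper's: reduce to $S$ affine, then noetherian by a limit argument, then local via Proposition~\pref{P:locality-of-effectiveness}, and finally use the henselian properties of a proper scheme over a local base to transfer the descent problem to the closed fibre. One deviation is worth noting even though it is not an error: by localizing at the \emph{strict henselization} (condition (iii)) you are forced to invoke the $1$-henselian property of $(S',S'_0)$ for $S$ henselian but not complete, i.e.\ Theorem~\pref{T:0-1-henselian-for-proper-over-henselian}, which rests on Artin approximation; the paper instead uses condition (iv) of Proposition~\pref{P:locality-of-effectiveness} (available here since $\catE_{\finite}\subseteq\catE_{\sep,\qc}$ and $S$ is by then noetherian) to pass to \emph{complete} local rings, where the needed equivalence is Theorem~\pref{T:1-henselian-for-proper-over-complete} and follows from Grothendieck's existence theorem alone. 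The paper deliberately arranges the proof so as to avoid the heavier input that your route consumes.

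The genuine gap is in your final step. Once the problem sits over the closed fibre, what remains is exactly effectivity of the descent datum $(X'_0,\varphi_0)$ along the faithfully flat quasi-compact morphism $S'_0\to\Spec(k)$; this is Proposition~\pref{P:fpqc-descent}, and citing it is all the paper does at this point. You instead attempt a direct trivialization of the local system, and the crucial assertion --- that the cocycle condition together with triviality of $\varphi_0$ on the diagonal forces the fibrewise bijections to commute with \etale{} parallel transport --- is stated, not proved. It does not follow formally: to compare the bijections $F_{s_0}\to F_t$ and $F_{s_0}\to F_{t'}$ along a path from $t$ to $t'$ you must exhibit a path in $S'_0\times_k S'_0$ lying over the constant path in the first factor, which amounts either to restricting $\varphi_0$ to a slice $\{s_0\}\times S'_0$ (this does work, giving an isomorphism from the constant sheaf with fibre $F_{s_0}$ to $F$, but it requires a rational point $s_0$, which a non-empty proper scheme over a merely separably closed field need not possess) or to a K\"unneth-type statement for $\pi_1$ whose standard proof is itself a descent argument of exactly this kind. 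Also, your preliminary claim that every ordered pair of geometric points of $S'_0$ is a $k$-valued point of $S'_0\times_k S'_0$ is false as stated when $k$ is not algebraically closed. The step is repaired most economically by deleting the monodromy discussion and simply citing fpqc descent over the field, after which the descended object over $\Spec(k)$ is automatically of the form $\coprod_I\Spec(k)$.
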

\begin{proof}
Let $\map{f}{S'}{S}$ be a proper and surjective morphism of finite
presentation. To show that $f$ is a morphism of effective descent, we can
assume that $S$ is affine by Proposition~\pref{P:locality-of-effectiveness}. As
$S'\to S$ and the morphisms of $\catE_{\finite}$ are of finite presentation, we
can by a limit argument reduce to the case where $S$ is
noetherian. By~Proposition~\pref{P:locality-of-effectiveness} we can further
assume that $S$ is the spectrum of a complete noetherian local ring.

Let $S_0$ be the closed point of $S$ and let $S'_0$, $S''_0$ and $S'''_0$ be
the fibers of $S'$, $S''$ and $S'''$ over $S_0$. By
Theorem~\pref{T:1-henselian-for-proper-over-complete}, the morphisms
$S_0\inj S$, $S'_0\inj S'$, etc., induce equivalences between the category
of finite \etale{} covers over the source and the category of finite
\etale{} covers over the target. Thus $\map{f}{S'}{S}$ is a morphism of
effective descent for $\catE_{\finite}$ if and only if $\map{f_0}{S'_0}{S_0}$
is of effective descent. But $f_0$ is flat and hence of effective
descent by Proposition~\pref{P:fpqc-descent}.
\end{proof}

\begin{corollary}\label{C:proper-descent-for-etsepqc}
Proper and surjective morphisms of finite presentation are morphisms of
effective descent for $\catE_{\sep,\qc}$.
\end{corollary}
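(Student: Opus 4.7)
The plan is to imitate the proof of Proposition~\pref{P:proper-descent-for-etfin} almost verbatim, upgrading the henselian input from finite \etale{} covers to separated quasi-compact \etale{} morphisms. Let $\map{f}{S'}{S}$ be proper, surjective and of finite presentation, and let $(X',\varphi)$ be an object of $\catE_{\sep,\qc/S'}$ equipped with a descent datum.

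First I would reduce to a convenient base. By Proposition~\pref{P:locality-of-effectiveness} applied to $\catE_{\sep,\qc}$, we can test effectiveness after base change to $\Spec(\sO_{S,s})$ for each $s\in S$; combined with Proposition~\pref{P:effectiveness-under-limits} and a limit argument (both $f$ and the objects of $\catE_{\sep,\qc}$ being finitely presented, cf.\ the corresponding reduction in the proof of Proposition~\pref{P:proper-descent-for-etfin}), we can assume $S$ is noetherian. We may then use part~(iv) of Proposition~\pref{P:locality-of-effectiveness}, which is available precisely because $X'\to S'$ lies in $\catE_{\sep,\qc}$ and $S$ is locally noetherian, to reduce further to the case where $S=\Spec(R)$ is the spectrum of a complete noetherian local ring with closed point $S_0$.

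Next, let $S'_0, S''_0, S'''_0$ be the fibers of $S', S'', S'''$ over $S_0$. The whole point of being over a complete base is that the appendix on \etale-henselian properties provides an equivalence between the categories $\catE_{\sep,\qc/S'}$ and $\catE_{\sep,\qc/S'_0}$ (and similarly for the fiber products), not just for finite \etale{} covers as in Theorem~\pref{T:1-henselian-for-proper-over-complete}. Granting this equivalence compatibly with descent data, effectiveness of $(X',\varphi)$ is equivalent to effectiveness of its restriction $(X'_0,\varphi_0)$ along $\map{f_0}{S'_0}{S_0}$. But $S_0$ is the spectrum of a field, so $f_0$ is automatically faithfully flat, and it is quasi-compact since $f$ is proper; hence $f_0$ is a morphism of universal effective $\catE_{\sep,\qc}$-descent by Proposition~\pref{P:fpqc-descent}. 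This yields effectiveness over $S$.

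The only part that requires more than cosmetic adaptation is the henselian equivalence in the third step: over the proper $S$-scheme $S'$ with $S$ complete local, the restriction functor $\catE_{\sep,\qc/S'}\to \catE_{\sep,\qc/S'_0}$ must be an equivalence. This is the analogue of Theorem~\pref{T:1-henselian-for-proper-over-complete} for quasi-compact separated \etale{} morphisms, and it is the promised content of Appendix~\ref{S:etale-henselian} (in essence an incarnation of proper base change for constructible \'etale sheaves, which is exactly the level of generality at which $\catE_{\sep,\qc}$ lives, cf.\ the remark after the definition of $\catE$). Once this input is in place, the corollary is indeed immediate.
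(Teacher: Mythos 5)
Your reduction to $S$ a complete noetherian local ring is correct and matches the paper, but the step you yourself flag as the crux --- that for $S$ complete local and $S'\to S$ proper the restriction functor $\catE_{\sep,\qc/S'}\to \catE_{\sep,\qc/S'_0}$ is an equivalence --- is false, and it is not what Appendix~\ref{S:etale-henselian} provides. The appendix only gives the equivalence for \emph{finite} \etale{} covers ($1$-henselianity, Theorem~\pref{T:1-henselian-for-proper-over-complete}) together with the bijectivity of $\Gamma(X/S')\to\Gamma(X_0/S'_0)$ ($0$-henselianity); neither implies an equivalence for $\catE_{\sep,\qc}$. A counterexample already occurs for $S'=S=\Spec(R)$ with $R$ complete local noetherian of dimension $\geq 1$: the punctured spectrum $U=S\setminus S_0$ is a quasi-compact separated \etale{} (indeed open) $S$-scheme with $U\times_S S_0=\emptyset$, so $U$ and $\emptyset$ become isomorphic after restriction to $S_0$ while $\Hom_S(U,\emptyset)=\emptyset$; the functor is neither full nor injective on isomorphism classes. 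The underlying point is that a quasi-compact separated \etale{} $S'$-scheme can have pieces lying entirely over the complement of $S'_0$, which are invisible after restriction, so one cannot hope to recover $X'$ (let alone the descended $X$) from data over the closed fiber alone.

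The paper's proof handles exactly this defect by an induction on $\dim(S)$. After reducing to $S$ complete local noetherian of dimension $\leq n$, one first descends $X'_0\to S'_0$ by fpqc descent to $X_0\to S_0$, observes that $X_0$ (hence $X'_0\to S'_0$) is finite so that $X'_0\to S_0$ is proper, and then invokes~\cite[Cor.~5.5.2]{egaIII} to split $X'=Z'\amalg U'$ canonically (hence compatibly with the descent datum) with $Z'$ proper, i.e.\ finite, over $S'$ and containing $X'_0$, and with $U'_0=\emptyset$. The summand $Z'$ is handled by Proposition~\pref{P:proper-descent-for-etfin} (the finite \etale{} case, where the henselian equivalence genuinely applies), while $U'$ lives over $S\setminus S_0$, which has dimension $\leq n-1$, so the induction hypothesis applies. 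Your argument is missing precisely this decomposition and the induction that absorbs the part of $X'$ not seen by the closed fiber; without it the passage from effectiveness over $S_0$ to effectiveness over $S$ does not go through.
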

\begin{proof}
Let $\map{f}{S'}{S}$ be a proper and surjective morphism of finite
presentation. To show that $f$ is a morphism of effective descent, we can as in
the proof of Proposition~\pref{P:proper-descent-for-etfin} assume that $S$ is
the spectrum of a noetherian local ring. In particular, we can assume that $S$
is noetherian and of finite dimension. We will now prove effectiveness using
induction on the dimension of $S$.

Let $n=\dim(S)$ and assume that every proper surjective morphism of finite
presentation $T'\to T$ such that $\dim(T)<n$ is a morphism of effective
descent. If $n<0$ then it is clear that $f$ is effective. 
By Proposition~\pref{P:locality-of-effectiveness}, it is enough to show
effectiveness for the completion of every local ring of $S$. We can thus
assume that $S$ is a complete local noetherian ring of dimension at most $n$.

Let $X'\to S'$ be a quasi-compact and separated \etale{} morphism. Let $S_0$
be the closed point of $S$. Let $S'_0$ and $X'_0$ be the inverse images of
$S_0$. As $S'_0\to S_0$ is fpqc, there exists $X_0\to S_0$ such that $X'_0\to
S'_0$ is the pull-back. Clearly $X_0\to S_0$ is finite and hence $X'_0\to S'_0$
is finite. Thus $X'_0\to S_0$, $X''_0\to S_0$ and $X'''_0\to S_0$ are proper.

By~\cite[Cor.~5.5.2]{egaIII} there are thus canonical decompositions into open
disjoint subsets $X'=Z'\amalg U'$, $X''=Z''\amalg U''$ and $X'''=Z'''\amalg
U'''$ such that $Z'$, $Z''$ and $Z'''$ are proper over $S$ and contain $X'_0$,
$X''_0$ and $X'''_0$ respectively. Replacing $X'$ with $Z'$ or $U'$ we can thus
assume that either $X'$ is finite over $S'$ or that $X'_0$ is empty. In the
first case it follows that $f$ is effective from
Proposition~\pref{P:proper-descent-for-etfin}. In the second case we can
replace $S$ with $S\setminus S_0$ which has dimension at most $n-1$. It then
follows from the induction hypothesis that $f$ is effective.
\end{proof}

The proof of the following generalization is similar to and independent of
Corollary~\pref{C:proper-descent-for-etsepqc}. As the methods are less
standard and involve algebraic spaces, we have chosen to not state
Corollary~\pref{C:proper-descent-for-etsepqc} as a corollary
of~\pref{C:proper-descent-for-etqc}.

\begin{corollary}\label{C:proper-descent-for-etqc}
Proper and surjective morphisms of finite presentation are morphisms of
effective descent for $\catE_{\qc}$.
\end{corollary}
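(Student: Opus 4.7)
The argument mirrors the proof of Corollary~\pref{C:proper-descent-for-etsepqc}, replacing the use of \cite[Cor.~5.5.2]{egaIII} by its algebraic-space analog supplied by the Appendix. Let $\map{f}{S'}{S}$ be proper, surjective and of finite presentation; $\catE_{\qc}$-descent itself is automatic via Proposition~\pref{P:univ-sub-is-m.o.d.}, so only effectiveness remains. By Proposition~\pref{P:effectiveness-under-limits} together with the equivalences (i)$\iff$(ii)$\iff$(iii) of Proposition~\pref{P:locality-of-effectiveness}, we may assume $S=\Spec(R)$ with $R$ a strictly henselian noetherian local ring of finite Krull dimension $n$. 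Note that, in contrast to the separated setting, criterion (iv) of~\pref{P:locality-of-effectiveness} is unavailable here, so we cannot pass further to the completion of $R$. We proceed by induction on $n$, the case $n<0$ being vacuous.

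Given $(X'\to S',\varphi)$ in $\catE_{\qc/S'}$ with descent datum, let $S_0=\Spec(k)\inj S$ denote the closed point (with $k$ separably closed) and put $S'_0=S'\times_S S_0$, $X'_0=X'\times_{S'}S'_0$. The map $S'_0\to S_0$ is proper surjective to the spectrum of a field, hence fpqc, so by Proposition~\pref{P:fppf-descent} the pair $(X'_0,\varphi|_{X'_0})$ descends to some $X_0\to S_0$. An \etale{} quasi-compact algebraic space over a separably closed field is a finite disjoint union of copies of that field, so $X_0\to S_0$, and therefore $X'_0\to S'_0$, is finite \etale{}. In particular $X'_0\to S$ is proper.

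The crucial input from the Appendix is the algebraic-space analog of \cite[Cor.~5.5.2]{egaIII}: for $Y\to S$ a proper algebraic space over a (strictly) henselian local base and $X\to Y$ a quasi-compact \etale{} algebraic space whose closed fiber $X_0$ is proper over $S_0$, there is a canonical open-closed decomposition $X=Z\amalg U$ into subspaces with $Z\to S$ proper and $U_0=\emptyset$. Applying this to $X'\to S'$, $X''\to S''$ and $X'''\to S'''$, uniqueness forces compatibility of these decompositions with $\varphi$, yielding $X'=Z'\amalg U'$ together with matching descent data.

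We descend the two summands separately. The morphism $Z'\to S'$ is \etale{} and proper, hence finite \etale{}, so Proposition~\pref{P:proper-descent-for-etfin} descends $(Z',\varphi|_{Z'})$ to a finite \etale{} $Z\to S$. For $U'$ the closed fiber is empty, so $U'\to S$ factors through the open subscheme $S\setminus S_0$ of Krull dimension at most $n-1$; the induction hypothesis applied to the proper surjective finitely presented pullback of $f$ over $S\setminus S_0$ descends $(U',\varphi|_{U'})$ to an \etale{} quasi-compact $U\to S\setminus S_0$. Viewing $U$ over $S$ via the open immersion $S\setminus S_0\inj S$, the disjoint union $X=Z\amalg U$ is \etale{} and quasi-compact over $S$ with $X\times_S S'\iso X'$ compatibly with $\varphi$. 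The principal obstacle is the algebraic-space Grothendieck--Stein decomposition invoked in the third paragraph, which is exactly what the Appendix delivers for proper algebraic spaces over (strictly) henselian local rings.
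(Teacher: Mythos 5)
Your reductions in the first two paragraphs are correct and agree with the paper, but the third paragraph rests on a statement that is false and is not supplied by the Appendix. The claimed algebraic-space analogue of \cite[Cor.~5.5.2]{egaIII} --- a canonical open-closed decomposition $X'=Z'\amalg U'$ with $Z'\to S$ proper and $U'_0=\emptyset$ --- fails as soon as $X'\to S'$ is not separated, which is precisely the case this corollary is designed to handle. For a counterexample take $S=S'=\Spec(R)$ with $R$ a strictly henselian discrete valuation ring and let $X'$ be $S$ with doubled closed point, i.e., two copies of $S$ glued along $S\setminus S_0$. Then $X'\to S'$ is quasi-compact \etale{}, its closed fibre consists of two copies of $S_0$ (so is finite over $S_0$), yet $X'$ is connected and $X'\to S$ is not separated; hence no open-closed subspace of $X'$ containing $X'_0$ can be proper over $S$. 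A proper $Z'$ would in particular have to be separated, and separatedness is exactly what one cannot assume here. What the Appendix actually provides is the $0$-henselian property of the pairs $(S',S'_0)$, $(S'',S''_0)$, etc.\ (Proposition~\pref{P:0-henselian-for-proper-over-henselian} combined with Proposition~\pref{P:hensel-equiv}), which is a statement about lifting sections of \etale{} spaces, not about decomposing their total spaces.

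The paper's proof avoids the decomposition entirely: since $X'_0$ is a disjoint union of $m$ copies of $S'_0$, one gets $m$ sections $s'_i$ of $X'_0/S'_0$, and the $0$-henselian property lifts these uniquely to sections of $X'/S'$, each of which is an open immersion by Proposition~\pref{P:diag-of-etale-is-open}. Setting $Z'=S'^{\amalg m}$ and $U'=X'\times_{S'}(S'\setminus S'_0)$, the map $Z'\amalg U'\to X'$ is an open \emph{covering}, not a disjoint decomposition --- the images of $Z'$ and $U'$ may overlap, as in the example above. One then descends $U'$ by the induction hypothesis and \emph{glues} $m$ copies of $S$ to the descended $U$ along the images of the descended sections over $S\setminus S_0$; the result is an algebraic space that may well be non-separated, which is why the statement concerns $\catE_{\qc}$ rather than $\catE_{\sep,\qc}$. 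Your fourth paragraph (descending two disjoint pieces separately and taking their disjoint union) therefore cannot be carried out as written; it must be replaced by this covering-and-gluing argument.
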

\begin{proof}
Let $\map{f}{S'}{S}$ be a proper and surjective morphism of finite
presentation. As in the proof of Corollary~\pref{C:proper-descent-for-etsepqc}
we can reduce to $S$ noetherian and of finite dimension and we proceed by
induction on the dimension of $S$.

Let $n=\dim(S)$ and assume that every proper surjective morphism of finite
presentation $T'\to T$ such that $\dim(T)<n$ is a morphism of effective
descent. If $n<0$ then it is clear that $f$ is effective. By
Proposition~\pref{P:locality-of-effectiveness}, it is enough to show
effectiveness for the strict henselization of every local ring of $S$. We can
thus assume that $S$ is a strictly local noetherian ring of dimension at most
$n$.

Let $X'\to S'$ be a quasi-compact \etale{} morphism. Let $S_0$
be the closed point of $S$. Let $S'_0$ and $X'_0$ be the inverse images of
$S_0$. As $S'_0\to S_0$ is fppf, there exists $X_0\to S_0$ such that $X'_0\to
S'_0$ is the pull-back. As $S_0$ is the spectrum of a separably closed field,
we have that $X_0$ is a disjoint union of $m$ copies of $S_0$. Let
$s_1,s_2,\dots,s_m$ be the corresponding sections of $X_0/S_0$ and let
$s'_i$, $s''_i$, $s'''_i$ be the corresponding sections of $X'_0/S'_0$ etc.

As $(S',S'_0)$ (resp.\ $(S'',S''_0)$ etc.) are $0$-henselian pairs by
Proposition~\pref{P:0-henselian-for-proper-over-henselian}, the sections
$s'_i$, (resp.\ $s''_i$ etc.) uniquely lift to sections of $X'/S'$ (resp.\ 
$X''/S''$ etc.) by Proposition~\pref{P:hensel-equiv}. Let $Z'=S'^{\amalg m}$ and
$U'=X'\times_{S'} S'\setminus S'_0$ and similarly for $Z''$, $U''$ etc. From
the sections we obtain canonical open coverings $Z'\amalg U'\to X'$ (resp.\ 
$Z''\amalg U''\to X''$ etc.). By the induction hypothesis it follows that $f$ is
a morphism of effective descent for $U'$ so that $U'\to S'\setminus S'_0$
descends to an \etale{} morphism $U\to S\setminus S_0$ with sections $s_{i,0}$.
We let $X$ be the algebraic space given by gluing a copy of $S$ to $U$ along
$s_{i,0}(S\setminus S_0)$ for each $i$ so that $Z\amalg U\to X$ is an open
covering where $Z=S^{\amalg m}$. Then $X\to S$ descends $X'\to S'$.
\end{proof}


\begin{theorem}\label{T:descent-for-etqc}
Quasi-compact universally subtrusive morphisms are morphisms of
effective $\catE_{\qc}$-descent.
\end{theorem}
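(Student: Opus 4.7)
My plan is to reduce to the case where $f$ is of finite presentation by means of locality and a limit argument, and then to conclude via the structure theorem of \S\ref{S:structure-theorems} together with the known effective descent results for fppf and for proper surjective morphisms.

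By Proposition~\pref{P:locality-of-effectiveness}, effectiveness of a descent datum can be checked on the spectra of the local rings of $S$, so I may assume $S$ is affine. Then $S'$ is quasi-compact and quasi-separated, and any $X' \to S'$ in $\catE_{\qc}$ is of finite presentation. Writing $S' = \varprojlim_\mu S'_\mu$ as a cofiltered inverse limit of affine finitely presented $S$-schemes with affine transition maps, the standard limit formalism of \cite[Thm.~8.8.2, Thm.~11.2.6]{egaIV} (together with descent of isomorphisms) yields an index $\mu$ for which $(X',\varphi)$ descends to an \etale{} quasi-compact morphism $X'_\mu \to S'_\mu$ together with a descent datum $\varphi_\mu$ relative to $S'_\mu \to S$. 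The crucial further reduction is to arrange, after possibly increasing $\mu$, that $S'_\mu \to S$ is itself universally subtrusive. This is a limit argument in the spirit of the forthcoming Theorem~\pref{T:subtrusive-limit-of-subtrusive}: via the valuative criterion of Theorem~\pref{T:valuative_criterion_for_subtrusions} it amounts to showing that for every morphism $\Spec(V) \to S$ from a valuation ring, the closure of the generic fiber of $S' \times_S \Spec(V)$ is already detected at some finite stage of the inverse system. I expect this step to be the main obstacle.

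Once $f$ has been replaced by a universally subtrusive morphism of finite presentation, Theorem~\pref{T:ST-aff} furnishes a refinement $g: S'' \to S$ of $f$ admitting a factorization
\[
S'' \xrightarrow{q} T \xrightarrow{p} S,
\]
where $q$ is fppf and $p$ is proper, surjective, and of finite presentation. Proposition~\pref{P:fppf-descent} shows that $q$ is of universal effective $\catE$-descent, hence of universal effective $\catE_{\qc}$-descent, since quasi-compactness descends through fppf morphisms. Corollary~\pref{C:proper-descent-for-etqc} shows that $p$ is of effective $\catE_{\qc}$-descent; this is automatically universal, as the class of proper surjective morphisms of finite presentation is stable under base change. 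By Proposition~\pref{P:giraud-composition}, the composition $g = p \circ q$ is of universal effective $\catE_{\qc}$-descent. Writing the refinement as $g = f \circ h$ for some $h: S'' \to S'$ and applying the second half of the same proposition, the property transfers to $f$, completing the proof.
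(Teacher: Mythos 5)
Your route is essentially the paper's: reduce to the affine case, pass to a limit to reduce to finitely presented $f$, and then combine the structure theorem~\pref{T:ST-aff} with fppf descent, proper descent and Proposition~\pref{P:giraud-composition}; your final paragraph is correct as written. The one step you flag as ``the main obstacle'' --- arranging that $S'_\mu\to S$ be universally subtrusive --- is in fact not an obstacle at all and requires no analogue of Theorem~\pref{T:subtrusive-limit-of-subtrusive}. Since $S'\to S$ factors as $S'\to S'_\mu\to S$ and the second factor of a (universally) subtrusive composition is again (universally) subtrusive (see the remarks following Definition~\pref{D:subtrusive}), \emph{every} $S'_\mu\to S$ in your inverse system is already universally subtrusive, and being finitely presented it falls under the case you have reduced to. The hard limit theorem is needed only in the genuinely different situation where the morphism at the finite stage is recovered by descending both source \emph{and} target, i.e., where $f$ is a \emph{base change} of $f_\mu$ rather than merely factoring through $S'_\mu$; that is not the situation here, since $S$ is fixed. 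The paper packages exactly your limit argument as Proposition~\pref{P:effectiveness-under-limits}, whose sufficiency hypothesis (each $S'_\lambda\to S$ of universal effective descent) is verified by this factorization remark once the finitely presented case is established.

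One smaller point: writing $S'=\varprojlim_\mu S'_\mu$ with the $S'_\mu$ affine over the affine scheme $S$ forces $S'$ itself to be affine, which your reduction does not yet guarantee. You should therefore also replace $S'$ by a finite affine open cover $S''=\coprod_i U_i$ before taking the limit; this is harmless, since $S''\to S'$ is fppf, $S''\to S$ remains quasi-compact and universally subtrusive, and effectiveness for $S''\to S$ implies effectiveness for $S'\to S$ by the second half of Proposition~\pref{P:giraud-composition}. This is precisely the paper's opening reduction to $S$ \emph{and} $S'$ affine.
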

\begin{proof}
Let $\map{f}{S'}{S}$ be a universally subtrusive quasi-compact morphism. To
show that $f$ is effective we can, replacing $S$ and $S'$ by open covers,
assume that $S$ and $S'$ are affine.
Proposition~\pref{P:effectiveness-under-limits} shows that it is enough to show
effectiveness for finitely presented $f$. Such an $f$ has a refinement which is
a composition of a flat morphism followed by a proper morphism by
Theorem~\pref{T:ST-aff}. The theorem thus follows from
Propositions~\pref{P:giraud-composition},
\pref{P:fppf-descent} and Corollary~\pref{C:proper-descent-for-etqc}.
\end{proof}

As a corollary we answer a question posed by
Grothendieck~\cite[Exp.~IX, Comment after Cor.~3.3]{sga1} affirmatively.

\begin{corollary}
Universal submersions between noetherian schemes are morphisms of effective
descent for $\catE_{\finite}$, $\catE_{\sep,\qc}$ and $\catE_{\qc}$.
\end{corollary}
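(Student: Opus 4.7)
The plan is to reduce all three claims to the already proven Theorem~\pref{T:descent-for-etqc}. First I would observe that any morphism $\map{f}{S'}{S}$ between noetherian schemes is automatically quasi-compact, because every open subset of a noetherian scheme is quasi-compact, so preimages of quasi-compact opens of $S$ are quasi-compact in $S'$. Hence $f^{\cons}$ is universally submersive by Proposition~\pref{P:submersions-in-cons}, and then Theorem~\pref{T:submersive=subtrusive-if-noeth} applies to say that $f$ is in fact universally subtrusive (since the target is locally noetherian).

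With $f$ now known to be quasi-compact and universally subtrusive, Theorem~\pref{T:descent-for-etqc} immediately yields that $f$ is a morphism of effective descent for $\catE_{\qc}$. For the subcategories $\catE_{\sep,\qc}$ and $\catE_{\finite}$, I would invoke the unnumbered proposition stated right after Proposition~\pref{P:univ-sub-is-m.o.d.}: if $(X'\to S',\varphi)$ lies in one of these smaller subcategories and, by the previous step, descends to some \'etale $X\to S$ in $\catE_{\qc}$, then separatedness, quasi-compactness, and (in the finite case) universal closedness propagate from $X\times_S S'\iso X'$ back down to $X\to S$ along the universally subtrusive morphism $f$. The descended object therefore automatically lies in the correct subcategory, giving effectiveness there as well.

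I do not anticipate any serious obstacle: the real content sits in Theorems~\pref{T:submersive=subtrusive-if-noeth} and~\pref{T:descent-for-etqc}, and the only remaining points are the automatic quasi-compactness of any morphism between noetherian schemes, together with the stability of the three subcategories $\catE_{\finite}\subseteq\catE_{\sep,\qc}\subseteq\catE_{\qc}$ under universally subtrusive descent. Both are already recorded in the paper, so the corollary follows by a simple chain of citations rather than any new argument.
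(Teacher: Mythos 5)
Your proof is correct and follows exactly the chain of citations the paper intends for this corollary (which it leaves without an explicit proof): automatic quasi-compactness over a noetherian source, Theorem~\pref{T:submersive=subtrusive-if-noeth} to upgrade universally submersive to universally subtrusive, Theorem~\pref{T:descent-for-etqc} for effectiveness in $\catE_{\qc}$, and the unnumbered proposition after Proposition~\pref{P:univ-sub-is-m.o.d.} to place the descended object in the smaller full subcategories. No gaps.
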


As another corollary we have the following result:

\begin{theorem}\label{T:descent-for-et}
The following classes of morphisms are classes of effective $\catE$-descent.
\begin{enumerate}
\item Universally open and surjective morphisms.
\item Universally closed and surjective morphisms of finite presentation.
\item Universally subtrusive morphisms of finite presentation.
\end{enumerate}
\end{theorem}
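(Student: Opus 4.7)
The plan is to prove cases~(i) and~(ii) directly and derive~(iii) from them via the structure theorem of Section~\ref{S:structure-theorems}.

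For~(iii), let $\map{f}{S'}{S}$ be universally subtrusive of finite presentation. Reducing to $S$ affine by Zariski-locality of descent, Theorem~\pref{T:ST-aff} yields a refinement $X''\to S'$ together with a factorization $X''\xrightarrow{h_1}Y'\xrightarrow{h_2}S$ in which $h_1$ is fppf and $h_2$ is proper, surjective and of finite presentation. Proposition~\pref{P:fppf-descent} handles $h_1$ and case~(ii) handles $h_2$, so Proposition~\pref{P:giraud-composition} gives effective $\catE$-descent for $h_2\circ h_1$, and hence for~$f$.

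For~(i), let $\map{f}{S'}{S}$ be universally open and surjective. Reducing to $S$ affine and choosing a quasi-compact open $U'\subseteq S'$ with $f(U')=S$, Proposition~\pref{P:giraud-composition} allows us to assume $f$ is quasi-compact. Theorem~\pref{T:descent-for-etqc} then gives effective $\catE_{\qc}$-descent; to upgrade to $\catE$-descent, let $X'\to S'$ be \etale{} with descent datum $\varphi$ over $S''$. The base-change projections $p_1,p_2:X'\times_{S'}S''\to X'$ of $\pi_1,\pi_2:S''\to S'$ are quasi-compact and universally open. For any quasi-compact open $V'\subseteq X'$, the saturation
\[
\mathrm{Sat}(V'):=p_2\bigl(\varphi(p_1^{-1}(V'))\bigr)
\]
is quasi-compact and open (as the image of a quasi-compact open under a quasi-compact, universally open map); the unit, cocycle, and symmetry axioms for $\varphi$ make the $\varphi$-relation on $|X'|$ an equivalence relation, so $\mathrm{Sat}(V')$ is $\varphi$-stable. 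These $\mathrm{Sat}(V')$'s form a cofinal system of $\varphi$-stable quasi-compact open sub-algebraic spaces of $X'$; each descends by effective $\catE_{\qc}$-descent, and uniqueness of descent (Proposition~\pref{P:univ-sub-is-m.o.d.}) together with stackiness of $\catE$ in the Zariski topology lets them be glued into the desired $X\to S$.

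For~(ii), we adapt the proof of Corollary~\pref{C:proper-descent-for-etqc}. We reduce to $S$ affine, then to $S$ noetherian by descending $f$ along a presentation $S=\varprojlim S_\lambda$ of $S$ as an inverse limit of noetherian affines (using stability of universal effective descent under base change), and then to $S$ the spectrum of a strictly henselian noetherian local ring via \etale{} stackiness of $\catE$ (a consequence of Proposition~\pref{P:fppf-descent}). We then induct on $\dim S$. Over the closed point $S_0=\Spec(k)$ with $k$ separably closed, the fiber $X'_0\to S'_0$ descends by fppf descent to $X_0=\coprod_{i\in I}S_0$ for some (possibly infinite) index set $I$. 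The corresponding sections lift uniquely to open immersions $s'_i:S'\to X'$ with pairwise disjoint images by the 0-henselian property of $(S',S'_0)$ (Proposition~\pref{P:hensel-equiv}), and analogously over $S''$ and $S'''$. Setting $Z'=S'^{\amalg I}$ and $U'=X'\setminus X'_0$, the map $Z'\amalg U'\to X'$ is an open covering compatible with the descent datum; the induction hypothesis applied to $f|_{S'\setminus S'_0}$ descends $U'$ to $U\to S\setminus S_0$, and gluing $|I|$ copies of $S$ to $U$ along the restricted sections yields $X\to S$ descending $X'$.

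The main technical obstacle is the saturation construction in case~(i), which requires both quasi-compactness and universal openness of $p_2$; this is what distinguishes~(i) from~(ii), where $p_2$ is proper but not open, so a saturation approach is unavailable. A secondary obstacle in~(ii) is the reduction to $S$ strictly henselian for non-quasi-compact \etale{} $X'$, which is carried out via stackiness of $\catE$ in the \etale{} topology rather than through a direct analogue of Proposition~\pref{P:locality-of-effectiveness} (the latter being stated only for $\catE_{\qc}$).
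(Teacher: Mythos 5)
Your treatment of case (i) and your derivation of case (iii) from case (ii) agree in substance with the paper (the paper refines a subtrusive morphism via Theorem~\pref{T:ST-voevodsky} rather than Theorem~\pref{T:ST-aff}, but this is immaterial). The gap is in case (ii), where you abandon the paper's route and instead try to rerun the henselian induction of Corollary~\pref{C:proper-descent-for-etqc} for the full category $\catE$. Two steps of that reduction are not available outside $\catE_{\qc}$. First, the passage to $S$ strictly henselian local: knowing that $(X'_T,\varphi_T)$ is effective for every $T=\Spec(\shensel{\sO_{S,s}})$ only yields effectiveness over an honest \etale{} covering of $S$ after spreading the descended object out from the strict henselization to an \etale{} neighborhood, and that spreading-out step (Proposition~\pref{P:effectiveness-under-limits-2}, which is what drives Proposition~\pref{P:locality-of-effectiveness}) requires the objects being descended to be of finite presentation, i.e.\ it is stated and proved only for $\catF\subseteq\catE_{\qc}$. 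For a non-quasi-compact \etale{} $X'\to S'$ this step is simply missing, and ``\etale{} stackiness of $\catE$'' does not substitute for it, since $\Spec(\shensel{\sO_{S,s}})\to S$ is not an \etale{} covering. Second, your appeal to the $0$-henselian property of $(S',S'_0)$ rests on Proposition~\pref{P:0-henselian-for-proper-over-henselian}, which requires $S'\to S$ to be \emph{proper}; a universally closed surjective morphism of finite presentation need not be separated, so this does not cover all of class (ii). Since your case (iii) is deduced from case (ii), it inherits the same gap.

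The paper sidesteps both problems by keeping all the henselian machinery inside $\catE_{\qc}$ (where it lives, in Theorem~\pref{T:descent-for-etqc}) and handling class (ii) by the same ``quasi-compact $R$-stable open cover'' device as class (i), with a complementary twist: for $x'\in X'$ the saturation $R(x')=\pi_2(\pi_1^{-1}(x'))$ is quasi-compact because $\pi_1$ is quasi-compact; choosing a quasi-compact open $V\supseteq R(x')$, the saturation $R(X'\setminus V)$ of the complement is closed (as $\pi_2$ is closed) and ind-constructible (as the $\pi_i$ are finitely presented), so $U=X'\setminus R(X'\setminus V)$ is an open, $R$-stable, pro-constructible neighborhood of $x'$ contained in $V$, hence quasi-compact. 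These $U$'s form a descent-datum-stable open cover of $X'$ by quasi-compact pieces, each of which descends by Theorem~\pref{T:descent-for-etqc}, and the pieces glue exactly as in your case (i). This is the construction you need to supply for case (ii); the alternative would be to prove versions of Propositions~\pref{P:effectiveness-under-limits-2} and~\pref{P:locality-of-effectiveness} for all of $\catE$, which is not straightforward.
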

\begin{proof}
First note that the morphisms in the first two classes are universally
subtrusive, cf.\ Remark~\pref{R:subtrusive-examples}. Moreover by
Theorem~\pref{T:ST-voevodsky}, a morphism in the third class has a refinement
which is the composition of a morphism in the first class and a morphism in the
second class. Thus, it is enough to prove the effectiveness of the first two
classes by Proposition~\pref{P:giraud-composition}.

Let $\map{f}{S'}{S}$ be
either a universally open morphism or a universally closed morphism of finite
presentation. Let $X'\to S'$ be an \etale{} morphism equipped with a descent
datum. By Proposition~\pref{P:locality-of-effectiveness}, we can assume that
$S$ is affine. If $f$ is universally open, then there is an open quasi-compact
subset $U$ of $S'$ such that $f|_U$ is surjective. Replacing $S'$ with $U$ we
can assume that $f$ is quasi-compact.

Let $x'\in X'$. If $f$ is universally open, let $V\subseteq X'$ be an open
quasi-compact neighborhood of $x'$ and let $R(V)=\pi_2(\pi_1^{-1}(V))$ be the
saturation of $V$ with respect to the equivalence relation
$\map{R=(\pi_1,\pi_2)}{X''}{X'\times_S X'}$. As the $\pi_i$'s are quasi-compact
and open, we have that $U=R(V)$ is an open quasi-compact $R$-stable
neighborhood of $x'$.

If $f$ is universally closed and finitely presented, let
$R(x')=\pi_2(\pi_1^{-1}(x'))\subseteq X'$ be the saturation of $x'$. As $\pi_i$
is quasi-compact, the subset $R(x')$ is quasi-compact. Let $V$ be an open
quasi-compact neighborhood of $R(x')$. As $X'$ is quasi-separated,
we have that $V\subseteq X'$ is retrocompact and thus pro-constructible. In
particular the complement $X'\setminus V$ is ind-constructible. As $\pi_i$ is
closed and finitely presented, the saturation $R(X'\setminus V)$ is a closed
ind-constructible subset of $X'$ disjoint from $R(x')$. Thus
$U=X'\setminus R(X'\setminus V)$ is an open $R$-stable pro-constructible
neighborhood of $x'$ contained in $V$. In particular $U\subseteq V$ is
retrocompact and hence $U$ is quasi-compact.

In both cases, we thus have an open covering $\coprod U_{x'}\to X'$, stable
under the descent datum, such that each $U_{x'}$ is quasi-compact.
By Theorem~\pref{T:descent-for-etqc} every space $U_{x'}$
descends to an \etale{} quasi-compact space $U_x$ over $S$.
As $U_{x'}\to U_x$ is submersive, the intersection $U_{x_1'}\cap U_{x_2'}$
descends to an open subset of both $U_{x_1}$ and $U_{x_2}$. Finally as $S'\to
S$ is a morphism of $\catE$-descent, the gluing datum of the $U_{x'}$'s
descends to a gluing datum of the $U_{x}$'s. Thus the $U_{x}$'s glue to an
algebraic space $X$ \etale{} over $S$ which descends $X'\to S'$.
\end{proof}


Recall that a morphism of algebraic spaces $\map{f}{X}{Y}$ is a \emph{universal
homeomorphism} if $\map{f'}{X\times_Y Y'}{Y'}$ is a homeomorphism (of
topological spaces) for every morphism $Y'\to Y$ of algebraic spaces. As usual,
it is enough to consider base changes such that $Y'$ is a scheme or even
an affine scheme.

The diagonal of a universally injective morphism is surjective.
Thus,
any homeomorphism of \emph{schemes} is separated. It then follows
from Zariski's main theorem that a finite type morphism of schemes is a
universal homeomorphism if and only if it is universally injective, surjective
and finite. More generally, a morphism of schemes $\map{f}{X}{Y}$ is a
universal homeomorphism if and only if $f$ is universally injective, surjective
and integral~\cite[Cor.~18.12.11]{egaIV}.

The diagonal of a universal homeomorphism of \emph{algebraic spaces} is a
surjective monomorphism but not necessarily an immersion. In particular, not
every homeomorphism of algebraic spaces is separated. This is demonstrated by
the following classical example of an algebraic space which is not locally
separated, i.e., its diagonal is not an immersion.

\begin{example}[{\cite[Ex.~1, p.~9]{knutson_alg_spaces}}]
Let $U$ be the union of two secant affine lines and let $R$ be the equivalence
relation on $U$ which identifies the two lines except at the singular
point. Then the quotient $X=U/R$ is an algebraic space whose underlying
topological space is the affine line. In fact, there is a universal
homeomorphism $X\to \A{1}$ such that $U\to X\to \A{1}$ induces the identity on
the two components. The corresponding two sections $\A{1}\to X$ are bijective
but not universally closed.
The space $X$ looks like the affine line except at a special point where it has
two different tangent directions.
\end{example}

The following theorem generalizes~\cite[Exp.~IX, Thm.~4.10]{sga1}:

\begin{theorem}\label{T:etale-and-homeomorphisms}
Let $S'\to S$ be a \emph{separated} universal homeomorphism of algebraic
spaces. Then the functor $\catE_{/S}\to \catE_{/S'}$
$$\xymatrix@R=0pt{
{\{ \text{\etale{} spaces over $S$} \}}\ar[r] &
  {\{ \text{\etale{} spaces over $S'$} \}} \\
{X}\ar@{|->}[r] & {X\times_S S'}}$$
is an equivalence of categories. In particular, we have induced equivalences of
categories $\catF_{/S}\to \catF_{/S'}$ where $\catF$ is one of the fibered
categories $\catE_{\finite}$, $\catE_{\sep,\qc}$, $\catE_{\qc}$,
$\catE_{\sep}$.
%
\end{theorem}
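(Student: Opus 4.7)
The plan is to establish that the pullback functor is both fully faithful and essentially surjective. The structural observation driving the argument is that since $f:S'\to S$ is \emph{separated} and a universal homeomorphism, its diagonal $\Delta:S'\to S'\times_S S'$ is simultaneously a closed immersion (by separatedness) and surjective (by universal injectivity), hence a nil-immersion. In particular, the two projections $\pi_1,\pi_2:S'\times_S S'\to S'$ induce the same map on underlying topological spaces, so $|\pi_1^{-1}(U)|=|\pi_2^{-1}(U)|$ inside $|X'\times_S S'|$ for every open $U\subseteq|X'|$.

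For fully faithfulness I would note that $f$ is universally submersive (being a universal homeomorphism) and apply the algebraic space analogue of Proposition~\pref{P:univ-sub-is-m.o.d.}, obtained by reducing to schemes via \etale{} presentations. This says that $f$ is a morphism of universal $\catE$-descent, which is precisely the fully faithfulness of the pullback on each slice.

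For essential surjectivity I would first reduce to $S$ an affine scheme by choosing an \etale{} presentation $V\to S$ by an affine scheme and combining the algebraic-space analogue of Proposition~\pref{P:locality-of-effectiveness} with effective fppf descent for $V\to S$ (Proposition~\pref{P:fppf-descent}). Once $S$ is affine, $f$ is automatically quasi-compact since $|S'|$ has the same underlying topology as $|S|$. Given $X'\to S'$ \etale{} with descent datum $\varphi$, the nil-immersion property of $\Delta$ shows that $\pi_1^{-1}(U)=\pi_2^{-1}(U)$ as open subspaces of $X'\times_S S'$ for every open subspace $U\subseteq X'$, hence $\varphi$ restricts to a descent datum on each such $U$. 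I would then cover $X'$ by quasi-compact open subspaces $\{U_i\}_{i\in I}$; since $f$ is quasi-compact and universally subtrusive (cf.\ Remark~\pref{R:subtrusive-examples}), each $(U_i,\varphi|_{U_i})$ is effective by Theorem~\pref{T:descent-for-etqc}, yielding quasi-compact \etale{} algebraic spaces $U_{i,S}\to S$ with $U_{i,S}\times_S S'\iso U_i$ compatibly with the descent data. Full faithfulness supplies canonical isomorphisms between the descended pieces on overlaps, and these glue in the category of \etale{} algebraic spaces over $S$ to produce the desired $X\to S$.

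The final statement about the equivalence restricting to $\catE_{\finite}$, $\catE_{\sep,\qc}$, $\catE_{\qc}$ and $\catE_{\sep}$ then follows from descent of the defining properties along the universally submersive morphism $f$ (Proposition~\pref{P:descent-of-top-props}) together with the characterization of a finite \etale{} morphism as an \etale{}, separated, quasi-compact and universally closed morphism~\cite[Thm.~8.11.1]{egaIV}. I expect the main technical obstacle to be the careful migration of the scheme-level descent tools of this section, especially Proposition~\pref{P:locality-of-effectiveness}, to an arbitrary algebraic space base $S$; this is handled uniformly by passing to an \etale{} presentation and invoking fppf descent, the same device used throughout the paper to extend topological results from schemes to algebraic spaces.
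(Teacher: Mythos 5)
Your proposal is correct and follows essentially the same route as the paper: the key observation that the diagonal $S'\to S'\times_S S'$ is a nil-immersion makes descent data unique, and essential surjectivity is obtained by covering $X'$ by quasi-compact open subspaces, descending each via Theorem~\pref{T:descent-for-etqc}, and gluing as in Theorem~\pref{T:descent-for-et}. The one step to make explicit is that Proposition~\pref{P:univ-sub-is-m.o.d.} by itself only gives full faithfulness into the category of \emph{descent data}; it is your nil-immersion observation, via Proposition~\pref{P:nil-imm-equiv-etale}, that identifies that category with $\catE_{/S'}$ and hence yields full faithfulness of the pullback functor itself.
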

\begin{proof}
As $S'\to S$ is a separated universal homeomorphism $S'\inj S'\times_S S'$ is a
nil-immersion. The functor from \etale{} algebraic spaces over $S'\times_S S'$
to \etale{} algebraic spaces over $S'$ is therefore an equivalence by
Proposition~\pref{P:nil-imm-equiv-etale}. In particular, every \etale{}
algebraic space over $S'$ comes with a unique descent datum. This shows that
the functor in the theorem is fully faithful.

Essential surjectivity for $\catF=\catE_{\qc}$ follows from
Theorem~\pref{T:descent-for-etqc}. For an object $X'$ in the category
$\catE_{/S'}$ we first choose an open covering $\{U'_\alpha\}$ of $X'$ such
that the $U'_\alpha$'s are quasi-compact spaces. These $U'_\alpha$'s come with
unique descent data and can be descended to $S$. As in the last part of the
proof of Theorem~\pref{T:descent-for-et} we can glue the descended spaces to
an algebraic space which descends $X'$.
%
%
\end{proof}

\begin{corollary}\label{C:sep-homeo-is-affine}
A separated universal homeomorphism of algebraic spac\-es is representable by
schemes and is integral.
\end{corollary}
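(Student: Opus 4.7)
The plan is to reduce to the case where $S$ is an affine scheme, then use the essential surjectivity of the equivalence in Theorem~\pref{T:etale-and-homeomorphisms} to lift an \'etale presentation of $S'$ to an \'etale presentation of $S$ by a scheme, and finally apply faithfully flat descent.

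Both representability by schemes and integrality of a morphism to an algebraic space are fpqc-local on the target, so I may assume $S=\Spec(A)$ is affine. Since the universal homeomorphism $S'\to S$ is quasi-compact and $S'$ is quasi-separated, I can choose an \'etale surjective morphism $V\to S'$ with $V$ an affine scheme. By Theorem~\pref{T:etale-and-homeomorphisms} there is an \'etale morphism $W\to S$ of algebraic spaces together with an isomorphism $V\cong W\times_S S'$ over $S'$. The morphism $V\to S'$ is separated and quasi-compact, and $S'\to S$ is universally submersive, so Proposition~\pref{P:descent-of-top-props} forces $W\to S$ to be separated and quasi-compact as well. Hence $W$ is representable by a scheme (quasi-affine over $S$) by Proposition~\pref{P:etale+sep-is-repr}. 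Finally $W\to S$ is surjective, since the composite $V\to W\to S$ factors the surjection $V\to S'\to S$, so $W\to S$ is fpqc.

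Now the pullback $V\to W$ of $S'\to S$ along $W\to S$ is a separated universal homeomorphism of \emph{schemes}, hence integral by \cite[Cor.~18.12.11]{egaIV}. As integrality of a morphism is fpqc-local on the target, the original morphism $S'\to S$ is integral; since integral morphisms are affine, it is also representable by schemes, which completes the proof.

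The only delicate point is to be sure that the \'etale algebraic space $W\to S$ produced by the equivalence is in fact representable by a scheme; this is precisely what the descent of separatedness and quasi-compactness from $V\to S'$ along the universally submersive morphism $S'\to S$ provides, after which Proposition~\pref{P:etale+sep-is-repr} applies and the rest is formal.
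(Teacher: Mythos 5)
Your proof is correct and follows essentially the same route as the paper's: pull an affine \etale{} presentation $V\to S'$ back through the equivalence of Theorem~\pref{T:etale-and-homeomorphisms} to obtain an \etale{} $W\to S$, observe that $V\to W$ is a separated universal homeomorphism of schemes and hence integral by~\cite[Cor.~18.12.11]{egaIV}, and conclude by \etale{} descent of integrality. You are in fact more explicit than the paper on the one delicate point, namely why $W$ is a scheme (you descend separatedness and quasi-compactness along the universally submersive $S'\to S$ and then invoke Proposition~\pref{P:etale+sep-is-repr}), which the paper leaves implicit in the phrase ``representable universal homeomorphism''; the only cosmetic blemish is your opening claim that representability by schemes is fpqc-local on the target, which is false in general but harmless here since you ultimately deduce representability from integrality.
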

\begin{proof}
Let $S'\to S$ be a separated universal homeomorphism and choose an \etale{}
presentation $U'\to S'$ such that $U'$ is a \emph{scheme}. Then by
Theorem~\pref{T:etale-and-homeomorphisms} this induces an \etale{} morphism
$U\to S$ such that $U'=U\times_S S'$. As $U'\to U$ is a representable universal
homeomorphism it is integral by~\cite[Cor.~18.12.11]{egaIV}. In particular
$U'\to U$ is affine and it follows by \etale{} descent that $S'\to S$ is
representable and integral.
\end{proof}

\begin{remark}
The proof of Corollary~\pref{C:sep-homeo-is-affine} shows that
Theorem~\pref{T:etale-and-homeomorphisms} is false for non-separated universal
homeomorphisms.
\end{remark}


\begin{example}[Push-outs]
Let $Z\inj X$ be a closed immersion of affine schemes and let $Z\to Y$ be any
morphism of affine schemes. Then the push-out $X\amalg_Z Y$ exists in the
category of schemes and is affine~\cite[Thm.~5.1]{ferrand_conducteur}.
Furthermore $Z=X\times_{X\amalg_Z Y} Y$ and $\map{f}{X\amalg Y}{X\amalg_Z Y}$
is universally submersive~\cite[Thm.~7.1~A)]{ferrand_conducteur}.
Let $E\to X\amalg Y$ be an affine \etale{} morphism equipped with a
descent datum with respect to $f$. The descent datum gives in particular
an isomorphism $E|_X \times_X Z \to E|_Y \times_Y Z$. We can then form the
push-out $E|_X \amalg_{E|_Z} E|_Y$ which is affine and \etale{}
over $X\amalg_Z Y$ and descends $E$~\cite[Thm.~2.2 (iv)]{ferrand_conducteur}.
Thus, $f$ is a morphism of effective descent for the category of affine and
\etale{} morphisms.

In general, $\map{f}{X\amalg Y}{X\amalg_Z Y}$ is not
\emph{subtrusive}. For example, let $Z\to Y$ be the open immersion
$\A{1}_x\setminus \{x=0\}\subseteq \A{1}_x$, let $X=\A{2}_{x,y}\setminus
\{x=0\}$ and let $Z\inj X$ be the hyperplane defined by $y=0$. Then
$Y\inj X\amalg_Z Y$ is a closed immersion, $X\to X\amalg_Z Y$ is an
open immersion and $X$ and $Y$ intersect along the locally closed subset
$Z$. The ordered pair $\{x=y=0\} < \xi$ of points on $X\amalg_Z Y$, where
$\xi$ is the generic point on $Y$, cannot be lifted to $X\amalg Y$.
\end{example}

This example motivates the following question:

\begin{question}
Are quasi-compact universally \emph{submersive} morphisms of effective
$\catE_{\qc}$-descent?
\end{question}

\end{section}


\begin{section}{Passage to the limit}\label{S:limit}
In this section, we first show that subtrusive morphisms are \emph{stable}
under inverse limits. This result follows from basic properties of subtrusive
morphisms (\ref{P:S-subtrusive}--\ref{P:locality-of-subtrusiveness}).
The corresponding stability result for universally open morphisms is proved
in~\cite[Prop.~8.10.1]{egaIV}. We then show that subtrusive morphisms and
universally open morphisms \emph{descend} under inverse limits. The proofs of
these results are much more difficult and use the structure theorems of
Section~\ref{S:structure-theorems}.

\begin{notation}\label{N:limit}
We use the following notation, cf.~\cite[\S8]{egaIV}: Let $S_0$ be a scheme and
let $S_\lambda$ be a filtered inverse system of schemes, affine over $S_0$. Let
$S=\varprojlim_\lambda S_\lambda$ be the inverse limit which is a scheme
affine over $S_0$. Let $\alpha$ be an index and let
$\map{f_\alpha}{X_\alpha}{Y_\alpha}$ be a morphism of $S_\alpha$-schemes. For
every $\lambda\geq\alpha$ we let $\map{f_\lambda}{X_\lambda}{Y_\lambda}$ be the
pull-back of $f_\alpha$ along $S_\lambda\to S_\alpha$ and we let
$\map{f}{X}{Y}$ be the pull-back of $f_\alpha$ along $S\to S_\alpha$. Let
$\map{u_\lambda}{X}{X_\lambda}$ and $\map{v_\lambda}{Y}{Y_\lambda}$ be the
canonical morphisms.
\end{notation}

\begin{proposition}[{\cite[Part~II, Prop.~3]{picavet_submersion}}]
\label{P:subtrusiveness-stable-under-limits}
Let $f$ and $f_\lambda$ be morphisms as in Notation~\pref{N:limit} and assume
that $f^{\cons}$ is submersive, e.g. $f$ quasi-compact.
If there exists $\lambda$ such that $f_\mu$ is
subtrusive (resp.\ universally subtrusive) for every $\mu\geq\lambda$, then $f$
is subtrusive (resp.\ universally subtrusive).
\end{proposition}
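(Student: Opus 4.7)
The plan is to use Definition~\pref{D:subtrusive} to split the conclusion: subtrusive means $S$-subtrusive plus submersive in the constructible topology. Since $f^{\cons}$ submersive is part of the hypothesis, everything reduces to showing that $f$ is $S$-subtrusive, i.e., that every ordered pair $y\le y'$ in $Y$ lifts to an ordered pair $x\le x'$ in $X$.

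For the \emph{universally subtrusive} case the argument is essentially formal. Any base change $Y'\to Y$ composes with the canonical $Y\to Y_\mu$ to give $Y'\to Y_\mu$, and since $X=X_\mu\times_{Y_\mu}Y$ one has $X\times_Y Y'=X_\mu\times_{Y_\mu}Y'$ canonically. Thus every base change of $f$ is simultaneously a base change of $f_\mu$ and so is subtrusive by the universal subtrusiveness of $f_\mu$; this absorbs both the lifting of pairs and the submersiveness in the constructible topology.

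For the \emph{subtrusive} (non-universal) case the idea is a compactness argument in the constructible topology. Given $y\le y'$ in $Y$, first pick an affine open $U\ni y$, which automatically contains $y'$ since $y\le y'$. By Proposition~\pref{P:locality-of-subtrusiveness} the restrictions of $f$ and $f_\mu$ above $U$ and the corresponding affine opens $U_\mu\subseteq Y_\mu$ (for $\mu$ large enough) remain (universally) subtrusive, so I may assume $Y$ and the $Y_\mu$'s are affine. The transition maps on the $X_\mu$ are then affine, giving $|X|=\varprojlim |X_\mu|$ as topological spaces and identifying the specialization order on $X$ with the inverse limit of the specialization orders on the $X_\mu$'s. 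Writing $y_\mu=v_\mu(y)$, $y'_\mu=v_\mu(y')$, I define
\[
L_\mu=\bigl\{(x,x')\in X_\mu\times X_\mu\,:\,x\le x',\ f_\mu(x)=y_\mu,\ f_\mu(x')=y'_\mu\bigr\},
\]
which is nonempty by $S$-subtrusiveness of $f_\mu$, and the canonical transitions send $L_\nu$ into $L_\mu$ for $\nu\ge\mu$.

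The key step is to show that each $L_\mu$ is pro-constructible in $X_\mu\times X_\mu$, hence compact in the constructible topology. The singletons $\{y_\mu\},\{y'_\mu\}\subseteq Y_\mu$ are pro-constructible as images of $\Spec(k(y_\mu)),\Spec(k(y'_\mu))\to Y_\mu$ (Proposition~\pref{P:char-of-pro-constr}); their preimages in $X_\mu$ are then pro-constructible by continuity of $f_\mu^{\cons}$. The specialization relation on $X_\mu$ is pro-constructible in $X_\mu\times X_\mu$: on affines $\Spec(A)$ its complement is $\bigcup_{a\in A}D(a)\times V(a)$, a union of patch-open rectangles, and the general case reduces to the affine one. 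Hence $L_\mu$ is patch-closed in the patch-compact space $X_\mu\times X_\mu$, and Tychonoff combined with the finite intersection property (for finitely many indices $\mu_1,\dots,\mu_k$, pick $\nu$ dominating them all and project any element of $L_\nu$) gives $\varprojlim L_\mu\neq\emptyset$. An element produces compatible pairs $(x_\mu,x'_\mu)$ that, via $|X|=\varprojlim|X_\mu|$, assemble into a pair $(x,x')$ in $X$ with $x\le x'$, $f(x)=y$, $f(x')=y'$. The hardest part will be the quasi-compactness bookkeeping: the hypothesis places $f^{\cons}$ submersive on $f$ but not on each $f_\mu$ separately, so ensuring that the relevant patch spaces are compact requires either inheriting quasi-compactness from $f$ down the system or a further local reduction to the fibers over $y$ and $y'$ where the pro-constructibility of $L_\mu$ still makes sense.
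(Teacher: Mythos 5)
Your reduction to $S$-subtrusiveness and your treatment of the universal case (every base change of $f$ is simultaneously a base change of $f_\mu$) both match the paper exactly. For the non-universal case you use the pair-lifting criterion \ref{PI:SS-pairs} of Proposition~\pref{P:S-subtrusive} together with an inverse limit of patch-compact sets, whereas the paper uses criterion (iv) (the identity $f(\overline{f^{-1}(Z)})=\overline{Z}$ for pro-constructible $Z$) and the equality $\bigcap_\mu f(K_\mu)=f\bigl(\bigcap_\mu K_\mu\bigr)$ for a filtered family of patch-closed sets; these are the same compactness argument in different clothing. However, the difficulty you flag at the end is a genuine gap, and your write-up does not close it. The sets $L_\mu$ are pro-constructible in $X_\mu\times X_\mu$, but pro-constructible only yields patch-\emph{compact} when the ambient space is patch-compact, i.e.\ when the fibers $f_\mu^{-1}(y_\mu)$ and $f_\mu^{-1}(y'_\mu)$ are quasi-compact. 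The hypothesis ($f^{\cons}$ submersive, e.g.\ $f$ quasi-compact) lives on the limit morphism $f$ and does not descend to the $f_\mu$: quasi-compactness of $f$ does not force quasi-compactness of $f_\mu$ (for instance $X_\mu$ may be an infinite disjoint union of closed subschemes of $Y_\mu$, all but finitely many of which become empty after base change to $Y$). Without compactness of the $L_\mu$, an inverse limit of nonempty sets can be empty, so the argument as written does not conclude.

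The repair is to transport your system to the limit, which is what the paper's formulation does automatically by taking closures and preimages inside $X$ and $Y$ rather than inside $X_\mu$ and $Y_\mu$. Replace $L_\mu$ by $M_\mu=(u_\mu\times u_\mu)^{-1}(L_\mu)\cap\bigl(f^{-1}(y)\times f^{-1}(y')\bigr)\subseteq X\times X$. Each $M_\mu$ is nonempty: since $X=X_\mu\times_{Y_\mu}Y$ and $v_\mu(y)=y_\mu$, the map $f^{-1}(y)\to f_\mu^{-1}(y_\mu)$ is surjective, so any pair in $L_\mu$ lifts to $(x,x')$ with $f(x)=y$, $f(x')=y'$ and $u_\mu(x)\leq u_\mu(x')$. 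The $M_\mu$ form a filtered decreasing family of patch-closed subsets of $f^{-1}(y)\times f^{-1}(y')$, which is patch-compact as soon as $f$ is quasi-compact (the case of interest), so $\bigcap_\mu M_\mu\neq\emptyset$; since $|X|=\varprojlim_\mu|X_\mu|$ identifies a specialization in $X$ with a compatible family of specializations in the $X_\mu$, any point of this intersection is the required lift of $y\leq y'$. With this modification your argument is correct and becomes, in effect, the paper's proof specialized to a single pair of points.
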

\begin{proof}
If $f_\lambda$ is universally subtrusive then it follows from the definition
that the pull-back $f$ is universally subtrusive. Assume that there is
$\lambda$ such that $f_\mu$ is subtrusive for $\mu\geq\lambda$. To prove that
$f$ is subtrusive, it is enough to show that if $Z\subseteq Y$ is
pro-constructible, then $\overline{Z}=f(\overline{f^{-1}(Z)})$ by
Proposition~\pref{P:S-subtrusive}. Let $Z_\mu=v_\mu(Z)$ which is
pro-constructible as $v_\mu$ is quasi-compact. Then
$$Z=\bigcap_{\mu\geq\lambda} v^{-1}_\mu(Z_\mu)$$
and as $Y=\varprojlim_\mu Y_\mu$ as topological spaces, it follows that
$$\overline{Z}=\bigcap_{\mu\geq\lambda} v^{-1}_\mu
\left(\overline{Z_\mu}\right).$$
Similarly
$$\overline{f^{-1}(Z)}=\bigcap_{\mu\geq\lambda} u^{-1}_\mu
\left(\overline{f_\mu^{-1}(Z_\mu})\right).$$
As $f_\mu$ is subtrusive we have that
$\overline{Z_\mu}=f_\mu(\overline{f^{-1}_\mu(Z_\mu)})$. It thus
follows that
$$\overline{Z}=\bigcap_{\mu\geq\lambda} v^{-1}_\mu
\left(f_\mu\left(\overline{f^{-1}_\mu(Z_\mu)}\right)\right)
=\bigcap_{\mu\geq\lambda} f\left(u^{-1}_\mu
\left(\overline{f_\mu^{-1}(Z_\mu})\right)\right)
=f\left(\overline{f^{-1}(Z)}\right)$$
as the intersections are filtered.
\end{proof}

\begin{corollary}\label{C:univ-submersive-An}
Let $\map{f}{X}{Y}$ be a morphism of schemes. Then $f$ is universally
subtrusive if and only if $f^{\cons}$ is universally submersive and
$\map{f_n}{X\times_{\Z} \A{n}_{\Z}}{Y\times_{\Z}
\A{n}_{\Z}}$ is subtrusive for every positive integer $n$.
\end{corollary}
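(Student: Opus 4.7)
The forward direction is immediate from the definition: if $f$ is universally subtrusive, then in particular the base changes along $\mathbb{A}^n_{\Z}\to \Spec(\Z)$ are subtrusive, and $f^\cons$ is universally submersive since every subtrusive morphism is submersive in the constructible topology.

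For the converse, assume $f^\cons$ is universally submersive and each $f_n$ is subtrusive. The plan is to show that for any morphism $Y'\to Y$ the pull-back $f\times_Y Y'$ is subtrusive; by Proposition~\pref{P:locality-of-subtrusiveness}~(ii) we may cover and so reduce to the case where both $Y=\Spec(B)$ and $Y'=\Spec(A)$ are affine.

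Now write $A=\varinjlim_\lambda A_\lambda$ as the filtered union of its finitely generated $B$-subalgebras, so that $Y'=\varprojlim_\lambda Y_\lambda$ with $Y_\lambda=\Spec(A_\lambda)$. Each $A_\lambda$ admits a presentation $B[t_1,\dots,t_{n_\lambda}]\twoheadrightarrow A_\lambda$, giving a closed immersion $Y_\lambda\hookrightarrow \A{n_\lambda}_Y=Y\times_{\Z}\A{n_\lambda}_{\Z}$. Since $f_{n_\lambda}\colon X\times_\Z\A{n_\lambda}_\Z\to \A{n_\lambda}_Y$ is subtrusive by hypothesis, Proposition~\pref{P:locality-of-subtrusiveness}~(iii) applied to the closed subscheme $Y_\lambda$ shows that the restriction $f\times_Y Y_\lambda$ is subtrusive.

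Finally I would apply Proposition~\pref{P:subtrusiveness-stable-under-limits} to the inverse system $\{f\times_Y Y_\lambda\}$ with limit $f\times_Y Y'$. Its hypotheses are in place: each member of the system is subtrusive by the previous step, and the constructible-topology submersiveness of $(f\times_Y Y')^\cons$ is exactly a consequence of $f^\cons$ being universally submersive. This yields subtrusiveness of $f\times_Y Y'$, and spreading back over open covers gives universal subtrusiveness of $f$. The main conceptual step is recognizing that the hypothesis on the countably many base changes $f_n$, combined with the ``locally closed subscheme'' clause of Proposition~\pref{P:locality-of-subtrusiveness}, is already enough to control every finitely generated base change, after which the limit proposition does the rest; there is no hidden obstacle since we do not need universal subtrusiveness of the $f_n$, only subtrusiveness.
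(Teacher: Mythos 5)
Your proof is correct and follows essentially the same route as the paper: reduce to affine base changes, handle finite type base changes by factoring through a closed immersion into $Y\times_{\Z}\A{n}_{\Z}$ and invoking Proposition~\pref{P:locality-of-subtrusiveness}, then pass to arbitrary affine base changes via Proposition~\pref{P:subtrusiveness-stable-under-limits}. The only difference is that you spell out the filtered-union presentation and the verification of the limit proposition's hypotheses more explicitly than the paper does.
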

\begin{proof}
The condition is necessary by the definition of universally subtrusive. For the
sufficiency, assume that $f_n$ is subtrusive for all $n$. As subtrusiveness is
Zariski-local on the base by Proposition~\pref{P:locality-of-subtrusiveness},
we can assume that $Y$ is affine. It is also enough to check that
$\map{f'}{X'}{Y'}$ is subtrusive for base changes $Y'\to Y$ such that $Y'$ is
affine. First assume that $Y'\to Y$ is of finite type. Then we can factor
$Y'\to Y$ through a closed immersion $Y'\inj Y\times \A{n}$ and it follows by
the assumptions on $f_n$ and Proposition~\pref{P:locality-of-subtrusiveness}
that $f'$ is subtrusive. For arbitrary affine $Y'\to Y$, we write $Y'$ as a
limit of finite type schemes and invoke
Proposition~\pref{P:subtrusiveness-stable-under-limits}.
\end{proof}

\begin{theorem}\label{T:subtrusive-limit-of-subtrusive}
Assume that $S_0$ is quasi-compact and $\map{f_\alpha}{X_\alpha}{Y_\alpha}$ is
of finite presentation with notation as in~\pref{N:limit}. Then $\map{f}{X}{Y}$
is universally subtrusive if and only if $f_\lambda$ is universally subtrusive
for some $\lambda\geq \alpha$.
\end{theorem}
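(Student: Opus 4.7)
The ``if'' direction is immediate: $f$ is the pullback of $f_\lambda$ along $Y\to Y_\lambda$, so if $f_\lambda$ is universally subtrusive then so is $f$ (this is also a special case of Proposition~\pref{P:subtrusiveness-stable-under-limits}).

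For the ``only if'' direction, my plan is to apply the structure theorem to $f$ and then descend the resulting factorization to some index $\lambda$. Since $S_0$ is quasi-compact and universal subtrusiveness is Zariski-local on the target by Proposition~\pref{P:locality-of-subtrusiveness}, I would first cover $Y_\alpha$ by finitely many affine open subsets and reduce to the case where $Y_\alpha$, and hence $Y_\lambda$ for all $\lambda\geq\alpha$ and $Y=\varprojlim_\lambda Y_\lambda$, is affine.

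With $Y$ affine, I would apply Theorem~\pref{T:ST-aff} to obtain a morphism $\map{h}{X'}{X}$ together with a factorization
$$\xymatrix{X'\ar[r]^{h_1} & Y'\ar[r]^{h_2} & Y}$$
of $f\circ h$, where $h_1$ is faithfully flat of finite presentation and $h_2$ is proper, surjective and of finite presentation. All of the data in this diagram is of finite presentation over $Y=\varprojlim_\lambda Y_\lambda$, so by the standard limit formalism~\cite[Thm.~8.8.2, Thm.~8.10.5, Thm.~11.2.6]{egaIV} there exists an index $\lambda\geq\alpha$ and morphisms $h_\lambda\colon X'_\lambda\to X_\lambda$, $h_{1,\lambda}\colon X'_\lambda\to Y'_\lambda$, $h_{2,\lambda}\colon Y'_\lambda\to Y_\lambda$ whose pullback along $Y\to Y_\lambda$ recovers the previous diagram, with $h_{1,\lambda}$ faithfully flat of finite presentation, $h_{2,\lambda}$ proper, surjective and of finite presentation, and $h_{2,\lambda}\circ h_{1,\lambda}=f_\lambda\circ h_\lambda$.

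To conclude, by Remark~\pref{R:subtrusive-examples} both $h_{1,\lambda}$ (fppf) and $h_{2,\lambda}$ (proper surjective) are universally subtrusive, hence so is the composition $f_\lambda\circ h_\lambda=h_{2,\lambda}\circ h_{1,\lambda}$. Since universal subtrusiveness of a composition $g\circ f$ forces universal subtrusiveness of $g$ (as noted right after Definition~\pref{D:subtrusive}), $f_\lambda$ is universally subtrusive. The main obstacle is the structure-theorem input: it is Theorem~\pref{T:ST-aff} (whose non-noetherian case relies on the TIC construction of \S\pref{S:structure-theorems}) that allows me to replace $f$ by a finite-presentation diagram whose constituent morphisms have properties (fppf, proper surjective) that are both individually stable under the EGA~IV limit formalism and which together force universal subtrusiveness.
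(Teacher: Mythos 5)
Your proposal is correct and follows essentially the same route as the paper: reduce to the affine case using quasi-compactness and the Zariski-locality of subtrusiveness, apply Theorem~\pref{T:ST-aff} to get a refinement factoring as fppf followed by proper surjective, descend that finitely presented data to some $Y_\lambda$ via the EGA~IV limit theorems, and conclude since a morphism through which a universally subtrusive morphism factors is itself universally subtrusive. Your write-up is if anything slightly more explicit than the paper's (e.g.\ in recording that the refinement map $X'_\lambda\to X_\lambda$ and the identity $h_{2,\lambda}\circ h_{1,\lambda}=f_\lambda\circ h_\lambda$ also descend), but there is no substantive difference.
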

\begin{proof}
The condition is sufficient by definition. To prove the necessity we assume
that $f$ is universally subtrusive. As $S_0$ is quasi-compact there is a finite
affine covering of $S_0$. As subtrusiveness is local on the base by
Proposition~\pref{P:locality-of-subtrusiveness} we can therefore assume that
$S_0$ is affine. We can then by Theorem~\pref{T:ST-aff} find a refinement
$\map{f'}{X'}{Y}$ of $\map{f}{X}{Y}$ such that $f'$ has a factorization into a
finitely presented flat surjective morphism $X'\to Y'$ followed by a finitely
presented proper surjective morphism $Y'\to Y$. By~\cite[Thm.~8.10.5 and
Thm.~11.2.6]{egaIV} the morphism $f'$ descends to a morphism
$\map{f'_\lambda}{X'_\lambda}{Y_\lambda}$ with a similar factorization. In
particular $f'_\lambda$ is universally subtrusive and it follows that
$f_\lambda$ is universally subtrusive as well.
\end{proof}

\begin{corollary}\label{C:subtrusive-reduction-to-exc-noeth}
Let $S=\Spec(A)$ be an affine scheme and let $\map{f}{X}{S}$ be a morphism
of finite presentation. Then the following are equivalent:
\begin{enumerate}
\item $f$ is universally subtrusive.
\item There exists an affine noetherian scheme $S_0=\Spec(A_0)$, a morphism
$\map{f_0}{X_0}{S_0}$ of finite presentation and a morphism $S\to S_0$ such
that $X=X_0\times_{S_0} S$ and $f_0$ is universally
submersive.\label{CI:sren-ii}
\item There exists a scheme $S_0$ and a morphism $f_0$ as in~\ref{CI:sren-ii}
such that in addition $A_0$ is a sub-$\Z$-algebra of $A$ of finite type.
\end{enumerate}
\end{corollary}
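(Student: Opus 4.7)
The plan is to prove the three implications in the natural order. The implication \ref{CI:sren-ii}$\implies$(i) uses no new input beyond previously established machinery: since $S_0$ is locally noetherian, Theorem~\pref{T:submersive=subtrusive-if-noeth} tells us that the universally submersive morphism $f_0$ is in fact universally subtrusive, and since universal subtrusiveness is visibly stable under arbitrary base change, the pull-back $f=f_0\times_{S_0}S$ is universally subtrusive. The implication (iii)$\implies$\ref{CI:sren-ii} is immediate from the definitions.

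The only real content is (i)$\implies$(iii). Here I would write $A=\varinjlim_\lambda A_\lambda$ as the filtered union of its sub-$\Z$-algebras of finite type, so $S=\varprojlim_\lambda S_\lambda$ with each $S_\lambda=\Spec(A_\lambda)$ affine and noetherian. Since $f\colon X\to S$ is of finite presentation, \cite[Thm.~8.8.2]{egaIV} provides an index $\alpha$ and a finitely presented morphism $f_\alpha\colon X_\alpha\to S_\alpha$ such that $f$ is obtained from $f_\alpha$ by base change along $S\to S_\alpha$, as in the setup of Notation~\pref{N:limit}.

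Now invoke Theorem~\pref{T:subtrusive-limit-of-subtrusive}: since $S_\alpha$ is quasi-compact, $f_\alpha$ is of finite presentation, and $f$ is universally subtrusive, there exists $\lambda\geq\alpha$ such that $f_\lambda\colon X_\lambda\to S_\lambda$ is universally subtrusive. Because $S_\lambda$ is locally noetherian, Theorem~\pref{T:submersive=subtrusive-if-noeth} yields that $f_\lambda$ is universally submersive. Setting $S_0:=S_\lambda$ and $A_0:=A_\lambda$ gives (iii), since by construction $A_0\subseteq A$ is a sub-$\Z$-algebra of finite type and $X=X_\lambda\times_{S_\lambda}S$.

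The main obstacle is hidden rather than visible here: the entire weight of (i)$\implies$(iii) rests on Theorem~\pref{T:subtrusive-limit-of-subtrusive}, whose proof depends on the structure theorem (Theorem~\pref{T:ST-aff}) for universally subtrusive morphisms. Everything else in the argument is a routine application of standard limit methods from \cite[\S8]{egaIV} combined with the noetherian equivalence between ``universally submersive'' and ``universally subtrusive''.
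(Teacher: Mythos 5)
Your proof is correct and follows exactly the route the paper intends: the corollary is stated without proof immediately after Theorem~\pref{T:subtrusive-limit-of-subtrusive}, and the implicit argument is precisely your combination of that theorem (applied to the presentation of $A$ as a filtered union of finite-type $\Z$-subalgebras) with the noetherian equivalence of universally submersive and universally subtrusive from Theorem~\pref{T:submersive=subtrusive-if-noeth}. No gaps; the hypothesis that $f_0^{\cons}$ is universally submersive is satisfied since $f_0$ is quasi-compact, as you implicitly use.
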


The corresponding result for universally submersive is false. Indeed, there
exists
a universally submersive morphism of finite presentation which is not
universally subtrusive, cf.\ Example~\pref{E:submersive-non-subtrusive}, and
hence not a pull-back from a universally submersive morphism of noetherian
schemes.

\begin{theorem}
Assume that $S_0$ is quasi-compact and $\map{f_\alpha}{X_\alpha}{Y_\alpha}$ is
of finite presentation with notation as in~\pref{N:limit}. Then $\map{f}{X}{Y}$
is universally open if and only if $f_\lambda$ is universally open for some
$\lambda\geq \alpha$.
\end{theorem}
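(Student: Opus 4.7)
The plan is to mimic the proof of Theorem~\pref{T:subtrusive-limit-of-subtrusive}, exploiting the stronger conclusion of Theorem~\pref{T:ST-aff} available for universally open morphisms.

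Sufficiency is immediate: the base change of a universally open morphism is universally open. For necessity, assume $f$ is universally open. Since universal openness is Zariski-local on the base and $S_0$ is quasi-compact by hypothesis, I would first reduce (as in the proof of Theorem~\pref{T:subtrusive-limit-of-subtrusive}) to the case where $S_0$, and therefore every $S_\lambda$ and $S$, is affine.

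Next, since every universally open morphism is universally subtrusive by Remark~\pref{R:subtrusive-examples}\,(2c), I would apply Theorem~\pref{T:ST-aff} to $f$. The point is that in the universally open case this theorem yields a refinement $\map{g}{X'}{X}$ together with a factorization $X'\to Y'\to Y$ where $X'\to Y'$ is faithfully flat of finite presentation, $Y'\to Y$ is proper, surjective and of finite presentation, and \emph{additionally} the induced morphism $X'\to X\times_Y Y'$ is a nil-immersion. I would then descend this entire datum to some level $\mu\geq\alpha$ using the standard limit theorems~\cite[Thm.~8.10.5, Thm.~11.2.6]{egaIV}, getting morphisms $X'_\mu\to Y'_\mu\to Y_\mu$ with the same three properties (the descent of the nil-immersion property follows by combining the descent of closed immersions and of surjectivity, after possibly enlarging $\mu$).

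The last and key step is the topological verification that this descended data forces $f_\mu$ to be universally open. Given an arbitrary base change $T\to Y_\mu$ and an open subset $U\subseteq X_T$, its preimage $U\times_T Y'_T$ in $X_T\times_T Y'_T$ is open. Since $X'_T\hookrightarrow X_T\times_T Y'_T$ is still a nil-immersion, this preimage corresponds to an open subset $U'\subseteq X'_T$. Its image in $Y'_T$ under the fppf (hence open) morphism $X'_T\to Y'_T$ is open, and a direct computation identifies it with $\pi^{-1}(f_{\mu,T}(U))$, where $\pi\colon Y'_T\to T$. Since $\pi$ is proper and surjective, it is submersive, so $f_{\mu,T}(U)$ is open in $T$. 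This gives universal openness of $f_\mu$.

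The main obstacle is precisely the propagation of the nil-immersion property: first descending it to finite level, and then leveraging it to transport openness of the image through the fppf map $X'_T\to Y'_T$ down to $T$. This refinement in Theorem~\pref{T:ST-aff} is exactly what upgrades the subtrusive conclusion to an openness conclusion, and it is what distinguishes this proof from that of Theorem~\pref{T:subtrusive-limit-of-subtrusive}.
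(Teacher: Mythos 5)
Your proposal is correct and follows essentially the same route as the paper: reduce to an affine base, invoke the nil-immersion refinement of Theorem~\pref{T:ST-aff} for universally open morphisms, descend it to a finite level via the standard limit theorems, and then transport universal openness from $X'_\lambda\to Y'_\lambda$ through the nil-immersion and down along the universally submersive morphism $Y'_\lambda\to Y_\lambda$. The only cosmetic difference is that your final topological verification unwinds by hand what the paper obtains by citing Proposition~\pref{P:descent-of-top-props}\,(ii).
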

\begin{proof}
As the condition is clearly sufficient, we assume that $f$ is universally open.
As $S_0$ is quasi-compact we can easily reduce to the case where $Y_\alpha$ is
affine. Using~\cite[Thm.~8.10.5 and Thm.~11.2.6]{egaIV} we can then descend the
refinement of $f$ given by Theorem~\pref{T:ST-aff}. There is thus an index
$\lambda$, a proper surjective morphism $Y'_\lambda\to Y_\lambda$, a faithfully
flat morphism of finite presentation $X'_\lambda\to Y'_\lambda$ and a
nil-immersion $X'_\lambda\inj X_\lambda\times_{Y_\lambda} Y'_\lambda$. As
$X'_\lambda\to Y'_\lambda$ is universally open, so is
$X_\lambda\times_{Y_\lambda} Y'_\lambda\to Y'_\lambda$. As $Y'_\lambda\to
Y_\lambda$ is universally submersive it follows that $X_\lambda\to Y_\lambda$
is universally open.
\end{proof}

\end{section}


\begin{section}{Weakly normal descent}\label{S:descent-of-morphisms}
Let $\map{f}{S'}{S}$ be faithfully flat and quasi-compact. Then $f$ is a
morphism of descent for the fibered category of all morphisms of algebraic
spaces, that is, for any algebraic space $X$ we have that
$$\xymatrix{
\Hom(S,X)\ar[r] & \Hom(S',X)\ar@<.5ex>[r]\ar@<-.5ex>[r]
 & \Hom(S'\times_S S',X)}$$
is exact~\cite[Thm.~A.4]{laumon}. In this section, we give a similar descent
result for \emph{weakly normal} universally submersive morphisms.

\begin{xpar}[Schematic image]
Let $\map{f}{X}{Y}$ be a morphism of algebraic spaces. If there exists a
smallest closed subspace $Y'\inj Y$ such that $f$ factors through $Y'\inj
Y$, then we say that $Y'$ is the \emph{schematic image} of
$f$~\cite[6.10]{egaI_NE}. If $X$ is reduced, then $\overline{f(X)}$ with its
reduced structure is the schematic image.

Let $\map{f}{X}{Y}$ be a quasi-compact and quasi-separated morphism of
\emph{algebraic spaces}. Then $f_*\sO_X$ is a quasi-coherent sheaf
(\cite[Prop.~II.4.6]{knutson_alg_spaces} holds for non-separated
morphisms) and the schematic image of $f$ is the closed subspace of $Y$
defined by the ideal $\ker(\sO_Y\to f_*\sO_X)$.
The underlying topological space of the image is $\overline{f(X)}$,
as can be checked on an \etale{} presentation.
\end{xpar}

A morphism $\map{f}{X}{Y}$ of algebraic spaces is \emph{schematically dominant}
if $\sO_Y\to f_*\sO_X$ is injective (in the small \etale{} site). This agrees
with the usual definition for schemes~\cite[D\'ef.~11.10.2, Thm.~11.10.5\ 
(ii)]{egaIV}. If $\map{f}{X}{Y}$ is schematically dominant, then the schematic
image of $f$ exists and equals $Y$. Conversely, if $f$ is quasi-compact and
quasi-separated or $X$ is reduced, then $f$ is schematically dominant if and
only if the schematic image of $f$ equals $Y$.

\begin{proposition}\label{P:sch-dom+univ-sub:epimorphism}
Let $\map{p}{S'}{S}$ be a schematically dominant universally submersive
morphism of algebraic spaces. Then $p$ is an epimorphism in the category of
algebraic spaces, i.e., $\Hom(S,X)\to \Hom(S',X)$ is injective for every
algebraic space $X$.
\end{proposition}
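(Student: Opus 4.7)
The plan is to prove injectivity by forming the equalizer. Given $f, g : S \to X$ with $f \circ p = g \circ p$, let $E := S \times_{X \times X} X$ be the fiber product taken along $(f, g) : S \to X \times X$ and $\Delta_X : X \to X \times X$. Since the diagonal of any algebraic space is a scheme-representable monomorphism, so is $E \to S$, and the assumption gives a canonical factorization $S' \to E \to S$ of $p$. Showing $f = g$ amounts to showing this monomorphism is an isomorphism.

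First I will show that $E \to S$ is a universal homeomorphism. As a monomorphism of algebraic spaces, $E \to S$ is universally injective (the fibers are either empty or consist of a single point with trivial residue field extension). Since $p$ is surjective, being universally submersive, and factors through $E$, the map $|E| \to |S|$ is surjective, and the same holds after any base change. Submersivity of $p$ together with the continuity of $S' \to E$ then forces $E \to S$ to be submersive: a subset $C \subseteq S$ is closed iff $p^{-1}(C)$ is, iff (using injectivity of $|E| \to |S|$) its preimage in $E$ is closed. Applying the same argument to pullbacks, $E \to S$ is universally submersive, hence a universal homeomorphism.

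Since a monomorphism is separated, Corollary~\pref{C:sep-homeo-is-affine} now yields that $E \to S$ is representable by schemes and integral. Working on an affine \'etale chart $S = \Spec A$, we obtain $E = \Spec B$ with $A \to B$ integral. Monomorphicity of $E \to S$ makes $A \to B$ a ring epimorphism, while schematic dominance of $p$ forces $A \to B$ to be injective: the composition $\sO_S \to (E\to S)_* \sO_E \to p_* \sO_{S'}$ is the injective structure map of $p$, so the first arrow is injective as well.

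The remaining step, and the main algebraic obstacle, is the classical ring-theoretic fact that an integral epimorphism of commutative rings is surjective. One reduces to the finite case by observing that for each $b \in B$ the inclusion $A[b] \hookrightarrow B$ is still an epimorphism; at each prime $\mathfrak p$ of $A$ the fiber $k(\mathfrak p) \to B \otimes_A k(\mathfrak p)$ is a finite epimorphism from a field, and $R \otimes_K R \cong R$ forces $\dim_K R = 1$, so this fiber is the identity; Nakayama then yields local surjectivity of $A \to B$. Combined with the injectivity from schematic dominance, $A = B$, so $E = S$ and $f = g$.
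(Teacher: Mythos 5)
Your reduction is correct and takes a genuinely different route from the paper. You form the equalizer $E=S\times_{X\times X}X$ of $f$ and $g$, observe that the factorization $S'\to E\to S$ forces the monomorphism $E\to S$ to be universally injective, surjective and universally submersive, hence a separated universal homeomorphism, and then invoke Corollary~\pref{C:sep-homeo-is-affine} to make it representable and integral; the deduction that $A\to B$ is injective from schematic dominance of $p$ is also correct. This is not circular (the corollary precedes the proposition), but it makes the statement rest on the full effective-descent machinery of Section~\ref{S:descent}. The paper's own proof is much lighter: for separated $X$ it recovers $f$ as the schematic image of $\Gamma_f\circ p$ in $S\times X$, and for general $X$ it reduces to the separated case by descending the canonical isomorphism $p^*f_1^*U\iso p^*f_2^*U$ along $p$, using only Proposition~\pref{P:univ-sub-is-m.o.d.} (plain descent, not effectiveness).

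The genuine gap is in your final step, the claim that an integral ring epimorphism $A\to B$ is surjective. The field case and the resulting identity $B\otimes_A\kappa(\mathfrak{p})=\kappa(\mathfrak{p})$ for every prime $\mathfrak{p}$ are fine (and need no finiteness), but they only give $B_{\mathfrak{p}}=A_{\mathfrak{p}}+\mathfrak{p}B_{\mathfrak{p}}$, i.e.\ $M=\mathfrak{p}M$ for $M=B_{\mathfrak{p}}/A_{\mathfrak{p}}$, and Nakayama does not apply because $M$ need not be finitely generated: integral is strictly weaker than finite. Your proposed reduction to the finite case does not repair this: that $A[b]\inj B$ is an epimorphism is not what is needed --- you would need $A\to A[b]$ to be an epimorphism, or the fiber computation to hold for $A[b]$ in place of $B$, and neither follows, since $A[b]\otimes_A\kappa(\mathfrak{p})\to B\otimes_A\kappa(\mathfrak{p})$ need not be injective (it can have a nonzero nilpotent kernel, which is exactly what blocks the argument). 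The fact you want --- equivalently, that a universally closed monomorphism of schemes is a closed immersion --- is a genuine theorem from the literature on ring epimorphisms (S\'eminaire Samuel 1967/68, \emph{Les \'epimorphismes d'anneaux}), and its proof in the non-finite integral case requires a different argument. As written, your proof is complete only when $E\to S$ is finite (e.g.\ when $X\to S$ is locally of finite type); in general you must either cite the integral case of that theorem or supply a proof of it.
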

\begin{proof}
First assume that $X$ is \emph{separated} and let $\map{f}{S}{X}$ be a
morphism. Then the schematic image of $\map{\Gamma_f\circ p=(p,f\circ
p)}{S'}{S\times X}$ exists and equals the graph
$\Gamma_f$~\cite[Prop.~6.10.3]{egaI_NE}. We can thus recover $f$ from $f\circ
p$.

For general $X$, let $\map{f_1,f_2}{S}{X}$ be two morphisms such that $f_1\circ
p=f_2\circ p$. Let $U\to X$ be an \etale{} surjective morphism such that $U$ is
a separated scheme. As $p$ is universally submersive, it is a morphism of
descent for \etale{} morphisms by Proposition~\ref{P:univ-sub-is-m.o.d.}. Thus,
the canonical $S'$-isomorphism $V':=p^*f_1^*U\iso p^*f_2^*U$ descends to an
$S$-isomorphism $V:=f_1^* U\iso f_2^* U$. To conclude, we have a diagram
$$\xymatrix{%
  V'\ar[r]^{q}\ar[d] & V\ar@<.5ex>[r]^-{g_1}\ar@<-.5ex>[r]_-{g_2}\ar[d]
  & U\ar[d]\\
  S'\ar[r]^{p} & S\ar@<.5ex>[r]^-{f_1}\ar@<-.5ex>[r]_-{f_2} & X%
}$$
where the vertical morphisms are \etale{}, the natural squares are cartesian
and $g_1\circ q=g_2\circ q$. Note that $q$ is schematically dominant as $p$ is
schematically dominant and $V\to S$ is \etale{}. We apply the special case of
the proposition to deduce that $g_1=g_2$ and it follows that $f_1=f_2$.
\end{proof}

\begin{xpar}[Weak normalization]
Let $\map{f}{S'}{S}$ be a dominant, quasi-compact and quasi-separated
morphism. A \emph{wn-factorization} of $f$ is a factorization $f=f_2\circ f_1$
such that $f_1$ is schematically dominant and $f_2$ is a separated universal
homeomorphism. A wn-factorization is \emph{trivial} if $f_2$ is an isomorphism.
We say that $f$ is \emph{weakly normal} (or weakly subintegrally closed) if any
wn-factorization of $f$ is trivial. The \emph{weak normalization} (or weak
subintegral closure) of $S$ in $S'$, denoted $\wn{S'}{S}$, is the maximal
separated universal homeomorphism ${\wn{S'}{S}\to S}$ such that there exists a
wn-factorization $S'\to \wn{S'}{S}\to S$ of~$f$. There exists a unique weak
normalization and it fits into a unique wn-factorization.
For more details on weakly normal morphisms and the weak normalization, see\
Appendix~\ref{S:weak-norm}.
\end{xpar}

\begin{theorem}\label{T:h-descent}
Let $\map{\pi}{X}{S}$ be a morphism of algebraic spaces and let
$\map{p}{T'}{T}$ be a quasi-compact, quasi-separated, universally submersive
and weakly normal
morphism of algebraic spaces over $S$. Assume either that $X\to S$ is locally
separated (this is the case if $X$ is a scheme) or that $p$ is universally
subtrusive. Then:
$$\xymatrix{%
\Hom_S(T,X)\ar[r] & \Hom_{S}(T',X)\ar@<.5ex>[r]\ar@<-.5ex>[r]
 & \Hom_S((T'\times_T T')_\red,X)}$$
is exact.
\end{theorem}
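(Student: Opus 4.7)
The plan is to establish injectivity and existence separately. For injectivity, the key observation is that $p$ being weakly normal forces the trivial factorization to be the unique wn-factorization of $p$, so in particular $p$ itself is schematically dominant. Combined with the assumption that $p$ is quasi-compact and universally submersive, Proposition~\pref{P:sch-dom+univ-sub:epimorphism} implies that $p$ is an epimorphism in the category of algebraic spaces, which is exactly the injectivity of the arrow $\Hom_S(T,X) \to \Hom_S(T',X)$.

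For existence, let $g : T' \to X$ satisfy $(g\pi_1)_\red = (g\pi_2)_\red$ on $(T' \times_T T')_\red$. The first reduction is set-theoretic: any two points $t'_1, t'_2$ of a fiber $T'_t$ give rise to a point of $T' \times_T T'$, and the hypothesis forces $g(t'_1) = g(t'_2)$ as points of $X$. Hence $g$ induces a well-defined set-theoretic map $|T| \to |X|$. Now cover $X$ by open affine subschemes $U_\alpha$: when $X\to S$ is locally separated, a Zariski cover suffices; in the universally subtrusive case, one first takes an \'etale cover by affines and then descends the resulting \'etale morphisms along $p$ using Theorem~\pref{T:descent-for-et}. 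By the set-theoretic factorization, the preimage $V'_\alpha := g^{-1}(U_\alpha)$ is saturated with respect to $T'\times_T T' \rightrightarrows T'$, so submersiveness of $p$ yields an open $V_\alpha := p(V'_\alpha) \subseteq T$ with $p^{-1}(V_\alpha) = V'_\alpha$. These $V_\alpha$ cover $T$, and by the uniqueness already established, descents constructed over each $V_\alpha$ glue. Hence we may assume $X = \Spec_S B$ is affine.

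Writing $g^\# : B \to \Gamma(T',\sO_{T'})$, the task is to show this map lands in the subring $\Gamma(T,\sO_T) \hookrightarrow \Gamma(T',\sO_{T'})$ (the inclusion being induced by schematic dominance of $p$). It suffices to descend each element $a' := g^\#(b)$ individually. Form the morphism $\phi := (p, a') : T' \to T \times_\Z \A{1}_\Z$, take its schematic image $Z \subseteq T\times_\Z\A{1}_\Z$, and factor $\phi = q\circ h$ with $h : T' \to Z$ schematically dominant and $q : Z \to T$ separated (the latter since $\A{1}_\Z \to \Spec\Z$ is affine and hence separated). If one can show additionally that $q$ is a universal homeomorphism, then $p = q\circ h$ is a wn-factorization, whence weak normality of $p$ forces $q$ to be an isomorphism; composing its inverse with the projection $Z \to \A{1}_\Z$ produces the desired $a \in \Gamma(T,\sO_T)$ lifting $a'$, and hence the morphism $f : T \to X$.

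The main obstacle is thus proving that $q$ is a universal homeomorphism. Surjectivity is immediate from surjectivity of $p$. For universal injectivity, observe that $h\pi_1$ and $h\pi_2$ agree on $(T'\times_T T')_\red$ (the $T$-coordinate tautologically, the $\A{1}$-coordinate by the hypothesis on $a'$), so that $(h\pi_1, h\pi_2) : T'\times_T T' \to Z\times_T Z$ factors through $\Delta_{Z/T}$ topologically; since $q$ is separated, $\Delta_{Z/T}$ is a closed immersion, and combining this with the topological density of $h$ and universal submersiveness of $p$ forces $Z\times_T Z = \Delta_{Z/T}$ topologically, whence $q$ is universally injective. The remaining integrality of $q$ amounts to saying that $a'$ is weakly subintegral over $\Gamma(T,\sO_T)$, which is precisely the content of the characterization of weakly subintegral elements recalled in Appendix~\ref{S:weak-norm}. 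Together these give $q$ the structure of a separated universal homeomorphism, completing the proof.
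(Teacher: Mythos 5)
Your strategy is recognizably parallel to the paper's --- form a schematic image, show it is a separated universal homeomorphism over $T$, and let weak normality collapse it --- but two steps contain genuine gaps. The first is the reduction to affine $X$. In the locally separated case you propose a Zariski cover of $X$ by open affine subschemes, but a locally separated algebraic space need not admit one (it need not be a scheme), and in that case $p$ is only assumed universally submersive, so the \etale{}-descent alternative is also unavailable: Theorem~\pref{T:descent-for-etqc} requires $p$ subtrusive, and the classes in Theorem~\pref{T:descent-for-et} require openness or finite presentation --- effectiveness for general quasi-compact universal submersions is exactly the open question posed at the end of Section~\ref{S:descent}. The paper's locally separated case instead never localizes on $X$: it uses that the section $s'=(f',\id{T'})$ is an immersion, hence has locally closed image, and shrinks $X\times_S T$ to an open subspace over which $s'$ becomes a closed immersion before running the schematic-image argument. (In the subtrusive case your reduction is essentially the paper's, but the citation should be Theorem~\pref{T:descent-for-etqc}, and one needs Proposition~\pref{P:nil-imm-equiv-etale} to promote the descent datum from $(T'\times_T T')_\red$ to $T'\times_T T'$.)

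The second and more serious gap is the integrality of $\map{q}{Z}{T}$, which you correctly identify as the main obstacle and then dispatch by appeal to the characterization of weakly subintegral elements. That appeal is circular: Lemma~\pref{L:wn-char} tells you which elements of an already \emph{integral} extension $A\inj B$ lie in the weak subintegral closure; it cannot be used to prove that $a'$ is integral over $\Gamma(T,\sO_T)$, which is precisely what is in question, and a separated, universally injective, surjective morphism that is not universally closed need not be a homeomorphism. The missing ingredient is the paper's key step: the graph of $a'$ is a closed immersion $T'\inj Z\times_T T'$ which is moreover surjective, because $\Gamma'$ is closed and saturated for $\map{(\pi_1,\pi_2)}{T'\times_T T'}{T'\times T'}$ and $p\times\id{\A{1}}$ is submersive, so $|Z|=(p\times\id{\A{1}})(\Gamma')$ is closed and $|Z\times_T T'|=(p\times\id{\A{1}})^{-1}(|Z|)=\Gamma'$. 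Hence $T'\to Z\times_T T'$ is a nil-immersion, $Z\times_T T'\to T'$ is a universal homeomorphism, in particular universally closed, and universal closedness descends along the universally submersive $p$ by Proposition~\pref{P:descent-of-top-props}; only then does \cite[Cor.~18.12.11]{egaIV} give integrality. With these two repairs your proof closes up and becomes the paper's argument with $X$ replaced by $\A{1}$.
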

\begin{proof}
As $p$ is weakly normal,
$p$ is schematically dominant and it follows from
Proposition~\pref{P:sch-dom+univ-sub:epimorphism} that $\Hom_S(T,X)\to
\Hom_{S}(T',X)$ is injective.

Let $T''=(T'\times_T T')_\red$ and let $\map{\pi_1,\pi_2}{T''}{T'}$ denote the
two projections. To show exactness in the middle, let $\map{f'}{T'}{X}$ be
a morphism such that $f'':=f'\circ \pi_1=f'\circ \pi_2$.
%
%
Let $\map{s'=(f',\id{T'})}{T'}{X\times_S T'}$ and
$\map{s''=(f'',\id{T''})}{T''}{X\times_S T''}$ be the induced sections.
Denote the set-theoretical images by $\Gamma'=s'(T')$ and
$\Gamma''=s''(T'')$. As $f''=f'\circ \pi_i$, we have that $s''$ is the
pull-back of $s'$ along either of the two projections $\id{X}\times\pi_i$,
$i=1,2$. In particular, we have that
$\Gamma''=(\id{X}\times\pi_i)^{-1}(\Gamma')$.
Let $\map{p_X=\id{X}\times p}{X\times_S T'}{X\times_S T}$ denote the pull-back
of $p$. Let $\Gamma:=p_X(\Gamma')$ so that $p_X^{-1}(\Gamma)=\Gamma'$.

First assume that $s'$ is a closed immersion so that $\Gamma'$ and $\Gamma''$
are closed. Then $\Gamma$ is also closed since $p_X$ is submersive.
We let ${T_1}$ be the schematic image of the map $\map{(f',p)=p_X\circ
s'}{T'}{X\times_S T}$ so that the underlying set of ${T_1}$ is
$\Gamma$.  Let $\map{q}{T'}{{T_1}}$ be the induced morphism. Then $q$
is surjective and the graph of $q$ is a nil-immersion $T'\to
{T_1}\times_T T'$ since both source and target are closed subspaces of
$X\times_S T'$ with underlying set $\Gamma'$. In particular, it follows that
${T_1}\times_T T'\to T'$ is a separated universal homeomorphism.
We now apply Proposition~\pref{P:descent-of-top-props} to ${T_1}\to T$
and $\map{p}{T'}{T}$ and deduce that ${T_1}\to T$ is universally
closed, separated, universally injective and surjective, i.e., a separated
universal homeomorphism. Since $\map{p}{T'\to{T_1}}{T}$ is weakly
normal, we have that ${T_1}\to T$ is an isomorphism and the morphism
$\map{f}{T={T_1}\inj X\times_S T}{X}$ lifts $f'$.

Instead assume that $X\to S$ is locally separated, i.e., that the diagonal
morphism $\map{\Delta_{X/S}}{X}{X\times_S X}$ is an immersion. Then the
sections $s'$ and $s''$ are also immersions.
The image of an immersion of \emph{algebraic spaces} is locally closed.
Indeed, this follows from taking an \etale{} presentation and
Theorem~\pref{T:loc-closed-2}. Thus $\Delta_{X/S}(X)$, $\Gamma'$ and $\Gamma''$
are locally closed subsets. We will now show that $\Gamma$ is locally
closed. If $p_X$ is universally \emph{subtrusive} this follows from
Theorem~\pref{T:loc-closed-1}.

Let $V\subseteq X\times_S X$ be an open neighborhood of $\Delta(X)$ such that
$\Delta(X)\subseteq V$ is closed. Consider the morphism $\map{(f'\times
\id{X})}{T'\times_S X}{X\times_S X}$. The composition with
either of the two morphism $\pi_i\times \id{X}$ is $f''\times \id{X}$.  Let
$U'=(f'\times\id{X})^{-1}(V)$ and $U''=(f''\times\id{X})^{-1}(V)$ so that if we
let $U=p_X(U')$ then $U'=p_X^{-1}(U)$. The subset
$U\subseteq X\times_S T$ is open since $p_X$ is submersive.
Note that the pull-back of $\Delta_{X/S}$ along $(f'\times \id{X})$ is $s'$.
Therefore $\Gamma'\subseteq U'$ is closed and $f'$ factors
through $U$. After replacing $X$ and $S$ with $U$ and $T$, the section $s'$
becomes a closed immersion so that the previous case applies.

Now, let $X$ be arbitrary and assume that $p$ is universally subtrusive. As the
question is local on $T$, we may assume that $T$ is quasi-compact. After
replacing $X$ with a quasi-compact open $U\subseteq X$ through which $f'$
factors, we can also assume that $X$ is quasi-compact. Let $U\to X$ be an
\etale{} presentation such that $U$ is a quasi-compact scheme. Let
$V'=f'^{-1}(U)$ and $V''_r=f''^{-1}(U)$. By
Proposition~\pref{P:nil-imm-equiv-etale} there is a unique \etale{} $T'\times_T
T'$-scheme $V''$ which restricts to $V''_r$ on $T''$. By
Theorem~\pref{T:descent-for-etqc} the \etale{} morphism $V'\to T'$ descends to
an \etale{} morphism $V\to T$. As the weak normalization commutes with \etale{}
base change, cf.\ Proposition~\pref{P:wn-basechange}, we have that $V'\to V$ is
weakly normal.

We now apply the first case of the theorem to $V'\to V$ and $V'\to U$ and
obtain a morphism $V\to U\to X$ lifting $V'\to X$. Similarly, we obtain a
lifting $V\times_T V\to U\times_X U\to X$ of $V'\times_{T'} V'\to U\times_{X}
U\to X$. Finally, we obtain the morphism $\map{f}{T}{X}$ by \etale{} descent.
\end{proof}

\begin{remark}
Suppose that we remove the assumption that $T'\to T$ is weakly normal in the
theorem. If $X\to S$ is locally separated, then the proof of the theorem shows
that there exists a minimal wn-factorization $T'\to T_1\to T$ such that
$\map{f'}{T'}{X}$ lifts to $T_1$.
%
%
If $X/S$ is locally of finite type, then $T_1\to T$ is of finite type.
%
%
It can be shown that such a minimal wn-factorization also exists
if $X\to S$ is arbitrary and $T'\to T$ is universally subtrusive.
\end{remark}

We obtain the following generalization of Lemma~\pref{L:wn-char}:

\begin{corollary}
Let $\map{p}{S'}{S}$ be a quasi-compact and quasi-separated universally
submersive morphism. Let $\map{q}{(S'\times_S S')_\red}{S}$ be the structure
morphism of the reduced fiber product. Then the sequence
$$\xymatrix{
\sO_{\wn{S'}{S}}\ar@{(->}[r] & p_*\sO_{S'}\ar@<.5ex>[r] \ar@<-.5ex>[r] &
q_*\sO_{(S'\times_S S')_\red}.}$$
is exact. In particular, we have that $p$ is weakly normal if and only if
$$\xymatrix{
\sO_S\ar[r] & p_*\sO_{S'}\ar@<.5ex>[r] \ar@<-.5ex>[r] &
q_*\sO_{(S'\times_S S')_\red}}$$
is exact.
\end{corollary}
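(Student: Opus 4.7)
The plan is to deduce the corollary from Theorem~\pref{T:h-descent} applied to the canonical factorization $S' \xrightarrow{p_1} \wn{S'}{S} \xrightarrow{p_2} S$ of $p$, with target $X = \A{1}_S$, and then translate the resulting exactness of $\Hom$-sets into exactness of structure sheaves via base change.

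First I would check that $\map{p_1}{S'}{\wn{S'}{S}}$ is quasi-compact, quasi-separated, universally submersive and weakly normal. The last property is the defining property of $\wn{S'}{S}$; the first two descend from $p$ since $p_2$ is a separated universal homeomorphism, hence in particular representable and integral by Corollary~\pref{C:sep-homeo-is-affine}. Universal submersivity of $p_1$ follows from that of $p$ together with Proposition~\pref{P:descent-of-top-props}(ii) applied to $p_2$. Next, since $p_2$ is a separated universal homeomorphism, its diagonal is a nil-immersion; base-changing along $S' \times S' \to \wn{S'}{S}\times_S \wn{S'}{S}$, one sees that $S' \times_{\wn{S'}{S}} S' \to S' \times_S S'$ is a nil-immersion, hence
\[
(S' \times_{\wn{S'}{S}} S')_\red = (S' \times_S S')_\red.
\]

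I would then apply Theorem~\pref{T:h-descent} to $p_1$ with $X = \A{1}_S$, which is affine over $S$ and so locally separated. Using $\Hom_S(T,\A{1}_S) = \Gamma(T,\sO_T)$, this yields exactness of
\[
\Gamma(\wn{S'}{S},\sO) \hookrightarrow \Gamma(S',\sO) \rightrightarrows \Gamma\bigl((S' \times_S S')_\red,\sO\bigr).
\]
To upgrade to the stated sheaf exactness on $S$, I would repeat the argument after Zariski (or étale) base change to any open $U\subseteq S$. Here one uses that weak normalization commutes with étale base change (Proposition~\pref{P:wn-basechange}), so $\wn{(S'|_U)}{U} = \wn{S'}{S}\times_S U$, and that reduced fiber products commute with flat base change on $S$; the whole diagram therefore sheafifies, giving the desired exact sequence. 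The ``in particular'' statement is immediate, since $p$ is weakly normal precisely when $\wn{S'}{S}=S$, in which case $\sO_{\wn{S'}{S}} = \sO_S$.

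The main obstacle is to correctly set up the application of Theorem~\pref{T:h-descent}—that is, to recognize that $p_1$ (rather than $p$) is the morphism to which the theorem applies and that the equalizer target can be rewritten in terms of $(S'\times_S S')_\red$ using the nil-immersion argument above. Once this is in place, the passage from the global-section statement to the sheaf statement is routine given Proposition~\pref{P:wn-basechange}.
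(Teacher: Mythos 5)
Your proposal is correct and takes essentially the same route as the paper, whose proof consists precisely of the identity $(S'\times_S S')_\red=(S'\times_{\wn{S'}{S}} S')_\red$ together with Theorem~\pref{T:h-descent} applied to $X=\A{1}$ and the weakly normal morphism $S'\to\wn{S'}{S}$; you merely spell out the verification of the hypotheses and the sheafification step. The one small imprecision is the appeal to Proposition~\pref{P:descent-of-top-props}~(ii) for the universal submersivity of $S'\to\wn{S'}{S}$ (that proposition concerns pullbacks rather than cancellation along a second factor), but the fact itself is immediate because $\wn{S'}{S}\to S$ is a universal homeomorphism, so this is not a real gap.
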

\begin{proof}
This follows from the fact that $(S'\times_S
S')_\red=(S'\times_{\wn{S'}{S}} S')_\red$ together with
Theorem~\pref{T:h-descent} applied to $X=\A{1}$.
\end{proof}

\end{section}


\begin{section}{The $h$-topology}\label{S:h-top}
In this section, we look at the $h$- and $qfh$-topologies. An easy description
of the coverings in these topologies is obtained from the structure theorems of
Section~\ref{S:structure-theorems}. In contrast to the Grothendieck topologies
usually applied, the $h$- and $qfh$-topologies are not sub-canonical, i.e., not
every representable functor is a sheaf. It is therefore important to give a
description of the associated sheaf to a representable
functor~\cite{voevodsky_homology,brenner_Groth-top}.

Let $X$ be an algebraic space of finite presentation over a base scheme $S$.
The main result of this section is that the associated sheaf to the functor
$\Hom_S(-,X)$ coincides with the functor $T\mapsto \Hom_S(\wn{}{T},X)$ where
$\wn{}{T}$ is the absolute weak normalization of $T$. This has been proved by
Voevodsky~\cite{voevodsky_homology} when $S$ and $X$ are excellent noetherian
\emph{schemes}.
When $S$ is non-noetherian, it is natural to replace submersive morphisms
with subtrusive morphisms.
To treat the case when $X$ is a general algebraic space, we use the effective
descent results of Section~\ref{S:descent} via Theorem~\pref{T:h-descent}.

Let $S$ be any scheme and let $\Sch_{/S}$ be the category of schemes
over $S$. The following definitions of the $h$- and $qfh$-topologies
generalize~\cite[Def.~3.1.2]{voevodsky_homology} which is restricted to the
category of noetherian schemes.


\begin{definition}\label{D:h-topology}
The $h$-topology is the minimal Grothendieck topology on $\Sch_{/S}$ such that
the following families are coverings
\begin{enumerate}
\item Open coverings, i.e., families of open immersions $\{\map{p_i}{U_i}{T}\}$
such that $T=\bigcup p_i(U_i)$.
\item Finite families $\{\map{p_i}{U_i}{T}\}$ such that
$\map{\coprod p_i}{\coprod U_i}{T}$ is \emph{universally subtrusive} and of
finite presentation.
\end{enumerate}
The $qfh$-topology is the topology generated by the same types of coverings
except that all morphisms should be locally quasi-finite.
\end{definition}

\begin{remark}
The restriction of the $h$-topology (resp.\ $qfh$-topology) to the category of
quasi-compact and quasi-separated schemes is the Grothendieck topology
associated to the pre-topology whose coverings are of the form (ii).
\end{remark}

\begin{remark}
Consider the following types of morphisms:
\begin{enumerate}
\item Finite surjective morphisms of finite presentation.
\item Faithfully flat morphisms, locally of finite presentation.
\item Proper surjective morphisms of finite presentation.
\end{enumerate}
(i) and (ii) are coverings in the $qfh$-topology and (i)--(iii) are coverings
in the $h$-topology. Indeed, morphisms of type (ii) have quasi-finite
flat quasi-sections~\cite[Cor.~17.16.2]{egaIV}.
\end{remark}

The following theorem generalizes~\cite[Thm.~3.1.9]{voevodsky_homology}.

\begin{theorem}\label{T:h-top-desc}
Every $h$-covering (resp.\ $qfh$-covering) $\{U_i\to T\}$ has a refinement of
the form $\{W_{jk}\to W_j \to V_j\to T\}$ such that
\begin{itemize}
\item $\{V_{j}\to T\}$ is an open covering,
\item $W_j\to V_j$ is a proper (resp.\ finite) surjective morphism of finite
presentation for every $j$,
\item $\{W_{jk}\to W_j\}$ is an open quasi-compact covering for every $j$.
\end{itemize}
In particular, the $h$-topology (resp.\ $qfh$-topology) is the minimal
Grothen\-dieck topology such that the following families are coverings:
\begin{enumerate}
\item Families of open immersions $\{\map{p_i}{U_i}{T}\}$
such that $T=\bigcup p_i(U_i)$.
\item Families $\{\map{p}{U}{T}\}$ consisting of a single proper
(resp.\ finite) surjective morphism of finite presentation.
\end{enumerate}
\end{theorem}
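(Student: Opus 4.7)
The plan is to apply the structure theorems of Section~\pref{S:structure-theorems}, namely Theorem~\pref{T:ST-voevodsky} in the $h$-case and Theorem~\pref{T:STf-noeth/aff} in the $qfh$-case, locally on the target, and then extract the topological characterization as a formal consequence. It suffices to prove the refinement assertion for the generating covering families of Definition~\pref{D:h-topology}, the general case following from the closure of universal subtrusiveness and finite presentation (resp.~and local quasi-finiteness) under composition of such pre-topology coverings. For covers of type~(i) (open coverings) the refinement is trivial: take $V_j$ to be the open pieces, $W_j = V_j$ with identity map, and $\{W_{jk}\} = \{W_j\}$.

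The remaining case is a type-(ii) family $\{U_i \to T\}$, that is, a finite family with $\coprod_i U_i \to T$ universally subtrusive of finite presentation (and locally quasi-finite in the $qfh$-case, hence quasi-finite since the family is finite and each piece is of finite presentation). I would choose an affine open covering $\{V_j \to T\}$ of $T$. On each $V_j$ the pulled-back family $\coprod_i U_i \times_T V_j \to V_j$ retains the required properties, and Theorem~\pref{T:ST-voevodsky} (resp.~Theorem~\pref{T:STf-noeth/aff}) produces a refinement $X_j' \to V_j$ together with a factorization $X_j' \to W_j \to V_j$, where $X_j' \to W_j$ is a quasi-compact open covering and $W_j \to V_j$ is a proper (resp.~finite) surjective morphism of finite presentation.

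To match the stated conclusion I would decompose $X_j'$ into its quasi-compact open pieces and refine each further by the clopen stratification coming from the map $X_j' \to \coprod_i U_i \times_T V_j$, so that every resulting open piece $W_{jk}$ factors through a single $U_i \times_T V_j$, and hence through a single $U_i$. This yields the refinement $\{W_{jk} \to W_j \to V_j \to T\}$ of $\{U_i \to T\}$ asserted in the first part. For the second part, Remark~\pref{R:subtrusive-examples} shows that proper (resp.~finite) surjective morphisms of finite presentation are universally subtrusive, so together with open coverings they generate a Grothendieck topology $\tau$ coarser than the $h$-topology (resp.~$qfh$-topology); the first part, applied to type-(ii) generating covers, shows conversely that every such cover has a $\tau$-refinement, whence $\tau$ coincides with the $h$-topology (resp.~$qfh$-topology).

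I expect the principal obstacle to be the combinatorial bookkeeping that organizes the output of the structure theorem on each affine $V_j$ into the three-level form $W_{jk} \to W_j \to V_j \to T$ with each $W_{jk}$ factoring through a single $U_i$; once this local refinement is in place, the topological characterization in the second part is purely formal.
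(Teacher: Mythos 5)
Your proof is correct and takes essentially the same route as the paper: reduce to a single type-(ii) covering over each affine open of $T$ and then invoke Theorems~\pref{T:ST-voevodsky} and~\pref{T:STf-noeth/aff}. The paper compresses your reduction to generating families into a citation of SGA3, Exp.~IV, Prop.~6.2.1, but your hands-on justification (composites of generating coverings over a quasi-compact base refine to a single finite, universally subtrusive, finitely presented family) is exactly the substance of that citation, and your clopen-stratification bookkeeping for making each $W_{jk}$ factor through a single $U_i$ spells out a point the paper leaves implicit.
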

\begin{proof}
By~\cite[Exp.~IV, Prop.~6.2.1]{sga3}
it follows that there is a refinement of the form $\{W'_j\to V_j\to T\}$ where
$W'_j\to V_j$ are $h$-coverings (resp.\ $qfh$-coverings) of affine schemes and
$\{V_j\to T\}$ is an open covering. Theorems~\pref{T:STf-noeth/aff}
and~\pref{T:ST-voevodsky} then show that these coverings have a further
refinement as in the theorem.
\end{proof}

We will now review the contents of~\cite[\S3.2]{voevodsky_homology} and extend
the results to
algebraic spaces and non-noetherian schemes.
We begin by recalling the construction of the sheaf associated to a presheaf,
cf.~\cite[Ch.~II, Thm.~2.11]{milne_etale_coh}.

\begin{definition}
Let $\sF$ be a presheaf on $\Sch_{/S}$ and equip $\Sch_{/S}$ with a
Grothendieck topology $\cT$. For any $V\in\Sch_{/S}$ we define an equivalence
relation $\sim$ on $\sF(V)$ where $f\sim g$ if there exist a covering
$\{\map{p_i}{U_i}{V}\}\in
\cT$ such that $p_i^*(f)=p_i^*(g)$ for every $i$. We let $\sF'$ be the
quotient of $\sF$ by this equivalence relation. Furthermore we let
$$\widetilde{\sF}=\varinjlim_{\cU} \check{H}^0(\cU,\sF')$$
where the limit is taken over all coverings $\cU=\{\map{p_i}{U_i}{V}\}\in
\cT$ and
$$\check{H}^0(\cU,\sF')=\ker\left(\equalizer{\prod_i \sF'(U_i)}
{\prod_{i,j} \sF'(U_i\times_V U_j)}\right)$$
is the \v{C}ech cohomology.
\end{definition}

\begin{remark}
It is easily seen that $\sF'$ is a separated presheaf. By~\cite[Lem.~II.1.4
(ii)]{artin_groth_top} it then follows that $\widetilde{\sF}$ is the
sheafification of $\sF$. Moreover, we have that $\sF'$ is the image presheaf
of $\sF$ by the canonical morphism $\sF\to\widetilde{\sF}$.
\end{remark}

\begin{definition}
Let $X$ be an algebraic space over $S$ and consider the representable presheaf
$h_X=\Hom_S(-,X)$ on $\Sch_{/S}$. Let $L'(X)=(h_X)'$ and $L(X)=\widetilde{h_X}$
be the separated presheaf and sheaf associated to $h_X$ in the $h$-topology. We
denote the corresponding notions in the $qfh$-topology by $L'_{qfh}(X)$ and
$L_{qfh}(X)$.
\end{definition}

\begin{lemma}[{\cite[Lem.~3.2.2]{voevodsky_homology}}]\label{L:L'-over-red}
Let $X$ be an algebraic space over $S$ and let $T$ be a reduced $S$-scheme.
Then $L'(X)(T)=L'_{qfh}(X)(T)=\Hom_S(T,X)$.
\end{lemma}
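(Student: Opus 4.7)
The goal is to show that the equivalence relation defining $L'(X)(T)$ (resp.\ $L'_{qfh}(X)(T)$) as a quotient of $h_X(T)=\Hom_S(T,X)$ is trivial when $T$ is reduced. Equivalently, if $f,g\colon T\to X$ become equal after pullback along some $h$-covering (resp.\ $qfh$-covering) $\{U_i\to T\}$, then $f=g$ already.

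The plan is to invoke Theorem~\pref{T:h-top-desc} to replace the given covering by a refinement of the form $\{W_{jk}\to W_j\to V_j\to T\}$, where $\{V_j\to T\}$ is an open covering, $W_j\to V_j$ is proper (resp.\ finite), surjective, and of finite presentation, and $\{W_{jk}\to W_j\}$ is a quasi-compact open covering. The property that $f$ and $g$ coincide after pullback along the covering is clearly inherited by any refinement, so I may assume the witnessing covering has this shape.

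The two open-covering steps are handled by Zariski gluing, using that morphisms into a fixed algebraic space form a Zariski sheaf: agreement on all $W_{jk}$ forces agreement on each $W_j$, and then agreement on each $V_j$ will force agreement on $T$. It therefore suffices to show, for each $j$, that $f|_{V_j}=g|_{V_j}$ given that they agree after pullback along $p_j\colon W_j\to V_j$.

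For this I will apply Proposition~\pref{P:sch-dom+univ-sub:epimorphism} to $p_j$. The map $p_j$ is universally submersive: a proper surjection is universally subtrusive by Remark~\pref{R:subtrusive-examples}~(1a), and a finite surjection is integral hence universally subtrusive by (1b). The delicate point, and the place where the reducedness hypothesis on $T$ enters, is verifying that $p_j$ is schematically dominant. Since $V_j$ is an open subscheme of the reduced scheme $T$ it is reduced, and $p_j$ is quasi-compact and quasi-separated, so the schematic image of $p_j$ is a closed subspace of $V_j$ whose underlying set is all of $V_j$ (by surjectivity of $p_j$) and which, by reducedness of $V_j$, must coincide with $V_j$. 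Proposition~\pref{P:sch-dom+univ-sub:epimorphism} then tells us that $p_j$ is an epimorphism in the category of algebraic spaces, yielding $f|_{V_j}=g|_{V_j}$ and completing the argument.
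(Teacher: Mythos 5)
Your argument is correct and rests on the same key input as the paper's own proof, namely Proposition~\pref{P:sch-dom+univ-sub:epimorphism} applied to a schematically dominant, universally submersive covering of the reduced scheme $T$. The paper simply applies that proposition directly to $\coprod_i U_i\to T$ for an arbitrary $h$-covering (such a covering is automatically universally submersive, and surjectivity onto a reduced target gives schematic dominance exactly as you argue for $p_j$), so your preliminary refinement via Theorem~\pref{T:h-top-desc} and the two Zariski-gluing steps are sound but not needed.
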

\begin{proof}
Let $\{U_i\to T\}$ be an $h$-covering. Then $\coprod_i U_i\to T$
is universally submersive and schematically dominant. It follows that
$\Hom_S(T,X)\to \prod_i \Hom_S(U_i,X)$ is injective
by Proposition~\pref{P:sch-dom+univ-sub:epimorphism}
\end{proof}

\begin{lemma}\label{L:L'-desc}\label{L:epimorphic-nature-of-coverings}
Let $X$ be an algebraic space \emph{locally of finite type} over $S$ and let
$T\in\Sch_{/S}$.
Then $L'(X)(T)=L'_{qfh}(X)(T)$ coincides with the image of
$$\Hom_S(T,X)\to \Hom_S(T_\red,X).$$
If $T'\to T$ is universally submersive, then $L(X)(T)\to L(X)(T')$ is
injective.
\end{lemma}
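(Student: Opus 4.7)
The plan is to establish the two halves of the lemma in sequence, since the sheafification argument in the second half relies on the concrete description furnished by the first. For the first statement, the less subtle direction is that if $f, g : T \to X$ become equivalent in $L'_{qfh}(X)(T)$ (resp.\ $L'(X)(T)$), then they must agree on $T_\red$. I would pull back the witnessing $qfh$-covering (resp.\ $h$-covering) $\{U_i \to T\}$ along the nil-immersion $T_\red \hookrightarrow T$, obtaining a covering of $T_\red$ on which $f|_{T_\red}$ and $g|_{T_\red}$ become equivalent in $L'_{qfh}(X)(T_\red)$. Since $T_\red$ is reduced, Lemma~\pref{L:L'-over-red} identifies $L'_{qfh}(X)(T_\red)$ with $\Hom_S(T_\red, X)$, so $f|_{T_\red} = g|_{T_\red}$; the argument for $L'(X)$ is identical.

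For the converse, the main technical step, I would use the finite-type hypothesis to exhibit a single $qfh$-covering of $T$ on which $f$ and $g$ literally coincide. The question is local on $T$ and on $S$ (open coverings being $qfh$-coverings), so I reduce to $T = \Spec A$ and $S = \Spec B$ affine. When $X$ is a general algebraic space, I take an etale atlas $V \to X$ with $V$ a quasi-compact scheme locally of finite type over $S$; since $f$ and $g$ agree on $T_\red$, the etale pullbacks $f^*V, g^*V$ of $V$ over $T$ restrict to a common etale cover of $T_\red$, so by the equivalence between etale covers of $T_\red$ and of $T$ across the nil-immersion $T_\red \hookrightarrow T$ (Proposition~\pref{P:nil-imm-equiv-etale}) they coincide with a common etale quasi-compact cover $W \to T$. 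Because $W \to T$ is itself a $qfh$-covering, I may pass to $W$ and reduce to the case where $X = \Spec C$ is an affine scheme of finite type over $S = \Spec B$. Picking $B$-algebra generators $x_1, \ldots, x_n$ of $C$, the ring maps $\phi_f, \phi_g : C \to A$ satisfy $\phi_f(x_i) - \phi_g(x_i) \in \mathrm{nil}(A)$, so the finitely generated ideal $J = (\phi_f(x_1) - \phi_g(x_1), \ldots, \phi_f(x_n) - \phi_g(x_n))$ lies in $\mathrm{nil}(A)$ and $\phi_f \equiv \phi_g \pmod{J}$. Then $f$ and $g$ coincide as maps $\Spec(A/J) \to X$, and $\Spec(A/J) \hookrightarrow T$ is a nil-immersion of finite presentation, hence a finite surjective morphism of finite presentation and a $qfh$-covering.

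For the second statement, let $s, t \in L(X)(T)$ with $s|_{T'} = t|_{T'}$. Since $L(X)$ is the sheafification of $L'(X)$, I may take an $h$-covering $\{U_i \to T\}$ over which $s|_{U_i}, t|_{U_i}$ lie in $L'(X)(U_i)$; by the first statement they are represented by maps $\bar s_i, \bar t_i : (U_i)_\red \to X$ satisfying the cocycle condition on $(U_i \times_T U_j)_\red$. Unpacking $s|_{T'} = t|_{T'}$ in $L(X)(T')$ produces a further $h$-covering $\{W_k \to T'\}$, refining $\{U_i \times_T T' \to T'\}$ via $W_k \to U_{i(k)}$, such that $\bar s_{i(k)}|_{(W_k)_\red} = \bar t_{i(k)}|_{(W_k)_\red}$ for all $k$. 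For each fixed $i$, combining this with the cocycle equalities $\bar s_i|_{(W_k \times_T U_i)_\red} = \bar s_{i(k)}|_{(W_k \times_T U_i)_\red}$ (and the analogous one for $\bar t$) gives $\bar s_i|_{(W_k \times_T U_i)_\red} = \bar t_i|_{(W_k \times_T U_i)_\red}$. The morphism $\coprod_k (W_k \times_T U_i)_\red \to (U_i)_\red$ is surjective and universally submersive---it is the composite of the $h$-covering $\coprod_k W_k \times_T U_i \to U_i \times_T T'$ and the base change $U_i \times_T T' \to U_i$ of $T' \to T$---and is schematically dominant since $(U_i)_\red$ is reduced. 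Proposition~\pref{P:sch-dom+univ-sub:epimorphism} then identifies it as an epimorphism in algebraic spaces, so $\bar s_i = \bar t_i$ on $(U_i)_\red$. The first statement now gives $s|_{U_i} = t|_{U_i}$ in $L'(X)(U_i) \subseteq L(X)(U_i)$ for each $i$, whence $s = t$ by the sheaf property of $L(X)$.
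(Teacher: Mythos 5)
Your proof is correct, and it rests on the same two pillars as the paper's: the epimorphism property of Proposition~\pref{P:sch-dom+univ-sub:epimorphism} (equivalently Lemma~\pref{L:L'-over-red}) for the direction ``agreement on a covering implies agreement on $T_\red$'' and for the injectivity statement, and the replacement of the nil-immersion $T_\red\inj T$ by a \emph{finitely presented} one for the converse. Where you genuinely diverge is in how that replacement is produced: the paper writes $T_\red$ as the inverse limit of the finitely presented nil-immersions $T_\lambda\inj T$ and quotes the injectivity of $\varinjlim_\lambda\Hom_S(T_\lambda,X)\to\Hom_S(T_\red,X)$ from \cite[Thm.~8.8.2]{egaIV}, which treats schemes and algebraic spaces uniformly, whereas you descend to an affine chart of $X$ by pulling back an \etale{} atlas, identify $f^*V$ with $g^*V$ via Proposition~\pref{P:nil-imm-equiv-etale}, and then exhibit the finitely generated nil-ideal $J$ by hand. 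Your route is more elementary (it avoids the limit formalism for algebraic spaces) at the cost of two routine reductions left implicit: since $X$ need not be quasi-compact you must first shrink it to a quasi-compact open subspace containing the common image of $f$ and $g$ before a quasi-compact atlas exists, and the passage from a quasi-compact $V$ to an affine one uses that $\tilde f^{-1}(V_j)=\tilde g^{-1}(V_j)$ for a finite affine cover $\{V_j\}$ because $\tilde f$ and $\tilde g$ agree topologically; neither is a real gap. For the injectivity of $L(X)(T)\to L(X)(T')$ the paper is much shorter: since $L'(X)$ is a separated presheaf, $\check{H}^0(\cU,L'(X))$ injects into $L(X)(T)$ for every covering $\cU$, so everything reduces to the injectivity of $L'(X)(V)\to L'(X)(V')$ for $V'\to V$ universally submersive, which is Proposition~\pref{P:sch-dom+univ-sub:epimorphism} applied to $V'_\red\to V_\red$; your explicit \v{C}ech unwinding proves the same statement from the same epimorphism input, merely with the bookkeeping written out.
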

\begin{proof}
If two morphisms $\map{f,g}{T}{X}$ coincide after the composition with an
$h$-covering $\{U_i\to T\}$, then they coincide after composing with
$T_\red\to T$. Indeed, we have that $\coprod_i (U_i)_\red\to T_\red$ is an
epimorphism by Proposition~\pref{P:sch-dom+univ-sub:epimorphism}.
Conversely, we will show that if $f$ and $g$ coincide on $T_\red$ then they
coincide on a $qfh$-covering $T'\to T$.

Taking an open covering, we can assume that $T$ is affine. Let $\sN$ be the
sheaf of nilpotent elements of $\sO_T$, i.e., the ideal sheaf defining
$T_\red$. Then $\sN$ is the direct limit of its subsheaves of finite type.
%
Thus $T_\red$ is the inverse limit of
finitely presented nil-immersions $T_\lambda\inj T$. As $X\to S$ is locally of
finite type $\varinjlim_\lambda \Hom_S(T_\lambda,X)\to
\Hom_S(T_\red,X)$ is injective, cf.~\cite[Thm.~8.8.2]{egaIV}. Thus $f$ and $g$
coincide on $T_\lambda$ for some $\lambda$.

To show the last statement, it is enough to show that $L'(X)(T)\to L'(X)(T')$
is injective when $T'\to T$ is universally submersive. From the first part
of the lemma, it is thus enough to show that $\Hom_S(T_\red,X)\to \Hom_S(T'_\red,X)$ is injective and this is Proposition~\pref{P:sch-dom+univ-sub:epimorphism}.\end{proof}

\begin{remark}
Voevodsky claims that $L'(X)(T)=\Hom_S(T_\red,X)$ in the text
following~\cite[Lem.~3.2.2]{voevodsky_homology}. This is not correct
as $\Hom_S(T,X)\to\Hom_S(T_\red,X)$ need not be surjective. In fact, a
counter-example is given by $X=T_\red$ for any scheme $T$ such that $T_\red\inj
T$ does not have a retraction.
\end{remark}

\begin{proposition}\label{P:L-desc}
Let $X$ be an algebraic space locally of finite type over $S$, and let
$T\in\Sch_{/S}$. Then $L(X)(T)$ (resp.\ $L_{qfh}(X)(T)$) is the filtered direct
limit of
$$\ker\left(\equalizer{\prod_i X(U_i)}
{\prod_{i,j} X\bigl((U_i\times_T U_j)_\red\bigr)}\right)$$
where the limit is taken over all $h$-coverings (resp.\ $qfh$-coverings)
$\{U_i\to T\}$.
\end{proposition}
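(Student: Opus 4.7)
\textbf{Proof plan for Proposition~\pref{P:L-desc}.}

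By construction $L(X)(T)=\varinjlim_\cU \check{H}^0(\cU,L'(X))$ with $\cU$ ranging over $h$-coverings (resp.\ $qfh$-coverings) of $T$, and by Lemma~\pref{L:L'-desc} the separated presheaf $L'(X)(W)$ identifies with the image of the reduction map $X(W)\to X(W_\red)$ for every $W$. Writing $K(\cU)$ for the kernel appearing in the proposition, the plan is to construct a natural map
\[
K(\cU)\longrightarrow \check{H}^0(\cU,L'(X)),\qquad (\tilde f_i)\longmapsto (f_i),
\]
where $f_i$ denotes the image of $\tilde f_i$ in $L'(X)(U_i)\subseteq X((U_i)_\red)$, and then to show that it becomes a bijection after passing to the colimit over $\cU$.

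Well-definedness and pointwise surjectivity rest on the observation that $(U_i\times_T U_j)_\red$ is reduced, so its structural morphism to $U_i$ factors through $(U_i)_\red$ by the universal property of the reduction. Consequently the image of $\tilde f_i$ under $X(U_i)\to X(U_i\times_T U_j)\to X((U_i\times_T U_j)_\red)$ coincides with the image of $f_i$ under $X((U_i)_\red)\to X((U_i\times_T U_j)_\red)$, which in turn is the image of $f_i$ in $L'(X)(U_i\times_T U_j)$. The equalizer conditions defining $K(\cU)$ and $\check{H}^0(\cU,L'(X))$ therefore match, giving well-definedness; surjectivity follows because any lift $\tilde f_i\in X(U_i)$ of a given $f_i$ automatically assembles to an element of $K(\cU)$ by the same diagram.

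The main obstacle is injectivity on the colimit. If $(\tilde f_i), (\tilde f'_i)\in K(\cU)$ have the same image in $L(X)(T)$, then after refining $\cU$ to some $h$-covering (resp.\ $qfh$-covering) $\{V_k \to T\}$ with factorizations $V_k\to U_{i(k)}$, the restrictions $\tilde f_{i(k)}|_{V_k}$ and $\tilde f'_{i(k)}|_{V_k}$ become equal in $L'(X)(V_k)$, that is, they agree on $(V_k)_\red$. Here I would reuse the argument from the proof of Lemma~\pref{L:L'-desc}: after covering $V_k$ by affines, $(V_k)_\red$ is the filtered inverse limit of finitely presented nil-immersions $V_{k,\lambda}\inj V_k$, and since $X\to S$ is locally of finite type we find some $\lambda$ with $\tilde f_{i(k)}|_{V_{k,\lambda}} = \tilde f'_{i(k)}|_{V_{k,\lambda}}$ in $X(V_{k,\lambda})$. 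Each morphism $V_{k,\lambda}\to V_k$ is finite, surjective and of finite presentation, hence integral and therefore universally subtrusive by Remark~\pref{R:subtrusive-examples}; thus $\{V_{k,\lambda}\to T\}$ is a further $qfh$-refinement (\textit{a fortiori} an $h$-refinement) on which the two tuples coincide in $K(\{V_{k,\lambda}\to T\})$. This handles both the $h$- and $qfh$-cases simultaneously.
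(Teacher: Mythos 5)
Your proof is correct and follows essentially the same route as the paper's, which compresses the whole argument into ``clear from the definitions of $L$ and $L'$'' plus a citation of Lemma~\pref{L:L'-desc}: you are simply making explicit the comparison between the \v{C}ech kernel computed with $L'(X)$ and the one computed with $X$ itself, and then invoking the same identification $L'(X)(W)=\image\bigl(X(W)\to X(W_\red)\bigr)$. The only point worth noting is that your injectivity-after-refinement step could be obtained directly from the definition of $L'$ as a quotient presheaf (two sections equal in $L'(X)(V_k)$ already agree on some $h$-covering of $V_k$), so the second pass through the nil-immersion limit argument, while valid, is not strictly necessary.
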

\begin{proof}
It is clear from the definitions of $L$ and $L'$ that $L(X)(T)$ is the limit of
$$\ker\left(\equalizer{\prod_i X(U_i)}
{\prod_{i,j} L'(X)\bigl(U_i\times_T U_j\bigr)}\right).$$
The proposition thus follows from Lemma~\pref{L:L'-desc}.
\end{proof}

In the remainder of this section, we can work in either the $h$-topology or the
$qfh$-topology, i.e., all instances of $L$ and $L'$ can be replaced with
$L_{qfh}$ and $L'_{qfh}$ respectively.

\begin{definition}
Let $X$ be an algebraic space over $S$, and let $T\in\Sch_{/S}$. Let $f\in
L(X)(T)$ be a section and let $\{\map{p_i}{U_i}{T}\}$ be a covering, i.e., a set
of morphisms such that $T=\bigcup_i p_i(U_i)$ but not necessarily an
$h$-covering. We say that $f$ is
\emph{realized} on the covering $\{p_i\}$ if there are morphisms $\{f_i\in
X(U_i)\}$ such that $p_i^*(f)=f_i$ in $L(X)(U_i)$ for every $i$.
\end{definition}

Let $\{\map{p_i}{U_i}{T}\}$ be a covering and let $\pi_1,\pi_2$ denote the
projections of $(U_i\times_T U_j)_\red$. If $f\in L(X)(T)$ is realized on
$\{p_i\}$ by $\{\map{f_i}{U_i}{X}\}$ then $f_i\circ \pi_1=f_j\circ \pi_2$ by
Lemma~\pref{L:L'-over-red}.
Conversely, if $X/S$ is \emph{locally of finite type} and $\{p_i\}$ is an
\emph{$h$-covering}, then morphisms $\{f_i\in X(U_i)\}$ such that
$f_i\circ \pi_1=f_j\circ \pi_2$, determines an element in $L(X)(T)$
by Proposition~\pref{P:L-desc}.

\begin{lemma}[{\cite[Lem.~3.2.6]{voevodsky_homology}}]
\label{L:realization-open-cov}
Let $X$ be an algebraic space over $S$, and let $T\in\Sch_{/S}$. Let $f\in
L(X)(T)$ and assume that $f$ is realized on an \etale{} covering
$\{\map{p_i}{U_i}{T}\}$. Then $f$ is realized on $T_\red$.
\end{lemma}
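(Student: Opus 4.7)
The plan is to pull back the realization along $T_\red \to T$, glue by \'etale descent, and then use the sheaf property of $L(X)$ in the $h$-topology to verify that the resulting morphism represents $f|_{T_\red}$.

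First I would record a topological compatibility. Since each $p_i$ is \'etale, the base change $U_i \times_T T_\red$ coincides with $(U_i)_\red$; composing \'etale morphisms, $U_i \times_T U_j$ is \'etale over $T$, so
\[
(U_i \times_T U_j)_\red \;=\; (U_i \times_T U_j) \times_T T_\red \;=\; (U_i)_\red \times_{T_\red} (U_j)_\red.
\]
In particular, the family $\{(p_i)_\red : (U_i)_\red \to T_\red\}$ is an \'etale covering of $T_\red$, and its double fibered products are already reduced.

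Next I would produce a candidate morphism $g : T_\red \to X$. The hypothesis gives $f_i \in X(U_i)$ with $p_i^*(f) = f_i$ in $L(X)(U_i)$. By the discussion following the definition of realization (together with Lemma~\pref{L:L'-over-red}), this forces $f_i \circ \pi_1 = f_j \circ \pi_2$ on $(U_i \times_T U_j)_\red$. Restricting to $T_\red$, the morphisms $(f_i)_\red : (U_i)_\red \to X$ agree on the reduced fiber products $(U_i)_\red \times_{T_\red} (U_j)_\red$, so \'etale descent of morphisms to the algebraic space $X$ glues them to a morphism $g : T_\red \to X$ with $g \circ (p_i)_\red = (f_i)_\red$.

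Finally, I would show that $g$ represents $f|_{T_\red}$ in $L(X)(T_\red)$. Since every \'etale covering is an $h$-covering (and also a $qfh$-covering, as \'etale morphisms are locally quasi-finite), and $L(X)$ is a sheaf in the $h$-topology, the map
\[
L(X)(T_\red) \;\longrightarrow\; \prod_i L(X)\bigl((U_i)_\red\bigr)
\]
is injective. The image of $g$ is $\{(f_i)_\red\}$, while the image of $f|_{T_\red}$ is the pullback of $\{p_i^*(f) = f_i\}$ along $(U_i)_\red \to U_i$, which is also $\{(f_i)_\red\}$. Thus $g = f|_{T_\red}$, so $f$ is realized on the single-morphism covering $\{T_\red \to T\}$.

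The main point requiring care is the identification $(U_i \times_T U_j)_\red = (U_i)_\red \times_{T_\red} (U_j)_\red$, which relies on $U_i \times_T U_j$ being \'etale over $T$; everything else is standard bookkeeping with \'etale descent and the definition of the sheafification $L(X)$.
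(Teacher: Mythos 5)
Your proof is correct and follows the same route as the paper: identify $(U_i\times_T U_j)_\red$ with the base change to $T_\red$ using that the fiber products are \etale{} over $T$, glue the restrictions $(f_i)_\red$ by \etale{} descent to a morphism $T_\red\to X$, and check via the sheaf property that it realizes $f$. The paper's proof is just a terser version of this same argument, leaving the descent and verification steps implicit.
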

\begin{proof}
Let $\map{f_i}{U_i}{X}$ be a realization of $f$ on the covering $\{p_i\}$.
Then $f_i$ and $f_j$ coincide on
$(U_i\times_T U_j)_\red=(U_i\times_T U_j)\times_T T_\red$.
The $\{f_i\}$ thus glue to a morphism $T_\red\to X$ which realizes $f$.
\end{proof}

\begin{proposition}\label{P:realization-wn}
Let $X$ be an algebraic space locally of finite type over $S$ and let $T$ be an
$S$-scheme. Let $f\in L(X)(T)$ be a section. Then $f$ is realized on the
absolute weak normalization $\wn{}{T}$.
\end{proposition}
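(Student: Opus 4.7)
The plan is to refine the covering that realizes $f$ into the composition of an open covering and a single proper surjective morphism, descend the resulting map by Theorem~\pref{T:h-descent} to a weakly normal universal homeomorphism of $T$, and then invoke the universal property of the absolute weak normalization supplied by Appendix~\ref{S:weak-norm}.

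By Proposition~\pref{P:L-desc}, $f$ is represented by morphisms $\{f_i : U_i \to X\}$ on some $h$-covering $\{U_i \to T\}$ agreeing on the reduced fiber products $(U_i \times_T U_j)_\red$. Theorem~\pref{T:h-top-desc} gives a refinement of the form $\{W_{jk} \to W_j \to V_j \to T\}$ in which $\{V_j \to T\}$ is an open covering, each $W_j \to V_j$ is proper surjective of finite presentation, and each $\{W_{jk} \to W_j\}$ is a finite open quasi-compact covering. Since formation of the absolute weak normalization commutes with open immersions, realizing $f$ on $\wn{}{T}$ is Zariski-local on $T$, so we may assume $T$ is affine and work with a single system $\{W_k \to W \to T\}$. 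Via the diagonal $W_k \cap W_l \hookrightarrow (W_k \times_T W_l)_\red$, the $f_k$ agree on the overlaps $(W_k \cap W_l)_\red$ and glue to a morphism $g : W_\red \to X$ realizing the pull-back of $f$ to $W$ with $g \pi_1 = g \pi_2$ on $(W \times_T W)_\red$. Since $\wn{}{T}$ is reduced (hence equal to $\wn{}{T_\red}$), we may replace $W$ and $T$ by their reductions and assume $g : W \to X$ is a genuine morphism with $W \to T$ proper surjective of finite presentation between reduced schemes.

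Let $W \to T_1 \to T$ be the wn-factorization from Appendix~\ref{S:weak-norm}, so $T_1 = \wn{W}{T}$, the morphism $W \to T_1$ is schematically dominant and weakly normal, and $T_1 \to T$ is a separated universal homeomorphism. As $T_1 \to T$ is a monomorphism, $W \times_{T_1} W = W \times_T W$, and the descent condition for $g$ over $T$ is the same as over $T_1$. Since $W \to T_1$ is proper surjective it is quasi-compact, quasi-separated, and universally subtrusive; combined with weak normality, Theorem~\pref{T:h-descent} yields a morphism $h : T_1 \to X$ with $g = h \circ (W \to T_1)$. The decisive step and main obstacle is the existence of a canonical morphism $\wn{}{T} \to T_1$ over $T$: this is exactly what the universal property of the absolute weak normalization delivers, since $\sO_{T_1}$ consists of elements weakly subintegral over $\sO_T$ living in the perfect closure of the total ring of fractions, and so lies inside $\sO_{\wn{}{T}}$ by the defining property of $\wn{}{T}$. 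Composing with $h$ gives the required morphism $\wn{}{T} \to X$ realizing $f$.
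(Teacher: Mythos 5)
Your proof is correct in substance and rests on the same two pillars as the paper's: the refinement supplied by Theorem~\pref{T:h-top-desc} and descent along a weakly normal proper surjection via Theorem~\pref{T:h-descent}. The route differs only in where the weak normalization enters. The paper first replaces $T$ by $\wn{}{T}$, so that each $(W_j)_\red\to V_j$ is \emph{automatically} weakly normal (any wn-factorization of it has reduced middle term, hence is trivial because the open subscheme $V_j$ of $\wn{}{T}$ is absolutely weakly normal); it then descends to the $V_j$ directly and glues by Lemma~\pref{L:realization-open-cov}. You instead descend to the relative weak normalization $T_1=\wn{W}{T}$ and afterwards map $\wn{}{T}\to T_1$; the correct justification for that map is Properties~\pref{X:AWN}~(i) (a morphism from an absolutely weakly normal space factors uniquely through any separated universal homeomorphism onto its target), not the claim that $\sO_{T_1}$ sits inside a perfect closure of the total fraction ring --- $W$ need not be birational to $T$. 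Two small repairs are needed. First, $T_1\to T$ is \emph{not} a monomorphism (a purely inseparable field extension already gives a separated universal homeomorphism that is not a monomorphism), so $W\times_{T_1}W\neq W\times_T W$ in general; what is true, and all you need, is that $W\times_{T_1}W\to W\times_T W$ is a surjective closed immersion (base change of the nil-immersion $T_1\to T_1\times_T T_1$), so the \emph{reduced} fiber products agree and the descent condition transfers. Second, after obtaining $\map{h}{T_1}{X}$ with $h\circ q=g$ you must still check that $[h]$ equals the pullback of $f$ in $L(X)(T_1)$, not merely that $h$ lifts $g$; this follows from the injectivity of $L(X)(T_1)\to L(X)(W)$ for the universally submersive $q$, i.e., the last statement of Lemma~\pref{L:epimorphic-nature-of-coverings}, which the paper invokes at the corresponding step.
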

\begin{proof}
We can replace $T$ with $\wn{}{T}$ and assume that $T$ is weakly normal.
The section $f$ is realized on an $h$-covering of the form ${\{W_j\to V_j\to
T\}}$ where the $W_j\to V_j$ are quasi-compact $h$-coverings and $\{V_j\to T\}$
is an open covering. By Theorem~\pref{T:h-descent}, applied to the weakly
normal morphism $(W_j)_\red\to V_j$, and
Lemma~\pref{L:epimorphic-nature-of-coverings}, we have that $f$ is realized on
the covering $\{V_j\to T\}$.
Lemma~\pref{L:realization-open-cov} then shows that $f$ is realized on $T$.
\end{proof}

\begin{corollary}\label{C:realization-univ-homeo-fp}
Let $X$ be an algebraic space locally of finite presentation over $S$, and let
$T$ be a quasi-compact and quasi-separated $S$-scheme. Let $f\in L(X)(T)$ be a
section. Then $f$ is realized on a universal homeomorphism $U\to T$ of finite
presentation.
\end{corollary}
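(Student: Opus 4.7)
The plan is to combine Proposition~\pref{P:realization-wn} with a standard limit/approximation argument, using that $X$ is locally of finite presentation to descend the realization on $\wn{}{T}$ to a finitely presented stage.

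First, by Proposition~\pref{P:realization-wn}, the section $f$ is realized on the absolute weak normalization $\map{\pi}{\wn{}{T}}{T}$, i.e., there is a morphism $\map{g}{\wn{}{T}}{X}$ whose class in $L(X)(\wn{}{T})$ equals $\pi^*f$.

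Next, since $T$ is quasi-compact and quasi-separated and $\pi$ is an affine (indeed integral) universal homeomorphism (this being part of the basic properties of the absolute weak normalization recorded in the appendix), one can write $\pi$ as a filtered inverse limit
\[
\wn{}{T}=\varprojlim_{\lambda}T_\lambda
\]
of finitely presented affine $T$-schemes $\map{\pi_\lambda}{T_\lambda}{T}$ (EGA~IV, Prop.~8.2.2). Moreover, since the limit morphism $\pi$ is surjective and universally injective, the approximation results of EGA~IV, Thm.~8.10.5 (combined with the fact that being a universal homeomorphism for a finite/affine morphism is equivalent to being surjective and universally injective) imply that $\pi_\lambda$ itself is a universal homeomorphism for all sufficiently large $\lambda$. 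We fix such a $\lambda_0$ and henceforth only consider $\lambda\geq\lambda_0$.

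Now apply EGA~IV, Thm.~8.14.2 to descend the morphism $\map{g}{\wn{}{T}}{X}$: because $X$ is locally of finite presentation over $S$ and the $T_\lambda$ form a filtered inverse system of quasi-compact quasi-separated $S$-schemes with limit $\wn{}{T}$, there exists some $\lambda$ and a morphism $\map{g_\lambda}{T_\lambda}{X}$ whose composition with the canonical map $\wn{}{T}\to T_\lambda$ is $g$. Setting $U=T_\lambda$ and $q=\pi_\lambda$, we have produced a universal homeomorphism $\map{q}{U}{T}$ of finite presentation together with a morphism $\map{g_\lambda}{U}{X}$.

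It remains to verify that $g_\lambda$ realizes $f$, i.e., that the class of $g_\lambda$ in $L(X)(U)$ equals $q^*f$. The two elements $[g_\lambda]$ and $q^*f$ of $L(X)(U)$ have the same further pullback in $L(X)(\wn{}{T})$, namely $[g]=\pi^*f$. Since $\wn{}{T}\to U$ is a universal homeomorphism (being the limit morphism of universal homeomorphisms over $T$), it is in particular universally submersive, so by the last part of Lemma~\pref{L:L'-desc} the map $L(X)(U)\to L(X)(\wn{}{T})$ is injective, giving $[g_\lambda]=q^*f$. The main technical point of the argument is step~2 (the approximation of $\wn{}{T}\to T$ by finitely presented universal homeomorphisms), which rests on the structural properties of the absolute weak normalization established in the appendix; everything else is a formal application of known limit theorems.
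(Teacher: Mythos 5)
Your proof is correct and follows the same route as the paper: the paper's own proof consists precisely of invoking Proposition~\pref{P:realization-wn} and then asserting that ``a limit argument'' descends the realization from $\wn{}{T}$ to a finitely presented universal homeomorphism. You have simply written out that limit argument in full (approximating the integral universal homeomorphism $\wn{}{T}\to T$ by finite finitely presented universal homeomorphisms, descending the morphism to $X$ using local finite presentation, and checking the realization via the injectivity statement of Lemma~\pref{L:L'-desc}), and each of those steps is sound.
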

\begin{proof}
By Proposition~\pref{P:realization-wn} the section $f$ is realized on
$\wn{}{T}$. A limit argument shows that there is a finitely presented universal
homeomorphism $U\to T$ which realizes $f$.
\end{proof}

\begin{theorem}\label{T:htop-res}
Let $X$ be an algebraic space locally of finite presentation over $S$, and let
$T\in\Sch_{/S}$ be quasi-compact and quasi-separated. Then
$L(X)(T)=\varinjlim_\lambda \Hom_S(T_\lambda,X)=\Hom_S(\wn{}{T},X)$ where the
limit is taken over all finitely presented universal homeomorphisms
$T_\lambda\to T$.
\end{theorem}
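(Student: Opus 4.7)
The plan is to establish the two equalities separately. For the identification $\varinjlim_\lambda \Hom_S(T_\lambda, X) = \Hom_S(\wn{}{T}, X)$, I would first invoke from Appendix~\ref{S:weak-norm} that the absolute weak normalization $\wn{}{T}$ is realized as the cofiltered limit of the system of all finitely presented universal homeomorphisms $T_\lambda \to T$. This system is essentially filtered since the fiber product $T_\lambda \times_T T_\mu$ is again a finitely presented universal homeomorphism mapping to both factors. Since every $T_\lambda \to T$ is finite and $T$ is quasi-compact and quasi-separated, $\wn{}{T}$ is quasi-compact and quasi-separated with affine transition morphisms, so~\cite[Thm.~8.14.2]{egaIV} applied to the locally finitely presented morphism $X \to S$ yields the identification.

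For the first equality, I would begin by constructing the natural map. Each $T_\lambda \to T$ is a single-element $h$-covering, being finitely presented and universally subtrusive. The diagonal $T_\lambda \hookrightarrow T_\lambda \times_T T_\lambda$ is a surjective nil-immersion, hence $(T_\lambda \times_T T_\lambda)_\red = (T_\lambda)_\red$ with both projections reducing to the canonical inclusion; the cocycle condition appearing in Proposition~\pref{P:L-desc} is therefore vacuous for this covering. Consequently every morphism $f \in \Hom_S(T_\lambda, X)$ determines an element of $L(X)(T)$, giving a natural map $\varinjlim_\lambda \Hom_S(T_\lambda, X) \to L(X)(T)$. Surjectivity of this map is precisely the content of Corollary~\pref{C:realization-univ-homeo-fp}.

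For injectivity, suppose $f_1, f_2 \in \Hom_S(T_\lambda, X)$ have equal image in $L(X)(T)$. Pulling back along $T_\lambda \to T$ they coincide in $L(X)(T_\lambda)$, and since $L'(X) \hookrightarrow L(X)$ as $L'(X)$ is separated, they coincide in $L'(X)(T_\lambda)$ as well. By Lemma~\pref{L:L'-desc} this forces $f_1|_{(T_\lambda)_\red} = f_2|_{(T_\lambda)_\red}$. Since $T_\lambda$ is quasi-compact and quasi-separated, $(T_\lambda)_\red$ is the cofiltered limit of the finitely presented nil-immersions $T_\mu \hookrightarrow T_\lambda$, and as $X \to S$ is locally of finite presentation there is some such $\mu$ with $f_1|_{T_\mu} = f_2|_{T_\mu}$. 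The composite $T_\mu \to T$ is again a finitely presented universal homeomorphism, so $f_1$ and $f_2$ become equal in the direct limit. The principal obstacle I expect is the structural description of $\wn{}{T}$ as the cofiltered limit of all finitely presented universal homeomorphisms over $T$, which must be supplied by the appendix; once this is granted, the remainder is essentially a matter of unwinding the definition of $L(X)$ and combining the preceding results with standard limit arguments.
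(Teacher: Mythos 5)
Your proposal is correct and follows essentially the same route as the paper's proof: the canonical map comes from the observation that $(T_\lambda\times_T T_\lambda)_\red=(T_\lambda)_\red$ combined with Proposition~\pref{P:L-desc}, surjectivity is exactly Corollary~\pref{C:realization-univ-homeo-fp}, and injectivity is handled by restricting to $(T_\lambda)_\red$ and then descending the equality to a finitely presented nil-immersion $T_\mu\inj T_\lambda$. The one caveat is your appeal to Appendix~\ref{S:weak-norm} for the identification $\wn{}{T}=\varprojlim_\lambda T_\lambda$: the appendix contains no such statement, so this (together with the cofilteredness of the system, which also requires equalizing parallel arrows, not just forming fiber products) has to be verified by hand --- e.g.\ by passing to the cofinal subsystem of the $(T_\lambda)_\red$, which compute the weak subintegral closure --- though in fairness the paper's own proof invokes the equivalent identity $\Hom_S(\wn{}{T},X)=\varinjlim_\lambda\Hom_S\left((T_\lambda)_\red,X\right)$ with just as little justification.
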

\begin{proof}
If $T_\lambda\to T$ is a universal homeomorphism then $(T_\lambda\times_T
T_\lambda)_\red=(T_\lambda)_\red$. Thus, by Proposition~\pref{P:L-desc} we
obtain a canonical map
\begin{equation*}
\varinjlim_\lambda \Hom_S(T_\lambda,X)
\to L(X)(T).
\end{equation*}
The surjectivity of this map follows from Corollary~\pref{C:realization-univ-homeo-fp}.
To show injectivity, let $\map{f_1,f_2}{T_\lambda}{X}$ be two maps coinciding
in $L(X)(T)$. Then $f_1$ and $f_2$ coincide on $(T_\lambda)_\red$ and hence
also on $T_\mu$ for a finitely presented nil-immersion $T_\mu\inj
T_\lambda$. Finally, we have that
\begin{equation*}
\Hom_S(\wn{}{T},X)=\varinjlim_\lambda \Hom_S\left((T_\lambda)_\red,X\right)
=\varinjlim_\lambda \Hom_S\left(T_\lambda,X\right).\hfill\qedhere
\end{equation*}
\end{proof}

\begin{remark}
In the non-noetherian case, it may be useful to change the $h$-topology (resp.\
$qfh$-topology) to only require the coverings to be of finite type instead of
finite presentation. In particular $X_\red\to X$ would always be an
$h$-covering.
Then Lemma~\pref{L:L'-desc} holds without any assumptions on $X$ and we can
drop the assumption that $X/S$ is locally of finite type in
\ref{P:L-desc}--\ref{P:realization-wn}. The main
results~\ref{C:realization-univ-homeo-fp}--\ref{T:htop-res} remain valid for
this topology.
It is also likely that for this topology
Theorem~\pref{T:h-top-desc} holds if we let $W_j\to V_j$ be
any proper (resp.\ finite) surjective morphism, cf.\ Remark~\pref{R:st-non-fp}.
\end{remark}

\end{section}

\appendix
\begin{section}{\Etale{} morphisms and henselian pairs}\label{S:etale-henselian}
In this section, we first recall some facts about \etale{} morphisms which we
state in the category of algebraic spaces. We then consider schemes which are
proper over a local henselian scheme. Let $S$ be a henselian local ring with
closed point $S_0$, let $S'\to S$ be a proper morphism and let
$S'_0=S'\times_S S_0$. Then $(S',S'_0)$ is $0$-henselian (i.e., a henselian
couple) and $1$-henselian (i.e., induces an equivalence between finite \etale{}
covers). This is the key fact in the proof of the proper base change theorem in
\etale{} cohomology for degrees~$0$ and~$1$, cf.\ 
Theorem~\pref{T:0-1-henselian-for-proper-over-henselian}. We interpret these
henselian properties using algebraic spaces in
Proposition~\pref{P:hensel-equiv}. These results are the core of the proof that
proper morphisms are morphisms of effective descent for \etale{} morphisms,
cf.~Proposition~\pref{P:proper-descent-for-etfin} and
Corollary~\pref{C:proper-descent-for-etqc}.

The results~\pref{P:diag-of-etale-is-open}--\pref{P:nil-imm-equiv-etale} are
well-known for schemes. We indicate how to extend these results to algebraic
spaces:

\begin{proposition}[{\cite[Cor.~II.6.17]{knutson_alg_spaces}}]
\label{P:etale+sep-is-repr}
An \etale{} and separated morphism of algebraic spaces is representable.
\end{proposition}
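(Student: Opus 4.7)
The plan is to show that for any morphism $T\to Y$ from a scheme $T$, the fibre product $X\times_Y T$ is a scheme. Being a scheme can be checked \'etale-locally on the target, so by taking an \'etale presentation $U\to Y$ with $U$ a scheme, we reduce immediately to the case where $Y$ itself is a scheme. Moreover, since the question is local on $Y$, we may further assume that $Y$ is affine.

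With $Y$ an affine scheme, $f$ is \'etale (in particular, locally of finite presentation and locally quasi-finite) and separated. The key tool is Zariski's Main Theorem in the category of algebraic spaces: any separated locally quasi-finite morphism admits a factorization as an open immersion followed by a finite morphism. Applying this to $f$ gives a commutative diagram
$$\xymatrix{X\ar@{^{(}->}[r]^{j}\ar[dr]_{f} & \bar{X}\ar[d]^{\bar{f}}\\ & Y}$$
with $j$ an open immersion and $\bar{f}$ finite. A finite morphism of algebraic spaces is affine, and an affine morphism whose target is a scheme has a scheme as source, realized as the relative spectrum $\bar{X}=\mathbf{Spec}_{Y}(\bar{f}_{*}\sO_{\bar{X}})$ of the quasi-coherent $\sO_{Y}$-algebra $\bar{f}_{*}\sO_{\bar{X}}$. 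Consequently $\bar{X}$ is a scheme, and $X$, being an open subspace of a scheme, is also a scheme.

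Returning to the general case, for arbitrary $Y$ and arbitrary $T\to Y$ from a scheme $T$, the base change $f'\,:\,X\times_{Y}T\to T$ is again \'etale and separated, and by the previous paragraph (with $T$ playing the role of $Y$) the source $X\times_{Y}T$ is a scheme. This is precisely the statement that $f$ is representable.

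The main obstacle is Zariski's Main Theorem for separated locally quasi-finite morphisms of algebraic spaces; once this is granted, the argument is essentially formal and the rest relies only on the elementary fact that affine morphisms into schemes yield schemes. Since this form of Zariski's Main Theorem is precisely~\cite[Cor.~II.6.17]{knutson_alg_spaces}, which is the reference attached to the proposition, there is nothing further to do.
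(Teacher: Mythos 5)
The paper gives no argument for this proposition at all---it simply cites Knutson---so the only meaningful comparison is with the standard proof via Zariski's Main Theorem, which is indeed the route you take and is the content of the cited corollary. Your overall strategy (reduce to an affine scheme base, factor through a finite morphism, and use that an open subspace of a scheme is a scheme) is the right one, but as written there is a genuine gap: the assertion that \emph{any separated locally quasi-finite morphism} factors as an open immersion followed by a finite morphism is false without a quasi-compactness hypothesis, and an \etale{} separated morphism need not be quasi-compact. The proposition is used in the paper for the full category $\catE_{\sep}$, not just $\catE_{\sep,\qc}$, so this case cannot be ignored. A counterexample to your form of the factorization is $X=\coprod_{n\geq 1}\Spec(k)\to Y=\Spec(k)$, which is \etale{} and separated, while a finite $Y$-scheme has only finitely many points and hence cannot contain $X$ as an open subscheme. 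Zariski's Main Theorem in the form you invoke applies to \emph{quasi-finite} (quasi-compact and locally quasi-finite) separated morphisms.

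The repair is short but requires an idea missing from your write-up: after reducing to $Y$ affine, cover $X$ by quasi-compact open subspaces $X_i$ (for instance the images of the affine charts of an \etale{} presentation of $X$). Each $X_i\to Y$ is \etale{}, separated and quasi-compact, hence quasi-finite and separated, so Zariski's Main Theorem applies to it and shows that $X_i$ is a scheme; since an algebraic space admitting an open covering by schemes is a scheme, $X$ is a scheme. A more minor point: your opening claim that ``being a scheme can be checked \etale{}-locally on the target'' is not true in general---\etale{} descent of schemes need not be effective in the category of schemes, which is one of the reasons algebraic spaces exist. Fortunately you never need this claim: the reduction to a scheme base is immediate from the definition of representability, exactly as your final paragraph observes.
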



\begin{proposition}\label{P:diag-of-etale-is-open}
Let $\map{f}{X}{Y}$ be an \etale{} morphism of algebraic spaces. Then:
\begin{enumerate}
\item $\Delta_f$ is an open immersion.
\item Any section of $f$ is an open immersion.\label{PI:etale-sect-open}
\item If $f$ is universally injective, then $f$ is an open immersion.
\label{PI:etale+univinj-is-open}
\end{enumerate}
\end{proposition}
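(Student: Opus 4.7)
The plan is to deduce all three assertions from the known scheme-theoretic analogues by reducing to schemes via \etale{} presentations and invoking \etale{} descent of ``open immersion'' on the target. Throughout, I will use that if $\map{f}{X}{Y}$ is \etale{} and $U\to X$ is an \etale{} presentation by a scheme, then $U\to Y$ is \etale{} and $U\times_X U$ is a scheme \etale{} over $U$.

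For (i), after a preliminary reduction I would assume $Y$ is a scheme: indeed, choosing an \etale{} cover $V\to Y$, the pullback of $\Delta_f$ along the \etale{} surjection $X\times_Y V\times_Y X\to X\times_Y X$ is canonically identified with $\Delta_{f_V}$ for $\map{f_V}{X_V}{V}$, so by \etale{} descent of the property ``open immersion'' on the target, it is enough to treat $\Delta_{f_V}$. Now pick an \etale{} presentation $U\to X$ by a scheme; then $U\to Y$ is \etale{} between schemes, and the classical result gives that $\Delta_{U/Y}$ is an open immersion. The morphism $U\times_Y U\to X\times_Y X$ is \etale{} and surjective, and a functor-of-points computation identifies the pullback of $\Delta_f$ along it with the natural morphism $U\times_X U\to U\times_Y U$. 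The source $U\times_X U$ is a scheme \etale{} over $U$, and the inclusion into $U\times_Y U$ is a monomorphism (the condition ``$u_1$ and $u_2$ have the same image in $X$'' being a property, not data). An \etale{} monomorphism of schemes is an open immersion, so $U\times_X U\hookrightarrow U\times_Y U$ is an open immersion, and \etale{} descent yields~(i).

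For (ii), given a section $\map{s}{Y}{X}$ of $f$, I would verify directly (using the universal property, i.e.\ the functor-of-points description $\{x\in X : x=s(f(x))\}$) that the square with vertices $Y,X,X,X\times_Y X$, horizontal maps $s$ and $(\id{X},s\circ f)$, and vertical maps $s$ and $\Delta_f$, is cartesian. Part~(i) then exhibits $s$ as the base change of an open immersion. For (iii), universal injectivity of $f$ forces $\Delta_f$ to be surjective, and combined with~(i) this gives that $\Delta_f$ is an isomorphism, i.e.\ $f$ is a monomorphism. To finish, I would show that an \etale{} monomorphism of algebraic spaces is an open immersion: again reducing to $Y$ a scheme by \etale{} descent, and choosing an \etale{} presentation $U\to X$ by a scheme, the monomorphism property gives $U\times_X U=U\times_Y U$, so the \etale{} sheaf quotient of $U$ by its equivalence relation coincides with the open image $f(U)\subseteq Y$, whence $X\iso f(U)\inj Y$.

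The main obstacle is the bookkeeping in~(i): confirming, by a fiber-product computation, that $U\times_X U\to U\times_Y U$ really is the pullback of $\Delta_f$ along the \etale{} surjection $U\times_Y U\to X\times_Y X$, and that $U\times_X U$ is indeed a scheme so that the scheme-theoretic fact ``\etale{} monomorphism is an open immersion'' applies. Once this identification is set up cleanly, the remaining steps are formal.
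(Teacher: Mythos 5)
Your argument is correct. For parts (i) and (ii) it is essentially the paper's proof with the details filled in: the paper disposes of (i) with ``follows easily from the case where $X$ and $Y$ are schemes,'' and the reduction you carry out --- pulling back along an \etale{} presentation $V\to Y$ to make the target a scheme, then identifying the pullback of $\Delta_f$ along $U\times_Y U\to X\times_Y X$ with the \etale{} monomorphism of schemes $U\times_X U\to U\times_Y U$ and descending the property ``open immersion'' --- is exactly the intended one; your cartesian square in (ii) is the paper's remark that a section is a pullback of the diagonal. The only genuine divergence is in (iii). Both proofs start by observing that universal injectivity makes $\Delta_f$ surjective, hence (by (i)) an isomorphism. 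The paper then concludes that $f$ is \emph{separated}, invokes Proposition~\pref{P:etale+sep-is-repr} to get representability, and reduces to the scheme case; you instead prove directly that an \etale{} monomorphism of algebraic spaces is an open immersion by showing $U\times_X U=U\times_Y U$ for a presentation $U\to X$ and identifying the quotient sheaf with the open image $f(U)\subseteq Y$. The paper's route is shorter given that representability of separated \etale{} morphisms is already on the shelf; yours is more self-contained and makes visible where the quasi-separation/presentation machinery is actually used. Both are sound.
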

\begin{proof}
(i) follows easily from the case where $X$ and $Y$ are schemes.
(ii) follows from (i) as any section of $f$ is a pull-back of $\Delta_f$.  For
(iii) we note that if $f$ is universally injective then $\Delta_f$ is
surjective. It follows by (i) that $f$ is separated and by
Proposition~\pref{P:etale+sep-is-repr} that $f$ is representable. We can thus
assume that $X$ and $Y$ are schemes.
\end{proof}

\begin{corollary}\label{C:WWW}
Let $X$ and $Y$ be algebraic spaces over $S$ such that $Y\to S$ is \etale{}.
There is then a one-to-one correspondence between morphisms $\map{f}{X}{Y}$
and open subspaces $\Gamma$ of $X\times_S Y$ such that $\Gamma\to X$ is
universally injective and surjective. This correspondence is given by mapping
$f$ to its graph $\Gamma_f$.
\end{corollary}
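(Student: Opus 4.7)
The plan is to exhibit the bijection directly and verify it is inverse to itself.

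For the forward direction, given $\map{f}{X}{Y}$ over $S$, I form the graph $\Gamma_f=(\id{X},f)\,:\, X\to X\times_S Y$. Since $Y\to S$ is \etale{}, the second projection $\map{\pi_2}{X\times_S Y}{X}$ is \etale{} as well, and $\Gamma_f$ is a section of $\pi_2$. By Proposition~\pref{P:diag-of-etale-is-open}~\pref{PI:etale-sect-open}, any section of an \etale{} morphism is an open immersion, so the image (also denoted $\Gamma_f$) is an open subspace of $X\times_S Y$ and $\pi_2|_{\Gamma_f}$ is an isomorphism onto $X$. In particular, $\Gamma_f\to X$ is universally injective and surjective.

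For the reverse direction, given an open subspace $\Gamma\subseteq X\times_S Y$ such that $\map{\pi_2|_\Gamma}{\Gamma}{X}$ is universally injective and surjective, I observe that $\pi_2|_\Gamma$ is the composition of the open immersion $\Gamma\inj X\times_S Y$ and the \etale{} morphism $\pi_2$, hence is itself \etale{}. An \etale{} universally injective morphism is an open immersion by Proposition~\pref{P:diag-of-etale-is-open}~\pref{PI:etale+univinj-is-open}, so adding surjectivity gives that $\pi_2|_\Gamma$ is an isomorphism. I then define
\[
f_\Gamma \,:\, X\xrightarrow{(\pi_2|_\Gamma)^{-1}} \Gamma\hookrightarrow X\times_S Y\xrightarrow{\pi_1} Y,
\]
which is a morphism of $S$-schemes since $\Gamma\subseteq X\times_S Y$ lies over $S$.

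The two constructions are mutually inverse by essentially formal checks. Starting from $f$, the composite $\pi_1\circ (\pi_2|_{\Gamma_f})^{-1}$ equals $f$ because $(\pi_2|_{\Gamma_f})^{-1}=\Gamma_f=(\id{X},f)$, and $\pi_1\circ(\id{X},f)=f$. Conversely, starting from $\Gamma$, the graph $\Gamma_{f_\Gamma}=(\id{X},f_\Gamma)\,:\, X\to X\times_S Y$ factors through $\Gamma$ by construction, and since both $\Gamma_{f_\Gamma}\to X$ and $\Gamma\to X$ are isomorphisms via $\pi_2$, the induced map $X\iso \Gamma_{f_\Gamma}\inj \Gamma\iso X$ is the identity, so $\Gamma_{f_\Gamma}=\Gamma$ as open subspaces of $X\times_S Y$. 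There is no real obstacle here; the only point to be careful about is the appeal to Proposition~\pref{P:diag-of-etale-is-open}~\pref{PI:etale+univinj-is-open} to conclude that $\pi_2|_\Gamma$ is an open immersion, which is exactly what makes the correspondence work for algebraic spaces and not only for schemes.
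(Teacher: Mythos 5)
Your proof is correct and follows exactly the route the paper intends: the forward direction via Proposition~\pref{P:diag-of-etale-is-open}~\pref{PI:etale-sect-open} (sections of \etale{} morphisms are open immersions) and the reverse direction via \pref{PI:etale+univinj-is-open} (\etale{} plus universally injective implies open immersion). The paper's own proof is just the one-line citation of these two facts, so your write-up is the same argument with the routine verifications spelled out.
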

\begin{proof}
This follows immediately from~\ref{PI:etale-sect-open}
and~\ref{PI:etale+univinj-is-open} of
Proposition~\pref{P:diag-of-etale-is-open}.
\end{proof}

\begin{proposition}[{\cite[Thm.~18.1.2]{egaIV}}]\label{P:nil-imm-equiv-etale}
Let $S_0\inj S$ be a nil-immersion of schemes, i.e., a surjective closed
immersion. Then the functor $X\mapsto X\times_S S_0$ from the category of
\etale{} $S$-spaces (resp.\ $S$-schemes) to the category of \etale{}
$S_0$-spaces (resp.\ $S_0$-schemes) is an equivalence of categories.
\end{proposition}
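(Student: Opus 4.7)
The plan is to reduce to the scheme case, which is cited from EGA~IV, Thm.~18.1.2, so what remains is to extend both full faithfulness and essential surjectivity of the base-change functor $\Phi: X \mapsto X \times_S S_0$ to \'etale algebraic spaces.

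For full faithfulness I would apply Corollary~\pref{C:WWW} on both sides to recast a morphism $X \to Y$ of \'etale $S$-spaces (resp.\ of \'etale $S_0$-spaces) as an open subspace $\Gamma \subseteq X \times_S Y$ (resp.\ $\Gamma_0 \subseteq X_0 \times_{S_0} Y_0$) whose projection to the first factor is universally injective and surjective. Since $S_0 \inj S$ is a nil-immersion, hence a universal homeomorphism, so is its pullback $X_0 \times_{S_0} Y_0 \inj X \times_S Y$; open subspaces therefore correspond bijectively between $X \times_S Y$ and $X_0 \times_{S_0} Y_0$. The universal bijectivity of the first projection is clearly preserved in both directions, since $X_0 \to X$ is also a universal homeomorphism. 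This yields a bijection $\Hom_S(X,Y) \to \Hom_{S_0}(X_0,Y_0)$.

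For essential surjectivity I would pick an \'etale scheme presentation $U_0 \to X_0$ and set $R_0 = U_0 \times_{X_0} U_0$, so that both $U_0$ and $R_0$ are schemes \'etale over $S_0$. The scheme case produces unique \'etale $S$-schemes $U$ and $R$ restricting to $U_0$ and $R_0$, and the scheme case of full faithfulness gives unique lifts of the two projections $R_0 \rightrightarrows U_0$ and of the monomorphism $R_0 \inj U_0 \times_{S_0} U_0$ to morphisms $R \rightrightarrows U$ and $R \to U \times_S U$ over $S$. One then defines $X := U/R$ as an algebraic space and checks that base change to $S_0$ commutes with the formation of $U$, $R$ and of the quotient, giving $X \times_S S_0 \iso X_0$.

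The main point to verify is that $R \rightrightarrows U$ is genuinely an \'etale equivalence relation of schemes over $S$. The key ingredient is that $R \to U \times_S U$ is an open immersion: it is a morphism between \'etale $S$-schemes whose restriction to $S_0$ is the open immersion $R_0 \inj U_0 \times_{S_0} U_0$, and by the scheme case any open subspace of $U_0 \times_{S_0} U_0$ lifts uniquely to an open subspace of $U \times_S U$, so the two must agree. The reflexivity, symmetry and composition morphisms of the equivalence relation $R_0 \rightrightarrows U_0$ then extend uniquely to $R \rightrightarrows U$ by the full faithfulness already established at the scheme level, and the equivalence-relation axioms transfer by uniqueness of lifts. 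Finally, applying the same construction to morphisms between two \'etale presentations shows that the construction is independent of the chosen presentation and defines a quasi-inverse to $\Phi$.
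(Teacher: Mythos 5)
Your proposal is correct and follows essentially the same route as the paper: full faithfulness via Corollary~\pref{C:WWW} (using that open subspaces and universal bijectivity are insensitive to the nil-immersion), and essential surjectivity by lifting an \etale{} scheme presentation $R_0\rightrightarrows U_0$ of $X_0$ through the scheme case and forming the quotient $U/R$. The extra verifications you supply --- that $R\to U\times_S U$ is an open immersion and that the equivalence-relation structure transfers by uniqueness of lifts --- are details the paper leaves implicit.
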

\begin{proof}
That the functor is fully faithful follows from Corollary~\pref{C:WWW}. Let us
prove essential surjectivity. For the category of schemes, this follows
from~\cite[Thm.~18.1.2]{egaIV}. Let $X_0\to S_0$ be an \etale{} morphism of
algebraic spaces. Let $U_0\to X_0$ be an \etale{} presentation with a scheme
$U_0$. Then $R_0=U_0\times_{X_0} U_0$ is also a scheme. We thus obtain
$S$-schemes $R$ and $U$ and an \etale{} equivalence relation
$\equalizer{R}{U}$ which restricts to the equivalence relation given by
$R_0$ and $U_0$. The quotient $X$ of this equivalence relation restricts to
$X_0$.
\end{proof}


We recall two fundamental results for schemes which are proper over a complete
local ring.

\begin{proposition}\label{P:0-henselian-for-proper-over-complete}
Let $S$ be the spectrum of a noetherian complete local ring with closed point
$S_0$. Let $S'\to S$ be a \emph{proper} morphism and $S'_0=S'\times_S S_0$.
The map $W'\mapsto W'\cap S'_0$ is a bijection between the open and closed
subsets of $S'$ and the open and closed subsets of $S'_0$.
\end{proposition}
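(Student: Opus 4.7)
The plan is to translate the bijection statement into the statement that the restriction map on idempotents is a bijection, and then establish it by combining the theorem on formal functions with the fact that idempotents lift uniquely through nilpotent thickenings.

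First I would set up notation: let $A$ be the complete local ring with maximal ideal $\mathfrak{m}$, and put $S_n = \Spec(A/\mathfrak{m}^{n+1})$ and $S'_n = S'\times_S S_n$. Write $B = H^0(S',\sO_{S'})$ and $B_n = H^0(S'_n,\sO_{S'_n})$. For any scheme $X$ the open and closed subsets of $|X|$ correspond bijectively with the idempotents of $H^0(X,\sO_X)$, and the map $W'\mapsto W'\cap S'_0$ corresponds, under this identification, to the restriction $B \to B_0$ of idempotents. Thus it suffices to prove that $B\to B_0$ induces a bijection on idempotents.

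Next I would invoke the consequences of properness. Since $S'\to S$ is proper and $\sO_{S'}$ is coherent, each $B_n$ is a finite $A/\mathfrak{m}^{n+1}$-algebra and $B$ is a finite $A$-algebra, and Grothendieck's theorem on formal functions gives the identification
\[
B \;=\; \varprojlim_n B_n,
\]
(no completion is required because $B$ is already finite, hence $\mathfrak{m}$-adically complete, over the complete ring $A$). Moreover, for each $n$ the map $B_n\to B_0$ is surjective with nilpotent kernel, since it is obtained by base change from $A/\mathfrak{m}^{n+1}\to A/\mathfrak{m}$. It is a standard fact that idempotents lift uniquely through surjective ring maps with nil kernels, so $\operatorname{Idem}(B_n)\to \operatorname{Idem}(B_0)$ is a bijection for every $n$.

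Finally, taking the inverse limit of these bijections gives
\[
\operatorname{Idem}(B) \;=\; \operatorname{Idem}\!\bigl(\varprojlim_n B_n\bigr)
\;=\; \varprojlim_n \operatorname{Idem}(B_n) \;=\; \operatorname{Idem}(B_0),
\]
which is the desired bijection. The main obstacle in this proof is simply invoking the comparison theorem (theorem on formal functions) in the correct form for $i=0$; everything else is standard commutative algebra. Retrieving $W'$ from $W'\cap S'_0$ is then concrete: the unique idempotent $e\in B$ restricting to the characteristic function of $W'\cap S'_0$ cuts out the open and closed subset $W'\subseteq S'$.
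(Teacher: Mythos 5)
Your approach is correct in outline, and it is worth noting that the paper itself offers no argument here: it simply cites \cite[Prop.~5.5.1]{egaIII}. What you have written is essentially the standard proof of that cited result (the ``th\'eor\`eme de comparaison'' for $i=0$ plus lifting of idempotents), so in substance you are reproving EGA's proposition rather than diverging from the paper. The dictionary between open-and-closed subsets and idempotents of $H^0$, the finiteness of $B=H^0(S',\sO_{S'})$ over $A$, the identification $B=\varprojlim_n B_n$ (with no completion needed since $B$ is module-finite over the complete ring $A$), and the exchange of $\operatorname{Idem}$ with the inverse limit are all correct.

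There is, however, one genuinely wrong justification in the middle step. You assert that $B_n\to B_0$ is \emph{surjective} ``since it is obtained by base change from $A/\mathfrak{m}^{n+1}\to A/\mathfrak{m}$''. Global sections do not commute with base change: from $0\to\mathcal{I}_n\to\sO_{S'_n}\to\sO_{S'_0}\to 0$ one only gets $B_n\to B_0\to H^1(S'_n,\mathcal{I}_n)$, and the cokernel of $B_n\to B_0$ need not vanish. So the standard ``idempotents lift through surjections with nil kernel'' lemma does not apply as stated. The step is nonetheless salvageable with your own dictionary: injectivity of $\operatorname{Idem}(B_n)\to\operatorname{Idem}(B_0)$ needs only that the kernel is nil (it consists of global sections of the nilpotent ideal $\mathcal{I}_n=\mathfrak{m}\sO_{S'_n}$, and two commuting idempotents differing by a nilpotent are equal), while surjectivity is immediate because $S'_0\inj S'_n$ is a homeomorphism on underlying spaces, so the two schemes have the same open-and-closed subsets and hence, by your first paragraph, the same idempotents. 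With that repair the proof is complete.
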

\begin{proof}
This is a special case of~\cite[Prop.~5.5.1]{egaIII}.
\end{proof}

\begin{theorem}[{\cite[Thm.~18.3.4]{egaIV}}]
\label{T:1-henselian-for-proper-over-complete}
Let $S$ be the spectrum of a noetherian complete local ring with closed point
$S_0$. Let $S'\to S$ be a \emph{proper} morphism and $S'_0=S'\times_S S_0$. The
functor $X'\mapsto X'\times_{S'} S'_0$ from the category of \etale{} and
finite $S'$-schemes to \etale{} and finite $S'_0$-schemes is then an
equivalence of categories.
\end{theorem}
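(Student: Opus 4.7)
The plan is to factor the functor $X' \mapsto X' \times_{S'} S'_0$ through the category of compatible systems of finite \'etale covers over the infinitesimal neighborhoods $S'_n := S' \times_S \Spec(\sO_S/\im^{n+1})$, where $\im$ is the maximal ideal of $S$. Concretely, I would prove that each of the two functors
\begin{equation*}
\{ \text{finite \'etale } S'\text{-schemes}\} \xrightarrow{\,\Phi\,}
\{ (X'_n)_{n\ge 0} \}
\xrightarrow{\,\Psi\,}
\{\text{finite \'etale } S'_0\text{-schemes}\}
\end{equation*}
is an equivalence, where the middle category consists of compatible systems of finite \'etale covers $X'_n \to S'_n$ with coherent transition isomorphisms $X'_n \iso X'_{n+1} \times_{S'_{n+1}} S'_n$, $\Phi$ sends $X'$ to the system of its base changes, and $\Psi$ sends a system to its term $X'_0$.

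That $\Psi$ is an equivalence is straightforward. Each closed immersion $S'_n \inj S'_{n+1}$ is a nil\nobreakdash-immersion, so Proposition~\pref{P:nil-imm-equiv-etale} gives an equivalence between \'etale $S'_{n+1}$-schemes and \'etale $S'_n$-schemes; moreover finiteness is preserved under such lifts since a finite \'etale cover can be written locally as $\Spec$ of a finite \'etale algebra and the unique \'etale lift of a finite algebra is again finite (one checks flatness and finite generation are preserved). Iterating gives an inverse to $\Psi$: any finite \'etale $X'_0 \to S'_0$ lifts uniquely and compatibly to finite \'etale $X'_n \to S'_n$ for every $n$.

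The crux is showing $\Phi$ is an equivalence, and here the assumption that $S$ is a complete noetherian local ring (together with the properness of $S' \to S$) is used through Grothendieck's existence theorem~\cite[Thm.~5.1.4]{egaIII}. That theorem gives an equivalence between the category of coherent $\sO_{S'}$-modules and the category of coherent sheaves on the formal completion $\widehat{S'}$ along $S'_0$. A compatible system $(X'_n)$ of finite \'etale covers corresponds to a compatible system of finite \'etale $\sO_{S'_n}$-algebras $\sA_n$, which is a coherent sheaf of algebras on $\widehat{S'}$; this descends uniquely to a coherent $\sO_{S'}$-algebra $\sA$, and we set $X' = \underline{\Spec}_{S'}(\sA)$. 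Fully faithfulness of $\Phi$ is immediate from the same theorem applied to $\Hom$-sheaves.

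The main obstacle, and the only non-formal step, is verifying that the algebra $\sA$ produced by Grothendieck's existence theorem is itself \emph{\'etale} over $\sO_{S'}$, not just finite. The argument is: $\sA$ is finite over $\sO_{S'}$, and its $\im$-adic completion at every point $x \in S'_0$ agrees with the completion of the system $(\sA_n)$, which is \'etale; since \'etaleness of a finite algebra at a point is detected by its completion, $\sA$ is \'etale on an open neighborhood $U \subseteq S'$ of $S'_0$. The properness of $S' \to S$ together with the fact that $S$ is local forces every point of $S'$ to specialize to a point of $S'_0$ (lifting of specializations for closed morphisms), hence $U = S'$ and $\sA$ is \'etale everywhere. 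This completes the verification and the equivalence follows.
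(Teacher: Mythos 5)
Your proof is correct and follows essentially the same route as the paper's: both factor the functor through the formal completion of $S'$ along $S'_0$ (which you present equivalently as the category of compatible systems over the infinitesimal neighborhoods $S'_n$), using Grothendieck's existence theorem for the algebraization step and Proposition~\pref{P:nil-imm-equiv-etale} for the reduction to $S'_0$. The details you supply --- that the algebraized finite algebra is \etale{} because \etale{}ness is detected on completions and the non-\etale{} locus, being closed and disjoint from $S'_0$, must be empty by properness over a local base --- are exactly the content the paper delegates to its reference to~\cite[Thm.~18.3.4]{egaIV}.
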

\begin{proof}
Let $\widehat{S}$ and $\widehat{S'}$ be the completions of $S$ and $S'$ along
$S_0$ and $S'_0$ respectively. Grothendieck's existence
theorem~\cite[Thm.~5.1.4]{egaIII} shows that $X'\mapsto X'\times_{S'}
\widehat{S'}$ is an equivalence between the categories of finite \etale{}
covers of $S'$ and $\widehat{S'}$ respectively.
Proposition~\pref{P:nil-imm-equiv-etale} then shows that $\widehat{X'}\mapsto
\widehat{X'}\times_{\widehat{S'}} S'_0$ is an equivalence between covers of
$\widehat{S'}$ and covers of $S'_0$.
For details see~\cite[Thm.~18.3.4]{egaIV}.
\end{proof}

Using \etale{} cohomology, we get a nice interpretation of the above two
results:

\begin{proposition}\label{P:hensel-equiv}
Let $S$ be a quasi-compact and quasi-separated scheme. Let $S_0\inj S$
be a closed subscheme. If $F$ is a sheaf on the small \etale{} site on $S$,
then we let $F_0$ denote the pull-back of $F$ to $S_0$. Then
\begin{enumerate}
\item The following conditions are equivalent:
\begin{enumerate}
\myitemii{a} For any sheaf of sets $F$ on the small \etale{} site on $S$, the
canonical map
$$H^0_{\metale}(S,F)\to H^0_{\metale}(S_0,F_0)$$
is bijective.
\myitemii{a$'$} For any \emph{constructible} sheaf of sets $F$ on the small
\etale{} site on $S$, the canonical map
$$H^0_{\metale}(S,F)\to H^0_{\metale}(S_0,F_0)$$
is bijective.
\myitemii{b} For any finite morphism $S'\to S$, the map
$W'\mapsto W'\cap (S'\times_S S_0)$ from open and closed subsets of $S'$ to
open and closed subsets of $S'\times_S S_0$ is bijective.
\myitemii{c} For any \etale{} morphism of algebraic spaces $X\to S$ the
canonical map
$$\Gamma(X/S)\to \Gamma(X\times_S S_0/S_0)$$
is bijective.
\myitemii{c$'$} For any \etale{} \emph{finitely presented} morphism of
algebraic spaces $X\to S$ the canonical map
$$\Gamma(X/S)\to \Gamma(X\times_S S_0/S_0)$$
is bijective.
\end{enumerate}
\item The following conditions are equivalent:
\begin{enumerate}
\item For any sheaf $F$ of ind-finite groups on $S$, the canonical map
$$H^i_{\metale}(S,F)\to H^i_{\metale}(S_0,F_0)$$
is bijective for $i=0,1$.
\item The functor $X\mapsto X\times_{S} S_0$ from the category of \etale{} and
finite $S$-schemes to \etale{} and finite $S_0$-schemes is an equivalence of
categories.
\end{enumerate}
\end{enumerate}
\end{proposition}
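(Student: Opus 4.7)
My strategy for part~(i) is to prove the chain (a)$\Leftrightarrow$(a')$\Leftrightarrow$(c')$\Leftrightarrow$(c) and then close up with (a')$\Leftrightarrow$(b). The equivalences (a)$\Leftrightarrow$(a') and (c)$\Leftrightarrow$(c') are formal consequences of the qcqs hypothesis: the small \etale{} site of $S$ is coherent, so $H^0$ commutes with filtered colimits of sheaves of sets, and every sheaf of sets is the filtered union of its constructible subsheaves. Analogously, every \etale{} algebraic space $X\to S$ is the filtered union of its finitely presented quasi-compact open subspaces, and any section $s\colon S\to X$ factors through one of them because $s$ is an open immersion (Proposition~\pref{P:diag-of-etale-is-open}) with quasi-compact image. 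The equivalence (a')$\Leftrightarrow$(c') is the standard correspondence between constructible sheaves of sets on the small \etale{} site of $S$ and \etale{} finitely presented algebraic spaces over $S$, under which $H^0(S,F)$ is identified with $\Gamma(X/S)$.

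For (a')$\Leftrightarrow$(b): the forward direction applies (a') to the constructible sheaf $F=\pi_{*}\underline{\{0,1\}}_{S'}$ for a finite $\pi\colon S'\to S$; global sections over $S$ are the open-closed subsets of $S'$, and affine base change (since $\pi$ is finite) identifies $F|_{S_0}$ with the analogous sheaf on $S'_0$. The hard direction is (b)$\Rightarrow$(c'). A preliminary consequence of (b) is that any finite $S$-scheme $Y$ with $Y\times_S S_0=\emptyset$ is itself empty (apply (b) to $Y$ to distinguish the empty clopen from $Y$ itself); in particular, the defining ideal of $S_0\subseteq S$ lies in the Jacobson radical. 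I then reduce to the case $X$ is a separated \etale{} scheme over $S$ via an \etale{} presentation, and use Zariski's main theorem to embed $X\hookrightarrow X'$ as an open subspace of a finite $S$-scheme $X'$. Sections of $X/S$ correspond to clopens $Z\subseteq X'$ with $Z\subseteq X$ and $Z\to S$ an isomorphism, and condition (b) lifts a clopen $Z_0\subseteq X'_0$ corresponding to a section of $X_0/S_0$ to such a $Z$; the inclusion $Z\subseteq X$ follows by applying the empty-fiber consequence above to the finite $S$-scheme $Z\cap(X'\setminus X)$, and $Z\to S$ is an isomorphism by Nakayama's lemma applied to the finite $\mathcal{O}_S$-module $\pi_{*}\mathcal{O}_Z$.

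For part~(ii), I use the torsor interpretation of non-abelian $H^1$. In (a)$\Rightarrow$(b), full faithfulness follows from the $i=0$ bijection applied to the internal $\Hom$-sheaf between two finite \etale{} $S$-covers, which is itself representable by a finite \etale{} $S$-scheme; essential surjectivity follows because every finite \etale{} $Y_0\to S_0$ with geometric fiber a finite set $T$ defines a cohomology class in $H^1(S_0,\underline{\mathrm{Aut}(T)})$, which by the $i=1$ bijection lifts to a class in $H^1(S,\underline{\mathrm{Aut}(T)})$ producing the desired $Y/S$. For (b)$\Rightarrow$(a), I reduce ind-finite to finite via filtered colimits (coherence of the site makes $H^0$ and $H^1$ commute with such colimits), then interpret $H^0$ and $H^1$ of a sheaf of finite groups $G$ as sections and isomorphism classes of $G$-torsors for the corresponding finite \etale{} group $S$-scheme; both are preserved by the equivalence of categories in (b). The main obstacle is the implication (b)$\Rightarrow$(c') of part~(i) --- specifically, verifying that (b) is strong enough both to lift the clopen $Z_0$ and to ensure that the auxiliary conditions ``$Z\subseteq X$'' and ``$Z\to S$ is an isomorphism'' transfer --- while in part~(ii) the delicate non-abelian cohomology bookkeeping requires keeping the $H^1$-torsor classification compatible with the equivalence of categories.
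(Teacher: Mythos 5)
Your overall architecture agrees with the paper's where it is formal: the reduction (a)$\Leftrightarrow$(a$'$) by writing a sheaf of sets as a filtered colimit of constructible subsheaves, and the dictionary (a)$\Leftrightarrow$(c), (a$'$)$\Leftrightarrow$(c$'$) via the equivalence between sheaves on the small \etale{} site and \etale{} algebraic spaces (espace \'etal\'e). The divergence is that the paper obtains (a)$\Leftrightarrow$(b), in both (i) and (ii), by citing~\cite[Exp.~XII, Prop.~6.5]{sga4}, whereas you attempt a direct geometric proof of (b)$\Rightarrow$(c$'$). Your argument for \emph{separated} quasi-compact \etale{} $X$ is essentially correct: the preliminary observation that a finite $S$-scheme with empty fiber over $S_0$ is empty, the Zariski's Main Theorem embedding $X\inj X'$ with $X'$ finite, and the identification of sections with clopen subsets $Z\subseteq X'$ satisfying $Z\subseteq X$ and $Z\to S$ an isomorphism all go through (injectivity for arbitrary $X$ also follows from your preliminary observation, since the equalizer of two sections is open by Proposition~\pref{P:diag-of-etale-is-open} and its complement is a closed subscheme of $S$ missing $S_0$).

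The genuine gap is the phrase ``reduce to the case $X$ is a separated \etale{} scheme over $S$ via an \etale{} presentation.'' No such reduction is available for \emph{surjectivity}: a section $s_0$ of $X_0/S_0$ does not lift to a section of an \etale{} presentation $U_0\to X_0$, and $\Gamma(-/S)=H^0_{\metale}(S,-)$ does not commute with the coequalizer $F_R\rightrightarrows F_U\to F_X$ defining the quotient sheaf. Shrinking $X$ to the open subspace $V$ with $V\times_S S_0=s_0(S_0)$ does not help either: to conclude that $V\to S$ is a monomorphism one would need the emptiness of a closed subset of $V\times_S V$ disjoint from the $S_0$-fiber, and $V\times_S V$ is only \etale{}, not finite, over $S$, so your preliminary observation does not apply. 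This is exactly the case the proposition is designed to capture --- the remark following it in the paper points out that (c) restricted to \emph{separated} morphisms (or even to schemes) is strictly weaker, by an example of Artin --- so the non-separated case cannot be waved away. The standard repair is the sheaf-theoretic d\'evissage underlying \cite[Exp.~XII, Prop.~6.5]{sga4}: every constructible sheaf of sets on a quasi-compact quasi-separated scheme embeds into a finite product of sheaves $\pi_*C$ with $\pi$ finite and $C$ finite constant, expressing $H^0(S,F)$ as a limit of sets of clopen decompositions of finite $S$-schemes, to which (b) applies directly; one then returns to (c$'$) through the espace \'etal\'e. A parallel issue occurs in your part~(ii): a sheaf of (ind-)finite groups on $S_{\metale}$ need not be representable by a finite \etale{} group scheme (e.g.\ an extension by zero from an open subscheme), so interpreting its $H^0$ and $H^1$ via torsors under a finite \etale{} group $S$-scheme again requires a d\'evissage you have not supplied; your torsor argument for essential surjectivity in (a)$\Rightarrow$(b) of part~(ii) is fine once one first decomposes $S_0$ into clopen pieces of constant degree and lifts that decomposition using the $i=0$ statement.
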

\begin{proof}
Every sheaf of sets is the filtered direct limit of constructible
sheaves by~\cite[Exp.~IX, Cor.~2.7.2]{sga4}.
As $H^0_{\metale}$ commutes with filtered direct
limits~\cite[Exp.~VII, Rem.~5.14]{sga4},
the equivalence between (a) and (a$'$) follows. The equivalence between (a) and
(b) in (i) and (ii) is a special case of~\cite[Exp.~XII, Prop.~6.5]{sga4}.
For the equivalence between (a) and (c) in (i) we recall that there is an
equivalence between the category of sheaves on the small \etale{} site on $S$
with the category of algebraic spaces $X$ \etale{} over $S$, cf.~\cite[Ch.~V,
Thm.~1.5]{milne_etale_coh} or~\cite[Ch.~VII, \S1]{artin_theoremes_de_repr}.
This takes a sheaf to its ``espace \'{e}tal\'{e}'' and conversely an algebraic
space to its sheaf of sections. Furthermore, a sheaf is constructible if and
only if its espace \'{e}tal\'{e} is of finite
presentation~\cite[Exp.~IX, Cor.~2.7.1]{sga4}.

If $X\to S$ is an \etale{} morphism corresponding to the sheaf $F$, then
$H^0_{\metale}(S,F)=\Gamma(X/S)$. For any morphism $\map{g}{S'}{S}$, the
pull-back $g^{*}F$ is represented by $X\times_S S'$.
%
This shows that (a) and (c) as well as (a$'$) and (c$'$) are equivalent.
\end{proof}

\begin{remark}
If $S$ is not locally noetherian, then an espace \'{e}tal\'{e} need not be
quasi-separated. However, do note that any \etale{} morphism is \emph{locally
separated} by Proposition~\pref{P:diag-of-etale-is-open} and that finitely
presented morphisms are quasi-separated.
\end{remark}

\begin{remark}
Part~(i) of Proposition~\pref{P:hensel-equiv} is a generalization
of~\cite[Prop.~18.5.4]{egaIV} which only shows that (b) implies (c) for the
category of \emph{separated} \etale{} morphisms $X\to S$. An example of
Artin~\cite[Exp.~XII, Rem.~6.13]{sga4} shows that
condition (c) restricted to morphisms of \emph{schemes} does not always imply
(a) and (b). It does suffice when $S$ is affine though, cf.~\cite[Ch.~XI,
Thm.~1]{raynaud_hensel_rings}.
\end{remark}

\begin{definition}
Let $S$ be a quasi-compact and quasi-separated scheme and $S_0\inj S$ a closed
subscheme. We say that the pair $(S,S_0)$ is \emph{$0$-henselian} or
\emph{henselian} (resp.\ \emph{$1$-henselian}) if $(S,S_0)$ satisfies the
equivalent conditions of (i) (resp.\ (ii)) of Proposition~\pref{P:hensel-equiv}.
\end{definition}

We can now rephrase Proposition~\pref{P:0-henselian-for-proper-over-complete}
and Theorem~\pref{T:1-henselian-for-proper-over-complete} as follows:

\begin{theorem}
\label{T:0-1-henselian-for-proper-over-complete}
Let $S$ be the spectrum of a noetherian complete local ring with closed point
$S_0$. Let $S'\to S$ be a proper morphism and $S'_0=S'\times_S
S_0$. Then $(S',S'_0)$ is $0$-henselian and $1$-henselian.
\end{theorem}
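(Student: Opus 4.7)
The plan is to observe that both assertions are direct reformulations of the two cited results \ref{P:0-henselian-for-proper-over-complete} and \ref{T:1-henselian-for-proper-over-complete} via the equivalences established in Proposition~\pref{P:hensel-equiv}. There is essentially no new content; the main point is bookkeeping to check that the hypotheses transport correctly across compositions.

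For the $0$-henselian part, I would use the characterization (i)(b) of Proposition~\pref{P:hensel-equiv}: I need to show that for every \emph{finite} morphism $T\to S'$, the map $W\mapsto W\cap(T\times_{S'} S'_0)$ is a bijection between the open-and-closed subsets of $T$ and those of $T\times_{S'} S'_0$. The composition $T\to S'\to S$ is proper (finite followed by proper), and we have the canonical identification
\[
T\times_S S_0 \;=\; T\times_{S'}(S'\times_S S_0) \;=\; T\times_{S'} S'_0 .
\]
Thus Proposition~\pref{P:0-henselian-for-proper-over-complete}, applied to the proper morphism $T\to S$, yields precisely the required bijection. This establishes condition (i)(b) of Proposition~\pref{P:hensel-equiv} for the pair $(S',S'_0)$.

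For the $1$-henselian part, the condition (ii)(b) of Proposition~\pref{P:hensel-equiv} asks exactly that $X\mapsto X\times_{S'}S'_0$ be an equivalence from finite \etale{} covers of $S'$ to finite \etale{} covers of $S'_0$. This is the statement of Theorem~\pref{T:1-henselian-for-proper-over-complete} verbatim.

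The main (modest) obstacle is simply recognizing that the hypothesis ``$T$ is finite over $S'$'' in the $0$-henselian criterion is compatible with the proper base setup: one must note that any such $T$ is itself proper over the complete local base $S$, so that Proposition~\pref{P:0-henselian-for-proper-over-complete} applies to $T\to S$ and not merely to $S'\to S$. Once this is in place, both statements follow formally from Proposition~\pref{P:hensel-equiv}.
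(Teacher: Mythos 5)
Your proof is correct and matches the paper's intent exactly: the theorem is stated there as a rephrasing of Proposition~\ref{P:0-henselian-for-proper-over-complete} and Theorem~\ref{T:1-henselian-for-proper-over-complete} via the equivalent conditions of Proposition~\ref{P:hensel-equiv}, with no further proof given. Your only added content --- passing from a finite $T\to S'$ to the proper composite $T\to S$ and identifying $T\times_{S'}S'_0$ with $T\times_S S_0$ to verify condition (i)(b) --- is precisely the bookkeeping the paper leaves implicit, and it is right.
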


Proposition~\pref{P:0-henselian-for-proper-over-complete} is easily extended
to noetherian henselian local rings using the connectedness properties of the
Stein factorization:

\begin{proposition}[{\cite[Prop.~18.5.19]{egaIV}}]
\label{P:0-henselian-for-proper-over-henselian}
Let $S$ be the spectrum of a noetherian \emph{henselian} local ring with closed
point $S_0$. Let $S'\to S$ be a proper morphism and $S'_0=S'\times_S
S_0$. Then $(S',S'_0)$ is $0$-henselian.
\end{proposition}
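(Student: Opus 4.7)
The plan is to verify characterization (b) of Proposition~\pref{P:hensel-equiv}(i): given any finite morphism $T\to S'$, the map from clopen subsets of $T$ to clopen subsets of $T\times_{S'}S'_0$ is bijective. Since $S'\to S$ is proper and $T\to S'$ is finite, the composition $g\colon T\to S$ is proper, and $T\times_{S'}S'_0 = T\times_S S_0 =: T_0$. So it suffices to show: for any proper morphism $g\colon T\to S$ with $S = \Spec(R)$ for $R$ a noetherian henselian local ring with maximal ideal $\mathfrak{m}$, pullback to $T_0 = g^{-1}(S_0)$ induces a bijection on clopen subsets.

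I would factor this bijection through the Stein factorization $T\xrightarrow{h} T'\xrightarrow{g'} S$ of $g$, where $h$ is proper surjective with connected fibers and $g'$ is finite. The pullback map $\mathrm{clopen}(T')\to \mathrm{clopen}(T)$ is bijective because surjectivity and connectedness of fibers force every clopen $W\subseteq T$ to be of the form $h^{-1}(h(W))$, and properness of $h$ shows that both $h(W)$ and $h(T\setminus W)$ are closed, hence clopen. Applying the same reasoning to the base change $h_0\colon T_0\to T'_0 := T'\times_S S_0$ yields a bijection $\mathrm{clopen}(T'_0)\to \mathrm{clopen}(T_0)$, since the fibers of $h_0$ above points of $T'_0\subseteq T'$ coincide with the corresponding fibers of $h$ and thus remain connected.

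It remains to see that pullback $\mathrm{clopen}(T')\to \mathrm{clopen}(T'_0)$ is bijective. Since $g'$ is finite and $R$ is henselian, $T' = \coprod_{i=1}^n \Spec(A_i)$ with each $A_i$ a finite (hence henselian) local $R$-algebra, so $T'$ has precisely $n$ connected components (each $\Spec(A_i)$ is connected since $A_i$ is local). Each $\Spec(A_i/\mathfrak{m}A_i)$ is again the spectrum of a local ring, hence connected and nonempty, so $T'_0$ also has exactly $n$ connected components, in canonical bijection with those of $T'$. Composing the three bijections produces the desired bijection $\mathrm{clopen}(T)\cong \mathrm{clopen}(T_0)$.

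The main obstacle is conceptual rather than computational: one must recognize that connectedness of the fibers of the Stein factorization survives the base change to $S_0$ (immediate from the fiberwise description, since fibers of $h_0$ are identified with fibers of $h$ over the same points), and that the henselian hypothesis on $R$ provides the necessary decomposition of the finite piece $T'$ into spectra of local rings, ensuring that the topology of $T'$ is already determined by its closed fiber.
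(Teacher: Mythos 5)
Your proof is correct and follows essentially the same route the paper indicates: the paper proves this by citing \cite[Prop.~18.5.19]{egaIV} and remarking that the complete case extends to the henselian case ``using the connectedness properties of the Stein factorization,'' which is precisely the reduction you carry out in detail via criterion (b) of Proposition~\pref{P:hensel-equiv}. The only point worth making explicit is that each $A_i/\mathfrak{m}A_i$ is nonzero (Nakayama), so that the closed fibre really does meet every connected component of the finite part.
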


It is more difficult to show that $(S',S'_0)$ is $1$-henselian under the
assumptions of Proposition~\pref{P:0-henselian-for-proper-over-henselian} (and
we will not need this). One possibility is to use Artin's approximation
theorem. This is done in~\cite[Thm.~3.1]{artin_alg_approximation}. Another
possibility is to use Popescu's
theorem~\cite{swan_popescus_theorem,spivakovsky_popescus_theorem}.
As these powerful results were not available at the time, Artin gave an
independent proof in~\cite[Exp.~XII]{sga4}.
This result is also slightly more general as it does not require the proper
morphism to be finitely presented:

\begin{theorem}[{\cite[Exp.~XII, Cor.~5.5]{sga4}}]
\label{T:0-1-henselian-for-proper-over-henselian}
Let $S$ be the spectrum of a henselian local ring with closed point $S_0$. Let
$S'\to S$ be a proper morphism and $S'_0=S'\times_S S_0$. Then the pair
$(S',S'_0)$ is $0$-henselian and $1$-henselian.
\end{theorem}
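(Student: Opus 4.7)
The plan is to reduce the statement to the complete noetherian case furnished by Theorem~\pref{T:0-1-henselian-for-proper-over-complete}. I would write the henselian local ring $A = \sO_S$ as a filtered colimit of henselizations of noetherian local rings (for instance, henselizations of localizations of the finite-type $\Z$-subalgebras of $A$ at the contracted maximal ideal). This expresses $S = \varprojlim_\lambda S_\lambda$ with each $S_\lambda$ the spectrum of a noetherian henselian local ring and the transition morphisms affine. Provided that $S' \to S$ is finitely presented, it descends to a proper finitely presented $S'_\lambda \to S_\lambda$ for some $\lambda$ by~\cite[Thm.~8.10.5]{egaIV}. The 0-henselian and 1-henselian properties then pass to the limit by the compatibility of \'etale cohomology of constructible sheaves with such limits, combined with the sheaf-theoretic formulations in Proposition~\pref{P:hensel-equiv}. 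The case of a proper morphism only of finite type is handled by an additional preliminary reduction: via Conrad's version of Nagata compactification~\cite[Thm.~4.3]{conrad_nagata}, one writes $S' \to S$ as an inverse limit of finitely presented proper $S$-schemes and applies the same limit argument.

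Once reduced to the setting where $S$ is the spectrum of a noetherian henselian local ring $A$ and $S' \to S$ is proper and finitely presented, the 0-henselian conclusion is Proposition~\pref{P:0-henselian-for-proper-over-henselian} directly. For the 1-henselian property, I would consider the completion $\widehat{S} = \Spec(\widehat A)$ and the base change $\widehat{S'} = S' \times_S \widehat{S}$. Since $\widehat A$ is a complete noetherian local ring, Theorem~\pref{T:0-1-henselian-for-proper-over-complete} says that $(\widehat{S'}, S'_0)$ is 1-henselian, using the identification $S'_0 = \widehat{S'} \times_{\widehat S} S_0$. The theorem thus follows once we establish that base change along $\widehat{S'} \to S'$ induces an equivalence of categories between finite \'etale covers.

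This descent from $\widehat{S'}$ back to $S'$ is the main obstacle. A modern approach uses Popescu's theorem to write $\widehat A = \varinjlim_i B_i$ as a filtered colimit of smooth $A$-algebras; a finite \'etale cover of $\widehat{S'}$ descends to a finite \'etale cover of $S' \times_S \Spec(B_i)$ for some $i$, and the smoothness of $B_i / A$ together with the henselian property of $A$ produces an $A$-algebra map $B_i \to A$ (apply Hensel's lemma to the residual section induced by $B_i \to \widehat A$), through which the cover pulls back to a finite \'etale cover of $S'$; full faithfulness is checked by the same device. An alternative applies Artin's approximation theorem to the limit-preserving and formally smooth functor classifying finite \'etale covers. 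The original proof of Artin in~\cite[Exp.~XII, Cor.~5.5]{sga4} bypasses both later tools by a direct cohomological argument combining the proper base change theorem in degrees $0$ and $1$ with Stein factorization, and it has the additional merit of not requiring $S' \to S$ to be finitely presented, so that no Nagata-type preliminary step is needed.
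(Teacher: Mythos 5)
The paper does not actually prove this statement: it is quoted directly from \cite[Exp.~XII, Cor.~5.5]{sga4}, and the discussion immediately preceding it merely lists the same three routes you describe (Artin approximation, Popescu's theorem, Artin's original cohomological argument) without carrying any of them out. Your proposal is therefore a genuine addition rather than a reproduction, and the strategy is sound: write the henselian local ring as a filtered colimit of noetherian henselian local rings, approximate the proper morphism by finitely presented proper ones, obtain the $0$-henselian part from Proposition~\pref{P:0-henselian-for-proper-over-henselian}, and for the $1$-henselian part descend finite \etale{} covers from $\widehat{S'}$ to $S'$ via Popescu. Two points need more care than your sketch gives them. In the Popescu step, the structural map $B_i\to\widehat{A}$ and the composite $B_i\to A\to\widehat{A}$ through the Hensel section agree only modulo the maximal ideal, so pulling the descended cover back along the section does not obviously recover the original cover over $\widehat{S'}$; one must compare the two pullbacks via their common restriction to $S'_0$ and invoke the $1$-henselianity of $(\widehat{S'},S'_0)$ from Theorem~\pref{T:1-henselian-for-proper-over-complete}, and full faithfulness should likewise be reduced to the $0$-henselian property of the pair $(X,X_0)$ for $X$ finite over $S'$ (hence proper over $S$). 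Second, \cite[Thm.~4.3]{conrad_nagata}, as used in Remark~\pref{R:st-non-fp}, only embeds $S'$ as a closed subscheme of a finitely presented $S$-scheme; to present $S'$ as a limit of \emph{proper} finitely presented $S$-schemes you still need the (true, but not immediate) observation that the finitely presented closed subschemes of that compactification containing $S'$ are eventually proper over $S$. Both gaps are fillable by standard limit arguments, so your proof goes through; what the cited SGA4 route buys instead is independence from Popescu/Artin approximation and a direct treatment of the non-finitely-presented case.
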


Theorem~\pref{T:0-1-henselian-for-proper-over-henselian} is only part of the
full proper base change theorem in \etale{} cohomology~\cite[Exp.~XII,
Thm.~5.1, Cor.~5.5]{sga4}.
A slightly less general but easier proof of this theorem utilizing Artin's
approximation theorem and algebraic spaces can be found
in~\cite[Ch.~VII]{artin_theoremes_de_repr}.

\end{section}


\begin{section}{Absolute weak normalization}\label{S:weak-norm}
In this section, we introduce the \emph{absolute weak normalization}. This is
an extension of the \emph{weak normalization},
cf.~\cite{and-bom,manaresi,yanagihara_1}. The weak normalization (resp.\ 
absolute weak normalization) is dominated by the normalization (resp.\ total
integral closure). Recall that a separated universal homeomorphism $X'\to X$ of
algebraic spaces is the same thing as an integral, universally injective and
surjective morphism, cf.\ Corollary~\pref{C:sep-homeo-is-affine}
and~\cite[Cor.~18.12.11]{egaIV}.

\begin{definition}
A scheme or algebraic space $X$ is \emph{absolutely weakly normal} if
\begin{enumerate}
\item $X$ is reduced.
\item If $\map{\pi}{X'}{X}$ is a separated universal homeomorphism and
$X'$ is reduced, then $\pi$ is an isomorphism.
\end{enumerate}
If $X'\to X$ is a separated universal homeomorphism such that $X'$ is
absolutely weakly normal, then we say that $X'$ is an absolute weak
normalization of $X$.
\end{definition}

\begin{properties}\label{X:AWN}
We briefly list some basic properties of absolutely weakly normal schemes.
\begin{enumerate}
\item If $Y'\to Y$ is a separated universal homeomorphism and $X$ is absolutely
weakly normal, then any morphism $X\to Y$ factors uniquely through $Y'$. In
fact, ${(X\times_Y Y')_\red} \to X$ is an isomorphism. In particular, an
absolute weak normalization is unique if it exists.
\item The spectrum of a perfect field is absolutely weakly normal.
\item A TIC~scheme, cf.\ Definition~\pref{D:TIC}, is absolutely weakly
normal.
\item An absolutely flat scheme with perfect residue fields is absolutely
weakly normal. Every scheme $X$ has a canonical affine universally bijective
morphism $T^{-\infty}(X)\to X$ where $T^{-\infty}(X)$ is absolutely flat with
perfect residue fields~\cite{olivier_sem_samuel-2}.
\end{enumerate}
\end{properties}

We first establish the existence of the absolute weak normalization in the
affine case and then show that it localizes.

\begin{definition}
A ring extension $A\inj A'$ is called \emph{weakly subintegral} if
$\Spec(A')\to\Spec(A)$ is a universal homeomorphism. For an arbitrary extension
$A\inj B$, the \emph{weak subintegral closure} $\rwn{B}{A}$ of $A$ in $B$ is
the largest sub-extension $A\inj \rwn{B}{A}$ which is weakly subintegral. 
%
A ring $A$ is \emph{absolutely weakly normal} if its spectrum is absolutely
weakly normal. If $\Spec(A')\to\Spec(A)$ is an absolute weak normalization then
we say that $A'$ is the absolute weak normalization of $A$ and denote $A'$ with
$\rwn{}{A}$.
\end{definition}

Some comments on the existence of $\rwn{B}{A}$ are due. If $A\inj A'_1$ and
$A\inj A'_2$ are two weakly subintegral sub-extensions of $A\inj B$, then the
union $A'_1\cup A'_2=\image(A'_1\otimes_A A'_2\to B)$ is a weakly subintegral
sub-extension of $A\inj B$. If $(A'_i)$ is a filtered union of weakly
subintegral extension, then $A'=\bigcup_i A'_i$ is weakly
subintegral~\cite[Cor.~8.2.10]{egaIV}. The existence of $\rwn{B}{A}$ then
follows from Zorn's lemma.

\begin{properties}
The following properties are readily verified:
\begin{enumerate}
\item The weak subintegral closure is inside the integral
closure~\cite[Cor.~18.12.11]{egaIV}.
\item If $A\inj B$ is an extension and $B$ is absolutely weakly normal then
$\rwn{B}{A}$ is absolutely weakly normal.
\item If $A$ is an integral domain then the weak subintegral closure of $A$
in a perfect closure of its fraction field is the absolute weak normalization.
\item If $A$ is any ring then the weak subintegral closure of $A_\red$
in $\TIC(A_\red)$ (or $T^{-\infty}(A_\red)$) is the absolute weak normalization
$\rwn{}{A}$.
\end{enumerate}
\end{properties}

We have furthermore the following characterization of the weak subintegral
closure:

\begin{lemma}[{\cite[Thm.~(I.6)]{manaresi}}]\label{L:wn-char}
Let $A\inj B$ be an \emph{integral} extension. Then $b\in B$ is in the weak
subintegral closure $\rwn{B}{A}$ if and only if $b\otimes 1 = 1\otimes b$ in
$(B\otimes_A B)_\red$.
\end{lemma}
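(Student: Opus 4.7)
The plan is to set $S := \{\,b \in B : b \otimes 1 - 1 \otimes b \text{ is nilpotent in } B \otimes_A B\,\}$ and prove that $S = \rwn{B}{A}$. A quick check shows $S$ is an $A$-subalgebra of $B$: $A \subseteq S$ is clear, additivity is immediate, and multiplicativity follows from the identity $(xy) \otimes 1 - 1 \otimes (xy) = (x \otimes 1 - 1 \otimes x)(y \otimes 1) + (1 \otimes x)(y \otimes 1 - 1 \otimes y)$ combined with the fact that the nilradical is an ideal. For the inclusion $\rwn{B}{A} \subseteq S$, write $A' := \rwn{B}{A}$. By definition $\Spec A' \to \Spec A$ is a universal homeomorphism, hence universally injective, which is equivalent to the multiplication $\mu \colon A' \otimes_A A' \to A'$ having nilpotent kernel. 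This kernel contains $b \otimes 1 - 1 \otimes b$ for every $b \in A'$, so these elements are nilpotent in $A' \otimes_A A'$ and then also in $B \otimes_A B$, since the induced $A$-algebra map between the two preserves nilpotents.

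The reverse inclusion $S \subseteq \rwn{B}{A}$ is the heart of the argument. Given $b \in S$, I must show that the integral extension $A \inj A[b]$ is weakly subintegral, i.e.\ that $\Spec A[b] \to \Spec A$ is a universal homeomorphism. Integrality takes care of surjectivity on spectra, so only universal injectivity remains, and I verify it via the following geometric criterion: an integral extension $A \inj A'$ is universally injective if and only if, for every algebraically closed field $K$ and every homomorphism $\alpha \colon A \to K$, at most one extension $A' \to K$ exists. (For integral $A'/A$ and algebraically closed $K$, the fibre $A' \otimes_A K$ is zero-dimensional and integral over $K$, so the number of $K$-algebra maps $A' \to K$ equals the number of its geometric points; having at most one such point with residue field $K$ is precisely universal injectivity of $A \to A'$.) Given two extensions $\phi_1, \phi_2 \colon A[b] \to K$ of $\alpha$, I extend each to a homomorphism $\psi_i \colon B \to K$ using lying-over for the integral extension $A[b] \inj B$ together with algebraic closedness of $K$. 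The pair $(\psi_1, \psi_2)$ induces a ring map $\psi \colon B \otimes_A B \to K$, and since $K$ is a domain it kills the nilradical, so $\phi_1(b) - \phi_2(b) = \psi_1(b) - \psi_2(b) = \psi(b \otimes 1 - 1 \otimes b) = 0$, whence $\phi_1 = \phi_2$ and $b \in \rwn{B}{A}$.

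The main obstacle is the reverse inclusion, and specifically the fact that the natural map $A[b] \otimes_A A[b] \to B \otimes_A B$ need not be injective, so nilpotence in the target cannot simply be pulled back to the source. The geometric-point criterion bypasses this difficulty by rephrasing universal injectivity as a condition on $K$-valued points, where integrality of $B$ over $A$ lets us lift any $A[b]$-valued point to a $B$-valued point and thereby bring the hypothesis directly to bear.
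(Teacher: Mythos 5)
Your proof is correct, and the overall skeleton matches the paper's: both arguments reduce the statement to the assertion that $b\otimes 1-1\otimes b$ being nilpotent is equivalent to $\Spec(A[b])\to\Spec(A)$ being universally injective (hence, being integral and surjective, a universal homeomorphism), using the EGA characterization of universal injectivity via surjectivity of the diagonal. Where you genuinely diverge is in the hard direction, i.e.\ transferring the nilpotence hypothesis from $B\otimes_A B$ down to the subring generated by $b$. The paper does this purely ring-theoretically in one line: since $\Spec(B)\to\Spec(A[b])$ is surjective, so is $\Spec(B\otimes_A B)\to\Spec(A[b]\otimes_A A[b])$, and a map that is surjective on spectra has kernel contained in the nilradical, so $(A[b]\otimes_A A[b])_\red\to(B\otimes_A B)_\red$ is \emph{injective} and the condition descends directly. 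You instead bypass the non-injectivity of $A[b]\otimes_A A[b]\to B\otimes_A B$ by switching to the field-valued-point criterion for radiciality and lifting any two $K$-points of $A[b]$ to $K$-points of $B$ via lying over and algebraic closedness of $K$, where the nilpotence of $b\otimes 1-1\otimes b$ forces them to agree on $b$. Both mechanisms ultimately rest on lying over for integral extensions; the paper's version is shorter and stays entirely inside commutative algebra, while yours is more hands-on and makes the role of integrality (you only need to lift points, not control the whole tensor square) more transparent. Minor remarks: the verification that your set $S$ is an $A$-subalgebra is correct but not actually needed for the statement, and your parenthetical justification of the geometric-point criterion is slightly informal but standard and repairable.
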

\begin{proof}
Let $A'=A[b]\subseteq B$. Then $\Spec(B)\to \Spec(A')$ is surjective and it
follows that $(A'\otimes_A A')_\red\to (B\otimes_A B)_\red$ is injective.
Thus $b\otimes 1 = 1\otimes b$ in $(B\otimes_A B)_\red$ if and only if
$(A'\otimes_A A')_\red \to A'_\red$ is an isomorphism. Equivalently, the
diagonal $\Delta_{\Spec(A')/\Spec(A)}$ is surjective which
by~\cite[Prop.~3.7.1]{egaI_NE} is equivalent to $\Spec(A')\to\Spec(A)$ being
universally injective. As $\Spec(A')\to\Spec(A)$ is finite and surjective,
$\Spec(A')\to\Spec(A)$ is universally injective if and only if
$\Spec(A')\to\Spec(A)$ is a universal homeomorphism. Thus $A\inj A[b]$ is
weakly subintegral if and only if $b\otimes1 = 1\otimes b$.
\end{proof}

\begin{proposition}\label{P:wn-basechange}
Let $A\inj B$ be an extension and let $A\to A'$ be a homomorphism. Assume that
$A\to A'$ is a localization or is \etale{}. Let $B'=B\otimes_A A'$. Then:
\begin{enumerate}
\item The weak subintegral closure $\rwn{B}{A}$ of $A$ in $B$ commutes with the
base change $A\to A'$, i.e., $\rwn{B'}{A'}=(\rwn{B}{A})\otimes_A A'$.
\item The absolute weak normalization $\rwn{}{A}$ of $A$ commutes with the
base change $A\to A'$, i.e., $\rwn{}{A'}=(\rwn{}{A})\otimes_A A'$.
\end{enumerate}
\end{proposition}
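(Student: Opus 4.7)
I would combine Lemma~\pref{L:wn-char} -- which for an integral extension $A\inj B$ identifies $\rwn{B}{A}$ with the kernel of the difference map $\alpha\colon B\to (B\otimes_A B)_{\red}$, $b\mapsto b\otimes 1-1\otimes b$ -- with the fact that $A\to A'$ is flat and compatible with reducedness. Specifically, for any $A$-algebra $M$ the natural map $M_{\red}\otimes_A A'\to (M\otimes_A A')_{\red}$ is an isomorphism: for a localization this is a direct check since radicals commute with localization; for $A\to A'$ \'etale this uses that \'etale morphisms have geometrically reduced fibres. Applied to $M=B\otimes_A B$ it yields a natural isomorphism $(B'\otimes_{A'}B')_{\red}\cong (B\otimes_A B)_{\red}\otimes_A A'$ under which $b'\otimes 1-1\otimes b'$ corresponds to $(\alpha\otimes\id{A'})(b')$.

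\textbf{Proof of (i).} Since $\rwn{B}{A}=\rwn{\bar A}{A}$ where $\bar A$ is the integral closure of $A$ in $B$, and since the formation of the integral closure commutes with localization and with \'etale base change, I would first replace $B$ with $\bar A$ and $B'$ with $\bar A\otimes_A A'$, reducing to the case where $A\inj B$ (and hence $A'\inj B'$) is integral. The exact sequence
\[
0\longrightarrow \rwn{B}{A}\longrightarrow B\xrightarrow{\ \alpha\ }(B\otimes_A B)_{\red}
\]
supplied by Lemma~\pref{L:wn-char} stays exact after tensoring with the flat $A$-algebra $A'$, and the key compatibility rewrites the resulting sequence as
\[
0\longrightarrow \bigl(\rwn{B}{A}\bigr)\otimes_A A'\longrightarrow B'\xrightarrow{\ \alpha'\ }(B'\otimes_{A'}B')_{\red},
\]
with $\alpha'(b')=b'\otimes 1-1\otimes b'$. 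A second application of Lemma~\pref{L:wn-char}, to the integral extension $A'\inj B'$, identifies $\ker(\alpha')$ with $\rwn{B'}{A'}$, giving (i).

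\textbf{Proof of (ii) and main obstacle.} Since $A\mapsto A_{\red}$ commutes with both base changes (a special case of the reducedness compatibility above), I may assume $A$ and $A'$ are reduced; then by Properties~\pref{X:AWN}, $\rwn{}{A}=\rwn{T^{-\infty}(A)}{A}$ and $\rwn{}{A'}=\rwn{T^{-\infty}(A')}{A'}$. Part (ii) then follows from (i) applied to the extension $A\inj T^{-\infty}(A)$, provided one knows that $T^{-\infty}(A)\otimes_A A'\cong T^{-\infty}(A')$; this is Olivier's commutation of the absolutely flat hull with the base changes in question. The technical heart of the argument, and the main obstacle, is the bundle of \'etale-base-change compatibilities used: reducedness of tensor products, formation of integral closure, and formation of $T^{-\infty}$. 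All three rest on the geometric reducedness of \'etale fibres and must be verified separately, and they are precisely where the hypothesis on $A\to A'$ enters beyond pure flatness.
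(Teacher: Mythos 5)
Your proof of (i) is, step for step, the paper's proof: reduce to the integral case using that integral closure commutes with localization and \etale{} base change, tensor the exact sequence of Lemma~\pref{L:wn-char} with the flat algebra $A'$, and use that reducedness of $B\otimes_A B$ is preserved under these base changes to identify the resulting kernel with $\rwn{B'}{A'}$; this is exactly what the paper does. For (ii) your endgame differs. You assert the commutation $T^{-\infty}(A)\otimes_A A'\cong T^{-\infty}(A')$ and then invoke (i), whereas the paper never identifies $B'=T^{-\infty}(A)\otimes_A A'$ with $T^{-\infty}(A')$: it only shows that $B'$ is \emph{absolutely weakly normal}, by localizing at a prime $\ip'$ and observing that $B_\ip\to B'_{\ip'}$ is essentially \etale{} with $B_\ip$ a perfect field (or a strictly henselian TIC local ring if one works with $\TIC(A)$), hence an isomorphism. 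Once $B'$ is absolutely weakly normal, $\rwn{B'}{A'}$ is weakly subintegral over $A'$ and absolutely weakly normal, so it \emph{is} $\rwn{}{A'}$ by Properties~\pref{X:AWN}, and (i) finishes the argument. Your commutation claim is true --- one checks that ``absolutely flat with perfect residue fields'' is preserved by the base changes in question and then applies the universal (left-adjoint) property of $T^{-\infty}$ --- but that universal property is nowhere stated in the paper, and the preservation check is essentially all the paper needs anyway. So your route works but carries a heavier dependency than necessary; if you cannot locate the commutation statement in Olivier, replace it by the weaker claim that $T^{-\infty}(A)\otimes_A A'$ is absolutely weakly normal, which follows from Properties~\pref{X:AWN}~(iv) once you have done the preservation check you already envisage.
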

\begin{proof}
As the integral closure commutes with \etale{} base
change~\cite[Prop.~18.12.15]{egaIV}
and localizations, we can assume that $A\inj B$ is integral. By
Lemma~\pref{L:wn-char}, the sequence
$$\xymatrix{
\rwn{B}{A}\ar@{(->}[r] & B\ar@<.5ex>[r] \ar@<-.5ex>[r] & (B\otimes_A B)_\red
}$$
is exact. As exactness is preserved by flat morphisms and reduced rings are
preserved by localization and \etale{} base change,
cf.~\cite[Prop.~17.5.7]{egaIV}, it follows that
$\rwn{B'}{A'}=(\rwn{B}{A})\otimes_A A'$.

For the second part, let $B=\TIC(A)$ (or $B=T^{-\infty}(A)$). Then
$\rwn{}{A}=\rwn{B}{A}$ and in order to show that
$\rwn{}{A'}=(\rwn{}{A})\otimes_A A'$ it is enough to show that $B'$ is
absolutely weakly normal as $(\rwn{}{A})\otimes_A A'=\rwn{B'}{A'}$ by the first
part. Furthermore, it suffices to show that $B'_{\ip'}$ is absolutely weakly
normal for every prime $\ip'\in\Spec(B')$. Let $\ip$ be the image of $\ip'$ by
$\Spec(B')\to \Spec(B)$. Then $B_\ip\to B'_{\ip'}$ is essentially
\etale{}. But $B_\ip$ is strictly henselian, cf.\ Properties~\pref{X:TIC}, and
thus $B_\ip\to B'_{\ip'}$ is an isomorphism. As $B_\ip$ is a TIC~ring it is
absolutely weakly normal. If we instead use $B=T^{-\infty}(A)$ the last part of
the demonstration becomes trivial as $B_\ip$ and $B'_{\ip'}$ are perfect
fields.
\end{proof}

Let $S$ be a scheme or algebraic space. The proposition implies that given an
extension of quasi-coherent algebras $\sA\inj \sB$ on $S$, there is a unique
quasi-coherent sub-algebra $\rwn{\sB}{\sA}$ which restricts to the weak
subintegral closure on any affine covering. If $\map{\varphi}{\sA}{\sB}$ is not
injective but $\Spec(\sB)\to\Spec(\sA)$ is dominant, then we let
$\rwn{\sB}{\sA}$ be the weak subintegral closure of $\sA/\ker(\varphi)$
in~$\sB$.
Furthermore, there is a quasi-coherent sheaf of algebras
$\rwn{}{\sO_S}=\rwn{}{\sO_{S_\red}}$ and the spectrum of this algebra is the
absolute weak normalization of $S$. In the geometric case we adhere to the
notation in~\cite[Ch.~I, 7.2]{kollar_rat_curves_book}:

\begin{definition}
Let $S$ be a scheme or algebraic space. The \emph{weak normalization} of $S$
with respect to a quasi-compact and quasi-separated dominant morphism
$\map{f}{X}{S}$ is the spectrum of the weak subintegral closure of $\sO_S$ in
$f_*\sO_X$ and is denoted $\wn{X}{S}$. The \emph{absolute weak normalization}
of $S$ is denoted $\wn{}{S}$.
\end{definition}

\begin{remark}
An integral domain is said to be weakly normal if it is weakly normal in its
fraction field. Similarly, a reduced ring with a finite number of irreducible
components is weakly normal if it is weakly normal in its total fraction
ring~\cite{manaresi, yanagihara_2}. If $A$ is an excellent noetherian ring,
then its weak normalization is finite over $A$ and thus noetherian. The
absolute weak normalization on the other hand, need not be finite and may well
reside outside the category of noetherian rings.

There is also the notions of \emph{subintegral closure} and
\emph{semi-normality}~\cite{traverso,swan_seminormality,greco-traverso} which
coincide with weak subintegral closure and (absolute) weak normality in
characteristic
zero. The difference in positive characteristic is that $A\inj B$ is
subintegral if $\Spec(B)\to\Spec(A)$ is a universal homeomorphism with
\emph{trivial residue field extensions}, while weakly subintegral morphisms may
have purely inseparable field extensions. If $A$ is an excellent noetherian
ring then its semi-normalization is finite over $A$. In particular, if $A$ is
an excellent noetherian ring of characteristic zero, then the absolute weak
normalization, being equal to the semi-normalization, is finite over $A$.
\end{remark}

\end{section}

\bibliography{submersion}
\bibliographystyle{dary}

\end{document}